\DeclareRobustCommand{\SkipTocEntry}[5]{} 
\setlist[enumerate]{itemsep=0pt, topsep=5pt, label={\roman*\textnormal{)}}, parsep=0pt,listparindent=\parindent}
\theoremstyle{plain}
\newtheorem{theorem}{Theorem}[section]
\newtheorem*{theorem*}{Theorem}
\newtheorem{lemma}[theorem]{Lemma}
\newtheorem{corollary}[theorem]{Corollary}
\newtheorem{proposition}[theorem]{Proposition}
\theoremstyle{definition}
\newtheorem{definition}[theorem]{Definition}
\newtheorem{conjecture*}{Conjecture}
\newtheorem{example}[theorem]{Example}
\newtheorem{examples}[theorem]{Examples}
\theoremstyle{remark}
\newtheorem{remark}[theorem]{Remark}
\newtheorem*{remark*}{Remark}
\newtheorem*{convention*}{Convention}
\newcommand{\NN}{\mathbb{N}}
\newcommand{\VV}{\mathbb{V}}
\newcommand{\ZZ}{\mathbb{Z}}
\newcommand{\bD}{\mathbf{D}}
\newcommand{\bL}{\mathbf{L}}
\newcommand{\bR}{\mathbf{R}}
\newcommand{\Ascr}{\mathscr{A}}
\newcommand{\Cscr}{\mathscr{C}}
\newcommand{\Bscr}{\mathscr{B}}
\newcommand{\Dscr}{\mathscr{D}}
\newcommand{\Asf}{\mathsf{A}}
\newcommand{\Csf}{\mathsf{C}}
\newcommand{\Isf}{\mathsf{I}}
\newcommand{\Ssf}{\mathsf{S}}
\newcommand{\Tsf}{\mathsf{T}}
\newcommand{\Acal}{\mathcal{A}}
\newcommand{\Fcal}{\mathcal{F}}
\newcommand{\Ical}{\mathcal{I}}
\newcommand{\Jcal}{\mathcal{J}}
\newcommand{\Kcal}{\mathcal{K}}
\newcommand{\Mcal}{\mathcal{M}}
\newcommand{\Ncal}{\mathcal{N}}
\newcommand{\Ocal}{\mathcal{O}}
\newcommand{\Pcal}{\mathcal{P}}
\newcommand{\kk}{\Bbbk}
\newcommand{\tE}{\widetilde{E}}
\newcommand{\tR}{\widetilde{R}}
\newcommand{\tOcal}{\widetilde{\mathcal{O}}}
\newcommand{\mapsfrom}{\mathrel{\reflectbox{\ensuremath{\mapsto}}}}
\newcommand\isoto{\stackrel{\sim}{\smash{\longrightarrow}\rule{0pt}{0.4ex}}}
\renewcommand{\phi}{\varphi}
\DeclareMathOperator{\op}{op}
\DeclareMathOperator{\id}{id}
\DeclareMathOperator{\Hom}{Hom}
\DeclareMathOperator{\RHom}{\mathbf{R}Hom}
\DeclareMathOperator{\Ext}{Ext}
\DeclareMathOperator{\cone}{cone}
\DeclareMathOperator{\Tot}{Tot}
\DeclareMathOperator{\rad}{rad}
\DeclareMathOperator{\Rees}{Rees}
\DeclareMathOperator{\gr}{gr}
\DeclareMathOperator{\Spec}{Spec}
\DeclareMathOperator{\Proj}{Proj}
\DeclareMathOperator{\rProj}{\underline{Proj}}
\DeclareMathOperator{\Bl}{Bl}
\DeclareMathOperator{\sHom}{\mathcal{H}\! \mathit{om}}
\DeclareMathOperator{\sEnd}{\mathcal{E}\! \mathit{nd}}
\DeclareMathOperator{\Sym}{Sym}
\DeclareMathOperator{\Obj}{Obj}
\DeclareMathOperator{\Mor}{\mathsf{Mor}}
\DeclareMathOperator{\Fun}{\mathsf{Fun}}
\DeclareMathOperator{\tw}{\mathsf{tw}}
\DeclareMathOperator{\Cube}{\mathsf{Cube}}
\DeclareMathOperator{\Glue}{\mathsf{Glue}}
\DeclareMathOperator{\Gac}{\mathsf{Gac}}
\DeclareMathOperator{\Filt}{\mathsf{Filt}}
\DeclareMathOperator{\Sch}{\mathsf{Sch}}
\DeclareMathOperator{\fSch}{\mathsf{fSch}}
\DeclareMathOperator{\dgCat}{\mathsf{dgCat}}
\DeclareMathOperator{\dgcat}{\mathsf{dgcat}}
\DeclareMathOperator{\Tri}{\mathsf{Tri}}
\DeclareMathOperator{\dgMod}{\mathsf{dgMod}}
\DeclareMathOperator{\Ac}{\mathsf{Ac}}
\DeclareMathOperator{\Res}{Res}
\DeclareMathOperator{\Ind}{Ind}
\DeclareMathOperator{\Mod}{\mathsf{Mod}}
\DeclareMathOperator{\grMod}{\mathsf{grMod}}
\DeclareMathOperator{\QCoh}{\mathsf{QCoh}}
\DeclareMathOperator{\Coh}{\mathsf{Coh}}
\DeclareMathOperator{\Ab}{\mathsf{Ab}}
\DeclareMathOperator{\Perf}{\mathsf{Perf}}
\DeclareMathOperator{\Thick}{\mathsf{Thick}}
\DeclareMathOperator{\Loc}{\mathsf{Loc}}
\DeclareMathOperator{\hflat}{\mathsf{h-flat}}
\DeclareMathOperator{\hflatperf}{\mathsf{h-flat-perf}}
\title{Categorical resolutions of filtered schemes}
\author{Timothy De Deyn}
\address{Departement Wiskunde, Vrije Universiteit Brussel, Pleinlaan 2, B-1050 Elsene}
\email{timothy.de.deyn@vub.be}
\address{Currently at: School of Mathematics and Statistics, University of Glasgow, University Place, G12
	8QQ Glasgow, United Kingdom}
\email{timothy.dedeyn@glasgow.ac.uk}
\thanks{This work was supported by the Research Foundation - Flanders (FWO) Ph.D.\ Fellowship fundamental research 95249.}
\keywords{Categorical resolutions, gluing dg categories, filtered schemes}
\subjclass{14A22, 18G80, 14E15}
\begin{document}
\begin{abstract}
	We give an alternative proof of the theorem by Kuznetsov and Lunts, stating that any separated scheme of finite type over a field of characteristic zero admits a categorical resolution of singularities.
	Their construction makes use of the fact that every variety (over a field of characteristic zero) can be resolved by a finite sequence of blow-ups along smooth centres.
	We merely require the existence of (projective) resolutions. 
	To accomplish this we put the $\Acal$-spaces of Kuznetsov and Lunts in a different light, viewing them instead as schemes endowed with finite filtrations.
	The categorical resolution is then constructed by gluing together differential graded categories obtained from a hypercube of finite length filtered schemes.
\end{abstract}
\maketitle
{\footnotesize  \tableofcontents}
\section{Introduction}
\addtocontents{toc}{\SkipTocEntry}	
\subsection*{Main result and motivation}
In \cite{KuznetsovLunts}, Kuznetsov and Lunts prove the following theorem.
\begin{theorem*}
	Any separated scheme of finite type over a field of characteristic zero has a categorical resolution by a strongly geometric triangulated category.
	Moreover, if the scheme is proper, so is the resolving category.
\end{theorem*}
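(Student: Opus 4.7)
The plan is to reformulate the $\mathscr{A}$-space construction of Kuznetsov and Lunts in terms of filtered schemes, to replace their iterated sequence of blow-ups by the mere existence of a single projective resolution of singularities, and to assemble the categorical resolution as a gluing of dg enhancements indexed by a hypercube of filtered schemes.

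I first set up the categorical framework. This means introducing the category $\fSch$ of schemes equipped with a finite decreasing filtration by closed subschemes, together with filtration-compatible morphisms, and attaching to each such object a dg category enhancing $\Perf$ (or $\bD^{b}(\Coh)$) that remembers the filtration. In parallel I would develop an abstract $\Glue$ construction that takes a cubical diagram of dg categories and produces a single dg category, with functoriality in the diagram and with the hereditary properties I will need: smoothness, properness, and strong geometricity of the associated homotopy category. This gluing machinery is what replaces the inductive blow-up argument of \cite{KuznetsovLunts}.

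Next, given a separated $X$ of finite type over $\kk$, I would choose a projective resolution $\pi\colon \widetilde{X}\to X$, which exists in characteristic zero, let $Z\subseteq X$ be a suitably thickened singular locus and $E=\pi^{-1}(Z)\subseteq \widetilde{X}$ its preimage, and read off from these data a hypercube in $\fSch$ whose vertices are built from $(X,Z)$, $(\widetilde{X},E)$, and from the infinitesimal neighbourhoods of $E$ needed to absorb the discrepancy of $\pi^{*}\pi_{*}$ on $Z$. The candidate resolution is then $\Tscr$, obtained by applying $\Glue$ to the hypercube of dg enhancements. Strong geometricity should follow because each vertex of the cube carries a geometric dg enhancement and $\Glue$ preserves this property, and properness when $X$ is proper is inherited from properness of $\widetilde{X}$ and of each stratum.

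The technical heart is the construction of the resolution functor $\pi^{*}\colon \Perf(X)\to \Tscr$ together with its right adjoint, and the verification of the counit/unit identities that make $\Tscr$ into a categorical resolution. Here one has to define the filtered pullback and pushforward compatibly with the cubical gluing and reduce the identities to adjunctions on each stratum. The main obstacle I anticipate is precisely the choice of filtration depths: Kuznetsov and Lunts use the length of the blow-up tower in an essential way to absorb the failure of $\pi^{*}$ to preserve perfectness, and replacing that tower by a single resolution forces all of that bookkeeping into the filtration indices on $E$. Verifying that the nonperfect discrepancy sits in a subcategory killed by the gluing, while keeping control of perfect complexes on each stratum, is the step that I expect to require the most careful dg-module analysis.
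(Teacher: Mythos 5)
Your high-level framework is right: you correctly identify that the paper reinterprets the $\Acal$-spaces of Kuznetsov--Lunts as finite-length filtered schemes, develops a $\Glue$ construction for hypercubes of dg categories, and assembles a categorical resolution from a hypercube in $\fSch$ by gluing the punctured hypercube of enhancements. You also correctly anticipate that smoothness, properness, and strong geometricity of the glued category descend from the vertices and the edges of the hypercube.

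However, there is a substantive gap in the middle step, and I think it reflects a misreading of what ``replacing the blow-up tower'' means. You propose to take a \emph{single} resolution $\pi\colon \widetilde{X}\to X$, pick a thickened singular locus $Z$, and form a hypercube from $(X,Z)$, $(\widetilde{X},E)$, and ``infinitesimal neighbourhoods of $E$ needed to absorb the discrepancy of $\pi^{*}\pi_{*}$.'' This will not give a smooth gluing in general: after forming the acyclic square associated to the nonrational locus $S\subset X$ (the correct notion, cf.\ \S\ref{subsec: filt nrl}), the glued category contains $\bD(S,F^{*})$ as a semi-orthogonal component, and there is no reason for $S_{\mathrm{red}}$ to be smooth. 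No amount of filtration bookkeeping on $E$ or its infinitesimal neighbourhoods fixes this, because the obstruction lives on the base, not on the resolution. The paper's actual device is different: it retains an \emph{iterated} sequence of resolutions, but of the successive nonrational loci $S_{0}\supsetneq S_{1}\supsetneq\cdots$ rather than a tower of blow-ups with smooth centres, and the induction terminates because $\dim S_{i}<\dim S_{i-1}$ at each step (the resolution is an isomorphism over the regular locus, so the complement of $S_{i}$ is dense in $S_{i-1}$). This is precisely why the hypercube has dimension $r+1$ with $r\leq\dim X$ and why only the \emph{existence} of resolutions is needed, not functorial resolution by smooth blow-ups. The filtration/refinement machinery is there to make each $(X_{i},F_{i}^{*})$ dg-smooth once $(X_{i})_{\mathrm{red}}$ is smooth (Proposition~\ref{prop: smoothness Perf(X,F)}), and to keep the intermediate squares acyclic (Corollary~\ref{cor: refinement give acyclic square}, Proposition~\ref{prop: nr locus give acyclic square}) — it does not by itself absorb a singular nonrational locus.

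A second, smaller, point: you say you would ``define the filtered pullback and pushforward compatibly with the cubical gluing and reduce the identities to adjunctions on each stratum.'' The paper instead proves quasi-full-faithfulness of $\pi\colon\Dscr_{\varnothing}\to\Glue(\underline{\Dscr}^{\circ})$ purely at the level of the $t$-totalization of hom-bimodules (Proposition~\ref{prop: acyclic hypercube iff qff}): acyclicity of the totalized hypercube of bimodules is equivalent to quasi-full-faithfulness. Condition \ref{item: in def cat res3} in the definition of categorical resolution is then checked via Lemma~\ref{lem: restriction functor restricted to component of sod} and the compatibility of $\bR f_{*}$ with coherence. This is cleaner than unit/counit manipulations stratum by stratum, and I would encourage you to locate the acyclicity condition as the operative hypothesis rather than reasoning through the adjunction identities directly.
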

Roughly speaking, a strongly geometric categorical resolution of a scheme $X$, as in the theorem, is a `nice' embedding of the derived category $\bD(X):=\bD(\QCoh (X) )$ into a smooth triangulated category admitting a semi-orthogonal decomposition with as components derived categories of smooth algebraic varieties.
The construction in loc.\ cit.\ makes use of a strong version of Hironaka, namely, that every variety (over a field of characteristic zero) can be resolved by a finite sequence of blow-ups along smooth centres.
In this paper, we reprove this result in Theorem \ref{thm: main} without making use of the strong version of Hironaka.
We merely require the existence of (projective) resolutions. 
It is well known that such a projective resolution is a blow-up (at least when the target is quasi-projective over an affine Noetherian scheme\footnote{By some form of Chow's lemma \cite[\href{https://stacks.math.columbia.edu/tag/088U}{Lemma 088U}]{stacks-project} quasi-projectiveness can be achieved by blowing up.	}).

The motivation for weakening the assumptions on the resolutions of singularities used is so that the construction can be applied to orders over schemes (types of coherent sheaves of (non-commutative) algebras over schemes).
Therefore, this paper should be viewed as part one in a bigger picture.
Part two, which is in preparation, will apply the ideas in this paper to construct geometric categorical resolutions of orders over schemes. 	

For simplicity, let $X$ be a variety.
In \cite{KuznetsovLunts}, the authors consider a resolution of singularities of the form

\begin{equation}\label{eq: data KL}
	\begin{tikzcd}
		{Y_n} \\
		{Y_{n-1}} & {Z_{n-1}} \\
		\vdots \\
		{Y_1} & {Z_1} \\
		X & Z_0\rlap{ ,}
		\arrow[hook', from=5-2, to=5-1]
		\arrow["{f_0}"', from=4-1, to=5-1]
		\arrow[from=3-1, to=4-1]
		\arrow[from=2-1, to=3-1]
		\arrow[hook', from=2-2, to=2-1]
		\arrow[hook', from=4-2, to=4-1]
		\arrow["{f_{n-1}}"', from=1-1, to=2-1]
	\end{tikzcd}
\end{equation}
where each of the $f_i$ are blow-ups with smooth centre $Z_i$ and $Y_n$ is smooth. 
They then inductively construct categorical resolutions of $Y_i$ for $i=n,\dots,0$ (with $Y_0:=X$). 
The semi-orthogonal components of the categorical resolution are copies of $\bD(Z_i)$, with suitable multiplicity, and $\bD(Y_n)$.

In this work instead we obtain the categorical resolution from a diagram of the form
\begin{equation}\label{eq: data here}
	\begin{tikzcd}
		X_0 & {X_1} && {X_{n-1}} & {X_n}\\
		X & {S_1} & \dots & {S_{n-1}} & {S_n}\rlap{ ,}
		\arrow[hook', from=2-2, to=2-1]
		\arrow["{g_0}"', from=1-1, to=2-1]
		\arrow["{g_1}"', from=1-2, to=2-2]
		\arrow[hook', from=2-3, to=2-2]
		\arrow[hook', from=2-4, to=2-3]
		\arrow["{g_{n-1}}"', from=1-4, to=2-4]
		\arrow[hook', from=2-5, to=2-4]
		\arrow[equal, from=1-5, to=2-5]
	\end{tikzcd}
\end{equation}
where each $g_i$ is a blow-up with potentially singular centre $S_{i+1}$ and the reduced schemes associated to the $X_i$'s are smooth.
In this case, the semi-orthogonal components of the categorical resolution are copies of $\bD((X_i)_\mathrm{red})$, again with suitable multiplicities.
It is interesting to note that here the dimensions of the components strictly decrease with increasing $i$, and therefore, moreover, the integer $n$ appearing in the diagram is bounded above by the dimension of $X$.

Although the diagrams \eqref{eq: data KL} and \eqref{eq: data here} look similar, there is no analogue in our case for the inductive procedure in \cite{KuznetsovLunts} and we have to proceed differently.
The categorical resolution is constructed by gluing together a number of differential graded (dg) categories in a suitable way.
In the work of Kuznetsov and Lunts it is possible, through an inductive regluing procedure, to restrict to gluing pairs of dg categories in every step of the construction.
In our set-up this regluing procedure no longer works; we lose control over the gluing bimodule (there is no analogue of \cite[Lemma 6.9]{KuznetsovLunts}).
Therefore, we have to glue multiple dg categories simultaneously.
To this end, we consider the gluing of (punctured) hypercubes of dg categories, and construct a quasi fully faithful functor from the dg category situated at the puncture to the glued dg category.
It is this dg functor that yields the categorical resolution, the relevant condition for quasi fully faithfulness is that of the hypercube being `acyclic'.
The nonrational loci defined by Kuznetsov and Lunts yield acyclic squares.

In addition, for our approach, we put some of the results of Kuznetsov and Lunts into a slightly different framework.
Namely, throughout we work in the category of finite length filtered schemes (together with the corresponding pullback and pushforward functors between an adequate notion of quasi-coherent sheaves over them).
Finite length filtered schemes are equivalent to the $\Acal$-spaces considered in \cite{KuznetsovLunts}, but the latter seem less convenient to deal with functoriality issues, especially when we deal with morphisms that change the length of the filtration (the so-called `refinements', see \S\ref{subsubsec: gen mor} or the next subsection).
One should also note that the utility of filtrations could already be seen in the work of Kaledin and Kuznetsov \cite{KaledinKuznetsov}, but the approach here is different.

\addtocontents{toc}{\SkipTocEntry}	
\subsection*{In a nutshell}
Let $X$ be a variety and suppose $f:Y\to X$ is a resolution of singularities. 
When $X$ does not have rational singularities, i.e.\ 
\[
\bR f_* \Ocal_Y\not\cong\Ocal_X,
\]
the derived pullback $\bL f^*:\bD(X)\to\bD(Y)$ is not fully faithful, and so, does not give rise to a categorical resolution, a `nice' embedding of $\bD(X)$ into a smooth triangulated category (see Definition \ref{def: cat res} for a precise definition).
To obtain a fully faithful functor one uses the ingenious insight from Kuznetsov and Lunts to adequately \emph{modify} $\bD(Y)$.
This is done by \emph{gluing} $\bD(S)$ onto it, where $S\subset X$ is a so-called \emph{nonrational locus of $X$ with respect to $f$}, it satisfies 
\[
\bR f_* \Ical_{f^{-1}(S)}\cong \Ical_S.
\]
Here, $\Ical_?$ denotes the ideal of the closed subscheme $?\subseteq X$.
The resulting square
\[
\begin{tikzcd}[sep=1.5em]
	Y &  f^{-1}(S) \\
	X &  S
	\arrow["f"', from=1-1, to=2-1]
	\arrow[from=1-2, to=1-1]
	\arrow[from=1-2, to=2-2]
	\arrow[from=2-2, to=2-1]
\end{tikzcd}
\]
is \emph{acyclic}, in some sense, and this acyclicity yields a fully faithful functor as required for a categorical resolution.

However, the resulting gluing will only be smooth if both $Y$ and
the associated reduced scheme of $S$ are smooth.
When $f$ is a blow-up with centre $S_0$, $S$ is some nilpotent thickening of $S_0$.
In particular it is, generally, not reduced and so there is certainly no reason for it to be smooth.
Thus we have to get rid of the non-reducedness and potentially further resolve $S$.
Removing the non-reducedness can, in some sense, be done by viewing $S$ as a filtered scheme by equipping it with its radical filtration, see also the next paragraph.
Further resolving $S$ (endowed with its radical filtration) and inductively continuing the procedure, using the fact that one can construct acyclic squares as above in a (somewhat) functorial fashion, we inductively construct bigger and bigger acyclic hypercubes.
As $S$ is strictly smaller than $X$, this procedure stops at some point.
For example, with notation from diagram \ref{eq: data here}, in the second step one obtains a cube of the form (we leave out the filtrations)
\[
\begin{tikzcd}[sep= 1em]
	& {X_0} && \bullet \\
	\bullet && \bullet \\
	& {X} && {X_1}\rlap{ .} \\
	{S_2} && {\bullet}
	\arrow[from=4-3, to=3-4]
	\arrow[from=4-3, to=4-1]
	\arrow[from=2-1, to=4-1]
	\arrow[from=1-4, to=3-4]
	\arrow[from=2-3, to=1-4]
	\arrow[from=1-2, to=3-2]
	\arrow[from=1-4, to=1-2]
	\arrow[from=3-4, to=3-2]
	\arrow[from=2-1, to=1-2]
	\arrow[from=4-1, to=3-2]
	\arrow[from=2-3, to=4-3, crossing over]
	\arrow[from=2-3, to=2-1, crossing over]
\end{tikzcd}
\]
A sketch of the construction of the hypercube can be found in \S\ref{subsubsec: sketch}.
With the resulting hypercube of (filtered) schemes we associate a hypercube of dg categories.
The punctured hypercube of dg categories  is then glued in order to obtain a categorical resolution of $X$.
The smoothness of the resulting glued dg category is ensured by the fact that all the vertices adjacent to $X$ are smooth by construction (and that all the morphisms in the hypercube are proper).
Placing adequate (finite) filtrations on the schemes in diagram \eqref{eq: data here} we obtain a categorical resolution of the form
\[
\bD(X)\hookrightarrow\langle \bD(X_0,F_0^*),\bD(X_1,F_1^*),\dots, \bD(X_n,F_n^*)  \rangle
\]
with $n\leq \dim X$ and where every component admits a further semi-orthogonal decomposition
\begin{equation}\label{eq: sod filtered scheme}
	\bD(X_i,F_i^*)=
	\langle \underbrace{\bD((X_i)_\mathrm{red}),\dots,\bD((X_i)_\mathrm{red}}_{\text{length }F_i\text{ components}} \rangle.
\end{equation}

The semi-orthogonal decomposition \eqref{eq: sod filtered scheme} is the main reason for considering filtrations. 
Suppose $S$ is some non-reduced scheme whose associated reduced scheme is a smooth variety.
Then, by endowing $S$ with its radical filtration $F_{\text{rad}}^*$ we obtain a (finite length) filtered scheme $(S,F_{\text{rad}}^*)$ that behaves smoothly; of course, $S$ is not smooth.
To formulate the proof, i.e.\ to adequately place filtrations on the schemes in diagram \eqref{eq: data here}, we work in a category with as objects filtered schemes.
The morphisms in this category include those one expects, morphisms of schemes that are compatible with the filtrations, but another type of morphism is also included.
These are the so-called refinements, an example of which is the procedure $S\mapsto(S,F_{\text{rad}}^*)$ of endowing a scheme with a (finite) filtration.
Combining filtered morphisms with refinements we obtain what we call \emph{generalised morphisms}, for lack of inspiration, and generally denote these by a squiggly arrow $\rightsquigarrow$.
Foundational material concerning the category of filtered schemes, including the (derived) category of quasi-coherent modules and the (derived) pushforward and pullback functors for generalised morphism, is developed in \S\ref{sec: filt sch}.
The reader that is willing to take these results for granted, accepting that everything for filtered schemes works as one expects, can skip this section.		
Moreover, we recommend the reader simply read \S\ref{sec: cat res}, looking back at the previous sections when necessary.

\addtocontents{toc}{\SkipTocEntry}	
\subsection*{Overview of the paper}
We start in \S\S\ref{sec: prelim tricat} and \ref{sec: prelim dg cat} by recalling some facts concerning triangulated and dg categories; this can safely be skipped by readers already familiar with those concepts. 
These sections mainly serve to introduce terminology, notation, and conventions

After this, in \S\ref{sec: directed dg cat}, we study `directed dg categories', a natural extension of \cite[\S3.2]{LuntsSchnurer}.
These are the dg categories that appear as enhancements of triangulated categories admitting semi-orthogonal decompositions.
We give a sufficient condition for their smoothness in Proposition \ref{prop: smoothness directed dg cat}.
In Appendix \ref{Asec: more on smoothness}, we briefly discuss necessary conditions.

Then, in \S\ref{sec: gluing dg cat}, we discuss how to glue (punctured) hypercubes of dg categories.
The way in which the categorical resolution is constructed is by gluing a punctured hypercube of dg categories.
We give sufficient conditions for the glued category to be smooth (Corollary \ref{cor: smoothness gluing}), and discuss necessary and sufficient conditions for a natural dg functor from the dg category situated at the puncture to the glued punctured hypercube to be quasi fully faithful (Proposition \ref{prop: acyclic hypercube iff qff}).

Filtered schemes are introduced in \S\ref{sec: filt sch}. 
Most of the chapter consists of making everything that works for schemes also work in the filtered setting.
In \S\ref{subsec: cat fSch}, we define (finite length) filtered schemes and associate appropriate modules over them.
Particularly, we have the usual pullback/pushforward-like adjunction between the module categories for any generalised morphism.
A special instance of a generalised morphism is `taking a refinement'.
Then, in \S\ref{subsec: rel schemes and A-spaces}, we briefly discuss the relation between filtered schemes, schemes and the $\Acal$-spaces of \cite{KuznetsovLunts}.
Next, in \S\ref{subsec: dercat}, we discuss the derived category of a finite length filtered scheme, showing that it has enough admissible complexes to obtain a derived pullback/pushforward-like adjunction.
We define and discuss perfect complexes and the existence of semi-orthogonal decompositions of finite length filtered schemes in, respectively, \S\S\ref{subsec: perf comp} and \ref{subsec: SODs}.
Functorial enhancements of certain types of filtered schemes are constructed in \S\ref{subsec: the enhancement}.
Lastly, in \S\S\ref{subsec: filt blow} and \ref{subsec: filt nrl}, we extend blow-ups and nonrational loci to the filtered setting, and show in \S\ref{subsec: two acyclic squares} how the latter and refinements induce acyclic squares on enhancements.

Finally, in \S\ref{sec: cat res}, we construct a categorical resolution of any finite length filtered scheme whose underlying scheme is separated and of finite type over a field of characteristic zero.

\addtocontents{toc}{\SkipTocEntry}	
\subsection*{Acknowledgements}
This work is part of the author's Ph.D.\ thesis at the Vrije Universiteit Brussel under the supervision of Michel Van den Bergh.
The author would like to thank him for generously sharing his knowledge and his help throughout.
Moreover, the author thanks Greg Stevenson for some useful comments and pointing out some typos.

\addtocontents{toc}{\SkipTocEntry}	
\subsection*{Some conventions}
Throughout $\kk$ denotes an arbitrary field of characteristic zero.
As everything will be considered over this field, we will neglect to write the adjective `$\kk$-' in places.

By `module' we mean `right module', unless otherwise specified. 
All dg categories considered are small unless explicitly stated otherwise.
For dg bimodules, by convention, the contravariant variable comes first just as for hom-complexes, i.e.\ for dg $(\Ascr,\Bscr)$-bimodules we consider dg functors $\Bscr^{\op}\otimes_\kk\Ascr\to\Cscr(\kk)$. 
Perfect dg modules are the compact objects of the derived category (they are not assumed to be in addition h-projective).

The closed subscheme corresponding to a quasi-coherent ideal $\Ical$ of a scheme $X$ is denoted by $\VV_X(\Ical)$.
For finite length filtered schemes we often adopt a naming convention for the filtration that reflects its length.
For example, $(X,F^*)$ and $(Y,F^*)$ are implicitly understood to have the same length, whilst $(Z,G^*)$ could have an a priori different length.	

\section{Preliminaries on triangulated categories}\label{sec: prelim tricat}
	We advise the reader knowledgable about triangulated categories to skip this section.
	It collects some definitions and results concerning triangulated categories that we will use throughout this paper. 
	As most definitions are standard, this mainly serves to introduce notation.
\subsection{Compact objects}
	Let $\Tsf$ be a triangulated category with arbitrary direct sums (some authors call this cocomplete, but we refrain from using this terminology).
	An object $A$ of $\Tsf$ is called \emph{compact} if $\Hom_\Tsf(A,-):\Tsf\to\Ab$, the covariant hom-functor, commutes with arbitrary direct sums.		
	The full subcategory of $\Tsf$ consisting of compact objects is a thick (i.e.\ closed under direct sums) triangulated subcategory and denoted $\Tsf^c$.
	
	A triangulated category $\Tsf$ with direct sums is \emph{compactly generated} if there exists a set $S\subset\Tsf^c$ of compact objects that \emph{generate} $\Tsf$, i.e.\ such that its \emph{right orthogonal}
	\[
	S^\perp:=\{ B\in\Tsf \mid \Hom_\Tsf(A,B[i])=0,\text{ for }A\in S, i\in\ZZ \} = 0.
	\]
	In this case $S$ determines both $\Tsf$ and $\Tsf^c$, namely
	\begin{align*}
		\Tsf^c&=\Thick(S), \\
		\Tsf&=\Loc(S),
	\end{align*}
	see e.g.\ \cite[Theorem 2.1]{NeemanGro}.
	Here, $\Thick(S)$, respectively, $\Loc(S)$, denotes the smallest strictly full triangulated subcategory containing $S$ that is thick, respectively, localising (i.e.\ closed under direct sums).
	
	Let $F:\Tsf_1\to\Tsf_2$ be a triangulated functor between triangulated categories with direct sums.
	We say $F$ \emph{commutes with direct sums} if, for any set of objects $A_i$ in $\Tsf_1$, the natural morphism
	\[
	\oplus_i F(A_i)\to F(\oplus_i A_i)
	\] 
	is an isomorphism. 
	Moreover, we say $F$ \emph{preserves compactness} when $F(\Tsf_1^c)\subseteq \Tsf_2^c$.
	
	To finish this subsection we recall two lemmas, for which we refer to \cite{KuznetsovLunts} for proofs.
	Some original (or at least older) references for some of these statements are \cite[Theorem 5.1]{NeemanGro} and \cite[Lemma 1]{Beilinson}.
	
	\begin{lemma}[{\cite[Lemma 2.10]{KuznetsovLunts}}]\label{lem: nice functor preserving or reflecting compactness}
		Let $F:\Tsf_1\to\Tsf_2$ be a triangulated functor between triangulated categories with direct sums.
		\begin{enumerate}
			\item Assume $F$ is fully faithful and commutes with direct sums. 
			If $F(A)$ is compact, then $A$ is compact.
			\item\label{item: nicefunctor2} Assume $F$ has a right adjoint $G$ and $\Tsf_1$ is compactly generated.
			Then, $F$ preserves compactness if and only if $G$ commutes with direct sums.
		\end{enumerate}
	\end{lemma}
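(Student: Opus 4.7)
The plan is to chase the defining property of compactness, namely that $\Hom(A,-)$ commutes with arbitrary direct sums, through the given functors by repeatedly using adjunction, full faithfulness, and commutation with direct sums.

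For (i), I would take a set $\{B_i\}$ of objects of $\Tsf_1$ and compute
\[
\Hom_{\Tsf_1}(A,\textstyle\bigoplus_i B_i)\cong \Hom_{\Tsf_2}(F(A),F(\bigoplus_i B_i))\cong \Hom_{\Tsf_2}(F(A),\bigoplus_i F(B_i))
\]
by full faithfulness and the assumption that $F$ commutes with direct sums; then compactness of $F(A)$ turns the right-hand side into $\bigoplus_i\Hom_{\Tsf_2}(F(A),F(B_i))\cong \bigoplus_i\Hom_{\Tsf_1}(A,B_i)$, again by full faithfulness, and the composite is visibly the canonical map, giving compactness of $A$.

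For the easy (``if'') direction of (ii), the same computation works in reverse: given $A\in\Tsf_1^c$ and a set $\{B_i\}$ in $\Tsf_2$, adjunction and the hypothesis that $G$ commutes with direct sums yield
\[
\Hom_{\Tsf_2}(F(A),\textstyle\bigoplus_i B_i)\cong \Hom_{\Tsf_1}(A,G(\bigoplus_i B_i))\cong \Hom_{\Tsf_1}(A,\bigoplus_i G(B_i)),
\]
and compactness of $A$ followed by adjunction again produces $\bigoplus_i \Hom_{\Tsf_2}(F(A),B_i)$, so $F(A)$ is compact.

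The interesting direction is the ``only if'' part of (ii), and this is where compact generation of $\Tsf_1$ really enters. Assume $F$ preserves compactness and let $\{B_i\}\subset\Tsf_2$. The comparison map $\bigoplus_i G(B_i)\to G(\bigoplus_i B_i)$ needs to be shown an isomorphism; by the recognition principle for compactly generated categories (taking $\Hom_{\Tsf_1}(A[n],-)$ for $A$ in a generating set $S\subset\Tsf_1^c$ and $n\in\ZZ$ detects isomorphisms, since the cone lies in $S^\perp=0$), it suffices to check that these homs agree on both sides. For the target, adjunction together with compactness of $F(A)\in\Tsf_2^c$ give
\[
\Hom_{\Tsf_1}(A,G(\textstyle\bigoplus_i B_i))\cong \Hom_{\Tsf_2}(F(A),\bigoplus_i B_i)\cong \bigoplus_i\Hom_{\Tsf_2}(F(A),B_i);
\]
for the source, compactness of $A$ together with adjunction give the same result, and one checks the identifications are induced by the canonical map. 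The only real subtlety, and therefore the main obstacle, is this last detection step, which is where the hypothesis that $\Tsf_1$ is compactly generated is indispensable; without it, one cannot upgrade the pointwise identification of homs into an actual isomorphism, and the conclusion fails.
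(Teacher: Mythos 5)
Your proposal is correct and follows the standard argument (the same one used in \cite[Lemma 2.10]{KuznetsovLunts}, to which the paper defers): chase the defining universal property of compactness through the adjunction, and in the ``only if'' direction of (ii) use the generating set of compacts to detect that the comparison map $\bigoplus_i G(B_i)\to G(\bigoplus_i B_i)$ is an isomorphism. You also correctly identify exactly where compact generation of $\Tsf_1$ is used and flag the (easy but necessary) verification that the pointwise identifications are induced by the canonical morphism, which is the only subtlety.
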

	\begin{remark}
		The requirement that $\Tsf_1$ is compactly generated in $\ref{item: nicefunctor2}$ is only needed for the only if direction.
	\end{remark}
	
	\begin{lemma}[{\cite[Lemma 2.12]{KuznetsovLunts}}]\label{lem: fully faithful if so on compact generators etcetc}
		Let $F:\Tsf_1\to\Tsf_2$ be a triangulated functor between triangulated categories with direct sums that commutes with arbitrary direct sums.
		Let $S\subset\Tsf_1^c$ be a set of compact objects that generates $\Tsf_1$. 
		If $F$ preserves compactness and, for all $A,A'\in S$ and $i\in\ZZ$,
		\[
		F:\Hom_{\Tsf_1}(A,A'[i])\to\Hom_{\Tsf_2}(FA,FA'[i])
		\]
		is bijective, then $F$ is fully faithful.
		If, moreover, $F(S)$ generates $\Tsf_2$, then $F$ is an equivalence.
	\end{lemma}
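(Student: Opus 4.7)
The plan is to establish full faithfulness via a standard two-step dévissage, and then deduce the equivalence statement from a short argument on the essential image. In each step one isolates a full subcategory of ``good'' objects and shows it is localising and contains $S$, hence equals $\Loc(S) = \Tsf_1$.

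First I would fix $A \in S$ and consider the full subcategory $\mathcal{C}_A \subseteq \Tsf_1$ of objects $B$ for which $F \colon \Hom_{\Tsf_1}(A, B[i]) \to \Hom_{\Tsf_2}(FA, FB[i])$ is bijective for every $i \in \ZZ$. A five-lemma argument shows $\mathcal{C}_A$ is triangulated; closure under arbitrary direct sums in this step is where the compactness hypotheses actually do work, namely one needs both $\Hom_{\Tsf_1}(A, -)$ and $\Hom_{\Tsf_2}(FA, -)$ to commute with direct sums. This follows from $A$ being compact, $F$ preserving compactness (so that $FA$ is compact), and $F$ commuting with direct sums (so that $F(\bigoplus_j B_j) \cong \bigoplus_j FB_j$). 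Since $\mathcal{C}_A \supseteq S$ by hypothesis, we obtain $\mathcal{C}_A = \Tsf_1$.

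Next, I would fix an arbitrary $B \in \Tsf_1$ and let $\mathcal{D}_B \subseteq \Tsf_1$ be the full subcategory of those $A'$ for which the analogous map is bijective. The first step gives $\mathcal{D}_B \supseteq S$; triangulatedness follows again from the five lemma, and closure under arbitrary direct sums is now automatic: in the contravariant variable $\Hom(\bigoplus_j A'_j, -)$ turns direct sums into products of abelian groups on both sides, and products of bijections are bijections, so no compactness is required here. Hence $\mathcal{D}_B = \Tsf_1$, which yields full faithfulness.

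For the ``moreover'' clause, the essential image of $F$ is a triangulated subcategory of $\Tsf_2$, closed under arbitrary direct sums because $F$ commutes with them; if it contains $F(S)$ then it contains $\Loc(F(S)) = \Tsf_2$, and combined with full faithfulness, $F$ is an equivalence. The only conceptual subtlety is the asymmetry between the two dévissage steps: compactness is indispensable in the covariant variable (for closure of $\mathcal{C}_A$ under direct sums of $B$'s) but redundant in the contravariant one, and I expect organising this split, rather than any individual verification, to be the main care-point in writing out the proof.
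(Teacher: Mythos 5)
Your proof is correct, and since the paper delegates the proof of this lemma to \cite[Lemma 2.12]{KuznetsovLunts} rather than supplying one, what you have written is precisely the standard dévissage argument one would expect to find there: show the subcategory of objects on which $F$ induces bijections is localising and contains $S$, first in the second variable (using compactness of $A$, of $FA$, and $F$ commuting with direct sums) and then in the first (where only $F$ commuting with direct sums is needed, since contravariant $\Hom$ turns coproducts into products automatically), and finish by observing the essential image of the resulting fully faithful functor is a localising subcategory containing the compact generating set $F(S)$. Your diagnosis of exactly which hypothesis carries which step is accurate — in particular you are right that compactness is genuinely used only in the covariant-variable step and in identifying $\Loc(F(S))$ with $\Tsf_2$.
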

	\begin{remark}
		It is enough to assume $F(S)\subseteq \Tsf_2^c$ since this implies $F$ preserves compactness (as $\Tsf_1^c=\Thick(S)$).
	\end{remark}

\subsection{Semi-orthogonal decompositions}
	Let $\Tsf$ be a triangulated category and let $\Tsf_1,\dots,\Tsf_n$ be triangulated subcategories of $\Tsf$.
	The sequence $(\Tsf_1,\dots,\Tsf_n)$ is called a \emph{semi-orthogonal collection} of triangulated subcategories if
	\[
	\Hom_{\Tsf}(\Tsf_i,\Tsf_j)=0\quad\text{for }1\leq j < i \leq n.
	\]
	In words, there are no morphisms `going backwards'.
	If, in addition, the smallest triangulated subcategory containing $\Tsf_1,\dots,\Tsf_n$ is the whole of $\Tsf$, the sequence is called a \emph{semi-orthogonal decomposition}.
	The subcategories $\Tsf_i$ are called the \emph{components} of the decomposition.
	We use the notation
	\[
	\Tsf=\langle \Tsf_1,\dots,\Tsf_n\rangle
	\] 
	to express that $\Tsf$ has a semi-orthogonal decomposition with components $\Tsf_1,\dots,\Tsf_n$.
	
	We mention some properties in the special case $n=2$.
	So, assume $\Tsf=\langle \Tsf_1,\ \Tsf_2\rangle$ has a semi-orthogonal decomposition with two components.
	In this case, $\Tsf_1$ is \emph{left admissible} and $\Tsf_2$ is \emph{right admissible}, by definition, this means that the inclusion $j:\Tsf_1\hookrightarrow\Tsf$ admits a left adjoint $j^*$ and the inclusion $i:\Tsf_2\hookrightarrow\Tsf$ admits a right adjoint $i^!$.
	Furthermore, for any object $A$ in $\Tsf$ there exists a distinguished triangle
	\[
	i i^! A\to A\to j j^* A\to
	\]
	and we have
	\begin{align*}
		\Tsf_1&=\Tsf_2^\perp:=\{A\in\Tsf\mid \Hom_{\Tsf}(\Tsf_2,A)=0\}, \\
		\Tsf_2&={}^\perp\Tsf_1:=\{A\in\Tsf\mid \Hom_{\Tsf}(A,\Tsf_1)=0\}.
	\end{align*}
	Conversely, any right admissible triangulated subcategory $\Ssf\subseteq\Tsf$ induces a semi-orthogonal decomposition
	\[
	\Tsf=\langle \Ssf^\perp,\Ssf\rangle.
	\]
	Of course, a `mirrored' statement holds for left admissible triangulated subcategories.
	
	Let us finish this section with a lemma for future reference. 
	
	\begin{lemma}\label{lem: compact sod}
		Let $\Tsf$ be a triangulated category with direct sums admitting a semi-orthogonal decomposition $\Tsf=\langle \Tsf_1, \Tsf_2\rangle$.
		Assume that $\Tsf_1=\mathop\mathsf{Loc}(S_1)$ with $S_1\subseteq \Tsf^c$.
		Then, for any distinguished triangle
		\[
		A_2\to A\to A_1\to 
		\]
		with $A_i\in\Tsf_i$ we have that $A$ is compact if and only if $A_1$ and $A_2$ are compact (in $\Tsf$).
	\end{lemma}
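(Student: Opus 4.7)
The ``if'' direction is immediate since $\Tsf^c$ is a thick subcategory of $\Tsf$: the given distinguished triangle exhibits $A$ as an extension of $A_1$ by $A_2$, so $A\in\Tsf^c$.

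For the converse, I would exploit the semi-orthogonal decomposition. Let $j\colon\Tsf_1\hookrightarrow\Tsf$ denote the inclusion and $j^{*}$ its left adjoint (which exists because $\Tsf_1$ is the left component of the decomposition); the functorial triangle attached to $\Tsf=\langle\Tsf_1,\Tsf_2\rangle$ identifies $A_1$ with $jj^{*}A$. The plan is to first show that $j^{*}A$ is compact in $\Tsf_1$ and then upgrade this to compactness in $\Tsf$. The first step is a direct adjunction computation: since $\Tsf_1=\Loc(S_1)$ is closed under arbitrary direct sums in $\Tsf$, the inclusion $j$ commutes with direct sums, and therefore for $A\in\Tsf^c$ and any family $\{B_i\}\subseteq\Tsf_1$,
\[
\Hom_{\Tsf_1}(j^{*}A,\oplus_{i}B_i)=\Hom_{\Tsf}(A,\oplus_{i}jB_i)=\oplus_{i}\Hom_{\Tsf}(A,jB_i)=\oplus_{i}\Hom_{\Tsf_1}(j^{*}A,B_i),
\]
so $j^{*}A\in\Tsf_1^c$.

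The second step is transferring this compactness from $\Tsf_1$ back to $\Tsf$. Since direct sums in $\Tsf_1$ agree with those in $\Tsf$, the objects of $S_1$ remain compact when viewed in $\Tsf_1$, so $\Tsf_1$ is compactly generated by $S_1$, and the Neeman result recalled in the preliminaries gives $\Tsf_1^c=\Thick(S_1)$. Combining this with $S_1\subseteq\Tsf^c$ and the thickness of $\Tsf^c$ yields the key inclusion $\Tsf_1^c=\Thick(S_1)\subseteq\Tsf^c$. Hence $A_1=jj^{*}A\in\Tsf^c$, and a further application of thickness of $\Tsf^c$ to the triangle $A_2\to A\to A_1\to$ forces $A_2\in\Tsf^c$.

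The only real obstacle is the standard subtlety that compactness in the subcategory $\Tsf_1$ need not coincide with compactness in the ambient category $\Tsf$; this is precisely what the hypothesis $S_1\subseteq\Tsf^c$ (rather than merely $S_1\subseteq\Tsf_1^c$) is designed to bridge, and it feeds in via the inclusion $\Thick(S_1)\subseteq\Tsf^c$ used above.
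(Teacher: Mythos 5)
Your proof is correct and follows essentially the same route as the paper's: identify $A_1$ with $jj^{*}A$, use closure of $\Tsf_1$ under direct sums to see that $j$ commutes with direct sums, deduce that $j^{*}$ preserves compactness, and then transfer compactness in $\Tsf_1$ to compactness in $\Tsf$ via $\Tsf_1^{c}=\Thick(S_1)\subseteq\Tsf^{c}$. The only cosmetic difference is that you spell out the adjunction computation showing $j^{*}$ preserves compactness, whereas the paper appeals to Lemma~\ref{lem: nice functor preserving or reflecting compactness}\ref{item: nicefunctor2} for that step.
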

	\begin{proof}
		One direction is obvious, for the other assume $A$ is compact. 
		Let $j:\Tsf_1\to \Tsf$ denote the inclusion and let $j^*$ be its left adjoint.
		We have $A_1=j j^* A$ (this follows for example form \cite[Section IV.1 Corollary 5]{GelfandManin}). 
		By assumption, $\Tsf_1$ is closed under direct sums, so as a result $j$ commutes with direct sums.
		Therefore, its left adjoint $j^*$ preserves compactness by Lemma \ref{lem: nice functor preserving or reflecting compactness}.
		Moreover, as $\Tsf_1$ is compactly generated by $S_1$ by assumption, $\Tsf_1^c=\Thick(S_1)$, so $j(\Tsf_1^c)\subseteq \Tsf^c$ as $S_1\subseteq \Tsf^c$.
		It follows that $A_1$ is compact in $\Tsf$, and thus also $A_2$.
	\end{proof}
	\begin{remark}
		It follows that
		\[
		\Tsf^c=\langle \Tsf_1\cap \Tsf^c, \Tsf_2\cap\Tsf^c\rangle
		\]
		and, moreover, $\Tsf_1^c=\Tsf_1\cap \Tsf^c$.
		However, in general, $\Tsf_2\cap \Tsf^c \varsubsetneq\Tsf_2^c$ (e.g.\ look at $\bD(X) = \langle \bD(U), \bD_Z(X) \rangle$ with $U\subset X$ a quasi-compact open of a quasi-compact quasi-separated scheme $X$ and $Z:=X-U$).
		In addition the statement is not true when one replaces the assumption on $\Tsf_1$ by a similar one on $\Tsf_2$ (e.g. look at $\bD(\left(\begin{smallmatrix}	\Ascr & 0 \\ \phi & \Bscr \end{smallmatrix}\right)) = \langle b^!\bD(\Bscr), a^*\bD(\Ascr) \rangle$, where we used the notation of Lemma \ref{lem: functors A,B,C}).
	\end{remark}

\section{Preliminaries on differential graded categories}\label{sec: prelim dg cat}
	We advise the reader knowledgeable about differential graded (dg) categories to skip this section.
	It gathers some recollections concerning dg categories. 
	As most definitions are standard, this mainly serves to introduce the conventions and notation we use.
	This is far from a comprehensive introduction to the subject.
	A good introduction to dg categories is \cite{Keller}, see also \cite[\S3]{KuznetsovLunts} and \cite[\S3]{Orlov} on which we based much of this section.
	We fix a ground field\footnote{
		Of course, most of what follows works more generally over a general commutative base ring. But, as our main application will be in the setting where $\kk$ is a field we restrict to this here already for ease. (Otherwise we would have to take derived tensor products at appropriate places.)
	}
	$\kk$.
	As everything will be considered over this field, we will neglect to write the adjective `$\kk$-' in places.

	\subsection{The category of differential graded categories}
		A \emph{dg category} $\Ascr$ over a field $\kk$ is a category enriched over $\Csf(\kk)$, the closed symmetric monoidal category\footnote{\label{foot: monoidal structure C(k)}
			The monoidal structure on $\Csf(\kk)$ is given by the usual tensor product of complexes with symmetry given by
			\[
			a\otimes b\mapsto (-1)^{|a| |b|} b\otimes a,
			\]
			where $|-|$ denotes the degree of an element.
			This is the so-called `Koszul sign rule', i.e.\ add signs when exchanging elements, and is why there are so many minus signs abound when working with dg categories.
			Note moreover that by the definition of the monoidal structure we have
			\begin{gather*}
				(f\otimes g)(a\otimes b) := (-1)^{|a| |g|} f(a)\otimes g(b),\\
				(f\otimes g)\circ(f'\otimes g') :=  (-1)^{|f'| |g|} (f\circ f')\otimes (g\circ g'),
			\end{gather*}
			where $f, f', g$ and $g'$ are morphisms of complexes and $a$ and $b$ are elements in the respective domains of the morphisms.
		}  of cochain complexes over $\kk$.
		That is, it is a usual category such that for all objects $A, B\in \Ascr$ the morphism set $\Hom_\Ascr(A,B)$, which we will often simply denote by $\Ascr(A,B)$, has the structure of a $\kk$-complex and the composition is a morphism of $\kk$-complexes.
		Thus,
		\[
		\Ascr(A,B) = \oplus_{i\in\ZZ} \Ascr(A,B)^i
		\]
		is a graded $\kk$-module endowed with degree one morphisms $d^i:\Ascr(A,B)^i\to \Ascr(A,B)^{i+1}$ squaring to zero.
		When $f\in \Ascr(A_1,A_2)^i$, we say $f$ is \emph{homogeneous of degree} $i$ and we denote this by $|f|=i$.
		We say $f$ is \emph{closed} when $df=0$.
		
		For any dg category $\Ascr$ its \emph{homotopy category} $[\Ascr]$ is an ordinary category that has the same objects as $\Ascr$ and has $H^0\Hom_\Ascr(-,-)$ as morphisms.
		Similarly, its \emph{underlying category} $Z^0\Ascr$ is the category obtained by taking $Z^0\Hom_\Ascr(-,-)$ instead of $H^0\Hom_\Ascr(-,-)$.
		We say a degree zero morphism $f$ is a \emph{homotopy equivalence} if it is invertible in $[\Ascr]$, and is a \emph{dg isomorphism} if it is invertible in $Z^0\Ascr$. 
		Two objects of $\Ascr$ are \emph{homotopy equivalent} if there exists a homotopy equivalence between them, similarly they are \emph{dg isomorphic} if instead there exists a dg isomorphism.
		
		For any two dg categories $\Ascr$ and $\Bscr$ a \emph{dg functor} $F:\Ascr\to\Bscr$ is a $\Csf(\kk)$-enriched functor, i.e.\ for all $A,B\in\Ascr$ the map $\Ascr(A,B)\to\Bscr(FA,FB)$ is a morphism of $\kk$-complexes.
		A `good' notion of equivalences between dg categories is given by \emph{quasi-equivalences}.
		These are dg functors that induce quasi-isomorphisms on the morphism complexes, we say it is \emph{quasi fully faithful}, and in addition induce essentially surjective functors on the homotopy categories.
		
		The \emph{category of small dg categories} is denoted $\dgcat$ and has the small dg categories, i.e.\ whose objects form a set, as objects and the dg functors as morphisms.
		In this text all dg categories considered are small unless explicitly stated otherwise.
		The category of small dg categories has the structure of closed monoidal category.
		Namely, the \emph{tensor product} of two dg categories $\Ascr$ and $\Bscr$ is the dg category  $\Ascr\otimes_\kk\Bscr$ with objects $\Obj\Ascr\times\Obj\Bscr$ and morphisms
		\[
		(\Ascr\otimes_\kk\Bscr)( (A,B), (A',B') ):= \Ascr(A, A')\otimes_\kk\Bscr(B,B').
		\]
		There are minus signs in the composition due to the Koszul sign rule.
		Moreover, for any two dg categories $\Ascr$ and $\Bscr$ there is a dg category $\Fun_{dg}(\Ascr,\Bscr)$ whose objects are the dg functors from $\Ascr$ to $\Bscr$.
		Given dg functors $F, G:\Ascr\to\Bscr$ the complex $\Hom(F,G)$ has as $i$th component families of morphisms $\eta_A\in\Bscr(FA,GA)^i$ making the usual diagram for a natural transformation commute up to a sign given by the Koszul sign rule.	
		
		Lastly, for any dg category $\Ascr$ there is an \emph{opposite dg category} $\Ascr^{\op}$. 
		It has as objects $\Obj\Ascr$ and as morphisms $\Ascr^{\op}(A,B):=\Ascr(B,A)$.
		Again, there are some minus signs in the composition given by the Koszul sign rule.

	\subsection{Differential graded modules}
		To any dg category $\Ascr$ one can associate the (big) \emph{dg category of (right) dg $\Ascr$-modules} $\dgMod\Ascr$.
		It is defined as the dg category $\Fun_{dg}(\Ascr^{\op}, \Cscr(\kk))$, where $\Cscr(\kk)$ is the (big) dg category of $\kk$-complexes (as $\Csf(\kk)$ is closed monoidal it is naturally enriched over itself).
		With this definition $\kk$-complexes are simply dg $\kk$-modules, where we view $\kk$ as a dg algebra (a dg category with one object) concentrated in degree zero.
		There is a natural fully faithful dg functor
		\[
		h^\bullet:\Ascr\to\dgMod\Ascr, \quad A\mapsto \Ascr(-,A),
		\]
		defined on morphisms by composition, called the \emph{Yoneda embedding}.
		The image of an object $A\in\Ascr$ is denoted $h^A$ and is called a \emph{representable} dg $\Ascr$-module (represented by $A$). 
		The category of left dg $\Ascr$-modules is $\dgMod\Ascr^{\op}$.
		Moreover, for any two dg categories $\Ascr$ and $\Bscr$ we can define dg $(\Ascr,\Bscr)$-bimodules by considering $\dgMod(\Bscr\otimes_\kk \Ascr^{\op})$.
		By convention the contravariant variable comes first just as for hom-complexes, i.e.\ we consider dg functors $\Bscr^{\op}\otimes_\kk\Ascr\to\Cscr(\kk)$. 
		This is for convenience as most bimodules we consider will end up being hom-complexes in dg categories.
		For any dg category $\Ascr$ there is a special dg $(\Ascr,\Ascr)$-bimodule worth mentioning, namely the \emph{diagonal dg bimodule} which we simply denote by $\Ascr$.
		It is given by mapping $(A,B)\mapsto\Ascr(A,B)$.
		(Note that by our convention on bimodules there is no confusion possible when we write $\Ascr(A,B)$, i.e.\ whether we mean hom-complex or diagonal bimodule.)
		Furthermore, for any dg $(\Ascr,\Bscr)$-bimodule $\phi$ and dg functors $F:\Ascr'\to\Ascr$ and $G:\Bscr'\to\Bscr$ we define the \emph{restricted} dg $(\Ascr',\Bscr')$-bimodule ${}_F\phi_G$, also denoted ${}_{\Ascr'}\phi_{\Bscr'}$, as
		\[
		{}_F\phi_G(B',A'):=\phi(G(B'),F(A'))\quad\text{for }A\in\Ascr',B'\in\Bscr.
		\]
		
		As for ordinary modules over rings, one can tensor and hom dg modules to obtain new dg modules and there is the tensor-hom adjunction. 
		We refer the reader to the references given above for details.
		
		Let $F:\Ascr\to\Bscr$ be a dg functor between dg categories.
		Precomposition with $F$ induces the \emph{restriction dg functor} $F_*:\dgMod\Bscr\to\dgMod\Ascr$, sometimes also denoted $\Res_F$, it maps $N\mapsto N\circ F$.
		It has a left adjoint $F^*:\dgMod\Ascr\to\dgMod\Bscr$, the \emph{induction dg functor}, sometimes also denoted $\Ind_F$, given by mapping $M\mapsto M\otimes_{\Ascr} {}_F\Bscr$, where ${}_F\Bscr$ is the dg $(\Ascr,\Bscr)$-bimodule obtained from the diagonal bimodule by restricting along the left $\Bscr$-action.
		It is useful to note that $F^*(h^A)\cong h^{F(A)}$ functorially, so $F^*$ extends $F$ to dg modules.
		Lastly, $F_*$ also has a right adjoint $F^!$.
		It is defined by $F^!M(B):=(\dgMod\Ascr)(F_*h^B,M)$.
		Succinctly, we have the following diagram
		\begin{equation}\label{eq: functors on dgmod induced by dg functor}
			\begin{tikzcd}[sep=3.5em]
				\dgMod\Ascr \\ \dgMod\Bscr\rlap{ .}
				\arrow[""{name=0, anchor=center, inner sep=0}, "{F_*}"{description}, from=2-1, to=1-1]
				\arrow[""{name=1, anchor=center, inner sep=0}, "{F^!}"{description}, bend left=55, from=1-1, to=2-1]
				\arrow[""{name=2, anchor=center, inner sep=0}, "{F^*}"{description}, bend right=55, from=1-1, to=2-1]
				\arrow["\dashv"{anchor=center}, draw=none, from=2, to=0]
				\arrow["\dashv"{anchor=center}, draw=none, from=0, to=1]
			\end{tikzcd}
		\end{equation}
		
		Let $\Ascr$ be a dg category.
		We say that a dg $\Ascr$-module $M$ is \emph{acyclic} if the dg $\kk$-module $M(A)$ is acyclic for all $A\in\Ascr$.
		The dg subcategory of acyclic dg modules is denoted $\Ac\Ascr$.
		The homotopy category $[\dgMod\Ascr]$ carries a natural triangulated structure, which we discuss in the next section, and $[\Ac\Ascr]$ is a localising triangulated subcategory.
		We define the \emph{derived category} of $\Ascr$ as the Verdier quotient
		\[
		\bD(\Ascr):= [\dgMod\Ascr]/[\Ac\Ascr].
		\] 
		It is a compactly generated triangulated category, so in particular it has (arbitrary small) direct sums.		
		The Yoneda embedding induces a fully faithful functor $[\Ascr]\to\bD(\Ascr)$ and its image (the representable dg modules) forms a set of compact generators.
		
		For the definition of free, (finitely generated) semi-free, h-projective, h-injective and h-flat dg modules we refer to the references given above.
		We simply mention that these allow us to define derived functors in the usual fashion.
		In particular, we obtain derived versions of the functors in the diagram \eqref{eq: functors on dgmod induced by dg functor} and a straightforward application of Lemma \ref{lem: fully faithful if so on compact generators etcetc} yields.
		\begin{lemma}\label{lem: prop F imlies prop IndF}
			Let $F:\Ascr\to\Bscr$ be a dg functor. 
			Both the derived induction functor $\bL F^*$ and the restriction functor $F_*$ commute with arbitrary direct sums.
			Moreover,
			\[
			\bL F^*(h_\Ascr^A)\cong h_\Bscr^{F(A)}.
			\]
			Thus, if $F$ is quasi fully faithful (respectively a quasi-equivalence), then $\bL F^*$ is fully faithful (respectively an equivalence).
		\end{lemma}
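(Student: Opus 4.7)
The strategy is to apply Lemma~\ref{lem: fully faithful if so on compact generators etcetc} to $\bL F^*:\bD(\Ascr)\to\bD(\Bscr)$ with the representables $\{h_\Ascr^A\}_{A\in\Ascr}$ as compact generators of the source. The three hypotheses of that lemma---commutation with direct sums, preservation of compactness, and bijectivity on $\Hom$'s between the generators---correspond almost one-to-one to the ingredients asserted in the present lemma, once the action of $\bL F^*$ on hom-spaces between representables is understood. I would therefore address the statements in the order in which they appear.

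For the commutation with direct sums, I first observe that direct sums in $\dgMod\Ascr$ and $\dgMod\Bscr$ are computed pointwise (as colimits in dg functor categories) and that acyclicity is likewise pointwise. Consequently $F_*(N) = N\circ F$ commutes with direct sums on the nose and preserves acyclic modules, so it descends to an exact functor $F_* = \bR F_*:\bD(\Bscr)\to\bD(\Ascr)$ commuting with direct sums. The dg adjunction $F^*\dashv F_*$, combined with the existence of h-projective (or h-flat) resolutions, yields a derived adjunction $\bL F^*\dashv F_*$; being a left adjoint, $\bL F^*$ commutes with all colimits, in particular with arbitrary direct sums.

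For the identification $\bL F^*(h_\Ascr^A)\cong h_\Bscr^{F(A)}$, note that $h_\Ascr^A$ is h-projective, since $\Hom_{\dgMod\Ascr}(h_\Ascr^A,-)$ coincides with the exact functor of evaluation at $A$. No resolution is therefore required and $\bL F^*(h_\Ascr^A) = F^*(h_\Ascr^A)$. The latter can be computed either from the explicit formula $F^*M = M\otimes_\Ascr {}_F\Bscr$ or via the universal property: $F^*(h_\Ascr^A)$ represents the functor $N\mapsto(F_*N)(A) = N(F(A))$, and hence is $h_\Bscr^{F(A)}$.

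It remains to describe the map induced by $\bL F^*$ on hom-spaces between representables. Fully faithfulness of the Yoneda embedding, together with h-projectivity of representables, gives $\bD(\Ascr)(h_\Ascr^A, h_\Ascr^{A'}[i]) = H^i\Ascr(A, A')$, and by naturality of the identification just established this map is simply $H^i(F)$. Quasi fully faithfulness of $F$ thus makes it bijective for all $A, A', i$, and Lemma~\ref{lem: fully faithful if so on compact generators etcetc} delivers fully faithfulness of $\bL F^*$. If $F$ is additionally a quasi-equivalence, essential surjectivity on homotopy categories implies $h_\Bscr^B\cong h_\Bscr^{F(A)}$ in $\bD(\Bscr)$ for some $A = A(B)$, so $\bL F^*$ hits a generating set of $\bD(\Bscr)$ and the same lemma upgrades fully faithfulness to an equivalence. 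The only mildly delicate point in this plan is the passage of the adjunction $F^*\dashv F_*$ to the derived level with $F^*$ replaced by $\bL F^*$, but this is entirely routine once it is verified that $F_*$ preserves acyclicity.
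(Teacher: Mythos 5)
Your proposal is correct and follows exactly the route the paper intends: the paper itself dismisses this as ``a straightforward application of Lemma~\ref{lem: fully faithful if so on compact generators etcetc},'' and what you have written is precisely the spelled-out version of that application (pointwise direct sums for $F_*$, $\bL F^*$ as a left adjoint, h-projectivity of representables to compute $\bL F^*(h^A_\Ascr)\cong h^{F(A)}_\Bscr$, identification of the induced map on $\Hom$'s with $H^i(F)$, and essential surjectivity on $H^0$ supplying a generating image in the quasi-equivalence case).
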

		A dg functor whose derived induction functor (or equivalently restriction functor) induces an equivalence at the derived level, as in the above lemma, is called a \emph{Morita equivalence}. 
		Hence, the lemma shows that any quasi-equivalence is a Morita equivalence.
		
		A dg module $M$ is called \emph{perfect} if its image in $\bD(\Ascr)$ is compact, i.e.\ ${\bD(\Ascr)}(M,-)$ commutes with direct sums. 
		Sometimes perfect dg modules are assumed to be additionally semi-free or h-projective, in which case the perfect dg modules are the homotopy direct summands  of finitely generated semi-free dg modules.
		We do not include this assumption, they are merely direct summands in the derived category instead of homotopy direct summands; but as a consequence, the perfect dg modules are closed under quasi-isomorphisms.
		
		We finish this subsection with a definition that will be needed later on.
		A dg $(\Ascr,\Bscr)$-bimodule $\phi$ is said to be \emph{right perfect}, or $\Bscr$-\emph{perfect}, if $\phi(-,A)$ is a perfect dg $\Bscr$-module for every $A\in\Ascr$; equivalently if $\bL\phi:=-\otimes_{\Ascr}^\bL\phi:\bD(\Ascr)\to \bD(\Bscr)$ preserves perfectness (i.e.\ compactness).
		More generally, if $P$ is a property of dg modules we will say that $\phi$ has $P$ as right dg module, or is right $P$, if $\phi(-,A)$ has $P$ for every $A\in\Ascr$.
		Of course, we can also consider the left version of the above.

	\subsection{Pretriangulated differential graded categories and enhancements}
		Let $\Ascr$ be a dg category.
		For any dg $\Ascr$-module $M$ and integer $n\in\ZZ$ we can define the shifted dg module $M[n]$ of $M$ by shifting the objectwise complexes 
		\[
		M[n](A) := M(A)[n]\quad\text{for }A\in\Ascr.
		\]
		Similarly, for any closed degree zero morphism $f:M\to N$ between dg $\Ascr$-modules we can define its cone dg module $\cone(f)$ by taking the cone objectwise, i.e. for any object $A\in\Ascr$ we have a morphism of complexes $f_A:M(A)\to N(A)$ so we can put
		\[
		\cone(f)(A) := \cone(f_A:M(A)\to N(A)).
		\]
		Both of these objectwise constructions are readily seen to extend to dg functors.
		In fact, we can characterise them more abstractly. 
		The shift $M[n]$ is the unique (up to unique dg isomorphism) dg module equipped with a closed degree $n$ isomorphism $M[n]\to M$.
		Similarly, $\cone(f)$ is the unique (up to unique dg isomorphisms) dg module equipped with degree zero morphisms
		\[
		M[1]\xrightarrow{i}\cone(f)\xrightarrow{p}M[1],\quad N\xrightarrow{j}\cone(f)\xrightarrow{s}N,
		\]
		satisfying
		\begin{gather*}
			pi=\id_{M[1]},\quad sj=\id_{N},\quad pj=0,\quad si=0,\quad ip+js=\id_{\cone(f)},\\	d(j)=d(p)=0,\quad d(i)=jf\epsilon,\quad d(s)=-f\epsilon p,
		\end{gather*}
		where $\epsilon:M[1]\to M$ is a closed degree one isomorphism.
		(Not all of the above relations are needed to uniquely determine the cone, e.g.\ the latter two imply each other given the former.)
		Using the shift and cone we obtain, as for complexes over $\kk$, a canonical triangulated structure on the homotopy category of dg modules $[\dgMod\Ascr]$.
		
		A dg category $\Ascr$ is called \emph{pretriangulated} if the essential image of $[\Ascr]$ in $[\dgMod\Ascr]$ under the Yoneda embedding is closed under shifts and cones of degree zero morphisms, i.e.\ the essential image is a triangulated subcategory of  $[\dgMod\Ascr]$.
		Worded differently, $\Ascr$ is pretriangulated if for every $A\in\Ascr$ and $k\in\ZZ$ the dg module $h^A[k]$ is homotopy equivalent to a representable dg module and for every closed degree zero morphism $f:M\to N$ the dg module $\cone(h^f:h^M\to h^N)$ is homotopy equivalent to a representable dg module.
		If instead of `homotopy equivalent' we require `dg isomorphic', we say that $\Ascr$ is $\emph{strongly pretriangulated}$.
		By the dg Yoneda lemma, being strongly pretriangulated is simply requiring the existence of certain objects in $\Ascr$ together with appropriate morphisms.
		
		Every dg category can be embedded in a `smallest' strongly pretriangulated dg category, called its \emph{pretriangulated hull}.
		For example, one can define this as the dg category consisting of the finitely generated semi-free dg modules, these are exactly the dg modules generated by the representable ones under shifts and cones.
		Alternatively, it can be described more concretely using twisted complexes.
		See the next section for this.
		
		The importance of pretriangulated dg categories lies in the fact that they can be used to enhance triangulated categories.
		An \emph{enhancement} of a triangulated category $\Tsf$ is a pretriangulated dg category $\Ascr$ together with an equivalence $\Tsf\cong[\Ascr]$ of triangulated categories.
		We often do not mention the specific equivalence explicitly, although it is part of the data, and simply say that $\Ascr$ is a dg enhancement of $\Tsf$.
		Enhancements do not always exist and, if they do, need not be unique up to quasi-equivalence.
		However, in most algebraic and geometric settings they exist and are unique.
		See \cite{CanonacoStellari} for an overview, and \cite{CanonacoNeemanStellari} for more recent results.
		
		Let $\Ascr$ be a dg category and $\Bscr\subseteq\Ascr$ be a dg subcategory. 
		One can consider its \emph{dg quotient} $\Ascr/\Bscr$ as constructed in \cite{Drinfeld}.
		As we are working over a field the quotient is constructed by adding a contracting homotopy for every $B\in\Bscr$, i.e.\ a morphism $\epsilon_B$ of degree minus one with $d(\epsilon_B)=\id_B$. 
		When $\Ascr$ and $\Bscr$ are pretriangulated this construction gives a natural enhancement of the Verdier quotient $[\Ascr]/[\Bscr]$ as $\Ascr/\Bscr$ is then pretriangulated and
		\[
		[\Ascr/\Bscr]\cong [\Ascr]/[\Bscr],
		\]
		see \cite[Theorem 3.4]{Drinfeld} and \cite[Lemma 1.5]{LuntsOrlov}.

	\subsection{Twisted complexes}\label{subsec: twist}
		We will define our gluing of dg categories as the pretriangulated hull of a `directed' dg category. 
		One way of working with the pretriangulated hull in an explicit fashion is by considering twisted complexes.
		This subsection serves to make our conventions concerning twisted complexes explicit. 
		We use those of \cite{BondalLarsenLunts} and \cite{Drinfeld} (though the latter is not explicit in the signs of the differential in the `shift closed dg category'). 
		For the remainder of this subsection, $\Ascr$ is a fixed dg category. 

		\subsubsection{Adding shifts}
			The \emph{shift closed dg category} $\ZZ\Ascr$ is defined as follows.
			\begin{itemize}
				\item The objects are formally tuples $(A,n)$, where $A\in\Ascr$ and $n\in \ZZ$. However, we mostly write $(A,n)$ as $A[n]$.
				\item The morphism complex $\ZZ\Ascr(A[k],B[l])$ is equal to $\Ascr(A,B)[l-k]$ as graded complex but with differential given by $d_{\text{new}} = (-1)^l d_{\text{old}}$. Composition is naturally induced from $\Ascr$.
			\end{itemize}
			Clearly, $\ZZ\Ascr$ contains $\Ascr$ as a full dg subcategory. 
			The following motivates our choice in signs.
			
			\begin{lemma}
				The category $\ZZ\Ascr$ embeds into the category of dg $\Ascr$-modules $\dgMod\Ascr$ by on objects mapping $A[n]\mapsto h^A[n]$ and on morphisms sending $f\in\ZZ\Ascr(A[k],B[l])^n$ to the natural transformation $f_*:h^A[k]\to h^B[l]$ of degree $n$ with 
				\[
				f_{*,C}: \Ascr(C,A)[k]\to \Ascr(C,B)[l],\,g\mapsto f g,\quad C\in\Ascr
				\]
				on morphisms.
			\end{lemma}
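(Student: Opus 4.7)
The plan is to verify three things: (a) the assignment $f\mapsto f_*$ is well-defined (has the claimed degree), (b) it is compatible with composition, and (c) it commutes with the differentials. Once these are verified one gets a dg functor, and fully faithfulness then follows from the dg Yoneda lemma.

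\emph{Well-definedness and composition.} By construction, $f\in\ZZ\Ascr(A[k],B[l])^n$ means $f\in\Ascr(A,B)^{n+l-k}$. For $g\in\Ascr(C,A)^i$, viewed as a degree-$(i-k)$ element of $\Ascr(C,A)[k]$, the composite $fg$ lies in $\Ascr(C,B)^{i+n+l-k}$, i.e.\ in degree $i-k+n$ of $\Ascr(C,B)[l]$. So $f_*$ is a degree $n$ natural transformation, as required. Compatibility with composition is immediate from associativity of composition in $\Ascr$ together with the fact that composition in $\ZZ\Ascr$ is inherited from $\Ascr$.

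\emph{Compatibility with differentials.} This is the only real computation and is where the sign conventions in the definition of $\ZZ\Ascr$ earn their keep. The differential on $\Hom$ in $\dgMod\Ascr$ between $h^A[k]$ and $h^B[l]$ is the graded commutator with the respective module differentials $(-1)^kd$ and $(-1)^ld$. Applying the Leibniz rule to $f_*(g)=fg$ yields
\[
d(f_*)_C(g)=(-1)^l d(fg)-(-1)^{n+k}f\,d(g)=(-1)^l(df)g+\bigl((-1)^{n-k}-(-1)^{n+k}\bigr)f\,d(g),
\]
and the second term vanishes since $(-1)^{n-k}=(-1)^{n+k}$. Hence $d(f_*)=((-1)^l df)_*$, which matches exactly the twisted differential $d_{\text{new}}=(-1)^ld_{\text{old}}$ on $\ZZ\Ascr(A[k],B[l])$.

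\emph{Fully faithfulness.} By dg Yoneda, $\Hom_{\dgMod\Ascr}(h^A,M)\cong M(A)$ naturally in $M$. Taking $M=h^B[l-k]$ and shifting in the first variable (a dg isomorphism of Hom complexes, up to the standard shift signs) identifies $\Hom_{\dgMod\Ascr}(h^A[k],h^B[l])$ with $\Ascr(A,B)[l-k]$ as graded $\kk$-modules, and the map $f\mapsto f_*$ realises this identification. The compatibility with differentials established above shows this is in fact a dg isomorphism, giving the desired embedding.

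The main subtlety is the sign calculation in the middle step; everything else is formal. The definition $d_{\text{new}}=(-1)^ld_{\text{old}}$ is chosen precisely so that the two terms involving $f\,d(g)$ cancel, and this is the reason the functor actually lands where the lemma claims.
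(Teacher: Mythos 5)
The paper states this lemma without proof (it serves to motivate the sign convention $d_{\text{new}}=(-1)^l d_{\text{old}}$ in $\ZZ\Ascr$), so there is no written argument to compare against; your proof is correct and is the expected direct verification. The sign calculation is the only non-trivial point and you have it right: with $|f|_\Ascr=n+l-k$ the Leibniz rule gives $(-1)^l d(fg)-(-1)^{n+k}f\,dg=(-1)^l(df)g$ since $(-1)^{n-k}=(-1)^{n+k}$, which matches the twisted differential on $\ZZ\Ascr(A[k],B[l])$; and the degree count plus dg Yoneda (evaluating at $1_A$) shows $f\mapsto f_*$ is a graded bijection, hence a dg isomorphism on hom-complexes.
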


		\subsubsection{Adding direct sums}
			The \emph{shift and (direct) sum closed dg category } $\Sigma \Ascr$ is defined as follows.
			\begin{itemize}
				\item The objects are formally finite sequences $(A_i)_{i=1}^n=(A_1, A_2, \dots, A_n)$ of objects in $\ZZ\Ascr$. 
				We usually write $(A_i)_{i=1}^n$ as $\oplus_{i=1}^n A_i$ or simply $\oplus_i A_i$.
				\item The morphism complex $\Sigma \Ascr(\oplus_i A_i, \oplus_j B_j)$ is equal to $\oplus_{i,j}\ZZ \Ascr(A_i, B_j)$ with composition given by matrix multiplication.
			\end{itemize}
			Note that the empty sequence is allowed above, in which case the morphism complex is zero. 
			So the empty sequence yields a zero object.
			Moreover, in order to get matrix multiplication to work nicely, one must index morphisms as follows, for $(f_{ji}) \in \Sigma \Ascr(\oplus_i A_i, \oplus_j B_j)$ we have $f_{ji}:A_i\to B_j$.

		\subsubsection{Adding cones}
			The \emph{dg category of twisted complexes} $\tw\Ascr$ is defined as follows.
			\begin{itemize}
				\item The objects are tuples $( A, \delta )$, called \emph{twisted complexes}, where $A$ is an object of $\Sigma\Ascr$ and $\delta$ is a degree one endomorphism of $A$ in $\Sigma\Ascr$ given by a strictly upper triangular matrix, i.e.\ $\delta_{ij}=0$ for $i\geq j$ (`$\delta$ decreases the $A_i$'), satisfying $d_{\Sigma\Ascr}\delta+\delta^2=0$.
				\item The morphism complex $(\tw\Ascr)((A,\delta), (B,\delta'))$ equals $\Sigma \Ascr(\oplus_i A_i, \oplus_i B_i)$ as graded complex but with differential given by $d+[\delta,-]$, where $d$ is the differential of $\Sigma \Ascr$. Explicitly, for $f:(A,\delta)\to (B,\delta')$, we have $d_{\tw \Ascr}f= d_{\Sigma \Ascr}f + \delta' f - (-1)^{|f|} f\delta$.	
			\end{itemize}
			In particular, the following shows that $[\tw \Ascr]$ identifies with the triangulated subcategory of $[\dgMod\Ascr]$ generated by the representable dg $\Ascr$-modules (under the functor mapping $( \oplus A_i[k_i], \delta )$ to the dg $\Ascr$-module $\oplus_i \Ascr(-,A_i)[k_i]$ as a graded module endowed with the differential $d_{\Ascr}+\oplus_{i,j}\delta_{ij}$).
			
			\begin{proposition}[{\cite[Proposition 3.10]{BondalLarsenLunts}}]
				Let $\mathscr A$ be a dg category. Then, the following statements hold:
				\begin{enumerate}
					\item the dg category $\tw\Ascr$ is closed under taking cones of closed degree zero morphisms, 
					\item every object in $\tw\Ascr$ can be obtained from objects in $\Ascr$ by taking successive cones of closed degree zero morphisms.		
				\end{enumerate}
			\end{proposition}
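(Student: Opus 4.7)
The plan is to give an explicit construction of the cone in $\tw\Ascr$ for (i), and then deduce (ii) by induction on the length of the underlying sequence, peeling off one summand at a time as such a cone. Throughout, it will be convenient to keep in mind the Yoneda-type embedding into $\dgMod\Ascr$ described just above the proposition: it sends a twisted complex $(\oplus_i A_i[k_i], \delta)$ to the dg module with underlying graded module $\oplus_i \Ascr(-, A_i)[k_i]$ and differential $d_\Ascr + \delta$. Since in $\dgMod\Ascr$ both cones and shifts already exist at the level of underlying graded modules by taking them componentwise, our task reduces to exhibiting a twisted complex whose image realises the corresponding dg-module cone.

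For (i), let $f\colon (A, \delta_A)\to (B, \delta_B)$ be a closed degree zero morphism in $\tw\Ascr$, with $A = \oplus_{i=1}^n A_i$ and $B = \oplus_{j=1}^m B_j$. I would define
\[
\cone(f) := \bigl( B \oplus A[1],\ \delta_f \bigr),\qquad \delta_f = \begin{pmatrix} \delta_B & f \\ 0 & \delta_A[1] \end{pmatrix},
\]
with components ordered $B_1,\dots,B_m, A_1[1],\dots,A_n[1]$. Strict upper-triangularity is immediate: the two diagonal blocks are strictly upper-triangular by hypothesis, and every entry of the off-diagonal block $f$ points from some $A_i[1]$ to some $B_j$, hence from a later component to an earlier one. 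It remains to verify $d_{\Sigma\Ascr}\delta_f + \delta_f^2 = 0$; block by block this unpacks into the Maurer--Cartan equations for $\delta_B$ and $\delta_A[1]$ together with the identity $df = f\delta_A - \delta_B f$, which is precisely the condition that $f$ is closed in $\tw\Ascr$. The structure morphisms $j,s\colon B \leftrightarrows \cone(f)$ and $i,p\colon A[1]\leftrightarrows \cone(f)$ are then the canonical matrix (co)projections, and the relations listed in \S\ref{subsec: twist} for a cone are verified directly (and are anyway forced by the fact that the Yoneda embedding sends this datum to the strict cone of $h^f$ in $\dgMod\Ascr$).

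For (ii), I argue by induction on the length $n$ of the underlying sequence. The case $n=0$ is the zero object (the empty sequence), and the case $n=1$ is a single shifted object $A_1[k_1]$, which is obtained from $A_1\in\Ascr$ by $|k_1|$ successive cones along the zero morphism to, or from, a zero object. For the induction step, given $(A_1\oplus\cdots\oplus A_n,\delta)$ with $n\geq 2$, restrict $\delta$ to the first $n-1$ summands to obtain a twisted complex $T := (A_1\oplus\cdots\oplus A_{n-1}, \delta')$; the remaining entries $(\delta_{i,n})_{i<n}$ assemble into a closed degree zero morphism $g\colon (A_n[-1], 0) \to T$ (closedness of $g$ is extracted from the $n$th column of $d\delta+\delta^2=0$). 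By the explicit formula from (i), $\cone(g)$ is canonically identified with $(A_1\oplus\cdots\oplus A_n, \delta)$. Both $T$ and $(A_n[-1], 0)$ are obtained from objects of $\Ascr$ by iterated cones of closed degree zero morphisms (by the induction hypothesis and the $n=1$ case), and hence so is the original twisted complex.

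The only genuinely fiddly step is the sign bookkeeping in verifying (i): the shifted block $\delta_A[1]$ lives in $\Sigma\ZZ\Ascr$ where, by the definition of $\ZZ\Ascr$, the differential in degree $l$ target picks up a sign $(-1)^l$, and the Koszul rule affects composition. One checks these signs conspire so that $d(\delta_A[1]) + (\delta_A[1])^2$ is essentially $d\delta_A + \delta_A^2 = 0$ up to an overall sign, which is the only possible obstruction to the plan. Everything else is either formal matrix algebra or a direct translation through the embedding into $\dgMod\Ascr$.
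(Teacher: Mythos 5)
The paper cites Proposition 3.10 of Bondal--Larsen--Lunts without reproving it, so there is no internal proof to compare against; the plan you describe (explicit cone formula, then induction on the number of summands) is the standard argument and is the right shape. There are, however, two concrete problems in the details.

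First, the cone formula has a sign error, and your hand-waving about the sign conspiracy is wrong as stated. With this paper's conventions, the morphism complex $\ZZ\Ascr(A[k],B[l])$ is $\Ascr(A,B)[l-k]$ with differential $(-1)^l d$, and composition carries no extra Koszul sign. Consequently the differential on the $A[1]$-block is the \emph{negative} of the differential on the $A$-block (the target degree increases by one). If you take the lower-right block of $\delta_f$ to be literally $\delta_A$ relabelled, then that block of $d_{\Sigma\Ascr}\delta_f + \delta_f^2$ evaluates to $-d\delta_A + \delta_A^2 = -2\,d\delta_A$, which is not zero in general and is not ``$d\delta_A + \delta_A^2 = 0$ up to an overall sign''. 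The correct formula is
\[
\delta_f = \begin{pmatrix}\delta_B & f \\ 0 & -\delta_A\end{pmatrix};
\]
with this choice the lower-right block becomes $d\delta_A + \delta_A^2 = 0$ and the off-diagonal block becomes $df + \delta_B f - f\delta_A$, which vanishes precisely because $f$ is closed. So you need the explicit sign, and your formula as written does not define a twisted complex.

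Second, your $n=1$ base case does not actually handle negative shifts, and this exposes a gap. The cone operation only ever raises shifts: $\cone(A\to 0) = A[1]$ but $\cone(0\to A) = A$, not $A[-1]$. So starting from $\Ascr$ and taking iterated cones one can reach $A[k]$ only for $k\geq 0$, and a twisted complex containing a component $A_i[k_i]$ with $k_i < 0$ cannot be produced this way. To make the argument go through (and to make the cited statement literally true) one must allow shifts in addition to cones, which is what BLL in fact establish. Once ``successive cones'' is replaced by ``successive shifts and cones'' your induction works verbatim, and your peeling-off-the-last-summand step (including the closedness of $g$, which follows from the last column of $d\delta + \delta^2 = 0$) is correct.
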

			\begin{remark}\label{rem: cone in tw}
				With these conventions, the cone of a closed degree zero morphism $f:A\to B$ in $\Ascr$ is given by the twisted complex 
				$(B\oplus A[1],\left( \begin{smallmatrix}
					0 & f
					\\
					0 & 0 
				\end{smallmatrix}\right) )$.
			\end{remark}

%

	\subsection{Smooth and proper differential graded categories}
		A dg category $\Ascr$ is ($\kk$-)\emph{smooth} if the diagonal bimodule is perfect, i.e.\ $\Ascr$ is perfect over $\Ascr^{\op}\otimes_\kk\Ascr$.
		Spelled out, this means that $\Ascr$ as bimodule is a direct summand, in the derived category, of a bimodule obtainable from representable bimodules by a finite number of shifts and the taking of cones of closed degree zero morphisms. 
		We say that $\Ascr$ is ($\kk$-)\emph{proper} if for all objects $A_1,A_2\in \Ascr$ the $\kk$-complex $\Ascr(A_1,A_2)$ is perfect; equivalently, it has finite dimensional cohomology.
		
		The following lemma of To\"{e}n and Vaqui\'{e} will be crucial for showing the perfectness of bimodules later on. 
		We include a proof for the convenience of the reader.
		\begin{lemma}[{\cite[Lemma 2.8.2]{ToenVaquie}}]\label{lem: right perfect sometimes gives perfect}
			Let $\Ascr, \Bscr$ be dg categories and let $\phi$ be a dg $(\Ascr,\Bscr)$-bimodule.
			Assume $\Ascr$ is smooth and $\phi$ is right perfect, then $\phi$ is perfect.
		\end{lemma}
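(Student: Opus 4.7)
The strategy is to exploit the reconstruction isomorphism $\phi\cong\phi\otimes_\Ascr^\bL\Ascr$, where the right-hand factor is the diagonal $(\Ascr,\Ascr)$-bimodule and the tensor product pairs the left $\Ascr$-action of $\phi$ with the right $\Ascr$-action of the diagonal. The smoothness of $\Ascr$ means exactly that this diagonal is compact in $\bD(\Ascr\otimes_\kk\Ascr^{\op})$. I would therefore introduce the triangulated functor
\[
U\colon\bD(\Ascr\otimes_\kk\Ascr^{\op})\to\bD(\Bscr\otimes_\kk\Ascr^{\op}),\quad \psi\mapsto\phi\otimes_\Ascr^\bL\psi,
\]
observe that $U(\Ascr)\cong\phi$, and reduce the lemma to showing that $U$ preserves compactness.

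The functor $U$ commutes with arbitrary direct sums (tensor products preserve colimits), so by the remark following Lemma \ref{lem: fully faithful if so on compact generators etcetc} it suffices to check preservation of compactness on a set of compact generators of the source. For these I would take the representable bimodules $h^{(X,Y)}\colon(a_1,a_2)\mapsto\Ascr(a_1,X)\otimes_\kk\Ascr(Y,a_2)$ with $X,Y\in\Ascr$. A direct co-Yoneda calculation then gives
\[
U(h^{(X,Y)})\cong \phi(-,X)\boxtimes\Ascr(Y,-),
\]
i.e.\ the external tensor product of the right $\Bscr$-module $\phi(-,X)$, perfect by the right-perfectness hypothesis on $\phi$, with the representable (hence perfect) left $\Ascr$-module $\Ascr(Y,-)$.

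It remains to observe that an external tensor product of this shape is perfect as a bimodule: on pairs of representables it is just the representable bimodule $h^{(X',Y')}$, and both factors are obtained from representables by finite shifts, cones, and summands, operations which external tensors convert into the analogous operations on bimodules. I do not expect a serious obstacle; the entire argument is the transport of perfectness of the diagonal through $U$, and the only point that requires care is the bookkeeping of the left/right actions and opposite factors so that $U$ genuinely produces $(\Ascr,\Bscr)$-bimodules and sends the diagonal to $\phi$.
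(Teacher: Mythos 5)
Your proof is correct and follows essentially the same route as the paper's: both arguments consider the functor "tensor with $\phi$" from $(\Ascr,\Ascr)$-bimodules to $(\Ascr,\Bscr)$-bimodules, check compactness preservation on representable generators using right perfectness of $\phi$ (your $\phi(-,X)\boxtimes\Ascr(Y,-)$ is the paper's $h^{A}\otimes_\kk\phi(-,A')$), and then transport the perfectness of the diagonal (smoothness of $\Ascr$) through this functor. The only difference is cosmetic: the paper compresses the argument into three lines while you spell out the co-Yoneda calculation and the external-tensor bookkeeping.
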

		\begin{proof}
			As $\phi$ is right perfect, tensoring with $\phi$ induces a compactness preserving functor $\bD(\Ascr^{\op}\otimes_\kk\Ascr)\to\bD(\Ascr^{\op}\otimes_\kk\Bscr)$ (it maps the compact generators $h^{(A,A')}$ to $h^A\otimes_\kk\phi(-,A')$). 
			Moreover, as it maps $\Ascr\mapsto\phi$ and $\Ascr$ is perfect by assumption, we conclude that $\phi$ is perfect.
		\end{proof}
		
		\section{Directed differential graded categories}\label{sec: directed dg cat}
		In this section, we collect some results on directed dg categories, which are a natural extension of \cite[\S3.2]{LuntsSchnurer}.
		These appear naturally as dg enhancements of triangulated categories equipped with semi-orthogonal decompositions.
		We give some sufficient conditions for their smoothness and make some remarks concerning necessary conditions.
		
		\subsection{Terminology}
		Let $\Cscr$ be a dg category and assume it has full dg subcategories\footnote{The indexing $0,\dots,n-1$ may seem strange to the reader, it is chosen to be uniform with the hypercube indexing later on.} $\Ascr_0$, \dots, $\Ascr_{n-1}$ such that 
		$
		\Obj \Cscr = \bigsqcup_{i=0,\dots,n-1}\Obj \Ascr_i
		$ 
		and\footnote{It actually suffices to merely require these hom-complexes to be acyclic, as the dg category is then quasi-equivalent to a dg category where they are zero.} $\Hom_\Ascr(\Ascr_i,\Ascr_j)=0$ for $i>j$.
		We call such a dg category \emph{directed}, or say that it comes with a \emph{directed decomposition}.
		One can think of a directed dg category as a lower\footnote{This depends a bit on one's point of view, as
			\begin{center}
				$\left(\begin{smallmatrix}	
					\Ascr & 0 \\ 
					\phi & \Bscr 
				\end{smallmatrix}\right)
				=\left(\begin{smallmatrix}	
					\Bscr & \phi \\ 
					0 & \Ascr 
				\end{smallmatrix}\right)
				\quad\text{and}\quad
				\left(\begin{smallmatrix}	
					\Ascr & 0 \\ 
					\phi & \Bscr 
				\end{smallmatrix}\right)^{\op}
				=\left(\begin{smallmatrix}	
					\Ascr^{\op} & \phi^{\op} \\ 
					0 & \Bscr^{\op} 
				\end{smallmatrix}\right)$.
			\end{center}
		} 
		triangular matrix.
		Indeed, giving such a dg category is equivalent to giving the data of dg categories $\Ascr_0$, \dots, $\Ascr_{n-1}$ together with dg $(\Ascr_i,\Ascr_j)$-bimodules $\phi_{ij}$ for $i> j$ and $(\Ascr_i,\Ascr_j)$-bimodule morphisms $\phi_{ik}\otimes_{\Ascr_k} \phi_{kj}\to \phi_{ij}$ for $0\leq j<k<i\leq n-1$ satisfying the natural associativity and unitality conditions.
		One can graphically depict this as a lower triangular matrix
		\begin{equation}\label{eq: lower matrix dg cat}
			\Cscr = 
			\begin{pmatrix}
				\Ascr_0		& 0		& 	0 	& \dots & 0\\
				\phi_{10}	& \Ascr_1 & 0 	& \dots & 0\\
				\phi_{20}	& \phi_{21}	& \Ascr_2 & \dots & 0\\
				\vdots & \vdots & \vdots & \ddots & \vdots \\
				\phi_{n-1,0}	& \phi_{n-1,1}	& \phi_{n-1,2} & \dots & \Ascr_{n-1} \\
			\end{pmatrix},
		\end{equation}
		but note that for $n>2$ the notation does not include all the data necessary for defining the composition.
		
		These types of dg categories naturally induce semi-orthogonal decompositions, as the following result shows.
		(This is similar to \cite[Corollary 4.5]{KuznetsovLunts} or \cite[Proposition 3.7]{Orlov} but with a slightly different set-up, as they assume the $\Ascr_i$'s are pretriangulated.)
		\begin{proposition}\label{prop: SOD directed}
			Let $\Cscr$ be as in Equation \eqref{eq: lower matrix dg cat}.
			The inclusions $\Ascr_i\hookrightarrow\Cscr$ induce a semi-orthogonal decomposition
			\[
			[\tw\Cscr]=\langle [\tw\Ascr_0],\dots,[\tw\Ascr_{n-1}]\rangle.
			\]
		\end{proposition}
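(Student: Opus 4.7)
The proof has two parts: semi-orthogonality of the images of the $[\tw\Ascr_i]$ inside $[\tw\Cscr]$, and generation of $[\tw\Cscr]$ by those images. The strategy is to handle them separately, with the directedness hypothesis playing the key role in both.

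Semi-orthogonality is essentially immediate. Fix $i>j$ and consider twisted complexes $(A,\delta_A)\in\tw\Ascr_i$ and $(B,\delta_B)\in\tw\Ascr_j$, regarded as objects of $\tw\Cscr$. By the construction of $\tw$, the underlying graded module of $(\tw\Cscr)((A,\delta_A),(B,\delta_B))$ is a direct sum of shifted hom-complexes $\Cscr(A_k,B_l)$ with $A_k\in\Ascr_i$ and $B_l\in\Ascr_j$, and each such summand vanishes by the directedness hypothesis $\Hom_\Cscr(\Ascr_i,\Ascr_j)=0$. Hence $\Hom_{[\tw\Cscr]}([\tw\Ascr_i],[\tw\Ascr_j])=0$.

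For generation, take an arbitrary twisted complex $(C,\delta)$ in $\tw\Cscr$. Grouping the summands of $C$ by the subcategory $\Ascr_m$ to which each belongs yields a decomposition $C=\bigoplus_{m=0}^{n-1}C^{(m)}$ in $\Sigma\Cscr$. A nonzero block $\delta_{kl}:A_l\to A_k$ of $\delta$ between summands in $\Ascr_{m_l}$ and $\Ascr_{m_k}$ forces $m_l\leq m_k$ by directedness, so for every $m$ the subsum $C^{(\geq m)}:=\bigoplus_{k\geq m}C^{(k)}$ is $\delta$-stable and $(C^{(\geq m)},\delta|)$ is a genuine sub-twisted-complex of $(C,\delta)$. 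This produces a finite filtration by closed degree-zero inclusions
\[
0=C^{(\geq n)}\hookrightarrow C^{(\geq n-1)}\hookrightarrow\cdots\hookrightarrow C^{(\geq 0)}=C,
\]
whose successive cones in $\tw\Cscr$ are, up to homotopy equivalence, the twisted complexes $(C^{(=m)},\delta|)$ for $m=n-1,\dots,0$, each of which is an object of $\tw\Ascr_m$. Iterating, $(C,\delta)$ is built from objects of the $\tw\Ascr_m$ by a finite sequence of cones, and so the $[\tw\Ascr_m]$ generate $[\tw\Cscr]$ as a triangulated subcategory.

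The one technical point worth isolating is the cone identification. By the explicit description in Remark \ref{rem: cone in tw}, the cone of $C^{(\geq m+1)}\hookrightarrow C^{(\geq m)}$ has underlying $\Sigma\Cscr$-object $C^{(\geq m)}\oplus C^{(\geq m+1)}[1]$; using the splitting $C^{(\geq m)}=C^{(=m)}\oplus C^{(\geq m+1)}$ this rewrites as $C^{(=m)}\oplus(C^{(\geq m+1)}\oplus C^{(\geq m+1)}[1])$, and the identity map between the two copies of $C^{(\geq m+1)}$ furnishes a contracting homotopy on the second summand. Everything else is the bookkeeping that the twisted-complex equation $d\delta+\delta^2=0$ restricts cleanly to the sub- and quotient pieces, which is automatic from how the blocks of $\delta$ respect the $\Ascr$-grading.
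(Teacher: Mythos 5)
Your proof is correct and in essence follows the paper's argument: semi-orthogonality is a direct consequence of the directedness hypothesis, and generation comes down to the fact that the objects of $\Cscr$ (which generate $[\tw\Cscr]$ under shifts and cones) are precisely the disjoint union of the objects of the $\Ascr_i$'s. Where you differ is that for the generation step the paper simply invokes this last observation, whereas you construct an explicit finite filtration $C^{(\geq n)}\hookrightarrow\cdots\hookrightarrow C^{(\geq 0)}$ of each twisted complex, with successive cones lying in the $\tw\Ascr_m$'s. Your filtration argument is a more constructive and self-contained route to the same conclusion; it makes visible the Postnikov-type tower realising the generation, at the cost of some extra bookkeeping (the $\delta$-stability of $C^{(\geq m)}$, the restriction of the Maurer--Cartan equation to $C^{(=m)}$, and the contracting-homotopy identification of the cone) that the paper avoids by relying directly on the generation statement for $\tw$. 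Both are valid; the paper's version is terser, yours is more instructive about the mechanism.
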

		\begin{proof}
			The inclusions induce fully faithful triangulated functors $[\tw\Ascr_i]\to[\tw\Cscr]$.
			We thus shamelessly identify the domains with their essential image.
			
			To observe the claim, note that the $\tw\Ascr_i$'s are suitably orthogonal (as the $\Ascr_i$'s are) and that they generate $[\tw\Cscr]$ as triangulated category, i.e.\ $[\tw\Cscr]$ is the smallest triangulated subcategory of itself containing them, since they contain the objects of $\Cscr$ (and $\tw\Cscr$ is generated by those under cones and shifts).
		\end{proof}
		
		Conversely, these categories appear naturally as enhancements of triangulated categories with semi-orthogonal decompositions, as the following proposition shows.
		Its proof is exactly the same as \cite[Proposition 3.8]{Orlov} which is the case $n=2$.
		
		\begin{proposition}
			Let $\Tsf=[\Ascr]$ be a dg enhanced triangulated category and assume it admits a semi-orthogonal decomposition of the form $\Tsf=\langle \Tsf_0,\dots,\Tsf_{n-1}\rangle$.
			Define $\Ascr_i$ as the full dg subcategory of $\Ascr$ with objects from $\Tsf_i$ and put $\phi_{ij}:= {}_{\Ascr_i}\Ascr_{\Ascr_j}$ the restriction of the diagonal bimodule equipped with bimodule maps induced from the composition in $\Ascr$.
			Then, $\Tsf \cong [\tw(\Cscr)]$ with $\Cscr$ as in Equation \eqref{eq: lower matrix dg cat}.
		\end{proposition}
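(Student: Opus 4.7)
The plan is to exhibit a concrete quasi-equivalence $\tw\Cscr \simeq \Ascr$ by constructing a dg functor $\tw\Cscr \to \Ascr$ and then verifying that the induced triangulated functor respects the two semi-orthogonal decompositions, component by component.

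First I would define $\Cscr$ exactly as described: the full dg subcategory of $\Ascr$ whose object set is $\bigsqcup_{i=0}^{n-1} \Obj \Ascr_i$. By construction $\Cscr(A_i,A_j) = \Ascr(A_i,A_j)$ for any $A_i \in \Ascr_i$, $A_j \in \Ascr_j$, so the diagonal is the $\Ascr_i$'s and the off-diagonal bimodules are the restrictions $\phi_{ij}={}_{\Ascr_i}\Ascr_{\Ascr_j}$. Taking $H^0$, the semi-orthogonality $\Hom_\Tsf(\Tsf_i,\Tsf_j)=0$ for $i>j$ forces $\Hom_\Cscr(\Ascr_i,\Ascr_j)$ to be acyclic for $i>j$, so $\Cscr$ is directed in the sense of the preceding subsection (the footnote allowing acyclic, rather than strictly zero, upper triangular pieces).

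Next, because $\Ascr$ is pretriangulated, the inclusion $\Cscr \hookrightarrow \Ascr$ extends to a dg functor $\tw\Cscr \to \Ascr$ (for example by sending a twisted complex in $\Cscr$ to the iterated cone in $\Ascr$, which exists up to homotopy equivalence by definition of pretriangulatedness). Passing to homotopy categories yields a triangulated functor
\[
F:[\tw\Cscr]\longrightarrow [\Ascr]=\Tsf.
\]
By Proposition \ref{prop: SOD directed}, the left-hand side carries a semi-orthogonal decomposition $[\tw\Cscr]=\langle [\tw\Ascr_0],\dots,[\tw\Ascr_{n-1}]\rangle$, while by hypothesis the right-hand side carries $\Tsf=\langle \Tsf_0,\dots,\Tsf_{n-1}\rangle$. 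The functor $F$ manifestly sends each $[\tw\Ascr_i]$ into $\Tsf_i$, since $\Tsf_i$ is triangulated and contains all objects of $\Ascr_i$ by definition.

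It remains to check that $F$ restricts to an equivalence $[\tw\Ascr_i]\isoto \Tsf_i$ on each component, after which $F$ itself is an equivalence by the standard fact that a triangulated functor respecting semi-orthogonal decompositions and inducing equivalences on all pieces is itself an equivalence. Fully faithfulness on each $[\tw\Ascr_i]$ follows from the fact that $\Ascr_i \hookrightarrow \Ascr$ is a full dg subcategory, hence the induced functor $\tw\Ascr_i \to \tw\Ascr$ is quasi fully faithful, combined with the quasi-equivalence $\Ascr \simeq \tw\Ascr$ coming from pretriangulatedness. Essential surjectivity onto $\Tsf_i$ is immediate from the choice of $\Ascr_i$: every object of $\Tsf_i$ is, by construction, already an object of $\Ascr_i$ and thus lies in the image of $F$. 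The only mildly delicate point—and the step I would expect to require the most care—is the identification $[\tw\Ascr]\cong \Tsf$, i.e.\ verifying that for the pretriangulated $\Ascr$ the natural functor $\Ascr \to \tw\Ascr$ is a quasi-equivalence, so that the fully faithful embedding $[\tw\Ascr_i]\hookrightarrow [\tw\Ascr]$ can be identified with a subcategory inclusion into $\Tsf$; once this is in hand, everything else assembles mechanically from Proposition \ref{prop: SOD directed} and the SOD hypothesis.
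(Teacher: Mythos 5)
Your approach is essentially the standard one, and it matches what the paper has in mind (the paper defers to the proof of \cite[Proposition~3.8]{Orlov}, which is the $n=2$ case of exactly this argument). Two small remarks.

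First, the claim that "the inclusion $\Cscr\hookrightarrow\Ascr$ extends to a dg functor $\tw\Cscr\to\Ascr$" is not quite right as stated: in a merely pretriangulated (as opposed to strongly pretriangulated) dg category, cones of closed degree zero maps exist only up to homotopy equivalence, so one cannot in general choose them coherently enough to produce an honest dg functor. The clean fix, which you in fact point at in your closing sentence, is to use the zigzag
\[
\tw\Cscr \hookrightarrow \tw\Ascr \hookleftarrow \Ascr,
\]
where the left arrow is quasi fully faithful because $\Cscr\hookrightarrow\Ascr$ is a full dg subcategory, and the right arrow is a quasi-equivalence precisely because $\Ascr$ is pretriangulated. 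Passing to homotopy categories yields a fully faithful triangulated functor $[\tw\Cscr]\to[\Ascr]=\Tsf$ with no choices needed. Second, once one has this fully faithful functor, the discussion of matching semi-orthogonal components is dispensable: the essential image is a strictly full triangulated subcategory of $\Tsf$ containing every object of every $\Ascr_i$, hence every object of every $\Tsf_i$, hence all of $\Tsf$ since the $\Tsf_i$ generate $\Tsf$ by the definition of a semi-orthogonal decomposition. This gives essential surjectivity and finishes the proof without invoking Proposition~\ref{prop: SOD directed}. Your observation that the off-diagonal hom-complexes of $\Cscr$ are acyclic rather than zero, and that the footnote in \S\ref{sec: directed dg cat} covers this, is correct and worth stating explicitly.
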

		
		To finish this subsection, we note that there also exists a semi-orthogonal decomposition on the derived level.
		
		\begin{proposition}\label{prop: SOD D(directed)}
			Let $\Cscr$ be as in Equation \eqref{eq: lower matrix dg cat}.
			The derived induction functors of the inclusions $\Ascr_i\hookrightarrow\Cscr$ induce a semi-orthogonal decomposition
			\[
			\bD(\Cscr)=\langle \bD(\Ascr_0),\dots, \bD(\Ascr_{n-1})\rangle.
			\]
		\end{proposition}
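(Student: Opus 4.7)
The plan is to verify the three usual ingredients: fully faithfulness of each component, semi-orthogonality, and generation.

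\emph{Fully faithfulness.} Since each $\iota_i\colon \Ascr_i\hookrightarrow \Cscr$ is the inclusion of a full dg subcategory, it is quasi fully faithful. By Lemma \ref{lem: prop F imlies prop IndF} the derived induction $\bL \iota_i^*\colon \bD(\Ascr_i)\to \bD(\Cscr)$ is therefore fully faithful, and I will identify $\bD(\Ascr_i)$ with its essential image.

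\emph{Semi-orthogonality.} Suppose $i>j$, $M\in \bD(\Ascr_i)$, $N\in \bD(\Ascr_j)$. By the derived version of the adjunction $\bL \iota_j^*\dashv (\iota_j)_*$ followed by $(\iota_i)_*\dashv \bL \iota_i^!$, I only need
\[
(\iota_i)_*\,\bL\iota_j^{*}\,N \;=\;0 \quad \text{in } \bD(\Ascr_i).
\]
Using $\bL\iota_j^{*}N = N\otimes^{\bL}_{\Ascr_j}{}_{\iota_j}\Cscr$ and evaluating at $A\in \Ascr_i$, the complex computing $(\iota_i)_*\bL\iota_j^{*}N$ at $A$ is $N\otimes^{\bL}_{\Ascr_j}\Cscr(A,\iota_j(-))$. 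But $\Cscr(A,A')=0$ for every $A'\in \Ascr_j$ because $\Hom_\Cscr(\Ascr_i,\Ascr_j)=0$ when $i>j$, so this vanishes. Hence semi-orthogonality holds.

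\emph{Generation.} The compact generators of $\bD(\Cscr)$ are the representables $h^A_\Cscr$ with $A\in \Obj \Cscr = \bigsqcup_i \Obj \Ascr_i$. If $A\in \Ascr_i$ then Lemma \ref{lem: prop F imlies prop IndF} gives $\bL\iota_i^{*}(h^A_{\Ascr_i})\cong h^A_\Cscr$. Therefore the localising subcategory of $\bD(\Cscr)$ generated by $\bigcup_i \bD(\Ascr_i)$ contains every compact generator, so it is all of $\bD(\Cscr)$. Combined with the semi-orthogonality from the previous step, this shows that the smallest triangulated subcategory containing the $\bD(\Ascr_i)$ is $\bD(\Cscr)$, giving the claimed semi-orthogonal decomposition.

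The only step that requires any real thought is the semi-orthogonality computation; once one correctly unwinds that the restriction of $\bL\iota_j^{*}$ to $\Ascr_i$ is controlled by the `off-diagonal' hom-complexes $\Cscr(\Ascr_i,\Ascr_j)$, everything else is a formal consequence of Lemma \ref{lem: prop F imlies prop IndF} and the fact that representables compactly generate derived categories of dg categories.
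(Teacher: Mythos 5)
Your proof is correct and takes a genuinely different route from the paper's. The paper reduces to $n=2$ by induction and then argues through admissibility: since $\bL b^*$ is fully faithful with a right adjoint, $\bD(\Bscr)$ is right admissible and yields $\bD(\Cscr)=\langle \bD(\Bscr)^\perp,\bD(\Bscr)\rangle$; after checking $\bD(\Ascr)\subseteq\bD(\Bscr)^\perp$ via $b_*a^*=0$, one peels off $\bD(\Ascr)$ the same way and finishes by showing the leftover right orthogonal vanishes. You instead verify semi-orthogonality and generation directly and uniformly in $n$, bypassing both the induction and the admissibility machinery. The key vanishing in your semi-orthogonality step --- that $(\iota_i)_*\bL\iota_j^*N$ vanishes because $\Cscr(\Ascr_i,\Ascr_j)=0$ for $i>j$ --- is precisely the paper's observation $b_*a^*=0$, and the paper in fact remarks at the end of its proof that one can ``alternatively'' argue by generation, which is essentially what you do. Your version is cleaner and avoids induction; the paper's version makes explicit the extra information that each component is admissible in the successive orthogonals.

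Two small expository points. The adjunctions you name in the semi-orthogonality step are slightly muddled: what you actually use is $\bL\iota_i^*\dashv(\iota_i)_*$ to transport the Hom to $\bD(\Ascr_i)$, and the computation of $(\iota_i)_*\bL\iota_j^*N$ you then give is correct. Also, your generation step shows that the \emph{localizing} subcategory generated by $\bigcup_i\bD(\Ascr_i)$ is all of $\bD(\Cscr)$, whereas the paper's definition of an SOD asks that the smallest \emph{triangulated} subcategory be everything. These coincide here --- each $\bD(\Ascr_i)$ is closed under direct sums because $\bL\iota_i^*$ commutes with them, and using the semi-orthogonality one checks that the class of objects admitting a filtration with graded pieces in the $\bD(\Ascr_i)$'s is a triangulated subcategory closed under direct sums, hence equals both the triangulated hull and the localizing hull --- but that bridge is implicit in your write-up and deserves a sentence.
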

		\begin{proof}
			By induction, we can reduce to the case $n=2$ (for the induction step see e.g.\ the beginning of the proof of Proposition \ref{prop: smoothness directed dg cat} below).
			This is then essentially \cite[Proposition 4.6]{KuznetsovLunts}, but, as the set-up is slightly different, we repeat the argument for convenience of the reader.
			
			So, let $\Cscr=\left(\begin{smallmatrix}	\Ascr & 0 \\ \phi & \Bscr \end{smallmatrix}\right)$ and denote by $a:\Ascr\hookrightarrow\Cscr$ and $b:\Bscr\hookrightarrow\Cscr$ the inclusion functors.
			We have to show $\bD(\Cscr)=\langle\bD(\Ascr), \bD(\Bscr) \rangle$.
			
			First note that $a^*$ is exact (see Lemma \ref{lem: functors A,B,C} below) and, by Lemma \ref{lem: prop F imlies prop IndF}, $\bL a^*=a^*$ and $\bL b^*$ are fully faithful as $a$ and $b$ are, so we identify their domains with their essential images.
			As $\bL b^*$ admits a right adjoint, we have a semi-orthogonal decomposition
			\[
			\bD(\Cscr) = \langle\bD(\Bscr)^\perp, \bD(\Bscr) \rangle.
			\]
			Moreover, $b_* a^*=-\otimes^{\bL}_{\Ascr} {}_a\Cscr_b = 0$ as ${}_a\Cscr_b=0$ due to the directedness. 
			So, $\bD(\Ascr)\subseteq\bD(\Bscr)^\perp$.
			Thus, as $ a^*$ also admits a right adjoint, we obtain a further semi-orthogonal decomposition  
			\[
			\bD(\Bscr)^\perp = \langle\bD(\Ascr)^\perp\cap\bD(\Bscr)^\perp, \bD(\Ascr) \rangle.
			\]
			But  $\bD(\Ascr)^\perp\cap\bD(\Bscr)^\perp=0$ as $\bD(\Cscr)$ is generated by the representable dg modules $h^A_\Cscr= a^* h^A_\Ascr$ and $h^B_\Cscr=\bL b^* h^B_\Bscr$ with $A\in\Ascr$ and $B\in\Bscr$.
			
			Alternatively, one can also argue that $\bD(\Ascr)$ and $\bD(\Bscr)$ generate $\bD(\Cscr)$ using the distinguished triangle \eqref{eq: dist trian in dgmod C} below.
		\end{proof}

	\subsection{Smoothness and properness}
		Recall the following Theorem of Lunts and Schnürer.
		\begin{theorem}[{\cite[Theorem 3.24]{LuntsSchnurer}}]\label{thm: LS}
			Let $
			\Cscr = \left(\begin{smallmatrix}	\Ascr & 0 \\ \phi & \Bscr \end{smallmatrix}\right)
			$.
			The following are equivalent
			\begin{enumerate}
				\item $\Ascr$ and $\Bscr$ are smooth and $\phi$ is perfect,
				\item $\Cscr$ is smooth.
			\end{enumerate}
		\end{theorem}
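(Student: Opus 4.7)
The plan is to view $\Cscr^{\op}\otimes_\kk\Cscr$ as itself a directed dg category (in four pieces) and exploit Proposition \ref{prop: SOD D(directed)} on its derived bimodule category. Directedness of $\Cscr$ (i.e.\ $\Cscr(B,A)=0$) means that, after ordering the four full dg subcategories as $\Bscr^{\op}\otimes_\kk\Ascr,\,\Ascr^{\op}\otimes_\kk\Ascr,\,\Bscr^{\op}\otimes_\kk\Bscr,\,\Ascr^{\op}\otimes_\kk\Bscr$, every backward Hom-complex in $\Cscr^{\op}\otimes_\kk\Cscr$ contains a vanishing $\Cscr(B,A)$ factor and thus vanishes; hence Proposition \ref{prop: SOD D(directed)} applies and yields a four-term semi-orthogonal decomposition
$$\bD(\Cscr^{\op}\otimes_\kk\Cscr) = \langle \bD(\Bscr^{\op}\otimes_\kk\Ascr),\,\bD(\Ascr^{\op}\otimes_\kk\Ascr),\,\bD(\Bscr^{\op}\otimes_\kk\Bscr),\,\bD(\Ascr^{\op}\otimes_\kk\Bscr)\rangle$$
whose embeddings are the derived induction functors along the fully faithful dg inclusions. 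Reading off the matrix presentation of $\Cscr$, the diagonal $\Cscr$-bimodule has components $0$, $\Ascr$, $\Bscr$, and $\phi$ in these four respective pieces.

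With this setup, both directions of the theorem follow from standard transfer lemmas for compactness. For (1) $\Rightarrow$ (2), the embeddings are fully faithful and preserve compactness (their right adjoints are restrictions and hence commute with direct sums, cf.\ Lemma \ref{lem: nice functor preserving or reflecting compactness}), so perfectness of $\Ascr$, $\Bscr$, and $\phi$ in their respective bimodule categories transports to perfectness of their images in $\bD(\Cscr^{\op}\otimes_\kk\Cscr)$; the $\Cscr$-diagonal is then an iterated cone of perfect objects (by the semi-orthogonal filtration) and hence perfect, i.e.\ $\Cscr$ is smooth. For (2) $\Rightarrow$ (1), apply Lemma \ref{lem: compact sod} iteratively to the four-term decomposition, peeling off one component at a time; the hypothesis of that lemma is met at each stage because each leftmost piece is generated by representables which remain compact in the ambient $\bD(\Cscr^{\op}\otimes_\kk\Cscr)$ (derived induction along a fully faithful dg functor maps representables to representables). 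Hence perfectness of the $\Cscr$-diagonal descends to perfectness of each of its four components in $\bD(\Cscr^{\op}\otimes_\kk\Cscr)$, and Lemma \ref{lem: nice functor preserving or reflecting compactness}(i), applied to the fully faithful, direct-sum-commuting embeddings, pulls this back to perfectness of $\Ascr$ and $\Bscr$ in their own bimodule categories (i.e.\ smoothness) and of $\phi$.

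The main obstacle is the initial bookkeeping: verifying directedness of $\Cscr^{\op}\otimes_\kk\Cscr$ with the stated ordering and correctly identifying the four pieces of the $\Cscr$-diagonal as $0$, $\Ascr$, $\Bscr$, and $\phi$. Once this is in place, both implications reduce to formal applications of the structural compactness lemmas already established.
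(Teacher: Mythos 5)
You identify the four restrictions of the diagonal $\Cscr$-bimodule to the pieces $\Bscr^{\op}\otimes_\kk\Ascr$, $\Ascr^{\op}\otimes_\kk\Ascr$, $\Bscr^{\op}\otimes_\kk\Bscr$, $\Ascr^{\op}\otimes_\kk\Bscr$ correctly as $0$, $\Ascr$, $\Bscr$, $\phi$, and the directedness check for this ordering is sound. The gap lies in the next step: the semi-orthogonal filtration pieces of the diagonal are \emph{not} the derived inductions $\bL a_i^*$ applied to these restrictions. Derived induction along $a_i:\Ascr_i\hookrightarrow\Cscr^{\op}\otimes_\kk\Cscr$ does not produce a module concentrated at piece $i$. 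For instance, writing $a_1:\Ascr^{\op}\otimes_\kk\Ascr\hookrightarrow\Cscr^{\op}\otimes_\kk\Cscr$ for the inclusion of piece $1$, a co-Yoneda computation (easiest when $\Ascr,\Bscr$ are dg algebras and $\Cscr^{\op}\otimes_\kk\Cscr$ is a matrix algebra with idempotents $e_i$) gives $a_{0,*}\bL a_1^*(\Ascr)\cong\Ascr\otimes_{\Ascr^{\op}\otimes_\kk\Ascr}(\phi^{\op}\otimes_\kk\Ascr)\cong\phi$, which is nonzero, whereas the diagonal restricts to $0$ on piece $0$. Thus $\bL a_1^*(\Ascr)$ is not the $\bD(\Ascr^{\op}\otimes_\kk\Ascr)$-component of the diagonal, and the claim that the $\Cscr$-diagonal is an iterated cone of $\bL a_i^*$ of $0,\Ascr,\Bscr,\phi$ is unjustified.

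The actual SOD filtration pieces are inductions of modules differing from the restrictions by cones involving derived tensor products against the gluing bimodules of $\Cscr^{\op}\otimes_\kk\Cscr$ --- precisely the corrections that Proposition~\ref{prop: perf iff} and the triangle~\eqref{eq: dist trian in dgmod C} are built to track. Concretely, in $(2)\Rightarrow(1)$, peeling with Lemma~\ref{lem: compact sod} does give compactness of $\bL a_3^*(a_{3,*}\Cscr)=\bL a_3^*(\phi)$, hence $\phi$ perfect; but the next piece is $\bL a_2^*$ of a cone of $\Bscr$ against a derived tensor of $\phi$ with a gluing bimodule, from which perfectness of $\Bscr$ itself does not follow without further argument. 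The same mismatch sinks $(1)\Rightarrow(2)$: transporting perfectness of $0,\Ascr,\Bscr,\phi$ under $\bL a_i^*$ gives perfect objects that are not the SOD pieces of the diagonal. Repairing the argument along your lines would require bookkeeping these cone corrections, essentially an iterated application of Proposition~\ref{prop: perf iff}, rather than a direct transfer. (Note also that the paper cites this theorem from Lunts and Schn\"urer and does not reprove it, so there is no paper proof to compare against.)
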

		In this subsection, we give some sufficient conditions for the smoothness and properness of directed dg category when $n>2$.
		The conditions we give are far from being necessary, but suffice for our purposes.
		We elaborate a bit on necessary conditions in Appendix \ref{Asec: more on smoothness}.
		
		Our goal is to prove the following proposition.
		
		\begin{proposition}\label{prop: smoothness directed dg cat}
			Let $\Cscr$ be a directed dg category as in Equation \eqref{eq: lower matrix dg cat}.
			Assume the dg categories $\Ascr_i$ are smooth and the dg bimodules $\phi_{ij}$ are right perfect.
			Then, the directed dg category $\Cscr$ is smooth.
			Moreover, if the $\Ascr_i$ are proper, then so is $\Cscr$.
		\end{proposition}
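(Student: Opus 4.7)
The plan is to induct on $n$, with the base case $n=2$ being Theorem \ref{thm: LS} (and $n=1$ trivial). For the inductive step, I would decompose $\Cscr$ as a $2\times 2$ block by separating off the smallest index:
\[
\Cscr = \begin{pmatrix} \Ascr_0 & 0 \\ \phi & \Cscr' \end{pmatrix},
\]
where $\Cscr'$ is the full dg subcategory on $\bigsqcup_{i=1}^{n-1}\Obj\Ascr_i$ (itself a directed dg category of length $n-1$ with components $\Ascr_1,\dots,\Ascr_{n-1}$ and bimodules $\phi_{ij}$ for $1\leq j<i\leq n-1$), and $\phi$ is the $(\Cscr',\Ascr_0)$-bimodule assembled from the $\phi_{i,0}$ for $i\geq 1$. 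By the inductive hypothesis applied to $\Cscr'$ (whose components are smooth and whose bimodules $\phi_{ij}$ with $j\geq 1$ are right perfect by assumption), $\Cscr'$ is smooth.

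The crux is then to verify that $\phi$ is perfect as a $(\Cscr',\Ascr_0)$-bimodule so that Theorem \ref{thm: LS} applies to give smoothness of $\Cscr$. Here I would invoke Lemma \ref{lem: right perfect sometimes gives perfect}: since $\Cscr'$ is smooth, it suffices to check that $\phi$ is right $\Ascr_0$-perfect. But for any object $C\in\Cscr'$, $C$ lies in some $\Ascr_i$ with $i\geq 1$, and $\phi(-,C)$ agrees with $\phi_{i,0}(-,C)$ as a dg $\Ascr_0$-module, which is perfect by the hypothesis that $\phi_{i,0}$ is right perfect. This is really the only non-routine step, and the main thing to watch is the bookkeeping of which variable of $\phi$ plays the role of the `right' (contravariant) one in the convention of the paper.

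For the properness statement, one runs the same induction. Granting by induction that $\Cscr'$ is proper, the hom-complexes of $\Cscr$ split into three cases: those landing inside an $\Ascr_i$ (perfect over $\kk$ by assumption), those inside $\Cscr'$ (perfect by induction), and those of the form $\Cscr(A_0,C)=\phi(A_0,C)$ with $A_0\in\Ascr_0$ and $C\in\Ascr_i$. For the last case, right perfectness of $\phi_{i,0}$ says $\phi_{i,0}(-,C)$ lies in $\Thick(h^{A_0'})$ in $\bD(\Ascr_0)$, and properness of $\Ascr_0$ then forces its value at $A_0$ to be a perfect $\kk$-complex, as required. Hence $\Cscr$ is proper.
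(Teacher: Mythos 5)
Your proof is correct, and it takes a genuinely different route from the paper's. The paper also inducts on $n$, but it \emph{merges} the two top blocks: it forms $\Ascr:=\left(\begin{smallmatrix} \Ascr_0 & 0 \\ \phi_{10} & \Ascr_1 \end{smallmatrix}\right)$ and recurses on the $(n-1)$-block directed dg category with $\Ascr$ in the corner. The cost of merging is that right perfectness of the new glueing bimodules $\phi_k={}_{\Ascr_k}\Cscr_\Ascr$ over $\Ascr$ is no longer immediate, and the paper detours through Proposition \ref{prop: perf iff} (perfectness of a module over a two-block directed dg category is detected on its restrictions) to recover it from right perfectness of $\phi_{k0}$ and $\phi_{k1}$. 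Your decomposition instead \emph{peels off} $\Ascr_0$ from the top-left corner and recurses on the bottom-right $(n-1)$-block $\Cscr'$; because the block you peel off is precisely the one $\phi$ is evaluated at in its contravariant slot, right $\Ascr_0$-perfectness of $\phi$ is literally the hypothesis on the $\phi_{i0}$'s, with no extra module-theoretic lemma required. You then upgrade to full perfectness via Lemma \ref{lem: right perfect sometimes gives perfect} using smoothness of $\Cscr'$ from the induction, and close with Theorem \ref{thm: LS}. The upshot is a slightly leaner argument that avoids Proposition \ref{prop: perf iff} entirely. Your properness argument is also sound (your three cases overlap a bit, since a hom-complex within an $\Ascr_i$ for $i\geq 1$ also lies in $\Cscr'$, but no harm done); the only non-trivial case is the cross-term $\phi_{i0}(A_0,C)$, and right perfectness together with properness of $\Ascr_0$ alone does the job, whereas the paper instead quotes full perfectness of the $\phi_{ij}$ and properness of both $\Ascr_i$ and $\Ascr_j$.
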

		
		We will prove this by induction on $n$, reducing to Theorem \ref{thm: LS}. 
		The main ingredient in proving the above proposition is the following result, which characterises perfect modules over directed dg categories for $n=2$.
		
		\begin{proposition}\label{prop: perf iff}
			Let
			$
			\Cscr = \left(\begin{smallmatrix}	\Ascr & 0 \\ \phi & \Bscr \end{smallmatrix}\right)$ and denote by $a:\Ascr\hookrightarrow\Cscr$ and $b:\Bscr\hookrightarrow\Cscr$ the inclusion functors. 
			Then, a dg $\Cscr$-module $M$ is perfect if and only if the dg modules $b_*M$ and $\cone(b_*M\otimes_{\Bscr}^{\bL}\phi \to a_*M)$ are perfect, where the morphism is induced by the $\Cscr$-action\footnote{With notation of Lemma \ref{lem: functors A,B,C} below, it is obtained by precomposing $\mu$ with the natural morphism $M_\Bscr\otimes^{\bL}_\Bscr\phi\to M_\Bscr\otimes_\Bscr\phi$.}.
			
			In particular, whenever $\phi$ is right perfect, $M$ is perfect if and only if its restrictions $a_*M$ and $b_*M$ are perfect.
		\end{proposition}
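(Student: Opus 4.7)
The plan is to recognise $M$ as sitting in a canonical distinguished triangle coming from the semi-orthogonal decomposition $\bD(\Cscr)=\langle a^*\bD(\Ascr),\,\bL b^*\bD(\Bscr)\rangle$ of Proposition \ref{prop: SOD D(directed)}, and then to apply Lemma \ref{lem: compact sod}. The inputs needed are the explicit form of the triangle together with the reflection/preservation of compactness by the fully faithful embeddings $a^*$ and $\bL b^*$.

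For the triangle, I would start from the counit $\epsilon:b^*b_*M\to M$ of the $(b^*,b_*)$-adjunction. Unwinding definitions one finds that $b^*b_*M$ has $\Bscr$-part equal to $b_*M$ (with $\epsilon$ the identity there) and $\Ascr$-part equal to $b_*M\otimes_\Bscr \phi$, with $\epsilon$ on $\Ascr$-objects equal to the action morphism $\mu:b_*M\otimes_\Bscr\phi\to a_*M$ mentioned in the footnote. Replacing $b_*M$ by an h-flat resolution we pass to the derived counit $\bL b^* b_*M\to M$ in $\bD(\Cscr)$ whose cone has acyclic $\Bscr$-part (the cone of an identity) and $\Ascr$-part quasi-isomorphic to $\cone(b_*M\otimes_\Bscr^{\bL}\phi\to a_*M)$. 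Because $\Cscr(B,A)=0$ for $B\in\Bscr, A\in\Ascr$, i.e.\ ${}_a\Cscr_b=0$, any dg $\Cscr$-module whose $\Bscr$-part is acyclic is isomorphic in $\bD(\Cscr)$ to $a^*$ of its $\Ascr$-part; equivalently, the cone automatically lies in the right-orthogonal $(\bL b^*\bD(\Bscr))^\perp=a^*\bD(\Ascr)$. One thus obtains in $\bD(\Cscr)$ the distinguished triangle
\[
\bL b^*b_*M\to M\to a^*\cone\bigl(b_*M\otimes_\Bscr^{\bL}\phi\to a_*M\bigr)\to .
\]

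Now Lemma \ref{lem: compact sod} applies: the left-admissible component $a^*\bD(\Ascr)$ is generated as a localising subcategory by the compact objects $\{a^*h_\Ascr^A=h_\Cscr^A\}_{A\in\Ascr}\subseteq\bD(\Cscr)^c$ (using Lemma \ref{lem: prop F imlies prop IndF} to transport the compact generators of $\bD(\Ascr)$). Hence $M$ is perfect if and only if both $\bL b^*b_*M$ and $a^*\cone(b_*M\otimes_\Bscr^{\bL}\phi\to a_*M)$ are perfect in $\bD(\Cscr)$. By Lemma \ref{lem: prop F imlies prop IndF} the functors $a^*$ and $\bL b^*$ are fully faithful, commute with direct sums and preserve compactness (they send representables to representables), so by Lemma \ref{lem: nice functor preserving or reflecting compactness} they also reflect compactness; this yields the first claimed equivalence. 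For the ``in particular'' statement, right perfectness of $\phi$ is precisely the assertion that $-\otimes_\Bscr^{\bL}\phi$ preserves perfectness, so when $b_*M$ is perfect so is $b_*M\otimes_\Bscr^{\bL}\phi$, and two-out-of-three in the triangulated subcategory of perfect objects of $\bD(\Ascr)$ makes the cone perfect if and only if $a_*M$ is. The main obstacle is the careful derivation of the distinguished triangle, in particular justifying the derived replacement of the $\Ascr$-part and the identification of the third term as lying in $a^*\bD(\Ascr)$; once the triangle is in place, the compactness analysis is formal.
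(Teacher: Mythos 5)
Your proof is correct and follows essentially the same route as the paper: take the cone of the (derived) counit $\bL b^*b_*M\to M$, identify the third term of the resulting triangle as $a^*\cone(b_*M\otimes^{\bL}_\Bscr\phi\to a_*M)$ using the observation that its $\Bscr$-part is acyclic, and then apply Lemma \ref{lem: compact sod} together with Proposition \ref{prop: SOD D(directed)}. The only differences are technical bookkeeping — the paper replaces $M$ and $\phi$ by h-projective resolutions before forming the triangle in $Z^0\dgMod\Cscr$, whereas you resolve $b_*M$ by an h-flat complex after the fact (also, the $\Bscr$-part of your cone is the cone of a quasi-isomorphism, not literally of an identity, but the acyclicity conclusion is unaffected).
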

		
		In the remainder of this subsection, we state some preliminary lemmas after which we prove both propositions.
		
		\begin{lemma}[{See also \cite[\S3.2]{LuntsSchnurer}}]\label{lem: functors A,B,C}
			With notation as in Proposition \ref{prop: perf iff}, there is an equivalence of dg categories between $\dgMod\Cscr$ and the dg category consisting of triples $(M_\Ascr,M_\Bscr, \mu)$ where $M_\Ascr$ and $M_\Bscr$ are, respectively, a dg $\Ascr$-and $\Bscr$-module and $\mu:M_\Bscr\otimes_\Bscr\phi\to M_\Ascr$ is a morphism of dg $\Ascr$-modules.
			
			Moreover, $a:\Ascr\hookrightarrow\Cscr$ and $b:\Bscr\hookrightarrow\Cscr$ induce the following dg functors:
			\begingroup
			\allowdisplaybreaks
			\begin{align*}
				&\begin{aligned}
					a_*:=\Res_a:\dgMod\Cscr&\to\dgMod\Ascr,\\
					(M_\Ascr,M_\Bscr, \mu)&\mapsto M_\Ascr,						
				\end{aligned}\\
				&\begin{aligned}
					b_*:=\Res_b:\dgMod\Cscr&\to\dgMod\Bscr,\\
					(M_\Ascr,M_\Bscr, \mu)&\mapsto M_\Bscr,						
				\end{aligned}\\
				&\begin{aligned}
					a^*:=\Ind_a:\dgMod\Ascr&\to\dgMod\Cscr,\\
					M&\mapsto (M,0,0),
				\end{aligned}\\
				&\begin{aligned}
					b^*:=\Ind_b:\dgMod\Bscr&\to\dgMod\Cscr,\\
					M&\mapsto (M\otimes_\Bscr\phi,M,\id),
				\end{aligned}\\
				&\begin{aligned}
					a^!:\dgMod\Ascr&\to\dgMod\Cscr,\\										
					M&\mapsto (M, B\mapsto\Hom(\phi(-,B),M), \textrm{evaluation}),
				\end{aligned}\\
				&\begin{aligned}
					b^!:\dgMod\Bscr&\to\dgMod\Cscr,\\	
					M&\mapsto (0, M, 0)\  (= C\mapsto\Hom(b_*h^C, M)),
				\end{aligned}
			\end{align*}
			\endgroup
			and 
			\[
			a^*\dashv a_*\dashv a^!,\quad b^*\dashv b_* \dashv b^!.
			\]
		\end{lemma}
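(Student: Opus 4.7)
The proof is a direct verification that exploits the block structure of $\Cscr$; there is no deep obstacle, only bookkeeping, especially around the paper's convention that the contravariant variable of a bimodule is written first.

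First I would establish the equivalence of dg categories. A dg $\Cscr$-module $M$ is by definition a dg functor $\Cscr^{\op} \to \Cscr(\kk)$. Because $\Obj\Cscr = \Obj\Ascr \sqcup \Obj\Bscr$, restricting $M$ separately to $\Ascr^{\op}$ and to $\Bscr^{\op}$ yields $M_\Ascr := a_*M$ and $M_\Bscr := b_*M$. The remaining datum of $M$ is the action of the two mixed hom-complexes: the upper-right block $\Cscr(B,A)$ vanishes by directedness and contributes nothing, while the lower-left block $\Cscr(A,B) = \phi(A,B)$ gives maps $\phi(A,B) \otimes M_\Bscr(B) \to M_\Ascr(A)$ that are natural in $A, B$ and compatible with the $(\Bscr,\Ascr)$-bimodule structure on $\phi$ (the compatibilities come from associativity of composition in $\Cscr$). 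By the universal property of the coend, this collection of maps is the same datum as a single morphism $\mu: M_\Bscr \otimes_\Bscr \phi \to M_\Ascr$ of dg $\Ascr$-modules. Running the same analysis on morphisms of dg $\Cscr$-modules upgrades the correspondence to an equivalence of dg categories.

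Next I would read off the six explicit formulas from this triple description. The restrictions $a_*$ and $b_*$ are immediate projections. For the inductions I use $F^*(-) = - \otimes_? {}_F\Cscr$ and compute the bimodules ${}_a\Cscr$ and ${}_b\Cscr$ block-by-block. For ${}_a\Cscr(C,A)$: the value is $\Ascr(A',A)$ when $C = A' \in \Ascr$, and $0$ when $C \in \Bscr$ by directedness; tensoring and invoking dg Yoneda gives $a^*M = (M,0,0)$. For ${}_b\Cscr(C,B)$: the value is $\Bscr(B',B)$ for $C = B'$ and $\phi(A,B)$ for $C = A$; the coend then produces $b^*M = (M \otimes_\Bscr \phi, M, \id)$, where the structure map is the identity by construction of the $\Ascr$-part. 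For the right adjoints I use the tautological formula $F^!M(C) = (\dgMod\Ascr)(F_*h^C, M)$ together with the description $a_*h^A = h^A_\Ascr$, $a_*h^B = \phi(-,B)$, $b_*h^A = 0$, $b_*h^B = h^B_\Bscr$, which follow from the hom-complex computations just made. Applying $\Hom(-,M)$ and dg Yoneda yields the stated formulas for $a^!$ and $b^!$.

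Finally, the adjoint triples $a^* \dashv a_* \dashv a^!$ and $b^* \dashv b_* \dashv b^!$ are instances of the general adjoint triple $F^* \dashv F_* \dashv F^!$ associated to any dg functor, recalled in diagram \eqref{eq: functors on dgmod induced by dg functor}; no further work is needed once the formulas above are in place. The only genuinely delicate part, as noted, is maintaining consistent variances and module sides throughout the block computations.
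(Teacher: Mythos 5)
Your proof is correct. Note that the paper does not actually supply a proof of this lemma; it is stated with a reference to Lunts--Schn\"urer \cite[\S3.2]{LuntsSchnurer} and used as a known fact, so there is no argument in the paper to compare against. Your argument is the standard one: decompose a dg $\Cscr$-module along the object decomposition $\Obj\Cscr = \Obj\Ascr\sqcup\Obj\Bscr$, observe that the only nonzero mixed hom-complex is $\Cscr(A,B)=\phi(A,B)$ by directedness, and repackage the resulting action maps $\phi(A,B)\otimes M_\Bscr(B)\to M_\Ascr(A)$ as a single morphism $\mu\colon M_\Bscr\otimes_\Bscr\phi\to M_\Ascr$ via the coend. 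The block-by-block computations of ${}_a\Cscr$, ${}_b\Cscr$ and of $a_*h^C$, $b_*h^C$ (in particular $a_*h^B=\phi(-,B)$ and $b_*h^A=0$) are exactly right, and the adjoint triples then follow from the generic $F^*\dashv F_*\dashv F^!$ as you say. This is the expected proof and it has no gaps.
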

		
		\begin{lemma}\label{lem: restr h-proj}
			With notation as in Proposition \ref{prop: perf iff}, the dg functor $b_*$ preserves h-projective dg modules and when $\phi$ is right h-projective so does $a_*$.
		\end{lemma}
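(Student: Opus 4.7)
The plan is to use the standard characterisation of h-projective dg modules together with the explicit formulas for the right adjoints $b^!$ and $a^!$ from Lemma \ref{lem: functors A,B,C}. Recall that a dg module $M$ over a dg category $\Dscr$ is h-projective if and only if, for every acyclic dg $\Dscr$-module $N$, the Hom-complex $\Hom_\Dscr(M,N)$ is acyclic. Since restriction along an inclusion of dg categories has a dg-right adjoint (not merely a derived one), I can transfer the question across the adjunction at the level of Hom-complexes rather than up to quasi-isomorphism, which keeps the argument clean.

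For the first statement, let $M$ be an h-projective dg $\Cscr$-module and let $N$ be an acyclic dg $\Bscr$-module. The adjunction $b_*\dashv b^!$ yields an isomorphism of $\kk$-complexes
\[
\Hom_\Bscr(b_*M,N)\ \cong\ \Hom_\Cscr(M,b^!N).
\]
By Lemma \ref{lem: functors A,B,C}, $b^!N=(0,N,0)$, and, since acyclicity of a dg $\Cscr$-module amounts to the componentwise acyclicity of its underlying $\Ascr$-module and $\Bscr$-module (the object-sets of $\Ascr$ and $\Bscr$ together exhaust those of $\Cscr$), $b^!N$ is acyclic in $\dgMod\Cscr$. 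Therefore the right-hand side above is acyclic by h-projectivity of $M$, so $b_*M$ is h-projective.

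For the second statement, I repeat the argument with $a_*\dashv a^!$. Here Lemma \ref{lem: functors A,B,C} gives
\[
a^!N=\bigl(N,\ B\mapsto \Hom_\Ascr(\phi(-,B),N),\ \mathrm{evaluation}\bigr).
\]
Its acyclicity as a dg $\Cscr$-module reduces to acyclicity of the $\Ascr$-component $N$ (given by hypothesis) and acyclicity of the $\Bscr$-component $B\mapsto\Hom_\Ascr(\phi(-,B),N)$. For the latter, the assumption that $\phi$ is right h-projective means exactly that each $\phi(-,B)$ is h-projective in $\dgMod\Ascr$, so $\Hom_\Ascr(\phi(-,B),N)$ is acyclic for every $B\in\Bscr$. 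Hence $a^!N$ is acyclic, and the same adjunction computation as before shows that $a_*M$ is h-projective.

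There is really no obstacle in this argument: the work has already been done in setting up the adjunctions and writing down the explicit formulas in Lemma \ref{lem: functors A,B,C}. The only thing one should be careful about is that the Hom-adjunction here is an honest isomorphism of dg modules (not merely a derived statement), which is what allows acyclicity to be transferred without any h-projectivity hypothesis on the target side.
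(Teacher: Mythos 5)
Your argument is correct and coincides with the paper's proof, which likewise observes that $b^!$ and, under the right h-projectivity hypothesis, $a^!$ preserve acyclics and then appeals to "abstract nonsense." You have merely unwound the abstract nonsense: the honest dg adjunction $\Hom_\Bscr(b_*M,N)\cong\Hom_\Cscr(M,b^!N)$ (resp.\ $\Hom_\Ascr(a_*M,N)\cong\Hom_\Cscr(M,a^!N)$) together with the componentwise description of acyclicity.
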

		\begin{proof}
			Considering Lemma \ref{lem: functors A,B,C}, we observe that $b^!$ preserves acyclics and $a^!$ does so when $\phi$ is right h-projective.
			The claim follows from this by abstract nonsense. 
		\end{proof}
		
		
		\begin{proof}[Proof of Proposition \ref{prop: perf iff}]
			We may assume that $\phi$ is h-projective as bimodule.
			Indeed, let $\widetilde{\phi}\to\phi$ be an h-projective resolution.
			As the vertical arrow in the commutative diagram
			\[
			\begin{tikzcd}[sep=tiny]
				&& \left(\begin{smallmatrix}	\Ascr & 0 \\ \widetilde{\phi} & \Bscr \end{smallmatrix}\right) \\
				\left(\begin{smallmatrix}	\Ascr & 0 \\ 0 & \Bscr \end{smallmatrix}\right) \\
				&& \left(\begin{smallmatrix}	\Ascr & 0 \\ \phi & \Bscr \end{smallmatrix}\right)
				\arrow[from=2-1, to=3-3]
				\arrow[from=2-1, to=1-3]
				\arrow[from=1-3, to=3-3]
			\end{tikzcd}
			\]
			is a quasi-equivalence, we may replace $\phi$ by $\widetilde{\phi}$. (We can `push down' the result from $\left(\begin{smallmatrix}	\Ascr & 0 \\ \widetilde{\phi} & \Bscr \end{smallmatrix}\right)$ to $\left(\begin{smallmatrix}	\Ascr & 0 \\ \phi & \Bscr \end{smallmatrix}\right)$).
			Moreover, as we work over a field, $\phi$ is also h-projective as right (and left) dg module.
			
			To prove the claim, we may also assume $M$ to be h-projective, replacing it by an h-projective resolution if necessary.
			Taking the cone of the counit $b^*b_*\to1$ in $\dgMod\Cscr$ gives the triangle
			\begin{equation}\label{eq: triangle induced by counit}
				\left(\begin{smallmatrix}	M_\Bscr\otimes_{\Bscr}\phi \\ M_\Bscr	\end{smallmatrix}\right)\to 
				\left(\begin{smallmatrix}	M_\Ascr \\ M_\Bscr	\end{smallmatrix}\right)\to 
				\left(\begin{smallmatrix}	\cone(M_\Bscr\otimes_{\Bscr}\phi\to M_\Ascr) \\ \cone(\id_{M_\Bscr})	\end{smallmatrix}\right) \to
			\end{equation}		
			inducing a distinguished triangle in $\bD(\Cscr)$.
			As
			\[
			\left(\begin{smallmatrix}	\cone(M_\Bscr\otimes_{\Bscr}\phi\to M_\Ascr) \\ 0	\end{smallmatrix}\right) \to
			\left(\begin{smallmatrix}	\cone(M_\Bscr\otimes_{\Bscr}\phi\to M_\Ascr) \\ \cone(\id_{M_\Bscr})	\end{smallmatrix}\right)
			\]
			is a quasi-isomorphism (exactness can be checked componentwise), $M$ is h-projective and $\phi$ is right h-projective, we have by Lemmas \ref{lem: functors A,B,C} and \ref{lem: restr h-proj} that the distinguished triangle induced by \eqref{eq: triangle induced by counit} has
			\begin{align*}
				\text{leftmost term} &= \bL b^*(b_*M), \\
				\text{middle term} &= M, \\ 
				\text{rightmost term} &= a^*(\cone(b_*M\otimes_{\Bscr}^{\bL}\phi \to a_*M)).
			\end{align*}
			The claim now follows from Lemmas \ref{lem: nice functor preserving or reflecting compactness}, \ref{lem: compact sod} and Proposition \ref{prop: SOD D(directed)}.
			(Both  $ a^*$ and $\bL b^*$ reflect and preserve perfectness, as they are fully faithful, commute with direct sums and have right adjoints that also commute with direct sums.)
		\end{proof}
		\begin{remark}
			The last claim of the proposition (i.e.\ the case when $\phi$ is right perfect) can also be proven as follows.
			Both $a_*$ and $b_*$ preserve perfectness as they map representable dg modules to perfect ones.
			The requirement that $\phi$ is right perfect is necessary here as 
			\[
			a_*h^B_\Cscr=\phi(-,B)\quad\text{for }B\in\Bscr.
			\]
			The fact that they jointly reflect perfectness can be seen making use of the following distinguished triangle
			\begin{equation}\label{eq: dist trian in dgmod C}
				a^*(b_*M\otimes^{\bL}_\Bscr\phi)\to a^*a_*M\oplus \bL b^*b_*M\to M\to
			\end{equation}
			in $\bD(\Cscr)$. 
			Its existence is shown by reducing to the case $M$ and $\phi$ h-projective, as above, and inferring it from the short exact sequence
			\[
			0\to a^*(b_*M\otimes_\Bscr\phi)\to a^*a_*M\oplus b^*b_*M\to M\to 0
			\]
			in $\dgMod\Cscr$.
		\end{remark}
		
		\begin{proof}[Proof of Proposition \ref{prop: smoothness directed dg cat}]
			We prove this by induction on $n$.
			The case $n=2$ follows from Lemma \ref{lem: right perfect sometimes gives perfect} and Theorem \ref{thm: LS}.
			
			So, let $n>2$. 
			Define 
			\begin{align*}
				&\Ascr := \textrm{the full subcategory on the objects }\Obj\Ascr_0\sqcup\Obj\Ascr_1, \\
				&\phi_k := {}_{\Ascr_k}\Cscr_\Ascr\textrm{ the restriction of the diagonal bimodule}, \\
				&\phi_{ik}\otimes_{\Ascr_k}\phi_k\to \phi_i \textrm{ induced by the composition of }\Cscr.
			\end{align*}
			Then, 
			\[
			\Ascr = 
			\begin{pmatrix}
				\Ascr_0 & 0 \\
				\phi_{10} & \Ascr_1
			\end{pmatrix}
			\]
			and 
			\[
			\Cscr = 
			\begin{pmatrix}
				\Ascr		& 0		& 	0 	& \dots & 0\\
				\phi_{2}	& \Ascr_2 & 0 	& \dots & 0\\
				\phi_{3}	& \phi_{32}	& \Ascr_3 & \dots & 0\\
				\vdots & \vdots & \vdots & \ddots & \vdots \\
				\phi_{n-1}	& \phi_{n-1,2}	& \phi_{n-1,3} & \dots & \Ascr_{n-1} \\
			\end{pmatrix}.
			\]
			By induction, $\Ascr$ is smooth and we will be done, again by induction, if we can show that the $\phi_i$ are right perfect. 
			But this follows immediately from Proposition \ref{prop: perf iff} as, denoting the inclusions $\Ascr_i\hookrightarrow\Ascr$ by $a_i$, 
			\begin{align*}
				&a_{0,*}\phi_k(-, A_k) = \phi_{k0}(-, A_k),\\
				&a_{1,*}\phi_k(-, A_k) = \phi_{k1}(-, A_k),\qquad\text{for }A_k\in\Ascr_k.
			\end{align*}
			
			Lastly, assume in addition that the $\Ascr_i$ are proper.
			As the $\Ascr_i$ are smooth, the bimodules $\phi_{ij}$ are perfect by Lemma \ref{lem: right perfect sometimes gives perfect}. 
			Therefore, $\phi_{ij}(A_j, A_i)$ is $\kk$-perfect for every $A_i\in\Ascr_i$ and $A_j\in\Ascr_j$ (this holds for the representable dg modules hence also for the perfect dg modules).
			Thus, all the hom-complexes in $\Cscr$ are $\kk$-perfect, so by definition $\Cscr$ is proper.
		\end{proof}

\section{Gluing hypercubes of dg categories}\label{sec: gluing dg cat}
		Let $[n]:=\{0,\dots, n-1\}$ (and $[0]:=\varnothing$) and consider a \emph{hypercube} (or \emph{$n$-cube} when we want to emphasise the dimension) of dg categories
		\[
		\underline{\Ascr}:=\underline{\Ascr}_n:=\{\Ascr_I,V_{I,j}\}_{I\subset[n],j\notin I},
		\]
		i.e.\ a collection 
		\[
		\{\Ascr_I \}_{I\subset [n]} 
		\] 
		of dg categories together with dg functors\footnote{
			We use the following notation. If $I\subset[n]$ consists of elements $i_1<\dots<i_r$, we write $\Ascr_{i_1,\dots,i_r}$ for $\Ascr_I$. 
			Moreover, $\widehat{i}_k$\ indicates that the element $i_k$ is omitted from the set.
		}
		\[
		V_{{\{i_1,\dots,\widehat{i}_k,\dots,i_r\}},i_k}: \Ascr_{i_1,\dots,\widehat{i}_k,\dots,i_r}\to \Ascr_{i_1,\dots,i_r}
		\] 
		that \emph{strictly}\footnote{It is more natural to weaken this, only allowing the squares to commute up to some higher data. One need then take this data into consideration when totalising. However, in our setting we can always reduce to strictly commuting squares, so we do this here for simplicity.} commute.
		In short, it is a functor $2^{[n]}\to\dgcat$, where we view $2^{[n]}$ (:= the power set of $[n]$) as a category obtained from the poset ordered by inclusion.
		We will usually write $V_{i_k}$ for $V_{{\{i_1,\dots,\widehat{i}_k,\dots,i_r\}},i_k}$.
		Moreover, for $I=\{i_1<\dots<i_r\}$, we usually write $V_I$ or $V_{i_1\dots i_r}$ for any composition (they are all equal by assumption) of $V_i$'s starting at a $\Ascr_{I'}$, with $I'\subset[n]$ disjoint from $I$, and ending at $\Ascr_{I'\sqcup I}$.
		Giving the same name to different functors is sloppy, but it makes the notation easier.
		As everything strictly commutes, not much confusion can arise in practice, so we hope the reader forgives us.
		
		In addition, we also consider the \emph{punctured hypercube} obtained by removing the dg category $\Ascr_\varnothing$.
		We denote the resulting data by $\underline{\Ascr}_n^\circ$.
		
		In this section, we do the following:
		\begin{itemize}
			\item we construct a dg category $\Glue(\underline{\Ascr}_n^\circ)$ glued from the punctured $n$-cube,
			\item give necessary and sufficient conditions for a natural dg functor $\Ascr_\varnothing\to \Glue(\underline{\Ascr}_n^\circ)$ to be quasi fully faithful,
			\item find suitable conditions to ensure smoothness/properness of $\Glue(\underline{\Ascr}_n^\circ)$. 
		\end{itemize}
		Lastly, we discuss two constructions, stacking and extending, of hypercubes.
		
		For the remainder of this section, let $\underline{\Ascr}_n$ denote a hypercube of dg categories.
		
	\subsection{Totalisation}\label{subsec: Tot}
		We give a procedure to associate a complex to the data of a hypercube with complexes as its vertices.
		This will then easily extend to hypercubes with values in $Z^0\dgMod\Ascr$ for $\Ascr$ a dg category.
		The procedure is a slight modification of the one given by Khovanov in \cite[\S3.3]{KhovanovJones}. 
		We modify it by simultaneously taking the total complex, so that we obtain a complex as opposed to a double complex.
		Moreover, we also `flip' the signs in some sense.
		
		Let $W$ be the graded algebra generated by $X_i$ of degree $-1$ and $\partial_i:=\partial/\partial X_i$ of degree $1$, for $i\in \{0,\dots, n-1\}$, satisfying the relations
		\begin{align*}
			&X_iX_j + X_jX_i = 0, \\
			&\partial_i\partial_j + \partial_j\partial_i = 0, \\
			&\partial_i X_j + X_j\partial_i = \begin{cases}
				1\text{ if }i=j, \\
				0\text{ if }i\neq j.
			\end{cases}
		\end{align*}
		We will make use of the $X_i$'s and $\partial_i$'s to take care of the minus signs in the differential of our complex.
		For this we view below products of $X_i$'s as elements in $W/(\sum_i W\partial_i)$ and consider the endomorphism induced by multiplication by $\partial_i$ on $W/(\sum_i W\partial_i)$. 
		
		Let $\underline{A}=\{A_I^\bullet, \alpha_{I,l}\}_{I\subset [n],l\notin I}$ be a hypercube of $\kk$-complexes.
		We associate a single complex $t(\underline{A})$ to $\underline{A}$, called its \emph{totalisation}, as follows:
		\[
		t(\underline{A}) := \bigoplus_{I\subset [n]} \left(\prod_{l\in [n]\backslash I} X_l \right) \otimes_\kk A_I^\bullet
		\]
		as graded $\kk$-module\footnote{In principle we should write $ \left(\kk\prod_{l\in [n]\backslash I} X_l \right) \otimes_\kk A_I$, but we do not.} with differential given by\footnote{Recall from footnote \ref{foot: monoidal structure C(k)} that signs arise when evaluating this on elements.}
		\[
		d := \bigoplus_{I\subset [n]} \left(\left(\bigoplus_{l\in [n]\backslash I} (\partial_l\cdot -)\otimes_\kk\alpha_{I,l}  \right) + 1\otimes_\kk d_{A_I^\bullet} \right).
		\]
		For notational ease we will often simply write $X_{[n]\backslash I}$ instead of $\left(\prod_{l\in [n]\backslash I} X_l \right)$.
		The following lemma is obligatory.
		\begin{lemma}
			We have $d\circ d=0$.
		\end{lemma}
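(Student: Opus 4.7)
The plan is to decompose the differential $d$ on $t(\underline{A})$ into a ``cube part'' and an ``internal part'' and verify that the three resulting contributions to $d^2$ vanish separately. Write $d = d_H + d_V$ where
\[
d_H := \bigoplus_{I\subset [n]} \bigoplus_{l\in [n]\backslash I} (\partial_l\cdot -)\otimes_\kk\alpha_{I,l}, \qquad d_V := \bigoplus_{I\subset[n]} 1\otimes_\kk d_{A_I^\bullet},
\]
both of total degree $1$. Then $d^2 = d_V^2 + (d_H d_V + d_V d_H) + d_H^2$, and I would handle the three terms in turn.

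First, $d_V^2 = 0$ is immediate from $d_{A_I^\bullet}^2 = 0$ for each $I$. For the cross term $d_H d_V + d_V d_H$, I would compute in a single summand: by the Koszul rule for composing tensor products of morphisms one has
\[
(\partial_l\otimes\alpha_{I,l})\circ(1\otimes d_{A_I^\bullet}) = \partial_l\otimes\alpha_{I,l}d_{A_I^\bullet}, \quad (1\otimes d_{A_{I\cup\{l\}}^\bullet})\circ(\partial_l\otimes\alpha_{I,l}) = -\partial_l\otimes d_{A_{I\cup\{l\}}^\bullet}\alpha_{I,l},
\]
the minus sign coming from the degrees $|\partial_l| = |d| = 1$. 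The sum vanishes because $\alpha_{I,l}$ is a morphism of $\kk$-complexes.

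The main point, and the only step where the hypercube structure really enters, is $d_H^2 = 0$. Expanding, $d_H^2$ is a sum over pairs $(l,l')\in([n]\setminus I)^2$ of terms $\partial_{l'}\partial_l\otimes\alpha_{I\cup\{l\},l'}\alpha_{I,l}$ (no Koszul sign here as $|\alpha| = 0$). The diagonal terms $l = l'$ vanish since $\partial_l^2 = 0$ in $W$. For $l\neq l'$, I would pair the term $(l,l')$ with $(l',l)$: using the anticommutation relation $\partial_l\partial_{l'} = -\partial_{l'}\partial_l$ in $W$, together with the strict commutativity of the $2$-face of the cube on $\{l,l'\}$ that gives $\alpha_{I\cup\{l\},l'}\alpha_{I,l} = \alpha_{I\cup\{l'\},l}\alpha_{I,l'}$, the two paired summands cancel. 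The expected obstacle is purely bookkeeping: keeping Koszul signs straight and ensuring the sign convention baked into the quotient $W/(\sum_i W\partial_i)$ genuinely delivers the required anticommutativity (so one should briefly note that $\partial_l$ acts on a monomial $X_{[n]\setminus I}$ exactly by removing the factor $X_l$ with the appropriate position sign, which is what makes the above pairing argument legitimate).
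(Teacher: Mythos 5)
Your proof is correct and takes the same route the paper does: the paper's one-line argument points to $\partial_l\partial_{l'}=-\partial_{l'}\partial_l$ as the crucial cancellation, which is exactly your $d_H^2=0$ step, and you simply supply the routine $d_V^2=0$ and Koszul-sign cross-term checks that the paper leaves implicit.
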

		\begin{proof}
			This follows as $\partial_l\partial_{l'}=-\partial_{l'}\partial_l$, for $l\neq l'$.
		\end{proof}
		\begin{remark}
			In the construction we did not have to choose any explicit signs in the differential.
			The signs come about when choosing an explicit ordering of $[n]$ and hence of the factors in $\prod_l X_l$.
			However, it is worthwhile to note that any explicit choice, such that every square in the hypercube anti-commutes, gives an isomorphic complex (the algorithm given in \cite{Rickard-stackexchange} also works in this setting). 
			Alternatively, one can argue by viewing the hypercube as a CW-complex and a choice of signs as a cellular 1-cochain whose differential has value -1 on every face.
			Different sign choices then differ by a coboundary as the hypercube is contractible, which induces an isomorphism of the corresponding chain complexes, see e.g. \cite[Lemma 2.2]{OzsvathRasmussenSzabo}. 
		\end{remark}
		We give some examples for small $n$ (omitting the $X_i$'s and $\partial_i$'s from the notation).
		\begin{examples}\label{exs: t} 
			We have the following.
			\begin{enumerate}
				\item A 0-cube of complexes is the same as a complex $A^\bullet$. 
				Clearly, $t(A^\bullet)=A^\bullet$.
				\item A 1-cube of complexes is the same as a morphism of complexes $f:A^\bullet\to B^\bullet$.
				We have\footnote{This is why we put the $\prod_l X_l$'s as first factor in the tensor product.}
				\[
				t(A^\bullet\xrightarrow{f}B^\bullet) = \cone(f).
				\]
				\item For a 2-cube, i.e.\ a square,
				\[
				\Box=
				\begin{tikzcd}
					B^\bullet\ar[r, "f'"] & D^\bullet \\
					A^\bullet\ar[u, "g"]\ar[r, "f"] & C^\bullet\ar[u, "g'"']
				\end{tikzcd}
				\]
				of complexes we have
				\[
				t(\Box)=\Tot(\ldots \to 0 \to A^\bullet \xrightarrow{\left(\begin{smallmatrix} \mp g \\ \pm f \end{smallmatrix}\right)} B^\bullet\oplus C^\bullet \xrightarrow{\left(\begin{smallmatrix} f' &  g' \end{smallmatrix}\right)} D^\bullet \to 0 \to \ldots),
				\]
				where $D^\bullet$ is in the zeroth position and the signs depend on the chosen explicit ordering of $[2]$.
			\end{enumerate}
		\end{examples}
		
		For a morphism $\{f_I\}_{I\subset[n]}:\underline{A}\to \underline{B}$ (which is just a natural transformation between the corresponding functors) we obtain a morphism of complexes
		\begin{equation}\label{eq: functoriality t(-)}
			\oplus_{I\subset [n]} \left( 1\otimes_\kk f_I \right): t(\underline{A})\to t(\underline{B}),
		\end{equation}
		as
		\[
		( 1\otimes_\kk f_I )( 1\otimes_\kk d_{A_I} ) = ( 1\otimes_\kk d_{B_I} )( 1\otimes_\kk f_I )
		\]
		and
		\[
		( 1\otimes_\kk f_I )( (\partial_l\cdot -)\otimes_\kk\alpha_{I,l} ) = ( (\partial_l\cdot -)\otimes_\kk\beta_{I,l} )( 1\otimes_\kk f_I ),
		\]
		since the $\{f_I\}$ have degree zero.
		Hereby, we obtain a functor
		\[
		t:\Fun(2^{[n]},\Csf(\kk))\to \Csf(\kk).
		\]
		Moreover, we can extend \eqref{eq: functoriality t(-)} to the case when the $\{f_I\}$ are not necessarily closed or of degree zero (but of course we do not obtain morphisms of complexes in this way). 
		This allows us, by simply taking the above construction objectwise, to extend the $t$-construction to hypercubes of dg modules over a dg category $\Ascr$ with closed degree zero morphisms as edges.
		We obtain a functor 
		\begin{equation}\label{eq: extending t to dg modules}
			t:\Fun(2^{[n]},Z^0\dgMod\Ascr)\to Z^0\dgMod\Ascr.
		\end{equation}
		In more detail, for a hypercube of dg modules $\underline{M}:2^{[n]}\to Z^0\dgMod\Ascr$, we define
		\[
		t(\underline{M})(A):=t(\underline{M}(A))\quad\text{for }A\in\Ascr
		\]
		($\underline{M}(A)$ is the composition of $\underline{M}$ with `evaluation at $A$')
		and define the action of $\Ascr$ via\footnote{By definition $m\cdot a:=(-1)^{|a||m|} M(a)(m)$, so $a$ acts via $\oplus_{I\subset [n]} \left( 1\otimes_\kk M_I(a) \right)$, the various minus signs cancel each other.} 
		\[
		\left( X_{[n]\backslash I} \otimes_\kk m \right)\cdot a := \left( X_{[n]\backslash I} \otimes_\kk (m\cdot a) \right).
		\]
		
		\begin{remark}\label{rem: modification t()}
			Further along, we will also use the above procedure for hypercubes indexed by index sets different from $2^{[n]}$. 
			For example we will use indexing by 
			\[
			J_{ij}:= \{ I\subset[n] \mid \{i,j\}\subset I\subset\{i,i+1,\dots,j-1,j\} \}.
			\]
			Then, we change $[n]\backslash I$ to $\{i,i+1,\dots,j-1,j\}\backslash I$ in the construction.
			
			More generally, the above procedure works for any set $K$ and subset $J\subset 2^K$ of `hypercube shape', if we change $[n]-I$ by $J_{max}-I$, where $J_{max}\in J$ is the subset of maximal cardinality. 
		\end{remark}
		
	\subsection{The gluing}
		We associate a dg category $\Glue(\underline{\Ascr}_n^\circ)$ to the punctured hypercube $\underline{\Ascr}_n^\circ$, which we will think of as the dg category obtained by gluing the punctured hypercube.
		This is constructed as the dg category of twisted complexes, i.e.\ the pretriangulated hull, over a directed dg category that we construct first.
		An alternative, more explicit, definition, which only considers a subcategory of the twisted complexes, is discussed in Appendix \ref{Asec: alternative gluing}.
		In the case $n=2$, i.e.\ a square, we obtain the usual gluing of two dg categories along a bimodule (of a specific type).
		In \S\ref{subsec: gluing squares}, we compare our conventions for the gluing with others found in the literature.
		
		\subsubsection{Generalised arrow dg category}
		We start by associating a directed dg category to the punctured hypercube.
		In order to define the morphism complexes, let us consider the following subsets of the power set of $[n]$ for any $0\leq i\leq j\leq n-1$:
		\[
		J_{ij}:= \{ I\subset[n] \mid \{i,j\}\subset I\subset\{i,i+1,\dots,j-1,j\} \}.
		\]
		(These form a partition of $2^{[n]}\backslash\varnothing$.)
		
		Define the \emph{generalised arrow dg category}\footnote{
			The name comes from viewing a directed dg category $ \Cscr = \begin{pmatrix} \Ascr & 0 \\ \phi & \Bscr \end{pmatrix}$ as an `arrow dg category', as it effectively makes elements of the dg bimodule $\phi$ into arrows. 
			Directed dg categories can then be thought of as `generalised arrow dg categories'.	
		} 
		$\Gac(\underline{\Ascr}_n^\circ)$ as the dg category having objects 
		\[
		\bigsqcup_{i=0}^{n-1}\Obj\Ascr_i.
		\]
		For $i\leq j$ denote by $\underline{\Ascr}_{J_{ij}}$ the subgraph $\{\Ascr_I\}_{I\subset J_{ij}}$ of $\underline{\Ascr}_n$ viewed as a $((j-i-1)\vee 0)$-cube (we will identify $J_{ij}$ with $[(j-i-1)\vee 0]$ for indexing the hypercube\footnote{$\{(I-\{i,j\})-i-1 \mid I\in J_{ij}\}=[(j-i-1)\vee 0]$, essentially this identification just results in the change we mentioned in Remark \ref{rem: modification t()}.}).
		We can view $\underline{\Ascr}_{J_{ij}}$ as a hypercube of dg $(\Ascr_j,\Ascr_i)$-bimodules, by first considering the vertices as diagonal bimodules and then suitably restricting them along the edges of the hypercube.
		Applying the $t$-construction then yields a dg $(\Ascr_j,\Ascr_i)$-bimodule that can be evaluated on objects $A_i\in\Ascr_i$ and $A_j\in\Ascr_j$.
		We define the morphism complex
		\begin{equation}\label{eq: Hom in Gac}
			\Hom_{\Gac}(A_i, A_j) :=
			\begin{cases}
				t(\underline{\Ascr}_{J_{ij}})(A_i,A_j) & \text{ if } i\leq j, \\
				0 & \text{ if } i>j.
			\end{cases}
		\end{equation}
		Concretely, for $i\leq j$,
		\[
		\Hom_{\Gac}(A_i, A_j) := \bigoplus_{I\subset J_{ij}} \left(\prod_{l\in \{i,\dots j\}\backslash I} X_l \right) \otimes_\kk \Ascr_I(V_{I\backslash\{i\}}A_i,V_{I\backslash\{j\}}A_j)
		\]
		with differential
		\[
		d := \bigoplus_{I\subset J_{ij}} \left(\left( \bigoplus_{l\in \{i,\dots j\}\backslash I} (\partial_l\cdot -)\otimes_\kk V_{I,l} \right) + 1\otimes_\kk d_{\Ascr_I} \right).
		\]
		Sometimes we suppress $\Pi_{l} X_l$ from the notation, denoting a morphism $\left( \Pi_{l\in \{i,\dots j\}\backslash I}X_l \otimes_\kk f_I \right)$ living in the ${I\subset J_{ij}}$ summand of $\Hom(X, Y)$ by $f_I$.
		Moreover, we will use the subscript to keep track of the degree of $f_I$ as a morphism in the generalised arrow category, it has degree $|f_I|-|\{i,\dots j\}\backslash I|$ (here, as well as in the sequel, $|f_I|$ denotes the degree considered as a morphism of $\Ascr_I$).
		
		We define the composition as follows.
		For $i\leq j\leq k$ (it is the zero map otherwise), $A_i\in\Ascr_i$, $A_j\in\Ascr_j$ and $A_k\in\Ascr_k$, the composition
		\[
		\Hom_{\Gac}(A_j,A_k)\otimes_\kk\Hom_{\Gac}(A_i,A_j) \to \Hom_{\Gac}(A_i,A_k)
		\]
		sends\footnote{\label{foot: concat}
			On the $\Pi_{l}X_l$ part, which we did not write, we simply do concatenation \[(\Pi_{l'\in \{j,\dots k\}\backslash I'}X_{l'}) (\Pi_{l\in \{i,\dots j\}\backslash I}X_{l}) = \pm\Pi_{l\in \{i,\dots k\}\backslash (I\cup I')}X_l.\] (As $\{j,\dots k\}\backslash I' \sqcup \{i,\dots j\}\backslash I = \{i,\dots k\}\backslash (I\cup I')$.)
			Note that when making signs explicit, it is best to order the product of the $X_i$'s with the order opposite to the usual one of $\NN$. 
			This ensures that there is no minus sign when concatenating.
		}
		\begin{multline}\label{eq: composition}
			g_{I'} \otimes_\kk f_{I} \mapsto (-1)^{|g_{I'}| |\{i,\dots j\}\backslash I|} V_{I\backslash I'}(g_{I'}) V_{I'\backslash I}(f_{I})\\\quad (\text{this element lives in the } {I\cup I'\subset J_{ik}} \text{ summand of }\Hom_{\Gac}(A_i,A_k)).
		\end{multline}
		The unit morphism of an object $A_i\in \Ascr_i$ is given by 
		\[
		1\otimes1_{A_i}\in 1\otimes_\kk \Ascr_i(A_i,A_i)= \Hom_{\Gac}(A_i,A_i).
		\]
		
		\begin{lemma}\label{lem: ass/un cube level}
			The composition is associative and unital, and it is a morphism of chain complexes.
		\end{lemma}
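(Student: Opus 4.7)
The plan is to verify each of the three properties directly from the definitions, relying on three ingredients: strict commutativity of the hypercube $\underline{\Ascr}_n^\circ$, the fact that each $V_{I,l}$ is a dg functor, and the dg category structure already present on each $\Ascr_I$. The only real content lies in tracking the Koszul signs introduced by the $t$-construction together with the $X$- and $\partial$-letters; once these are handled correctly, every required identity reduces to one that holds \emph{vertex-wise} inside a single $\Ascr_{I\cup I'\cup I''}$.

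For unitality, observe that $J_{ii}=\{\{i\}\}$, so the unit $1_{A_i}$ sits in the (only) summand for which the complementary set $\{i,\dots,i\}\setminus\{i\}$ is empty. Consequently no nontrivial $X_l$ factor appears and the sign prefactor is $(-1)^{0\cdot 0}=1$. Since $\{i\}\subset I$ for every $I\in J_{ij}$, the functors $V_{\{i\}\setminus I}$ and $V_{I\setminus\{i\}}$ reduce to the identity and to $V_{I\setminus\{i\}}$ respectively, and the composition formula \eqref{eq: composition} returns $f_I$ (resp.\ $g_{I'}$) without alteration.

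For associativity, one expands both $(h_{I''}\cdot g_{I'})\cdot f_I$ and $h_{I''}\cdot(g_{I'}\cdot f_I)$ using \eqref{eq: composition}; each ends up in the summand indexed by $I\cup I'\cup I''\in J_{il}$. The morphism parts collapse to the common expression $V_{(I\cup I')\setminus I''}(h_{I''})\cdot V_{(I\cup I'')\setminus I'}(g_{I'})\cdot V_{(I'\cup I'')\setminus I}(f_I)$ by strict commutativity (e.g.\ $V_{I\setminus(I'\cup I'')}V_{I'\setminus I''}=V_{(I\cup I')\setminus I''}$) together with functoriality applied inside each $\Ascr_?$. The crucial sign identity is $\{i,\dots,k\}\setminus(I\cup I')=(\{i,\dots,j\}\setminus I)\sqcup(\{j,\dots,k\}\setminus I')$, valid because $j\in I\cap I'$; this gives $|h_{I''}|\,|\{i,\dots,k\}\setminus(I\cup I')|\equiv |h_{I''}|\,|\{i,\dots,j\}\setminus I|+|h_{I''}|\,|\{j,\dots,k\}\setminus I'|\pmod 2$, which is precisely what is needed to match the two iterated Koszul signs. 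The concatenation of the $X$-letters contributes no extra sign under the convention of footnote~\ref{foot: concat}.

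The Leibniz identity $d(g_{I'}\cdot f_I)=d(g_{I'})\cdot f_I+(-1)^{\|g_{I'}\|}g_{I'}\cdot d(f_I)$, with $\|g_{I'}\|=|g_{I'}|-|\{j,\dots,k\}\setminus I'|$ the degree in $\Gac$, is the main obstacle. The strategy is to split $d$ on both sides into its internal part $1\otimes d_{\Ascr_?}$ and its hypercube-edge part $(\partial_l\cdot-)\otimes V_{?,l}$. The internal contribution works out because each $V_{I\setminus I'}$ is a dg functor (hence commutes with $d$) and composition in $\Ascr_{I\cup I'}$ already satisfies Leibniz. For the edge contribution, the disjoint splitting above distributes the sum over $l\in\{i,\dots,k\}\setminus(I\cup I')$ into a sum over $l\in\{j,\dots,k\}\setminus I'$ (matching $d(g_{I'})\cdot f_I$) and one over $l\in\{i,\dots,j\}\setminus I$ (matching $g_{I'}\cdot d(f_I)$), while the identities $V_{I\cup I',l}V_{I\setminus I'}=V_{(I\cup\{l\})\setminus I'}$ identify the individual morphisms. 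The remaining sign check amounts to matching the factor $(-1)^{|g_{I'}|\,|\{i,\dots,j\}\setminus I|}$ built into \eqref{eq: composition} against the sign with which $\partial_l$ sweeps past the monomial $\prod_{l'\in\{i,\dots,j\}\setminus I}X_{l'}$ in $W/(\sum_i W\partial_i)$; these were chosen to cancel, which is exactly why that sign was inserted in the composition. Once the bookkeeping closes, all three claims are established.
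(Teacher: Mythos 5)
Your proof is correct and follows the same direct-verification strategy the paper takes: the paper's proof is essentially a pair of hints (the disjoint-union identity $\{j,\dots,k\}\setminus I' \sqcup \{i,\dots,j\}\setminus I = \{i,\dots,k\}\setminus(I\cup I')$ and the equivalent cardinality relation $|I\cup I'|=|I|+|I'|-1$), and you have carried out exactly the bookkeeping those hints point to. The detailed sign analysis you supply for the Leibniz rule is precisely the ``tedious but straightforward'' verification the paper leaves to the reader.
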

		\begin{proof}
			This is a graded morphism as $\{j,\dots k\}\backslash I' \sqcup \{i,\dots j\}\backslash I = \{i,\dots k\}\backslash (I\cup I')$, see also footnote \ref{foot: concat}.
			Showing that it commutes with the differential is tedious but straightforward.
			Moreover, checking associativity and unitality boils down to a routine verification that the signs match up. 
			(For associativity note that $|I\cup I'| = |I|+|I'|-1$.)
		\end{proof}
		
		\subsubsection{Gluing dg category}
		We define the \emph{gluing dg category} $\Glue(\underline{\Ascr}_n^\circ)$ of the punctured hypercube $\underline{\Ascr}_n^\circ$ as the dg category of twisted complexes over the generalised arrow dg category
		\[
		\Glue(\underline{\Ascr}_n^\circ):= \tw(\Gac(\underline{\Ascr}_n^\circ)).
		\]
		An alternative, more concrete, description is given in Appendix \ref{Asec: alternative gluing}.
		
		\begin{remark}
			The gluing of hypercubes also appeared recently in the setting of stable $\infty$-categories in \cite{ChristDyckerhoffWalde}, as part of their `totalizations of categorical multicomplexes'.
			The gluing dg category here appears as one of the terms in the complex obtained by `totalizing' the hypercube diagram as in loc.~cit.~
			It is not immediately clear how to naturally obtain the other terms, and thereby the complex, in this framework at the moment.
		\end{remark}
		\begin{remark}
			The ordering of the vertices matters, see Remark \ref{rem: order matters}.
		\end{remark}
		
		As an immediate corollary of Propositions \ref{prop: SOD directed} and \ref{prop: SOD D(directed)} we obtain (note $\bD(\tw(-))=\bD(-)$).
		
		\begin{corollary}\label{cor: SODs Glue}
			We have semi-orthogonal decompositions
			\begin{align*}
				[\Glue(\underline{\Ascr}_n^\circ)]&=\langle [\tw\Ascr_0], \dots, [\tw\Ascr_{n-1}]\rangle, \\
				\bD(\Glue(\underline{\Ascr}_n^\circ))&=\langle \bD(\Ascr_0), \dots, \bD(\Ascr_{n-1})\rangle.
			\end{align*}
		\end{corollary}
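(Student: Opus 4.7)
The plan is to observe that the corollary is essentially a reformulation of Propositions \ref{prop: SOD directed} and \ref{prop: SOD D(directed)} applied to the generalised arrow dg category $\Gac(\underline{\Ascr}_n^\circ)$, once one unpacks that this category is directed in the sense of \S\ref{sec: directed dg cat}.

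First, I would verify that $\Gac(\underline{\Ascr}_n^\circ)$ is a directed dg category with components $\Ascr_0,\dots,\Ascr_{n-1}$. By construction the object set decomposes as $\bigsqcup_{i=0}^{n-1}\Obj\Ascr_i$, and the morphism complexes $\Hom_{\Gac}(A_i,A_j)$ are defined to vanish for $i>j$ and to equal $\Ascr_i(A_i,A_i')$ when $i=j$ and $I=\{i\}$ (the only element of $J_{ii}$), so the full dg subcategory on $\Obj\Ascr_i$ is exactly $\Ascr_i$. Hence $\Gac(\underline{\Ascr}_n^\circ)$ sits in the framework of Equation \eqref{eq: lower matrix dg cat}, with bimodules $\phi_{ji}:=t(\underline{\Ascr}_{J_{ij}})$ for $i<j$ and the composition maps $\phi_{ki}\otimes_{\Ascr_k}\phi_{jk}\to\phi_{ji}$ supplied by \eqref{eq: composition}.

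Next, I would derive both decompositions by directly invoking the two propositions. Since $\Glue(\underline{\Ascr}_n^\circ)=\tw(\Gac(\underline{\Ascr}_n^\circ))$ by definition, Proposition \ref{prop: SOD directed} applied to $\Cscr:=\Gac(\underline{\Ascr}_n^\circ)$ yields
\[
[\Glue(\underline{\Ascr}_n^\circ)]=[\tw\Gac(\underline{\Ascr}_n^\circ)]=\langle [\tw\Ascr_0],\dots,[\tw\Ascr_{n-1}]\rangle,
\]
giving the first claim. For the second, Proposition \ref{prop: SOD D(directed)} gives
\[
\bD(\Gac(\underline{\Ascr}_n^\circ))=\langle \bD(\Ascr_0),\dots,\bD(\Ascr_{n-1})\rangle,
\]
and since the inclusion $\Gac(\underline{\Ascr}_n^\circ)\hookrightarrow\tw(\Gac(\underline{\Ascr}_n^\circ))$ is a Morita equivalence (by Lemma \ref{lem: prop F imlies prop IndF} applied to the quasi-equivalence onto the semi-free twisted complexes, or just the standard fact $\bD(\tw\Cscr)\cong\bD(\Cscr)$ noted in the hint), we obtain
\[
\bD(\Glue(\underline{\Ascr}_n^\circ))=\bD(\Gac(\underline{\Ascr}_n^\circ))=\langle \bD(\Ascr_0),\dots,\bD(\Ascr_{n-1})\rangle.
\]

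There is no real obstacle here; the only thing requiring a moment of care is checking that the fully faithful dg inclusions $\Ascr_i\hookrightarrow\Gac(\underline{\Ascr}_n^\circ)$ agree with the inclusions used in the cited propositions (so that the components of both decompositions really are the essential images of the $[\tw\Ascr_i]$, respectively $\bD(\Ascr_i)$, under the induced derived induction functors). Since $\Gac(\underline{\Ascr}_n^\circ)$ is directed exactly in the shape of \eqref{eq: lower matrix dg cat}, this is immediate from the construction, and no further verification is needed.
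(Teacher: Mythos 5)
Your proof is correct and follows exactly the route the paper takes: the paper presents this corollary as an immediate consequence of Propositions \ref{prop: SOD directed} and \ref{prop: SOD D(directed)} applied to the directed dg category $\Gac(\underline{\Ascr}_n^\circ)$, together with the observation that $\bD(\tw(-))=\bD(-)$. You have simply spelled out the (correct) check that $\Gac(\underline{\Ascr}_n^\circ)$ really is directed in the required form, which the paper leaves implicit.
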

		
	\subsection{An example: gluing squares}\label{subsec: gluing squares}
		Consider a punctured square
		\begin{equation}\label{eq: .->.<-. of dg cats}
			\begin{tikzcd}
				\Ascr_0 \arrow[r, "V_1"]  & \Ascr_{01} \\
				& \Ascr_1 \arrow[u, "V_0"]
			\end{tikzcd}
		\end{equation}
		and let $\phi:={}_{V_0}{\Ascr_{01}}_{V_1}$ be the dg $(\Ascr_1,\Ascr_0)$-bimodule obtained by restricting the diagonal.
		The following notions of a gluing of $\Ascr_0$ and $\Ascr_1$ along $\phi$ can be found in the literature.
		\begin{itemize}
			\item In Tabuada \cite{Tabuada2007} the directed dg category 
			\[
			\Bscr := \begin{pmatrix}	
				\Ascr_0 & 0 \\
				\phi & \Ascr_1
			\end{pmatrix}
			\]
			is considered (as upper triangular matrix $\left( \begin{smallmatrix}
				\Ascr_1 & \phi \\ 
				0 & \Ascr_0 
			\end{smallmatrix}\right)$).
			\item In Orlov \cite{Orlov} the gluing is defined as the pretriangulated hull of $\Bscr$. 
			This is precisely $\Glue(\underline{\Ascr}^{\circ}_2)$.
			\item Lastly, we could define the gluing as the full dg subcategory of $\tw\Bscr$ consisting of objects $\cone(M_0\xrightarrow{\mu} M_1)$, i.e\ the twisted complex
			$(M_1\oplus M_0[1],\left( \begin{smallmatrix}
				0 & \mu
				\\
				0 & 0 
			\end{smallmatrix}\right) )$,
			where $M_0\in\Ascr_0$, $M_1\in\Ascr_1$ and $\mu\in \phi(M_0,M_1)$ is a closed morphisms of degree zero. 
			This is the alternative gluing discussed in Appendix \ref{Asec: alternative gluing}.
			
			Moreover, this is essentially the definition of Kuznetsov and Lunts in \cite{KuznetsovLunts}.
			However, their gluing $\Ascr_0\times_\phi\Ascr_1$ is defined as the full dg subcategory of $\tw \left(\begin{smallmatrix}	\Ascr_0 & 0 \\ \phi[-1] & \Ascr_1 \end{smallmatrix}\right)$ (note the shift in the bimodule) consisting of twisted complexes of the form $(M_1\oplus M_0,\left( \begin{smallmatrix}	0 & \mu \\ 0 & 0 \end{smallmatrix}\right) )$, where $M_0\in\Ascr_0$, $M_1\in\Ascr_1$ and $\mu\in\phi(M_0,M_1)$ is a closed morphism of degree zero. 
			This explains the `strange' minus signs in their composition and differential; they are artefacts of working with the shifted bimodule $\phi[-1]$ (and hence the differential and left module structure of $\phi$ obtains extra minus signs).
			On the other hand, they have no signs coming from shifts in the twisted complexes.
		\end{itemize}
		
		\begin{remark}\label{rem: order matters}
			Let us denote $\Ascr_0\sqcup_\phi\Ascr_1:=\Glue(\underline{\Ascr}^{\circ}_2)$ in this remark to make the ordering of the vertices clear.
			In general, $\Ascr_0\sqcup_{\Ascr_{01}}\Ascr_1$ and $\Ascr_1\sqcup_{\Ascr_{01}}\Ascr_0$ are different.
			Take for example $\Ascr_{01}$ to be an enhancement of the perfect complexes over $\mathbb{P}^{1}_{\kk}$ and $\Ascr_{i}$ the dg subcategory `triangularly generated' by $\mathcal{O}_{\mathbb{P}^{1}}(i)$.
			Then, $\Ascr_0\sqcup_{\Ascr_{01}}\Ascr_1$ gives back the perfect complexes over $\mathbb{P}^{1}$ whilst $\Ascr_1\sqcup_{\Ascr_{01}}\Ascr_0$ gives the perfect complexes over $\kk^{2}$.
			These are not quasi-equivalent (the homotopy category former is indecomposable whilst that of the latter is not).
		\end{remark}	
		
		\begin{remark}
			One can show that the homotopy pullback $\Ascr_0\times^h_\phi\Ascr_1$ of the diagram \eqref{eq: .->.<-. of dg cats} (for the model structure on $\dgcat$ for which the weak equivalences are the quasi-equivalences and the fibrations are `componentwise surjections giving isofibrations on $H^0$') can be identified with the full subcategory of $\Glue(\underline{\Ascr}^{\circ}_2)$ consisting of those twisted complexes $\cone(M_0\xrightarrow{\mu} M_1)$ for which $\mu$ is a homotopy equivalence when viewed as a morphism in $\Ascr_{01}$, see e.g.\ \cite[\S3.3]{CanonacoNeemanStellari}.
			In general the inclusion $\Ascr_0\times^h_{\Ascr_{01}}\Ascr_1 \hookrightarrow \Glue(\underline{\Ascr}^{\circ}_2)$ is not a quasi-equivalence.
			Moreover, it can happen that the latter is smooth whilst the former is not.			
		\end{remark}
		
	\subsection{Acyclic hypercube and quasi fully faithful dg functor}\label{subsec: a and qff}
		We give sufficient and necessary conditions for a natural dg functor $\pi:\Ascr_\varnothing\to \Glue(\underline{\Ascr}^{\circ}_n)$ (which we define below) to be quasi fully faithful.
		The relevant condition is the following.
		\begin{definition}
			A hypercube $\underline{\Ascr}_n$ of dg categories is called \emph{acyclic} if, when viewed as a hypercube of dg $(\Ascr_\varnothing,\Ascr_\varnothing)$-bimodules\footnote{
				Recall that we do this by first considering the vertices as diagonal bimodules and by then suitably restricting them along the edges of the hypercube.	
			}, it yields an acyclic dg $(\Ascr_\varnothing,\Ascr_\varnothing)$-bimodule after applying $t$, i.e.\
			\[
			t(\underline{\Ascr}_n)\in\dgMod(\Ascr_\varnothing^{\op}\otimes_\kk\Ascr_\varnothing)
			\]
			is acyclic.
		\end{definition}
		\begin{remark}
			This is independent of the ordering of the vertices in the hypercube.
		\end{remark}
		
		Let us define a natural dg functor
		\[
		\pi:\Ascr_\varnothing\to \Glue(\underline{\Ascr}_n^\circ).
		\]
		We use notation of \S\ref{subsec: twist}.
		Moreover, for $0\leq i<n$, let temporarily $\underline{i}$ denote $n-i-1$.
		The dg functor is given on objects by mapping $A\in\Ascr_\varnothing$ to the twisted complex
		\[
		\left( \oplus_{i=0}^{n-1}V_{\underline{i}}A[i], \alpha \right),
		\]
		with
		\begin{align*}
			\alpha_{ji}:=&(-1)^{j} 1_{V_{\underline{i}\underline{j}}A}\in \Ascr^0_{\underline{i}\underline{j}}(V_{\underline{i}\underline{j}}A, V_{\underline{i}\underline{j}}B) \subseteq\Hom_{\Gac}^{1+j-i}(V_{\underline{i}}A, V_{\underline{j}}A)\\ =& \Hom_{\ZZ\!\Gac}^{1}(V_{\underline{i}}A[i], V_{\underline{j}}A[j])
		\end{align*}
		for $ j<i$ and zero otherwise, and on morphisms by mapping $f:A\to B$ to
		\begin{align*}
			\pi f :=& \oplus_ i (-1)^{(n-1-i)|f|}V_if \in \oplus_i \Ascr_i^{|f|}(V_iA, V_iB) \\ =& \oplus_i \Hom^{|f|}_{\Gac}(V_iA,V_iB) \subseteq \Hom_{\Glue}^{|f|}(\pi A, \pi B).
		\end{align*}	
		The minus signs compensate for the fact that $\Ascr_{\underline{i}}=\Ascr_{n-1-i}$ sits in the image of $\pi$ with a twist $[i]$.
		To verify that $\pi$ is indeed a dg functor one has to check the following:
		\begin{itemize}
			\item the $\alpha$'s satisfy $d\alpha+\alpha^2=0$,
			\item $\pi$ commutes with the differential,
			\item $\pi$ respects units and composition.
		\end{itemize}
		This is straightforward, we leave it to the motivated reader.
		
		The following shows that the acyclic hypercube condition is `natural'.
		
		\begin{proposition}\label{prop: acyclic hypercube iff qff}
			The hypercube $\underline{\Ascr}_n$ is acyclic if and only if the natural dg functor $\pi:\Ascr_\varnothing\to\Glue(\underline{\Ascr}_n^\circ)$ is quasi fully faithful.
		\end{proposition}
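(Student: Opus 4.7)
My plan is to show, for each pair $A, B \in \Ascr_\varnothing$, a canonical quasi-isomorphism
\[
t(\underline{\Ascr}_n)(A, B) \simeq \cone\bigl(\pi_{AB}: \Ascr_\varnothing(A, B) \to \Hom_{\Glue(\underline{\Ascr}_n^\circ)}(\pi A, \pi B)\bigr)[n-1],
\]
where $\pi_{AB}$ denotes the action of $\pi$ on morphism complexes. Once this is established, the proposition follows at once: $t(\underline{\Ascr}_n)$ is acyclic as an $(\Ascr_\varnothing, \Ascr_\varnothing)$-bimodule if and only if $\cone(\pi_{AB})$ is acyclic for all $A, B$, which is equivalent to $\pi_{AB}$ being a quasi-isomorphism for all $A, B$, i.e., to $\pi$ being quasi fully faithful.

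The first task is to identify $\Hom_{\Glue}(\pi A, \pi B)$ with $t(\underline{\Ascr}_n^\circ)(A, B)[1-n]$ as a complex. Since $\pi A = (\oplus_i V_{\underline{i}} A[i], \alpha)$ is a twisted complex in $\Gac(\underline{\Ascr}_n^\circ)$ and $\Glue = \tw(\Gac)$,
\[
\Hom_{\Glue}(\pi A, \pi B) = \bigoplus_{0 \leq j \leq i \leq n-1} \Hom_{\Gac}(V_{\underline{i}} A, V_{\underline{j}} B)[j-i]
\]
(only pairs with $j \leq i$ contribute, by directedness of $\Gac$), with differential $d_{\Gac} + [\alpha, -]$. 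Unfolding each $\Hom_{\Gac}$ via \eqref{eq: Hom in Gac} and using that the sets $J_{i'j'}$ partition the nonempty subsets of $[n]$ (each $I$ lying in $J_{\min I, \max I}$), a routine shift computation shows that, as graded modules,
\[
\Hom_{\Glue}(\pi A, \pi B) \cong \bigoplus_{\varnothing \neq I \subset [n]} \Ascr_I(V_I A, V_I B)[1 - |I|] \cong t(\underline{\Ascr}_n^\circ)(A, B)[1-n],
\]
because the contribution $|\{\underline{i},\dots,\underline{j}\} \setminus I|$ coming from the $X$-factor in $\Hom_{\Gac}$ plus the shift $j-i$ from the twisted-complex structure telescopes to $1-|I|$. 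Promoting this to an isomorphism of complexes reduces to matching the edge maps $V_{I,l}$: those with $l$ strictly between $\min I$ and $\max I$ come from the internal $\Gac$-differential, whereas those with $l \notin [\min I, \max I]$ correspond to the $[\alpha,-]$-twist, since $\alpha$ is built from identity morphisms of the $V_{\underline{i}\underline{j}}A$'s. This step is the main technical obstacle: conceptually straightforward but requiring careful sign-bookkeeping with the Koszul sign rule and the signs fixed in \eqref{eq: composition} and in the definition of $\pi$.

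For the second step, equip $t(\underline{\Ascr}_n)(A, B)$ with the decreasing filtration $F_k := \bigoplus_{|I| \geq k} X_{[n] \setminus I} \otimes \Ascr_I(V_I A, V_I B)$. Since the totalisation differential strictly increases $|I|$, each $F_k$ is a subcomplex, and $F_0/F_1 \cong \Ascr_\varnothing(A, B)[n]$ as a complex (the internal differential of $\Ascr_\varnothing$ being the only one that stays within $I=\varnothing$); this yields a short exact sequence
\[
0 \to t(\underline{\Ascr}_n^\circ)(A, B) \to t(\underline{\Ascr}_n)(A, B) \to \Ascr_\varnothing(A, B)[n] \to 0.
\]
Its connecting morphism $\partial: \Ascr_\varnothing(A, B)[n-1] \to t(\underline{\Ascr}_n^\circ)(A, B)$ is read off from $d(X_{[n]} \otimes a) = \sum_j \pm X_{[n] \setminus \{j\}} \otimes V_j a$. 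Under the identification of the previous paragraph, $\partial$ lands in the singleton-indexed summands $\oplus_j X_{[n] \setminus \{j\}} \otimes \Ascr_j \subset t(\underline{\Ascr}_n^\circ)$, which correspond to $\oplus_j \Ascr_j(V_j A, V_j B) \subset \Hom_{\Glue}(\pi A, \pi B)$; this is precisely where $\pi_{AB}(a) = \oplus_j \pm V_j a$ lives. Thus, up to a harmless global sign, $\partial = \pi_{AB}[n-1]$, giving $t(\underline{\Ascr}_n)(A, B) \simeq \cone(\pi_{AB})[n-1]$ and completing the argument.
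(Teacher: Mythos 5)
Your proposal is correct and follows essentially the same route as the paper: both derive the distinguished triangle relating $t(\underline{\Ascr}_n)(A,B)$, $t(\underline{\Ascr}_n^\circ)(A,B)$, and $\Ascr_\varnothing(A,B)[n]$ (you via the $|I|$-filtration, the paper via the inclusion of hypercubes), both identify $\Hom_{\Glue}(\pi A,\pi B)$ with $t(\underline{\Ascr}_n^\circ)(A,B)$ up to the same shift $[1-n]$, and both reduce the boundary map to $\pi_{AB}$. Like the paper, you defer the explicit sign-checking for the identification to the reader, so the level of rigour is comparable.
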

		\begin{proof}
			The inclusion $\underline{\Ascr}_n^\circ\hookrightarrow \underline{\Ascr}_n$ induces, for every $A,$ $B\in\Ascr_\varnothing$, a distinguished triangle
			\[
			t(\underline{\Ascr}_n^\circ)(A,B)	\to t(\underline{\Ascr}_n)(A,B) \to \Ascr_\varnothing(A,B)[n]\xrightarrow{\delta} t(\underline{\Ascr}_n^\circ)(A,B)[1]
			\]
			with boundary morphism
			\[
			\delta:\Ascr_\varnothing(A,B)[n] \to t(\underline{\Ascr}_n^\circ)(A,B)[1],~ f\mapsto \sum_ i (-1)^{(n-1-i)}V_if
			\]
			(see e.g.\ \cite[\href{https://stacks.math.columbia.edu/tag/014I}{Definition 014I}]{stacks-project}).
			
			We claim that there exists an isomorphism 
			\begin{equation}\label{eq: ident t(-) with Hom}
				t(\underline{\Ascr}_n^\circ)(A,B)[1]\cong\Hom_{\Glue(\underline{\Ascr}_n^\circ)}(\pi A,\pi B)[n],
			\end{equation}
			making the diagram
			\[
			\begin{tikzcd}
				\Ascr_\varnothing(A,B)[n]\arrow[dr, "\pi"]\arrow[r, "\delta"]	&	t(\underline{\Ascr}_n^\circ)(A,B)[1]\arrow[d,"\text{Eq.~}\eqref{eq: ident t(-) with Hom}"]	\\
				&	\Hom_{\Glue(\underline{\Ascr}_n^\circ)}(\pi A,\pi B)[n]
			\end{tikzcd}
			\]
			commute. 
			It follows that $t(\underline{\Ascr}_n)$ is acyclic if and only if $\pi$ is quasi fully faithful.
			
			Thus, it remains to show the existence of the isomorphism \eqref{eq: ident t(-) with Hom}.
			For this observe that by definition (as graded modules)
			\[
			t(\underline{\Ascr}_n^\circ)(A,B)[1] = \bigoplus_{i\leq j}\bigoplus_{I\subset J_{ij}}
			\left( X_{[n]\backslash I} \otimes_\kk	\Ascr_I(V_I A, V_I B) \right)[1]
			\]
			(as $2^{[n]}\backslash\varnothing=\sqcup_{i\leq j} J_{ij}$) and
			\begin{align*}
				\Hom_{\Glue(\underline{\Ascr}_n^\circ)}(\pi A,\pi B)[n] &= \bigoplus_{i, j} \Hom_{\Gac(\underline{\Ascr}_n^\circ)}(V_iA,V_jA)[i-j+n] \\
				&= \bigoplus_{i\leq j} t(\underline{\Ascr}_{J_{ij}})(V_iA,V_jA)[i-j+n] \\
				&= \bigoplus_{i\leq j}\bigoplus_{I\subset J_{ij}}
				\left( X_{\{i,\dots,j\}\backslash I}  \otimes_\kk \Ascr_I(V_I A, V_I B) \right)[i-j+n].
			\end{align*}
			The required isomorphism is given on the summand corresponding to $I\subset J_{ij}$ by mapping 
			\[
			X_{n-1}\dots X_{j+1}X_{\{i,\dots,j\}\backslash I}X_{i-1}\dots X_0  \otimes_\kk f \mapsto (-1)^{(n-i-1)|f|+(n-1)|I|-i} X_{\{i,\dots,j\}\backslash I}  \otimes_\kk	f,
			\]
			i.e.\ it is graded (the difference in shifts is nicely compensated by the difference in $X_l$'s), compatible with the differentials and makes the diagram commute.
			Checking this is straightforward but uninteresting, so we leave it to the motivated reader.
		\end{proof}
		\begin{remark}
			Let $I\in J_{ij}$ for some $i\leq j$.
			In $t(\underline{\Ascr}_n^\circ)(A,B)$ all the edges $V_{I,l}$ for $l\in [n]\backslash I$ contribute to the differential, whilst in $\Hom_{\Glue(\underline{\Ascr}_n^\circ)}(A,B)$ only the edges with $i\leq l \leq j$ appear in $\oplus_{i\leq j} t(\underline{\Ascr}_{J_{ij}})(V_iA,V_jA)$. 
			The other $V_{I,l}$'s, with $l<i$ or $j<l$, emerge in the commutator term $[\delta,-]$ of the differential of the twisted complexes.
		\end{remark}
		
		It will be crucial later, when showing that we obtain a categorical resolution, to have some control over the image of the restriction functor associated to $\pi:\Ascr_\varnothing\to\Glue(\underline{\Ascr}_n^\circ)$.
		We finish the subsection with this.
		
		Let us first explain and introduce some notation.
		The inclusion $\Ascr_i\hookrightarrow \Gac(\underline{\Ascr}_n^\circ)\hookrightarrow \Glue(\underline{\Ascr}_n^\circ)$ along with the dg Yoneda embedding allows us to identify $[\Ascr_i]\subseteq\bD(\Glue(\underline{\Ascr}_n^\circ))$.
		This is compatible with the semi-orthogonal decompositions of Corollary \ref{cor: SODs Glue}.
		Moreover, denote by $\phi_{ji}$ the hom-complex defined in Equation \eqref{eq: Hom in Gac} viewed as dg $(\Ascr_j,\Ascr_i)$-bimodule.
		
		\begin{lemma}\label{lem: restriction functor restricted to component of sod}
			Let $\underline{\Ascr}_n$ be a hypercube of dg categories and consider the restriction functor 
			\[
			\Res_\pi:\bD(\Glue(\underline{\Ascr}_n^\circ))\to \bD(\Ascr_\varnothing).
			\]
			The image of $A_i\in[\Ascr_i]$ under $\Res_\pi$ is an iterated cone of its images under $\Res_{V_j}(-\otimes_{\Ascr_i}^{\bL}\phi_{ij})$'s where $V_j$ denotes the edge $\Ascr_\varnothing\to \Ascr_j$. 
		\end{lemma}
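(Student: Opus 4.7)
The plan is to unravel $\Res_\pi(A_i)$ using the definition of $\pi$ and the twisted-complex structure of $\Glue(\underline{\Ascr}_n^\circ)=\tw(\Gac(\underline{\Ascr}_n^\circ))$, and then to produce the desired iterated cone from a functorial filtration of $\pi A$ by sub-twisted-complexes.

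By definition, $\Res_\pi(A_i)(A)=\Hom_{\Glue(\underline{\Ascr}_n^\circ)}(\pi A, A_i)$ for any $A\in\Ascr_\varnothing$, where $A_i$ is regarded as a length-one twisted complex sitting in degree zero. Plugging in $\pi A=(\bigoplus_{k=0}^{n-1}V_{\underline{k}}A[k],\alpha)$ with $\underline{k}:=n-1-k$ gives, at the graded level,
\[
  \Hom_{\Glue(\underline{\Ascr}_n^\circ)}(\pi A, A_i)\;=\;\bigoplus_{k} \Hom_{\Gac(\underline{\Ascr}_n^\circ)}(V_{\underline{k}}A, A_i)[-k],
\]
where, by the vanishing rule for $\Gac$, only those $k$ with $\underline{k}\le i$ contribute. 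Setting $j=\underline{k}$, the dg Yoneda lemma yields a natural-in-$A$ identification
\[
  \Hom_{\Gac(\underline{\Ascr}_n^\circ)}(V_j A, A_i)\;\cong\;\Res_{V_j}\bigl(h^{A_i}_{\Ascr_i}\otimes_{\Ascr_i}\phi_{ij}\bigr)(A),
\]
and h-projectivity of $h^{A_i}_{\Ascr_i}$ lets us replace the ordinary tensor product with the derived one, matching the expression in the statement of the lemma.

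The iterated cone structure is then produced by filtering $\pi A$ by the sub-twisted-complexes $F_p\pi A:=(\bigoplus_{k=0}^{p}V_{\underline{k}}A[k],\alpha|_{F_p})$ for $p=-1,0,\dots,n-1$. This is well-defined because $\alpha$ is strictly upper triangular in the chosen ordering of the summands, so its restriction to each $F_p$ is again a valid twist; the associated graded pieces $F_p\pi A/F_{p-1}\pi A$ are the trivial twisted complexes $V_{\underline{p}}A[p]$, and the filtration is manifestly natural in $A$. Applying $\Hom_{\Glue(\underline{\Ascr}_n^\circ)}(-, A_i)$ to the short exact sequence $F_{p-1}\pi A\hookrightarrow F_p\pi A\twoheadrightarrow V_{\underline{p}}A[p]$, which is split at the $\Sigma\Gac$ level, yields a short exact sequence of dg $\Ascr_\varnothing$-modules, hence a distinguished triangle in $\bD(\Ascr_\varnothing)$, with third vertex $\Res_{V_{\underline{p}}}(A_i\otimes_{\Ascr_i}^{\bL}\phi_{i,\underline{p}})[-p]$ by the previous step. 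Iterating through $p=0,\dots,n-1$ exhibits $\Res_\pi(A_i)$ as an iterated cone of the $\Res_{V_j}(A_i\otimes_{\Ascr_i}^{\bL}\phi_{ij})$'s (for $0\le j\le i$), as claimed.

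The main point requiring care is the sign and twist bookkeeping inherited from the conventions of \S\ref{subsec: twist}: one must confirm that restricting the twisted differential of $\pi A$ to each $F_p$ gives a valid twist with the correct connecting morphisms between filtration quotients, and that the Yoneda identification above intertwines these connecting morphisms with the tensor action of $\phi_{ij}$ up to the prescribed Koszul signs. All of this is mechanical but tedious.
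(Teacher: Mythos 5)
Your proposal is correct and follows essentially the same route as the paper: identify $\Res_\pi(h^{A_i})(A)=\Hom_{\Glue}(\pi A, A_i)$, exhibit $\pi A$ as an iterated cone of the $V_jA$'s (functorially in $A$), and use the Yoneda-type identification $\Hom_{\Gac}(V_jA,A_i)\cong\Res_{V_j}(h^{A_i}\otimes_{\Ascr_i}\phi_{ij})(A)$. Your explicit filtration $F_p\pi A$ and the remark that h-projectivity of $h^{A_i}$ identifies $\otimes$ with $\otimes^{\bL}$ simply make explicit what the paper states more concisely.
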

		\begin{proof}	
			Consider the restriction dg functor on the level of dg modules, i.e.\ \[\Res_\pi:\dgMod\Glue(\underline{\Ascr}_n^\circ)\to \dgMod\Ascr_\varnothing.\]
			
			For any $A\in\Ascr_\varnothing$ the twisted complex $\pi A$ is an iterated cone of $V_jA$'s.
			This is functorial in $A$.
			Therefore, 
			\[
			(\Res_\pi h^{A_i})(A) = \Hom_{\Glue}(\pi A, A_i)
			\]
			is an iterated extension of $\Hom_{\Glue}(V_jA, A_i)$'s.
			Now observe that
			\begin{align*}
				\Hom_{\Glue}(V_jA, A_i) &= \phi_{ij}(V_jA, A_i) \\
				&= (h^{A_i}\otimes_{\Ascr_i}\phi_{ij} ) (V_jA) \\
				&= (\Res_{V_j}(h^{A_i}\otimes_{\Ascr_i}\phi_{ij} )) (A)
			\end{align*}
			from which the claim follows as everything is functorial in $A$.
		\end{proof}
		\begin{remark}
			This can be done in a more sophisticated way, but the above suffices for our purposes.
			By making `iterated cone' of functors more precise, one can describe the functor $\Res_\pi$ restricted to $\bD(\Ascr_i)$ itself as an iterated cone (making use of \cite[Theorem 7.2]{Toen} essentially reduces this to the above).
		\end{remark}
		
	\subsection{A sufficient condition for smoothness and properness}
		We have 
		\[
		\Gac(\underline{\Ascr}_n^\circ) = 
		\begin{pmatrix}
			\Ascr_0		& 0		& 	0 	& \dots & 0\\
			\phi_{10}	& \Ascr_1 & 0 	& \dots & 0\\
			\phi_{20}	& \phi_{21}	& \Ascr_2 & \dots & 0\\
			\vdots & \vdots & \vdots & \ddots & \vdots \\
			\phi_{n-1,0}	& \phi_{n-1,1}	& \phi_{n-1,2} & \dots & \Ascr_{n-1} \\
		\end{pmatrix}.
		\]
		where the $\phi_{ji}$'s are the hom-complexes defined in Equation \eqref{eq: Hom in Gac} viewed as dg $(\Ascr_j,\Ascr_i)$-bimodules.
		
		\begin{lemma}\label{lem: bimodules right perfect}
			Suppose that the restriction functors 
			\[
			\Res_{V_{I\backslash i}}:\bD(\Ascr_{I})\to \bD(\Ascr_i)\quad\text{for }i\in I\subseteq [n],
			\]
			induced from composing the edges of the hypercube, preserve compactness. 
			Then, the bimodules $\phi_{ji}$ are right perfect.
		\end{lemma}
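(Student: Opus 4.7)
The plan is to fix $A_j \in \Ascr_j$ and show directly that $\phi_{ji}(-,A_j)$ is perfect as dg $\Ascr_i$-module, by exhibiting it as a finite iterated cone of things the hypothesis hands us for free.

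First, I would unwind the definition: by construction
\[
\phi_{ji}(-,A_j) \;=\; t\bigl(\underline{\Ascr}_{J_{ij}}(-,A_j)\bigr),
\]
where the hypercube $\underline{\Ascr}_{J_{ij}}(-,A_j)$ has vertex, at $I\in J_{ij}$, the dg $\Ascr_i$-module sending $A_i \mapsto \Ascr_I(V_{I\setminus\{i\}}A_i,\,V_{I\setminus\{j\}}A_j)$, with edges induced from the $V_{I,l}$ by restriction. The key observation is that this vertex is nothing but $\Res_{V_{I\setminus\{i\}}}(h^{V_{I\setminus\{j\}}A_j})$, i.e.\ the restriction along $V_{I\setminus\{i\}}\colon \Ascr_i \to \Ascr_I$ of a representable dg $\Ascr_I$-module.

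Second, each such representable is compact in $\bD(\Ascr_I)$, so by the standing hypothesis that $\Res_{V_{I\setminus\{i\}}}$ preserves compactness each vertex of the hypercube $\underline{\Ascr}_{J_{ij}}(-,A_j)$ is perfect in $\bD(\Ascr_i)$.

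Third, I would argue that the totalisation $t$ applied to a finite hypercube of perfect dg modules yields a perfect dg module. This is the main (and essentially only) content of the proof, but it is mild: inspecting the construction of $t$ in \S\ref{subsec: Tot} (compare Examples \ref{exs: t} for $n\leq 2$, and induct on the dimension of the hypercube by splitting it into two parallel faces joined by a morphism, so that $t$ of the whole is the cone of $t$ on the face morphism), one sees that $t(\underline{\Ascr}_{J_{ij}}(-,A_j))$ is obtained from its vertices by finitely many shifts and cones of closed degree zero morphisms. Since the perfect dg $\Ascr_i$-modules form a thick subcategory of $\bD(\Ascr_i)$, this suffices. Applying this to every $A_j\in\Ascr_j$ gives right perfectness of $\phi_{ji}$.

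The main (minor) obstacle is only the bookkeeping in the third step: verifying that the totalisation is really built by iterated cones, and that the closed degree zero morphisms producing these cones are the ones induced by the edges of the hypercube. No further input is needed from the edge maps beyond the fact that they are closed of degree zero, so the argument goes through independently of the structure of the $V_{I,l}$.
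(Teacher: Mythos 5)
Your proof is correct and takes essentially the same route as the paper: the paper identifies $\bL\bigl({}_{V_{I\backslash\{j\}}}(\Ascr_I)_{V_{I\backslash\{i\}}}\bigr)$ with $\Res_{V_{I\backslash\{i\}}}\circ\bL\!\Ind_{V_{I\backslash\{j\}}}$ and then writes $\phi_{ji}$ as an iterated cone via an explicit filtration of $J_{ij}$ by cardinality, whereas you make the same observations objectwise after fixing $A_j$ (each vertex is $\Res_{V_{I\setminus\{i\}}}h^{V_{I\setminus\{j\}}A_j}$) and invoke the iterated-cone structure of $t$. The content is identical; the only cosmetic difference is evaluating the bimodule at $A_j$ up front rather than arguing at the bimodule level.
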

		\begin{proof}
			For a dg $(\Ascr_j,\Ascr_i)$-bimodule of the form $\phi={}_{V_{I\backslash\{j\}}}(\Ascr_I)_{V_{I\backslash\{i\}}}$, for $i,j\in I\subseteq[n]$, we have $\bL\phi = \Res_{V_{I\backslash\{i\}}} \bL\!\Ind_{V_{I\backslash\{j\}}}$ as, for $M\in\dgMod\Ascr_j$ and $A_i\in\Ascr_i$,
			\begin{align*}
				(M\otimes_{\Ascr_j}\phi)(A_i) &= M\otimes_{\Ascr_j} \Ascr_I(V_{I\backslash\{i\}}(A_i),V_{I\backslash\{j\}}(-)) \\
				&= (\Ind_{V_{I\backslash\{j\}}}M)(V_{I\backslash\{i\}}A_i) \\
				&= (\dgMod \Ascr_I) (h^{V_{I\backslash\{i\}}A_i}, \Ind_{V_{I\backslash\{j\}}}M) \\ 
				&= (\dgMod \Ascr_I) (\Ind_{V_{I\backslash\{i\}}}h^{A_i}, \Ind_{V_{I\backslash\{j\}}}M) \\
				&= (\dgMod \Ascr_i) (h^{A_i}, \Res_{V_{I\backslash\{i\}}}\Ind_{V_{I\backslash\{j\}}}M) \\
				&= (\Res_{V_{I\backslash\{i\}}}\Ind_{V_{I\backslash\{j\}}}M)(A_i).										
			\end{align*}
			Since $\bL\!\Ind_{V_{I\backslash\{j\}}}$ always preserves compactness (e.g.\ by Lemma \ref{lem: nice functor preserving or reflecting compactness} as its right adjoint commutes with coproducts, or simply because it maps representables to representables) and $\Res_{V_{I\backslash\{i\}}}$ preserves compactness by assumption, it is clear that $\bL\phi$ preserves compactness.
			
			In general, we can write $\phi_{ji}$ as an iterated cone of bimodules of the above form.
			Indeed, for $i < j$, $A_i\in\Ascr_i$ and $A_j\in\Ascr_j$ we have (as graded module)
			\[
			\phi_{ji}(A_i, A_j) = 	t(\underline{\Ascr}_{J_{ij}})(A_i,A_j) = \bigoplus_{I\subset J_{ij}}\left( X_{\{i,\dots j\}\backslash I} \otimes_\kk \Ascr_I(V_{I\backslash\{i\}}A_i,V_{I\backslash\{j\}}A_j)\right).
			\]
			Define
			\[
			J_{ij}^k := \{ I\subset J_{ij}\mid |I|\geq k \},\quad \textrm{for } 2\leq k \leq j-i+1.
			\]
			This gives us a filtration
			\[
			\left\{ \{i,\dots,j\} \right\}=J^{j-i+1}_{ij}\subseteq J^{j-i}_{ij}\subseteq \dots \subseteq J^{2}_{ij}=J_{ij}.
			\]
			Let  $\underline{\Ascr}_{J^k_{ij}}$ be the subgraph of $\underline{\Ascr}_{J_{ij}}$ obtained by inserting zeroes whenever $I\in J_{ij}\backslash J^k_{ij}$, still viewed as a $(j-i-1)$-cube.
			
			Finally, the inclusion $\underline{\Ascr}_{J^{k+1}_{ij}}\hookrightarrow\underline{\Ascr}_{J^{k}_{ij}}$ induces a distinguished triangle of dg bimodules
			\[
			t(\underline{\Ascr}_{J^{k+1}_{ij}}) \to t(\underline{\Ascr}_{J^{k}_{ij}})\to \bigoplus_{\{I\in J_{ij} \mid |I| = k\}} {}_{V_{I\backslash\{j\}}}(\Ascr_I)_{V_{I\backslash\{i\}}}[j-i+1-k]  \to ,
			\]
			which shows the claim.
		\end{proof}
		
		Combining the previous lemma with Proposition \ref{prop: smoothness directed dg cat}, noting that smoothness is invariant under Morita equivalences \cite[Theorem 3.17]{LuntsSchnurer} and that the dg category of twisted complexes over a proper dg category remains proper; we immediately obtain the following.
		
		\begin{corollary}\label{cor: smoothness gluing}
			Let $\underline{\Ascr}_n$ be a hypercube of dg categories.
			Suppose the dg categories $\Ascr_i$ are smooth and restriction along $\Ascr_i\to\Ascr_I$ for $i\in I\subseteq [n]$, induced by composing the edges of the hypercube, preserves compactness.
			Then, the glued dg category $\Glue(\underline{\Ascr}_n^{\circ})$ is smooth.
			Moreover, if the $\Ascr_i$ are proper, then so is $\Glue(\underline{\Ascr}_n^{\circ})$.
		\end{corollary}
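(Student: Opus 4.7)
The statement is essentially a bookkeeping exercise combining the preceding results, so my plan is to assemble them in the right order rather than to do any new calculation.

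First I would reduce the smoothness question from $\Glue(\underline{\Ascr}_n^{\circ})=\tw(\Gac(\underline{\Ascr}_n^{\circ}))$ to the underlying generalised arrow dg category $\Gac(\underline{\Ascr}_n^{\circ})$. This is justified because the inclusion $\Gac \hookrightarrow \tw(\Gac)$ is a Morita equivalence (both give the same compact generators up to shifts and cones in the derived category), and smoothness is invariant under Morita equivalences by \cite[Theorem 3.17]{LuntsSchnurer}.

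Next I would observe that $\Gac(\underline{\Ascr}_n^{\circ})$ is exactly a directed dg category of the lower-triangular shape in Equation \eqref{eq: lower matrix dg cat}, with components the $\Ascr_i$ and off-diagonal bimodules the $\phi_{ji}$ from Equation \eqref{eq: Hom in Gac}. Proposition \ref{prop: smoothness directed dg cat} then gives smoothness provided that each $\Ascr_i$ is smooth (which is assumed) and each $\phi_{ji}$ is right perfect. The latter is precisely the conclusion of Lemma \ref{lem: bimodules right perfect}, which applies under our hypothesis that the restrictions $\Res_{V_{I\setminus i}}\colon \bD(\Ascr_I)\to\bD(\Ascr_i)$ preserve compactness. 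Putting these ingredients together yields smoothness of $\Gac(\underline{\Ascr}_n^{\circ})$, hence of $\Glue(\underline{\Ascr}_n^{\circ})$.

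For the properness statement, the second half of Proposition \ref{prop: smoothness directed dg cat} gives that $\Gac(\underline{\Ascr}_n^{\circ})$ is proper whenever the $\Ascr_i$ are proper (and the bimodules are right perfect, already established). It then remains to note that passing from a proper dg category to its pretriangulated hull $\tw(-)$ preserves properness: the hom-complexes in $\tw(\Gac)$ are finite direct sums of shifts of hom-complexes in $\Gac$, all of which are $\kk$-perfect by assumption, and finite direct sums and shifts of $\kk$-perfect complexes remain $\kk$-perfect. This finishes the proof.

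There is no real obstacle here; the only mild subtlety is remembering to invoke Morita invariance of smoothness to pass between $\Gac$ and $\tw(\Gac)$, and to note that properness is preserved under taking twisted complexes — both are standard facts available in the background literature.
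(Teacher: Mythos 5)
Your proof is correct and follows the same route as the paper: identify $\Gac(\underline{\Ascr}_n^{\circ})$ as a directed dg category, invoke Lemma \ref{lem: bimodules right perfect} to get right perfectness of the $\phi_{ji}$, apply Proposition \ref{prop: smoothness directed dg cat}, and transport smoothness and properness to $\tw(\Gac)$ via Morita invariance and stability of properness under twisted complexes. Nothing to add.
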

		
	\subsection{Two constructions}
		We end this section by giving two constructions of hypercubes in the category $\Csf:=Z^0\dgMod(\Ascr)$, with $\Ascr$ a dg category.
		These constructions work more generally for hypercubes in an arbitrary category $\Csf$, but we are mostly interested in how these constructions behave with respect to acyclic hypercubes.
		It will be convenient at times to name our $n$-cubes with binary labels, i.e.\ thinking of them as functors $\{0,1\}^n\to\Csf$ (where we view  $\{0,1\}^n$ as a category obtained from the poset ordered by lexicographical ordering) instead of functors $2^{[n-1]}\to\Csf$. 
		(Binary labels are convenient for the operations below, whilst for the $t$-construction power set labels are convenient)
		
		The following lemma is key to all we do in this section. 
		From a high-brow point of view it follows from noting that the `category' of categories is cartesian closed, i.e.\ we have the following equivalence of functor categories
		\[
		\Fun(\{0,1\},\Fun(\{0,1\}^{n-1},\Csf))\cong \Fun(\{0,1\}\times\{0,1\}^{n-1},\Csf). 
		\]
		This allows us to view $n$-cubes as morphisms of $(n-1)$-cubes.
		We will use this to define operations on hypercubes.
		Concretely, with $\Cube_n:= \Fun(\{0,1\}^{n},\Csf)$ the equivalence gives.
		
		\begin{lemma}\label{lem: n-cube can be viewed as morphism of (n-1)-cubes}
			We have 
			\[
			\Cube_n\cong\Mor(\Cube_{n-1}).
			\]
			More precisely, let $\underline{A}$ be an $n$-cube.
			We can view $\underline{A}$ as a morphism of $(n-1)$-cubes as follows.
			Define $(n-1)$-cubes $\underline{A}_0$ and $\underline{A}_1$ via
			\begin{align*}
				\underline{A}_0(i_0,\dots,i_{n-2})&:=\underline{A}(i_0,\dots,i_{n-2},0),\\ \underline{A}_1(i_0,\dots,i_{n-2})&:=\underline{A}(i_0,\dots,i_{n-2},1)\
			\end{align*}
			and a morphism $\alpha:\underline{A}_0\to \underline{A}_1$ via 
			\[
			\alpha_{i_0,\dots,i_{n-2}}=\underline{A}(i_0,\dots,i_{n-2},0\leq 1).
			\]
			Then, the above equivalence maps $\underline{A}$ to $\alpha$.
		\end{lemma}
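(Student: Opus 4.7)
The plan is to unpack the identification from the standard cartesian closed structure on the (meta)category of categories. Concretely, for any three categories $\mathsf{X}, \mathsf{Y}, \mathsf{Z}$ there is a natural equivalence
\[
\Fun(\mathsf{X}\times\mathsf{Y},\mathsf{Z})\cong\Fun(\mathsf{X},\Fun(\mathsf{Y},\mathsf{Z})),
\]
sending a functor $F$ to the functor $x\mapsto F(x,-)$, and the required statement is then simply this equivalence specialised to $\mathsf{X}=\{0,1\}$, $\mathsf{Y}=\{0,1\}^{n-1}$ and $\mathsf{Z}=\Csf$, combined with the observation that $\Fun(\{0,1\},\Dscr)=\Mor(\Dscr)$ for any category $\Dscr$ (a functor out of the poset $0\leq 1$ is the same datum as a morphism in the target).

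First I would observe the canonical identification of posets $\{0,1\}^{n-1}\times\{0,1\}\cong\{0,1\}^n$, given by concatenation of tuples, where the isomorphism is compatible with the lexicographic orderings. This lets me rewrite $\Cube_n=\Fun(\{0,1\}^n,\Csf)$ as $\Fun(\{0,1\}^{n-1}\times\{0,1\},\Csf)$. Second, I would apply the cartesian closedness equivalence with the $\{0,1\}$-factor on the outside to obtain
\[
\Fun(\{0,1\}^{n-1}\times\{0,1\},\Csf)\cong\Fun(\{0,1\},\Fun(\{0,1\}^{n-1},\Csf))=\Fun(\{0,1\},\Cube_{n-1}).
\]
Third, I would unravel $\Fun(\{0,1\},\Cube_{n-1})=\Mor(\Cube_{n-1})$: a functor $\{0,1\}\to\Cube_{n-1}$ picks out objects $\underline{A}_0,\underline{A}_1\in\Cube_{n-1}$ (the images of $0$ and $1$) together with a morphism $\alpha:\underline{A}_0\to\underline{A}_1$ (the image of the unique non-identity arrow $0\leq 1$).

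The final step is to verify that, under the above chain of equivalences, an $n$-cube $\underline{A}$ corresponds to exactly the triple $(\underline{A}_0,\underline{A}_1,\alpha)$ specified in the statement: evaluating the transported functor at $0$ and $1$ gives the restrictions $\underline{A}(-,0)$ and $\underline{A}(-,1)$, and evaluating it on the arrow $0\leq 1$ gives the natural transformation whose component at $(i_0,\dots,i_{n-2})$ is the edge $\underline{A}(i_0,\dots,i_{n-2},0\leq 1)$. Since every step in the chain is a strict equivalence (in fact an isomorphism) of functor categories defined by evaluation, there is no real obstacle; the only thing to be careful about is to match the two orderings (lexicographic on $\{0,1\}^n$ versus lexicographic on $\{0,1\}^{n-1}$ with the extra factor on the right) so that the naming conventions $\underline{A}_0,\underline{A}_1$ given in the statement line up with the identification $\{0,1\}^{n-1}\times\{0,1\}\cong\{0,1\}^n$ used above.
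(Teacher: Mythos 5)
Your argument is correct and is essentially the paper's own reasoning: the paper doesn't give a separate proof of this lemma, but the preceding paragraph invokes precisely the cartesian-closedness identification $\Fun(\{0,1\},\Fun(\{0,1\}^{n-1},\Csf))\cong\Fun(\{0,1\}\times\{0,1\}^{n-1},\Csf)$, and the remark afterward notes the same swap $\{0,1\}\times\{0,1\}^{n-1}\cong\{0,1\}^{n-1}\times\{0,1\}$ that you address when matching the orderings. Your write-up just unpacks these steps a bit more explicitly.
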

		\begin{remark}
			Technically, we applied, in addition, another equivalence $$\Fun(\{0,1\}\times\{0,1\}^{n-1},\Csf) \cong \Fun(\{0,1\}^{n-1}\times \{0,1\},\Csf)$$ to get the specific indexing as in the lemma.
			
			Similarly, there are $n-1$ other ways we could have viewed $\underline{A}$ as a morphism, corresponding to a splitting $\{0,1\}^n\cong\{0,1\}\times \{0,1\}^{n-1}$.
		\end{remark}
		
		\begin{lemma}\label{lem: t(A)=t(t(alpha))}
			With notation as in the previous lemma
			\[
			t(\underline{A})= t(t(\underline{A}_0)\xrightarrow{t(\alpha)}t(\underline{A}_1))\quad\text{(up to possible signs that are not of importance)}.
			\]
			Consequently, a hypercube $\underline{A}$ is acyclic if and only if the corresponding morphism $\alpha$ induces a quasi-isomorphism $t(\alpha)$.
		\end{lemma}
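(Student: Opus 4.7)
My plan is to prove the identity by a direct computation, splitting the indexing of the hypercube along its last coordinate.

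First, I would decompose the underlying graded module of $t(\underline{A})$ according to whether the subset $I\subset[n]$ contains the last index $n-1$ or not. Writing $I'=I\cap[n-1]$, the subsets with $n-1\notin I$ contribute
\[
\bigoplus_{I'\subset[n-1]} X_{n-1}\otimes X_{[n-1]\setminus I'}\otimes_\kk \underline{A}_0(I') \;=\; X_{n-1}\otimes t(\underline{A}_0),
\]
while the subsets containing $n-1$ contribute
\[
\bigoplus_{I'\subset[n-1]} X_{[n-1]\setminus I'}\otimes_\kk \underline{A}_1(I') \;=\; t(\underline{A}_1).
\]
Hence, as graded $\kk$-modules, $t(\underline{A})\cong X_{n-1}\otimes t(\underline{A}_0)\oplus t(\underline{A}_1)$, which matches the underlying graded module of $t(t(\underline{A}_0)\xrightarrow{t(\alpha)}t(\underline{A}_1))$ by Examples \ref{exs: t}(ii).

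Next, I would check that the differential of $t(\underline{A})$ decomposes compatibly. The edges $\underline{A}(I,l)$ with $l<n-1$ only move within a single face, so they contribute the internal differentials of $t(\underline{A}_0)$ (on the $X_{n-1}\otimes t(\underline{A}_0)$ summand) and of $t(\underline{A}_1)$ (on the $t(\underline{A}_1)$ summand). The internal differentials $1\otimes d_{A_I}$ likewise split along the two summands. The only remaining terms are those indexed by $l=n-1$, i.e.\ the edges $\underline{A}(I',n-1):\underline{A}_0(I')\to\underline{A}_1(I')$ for $I'\subset[n-1]$. These are precisely the components of the natural transformation $\alpha$, and when assembled they produce the map $t(\alpha):t(\underline{A}_0)\to t(\underline{A}_1)$. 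Up to the sign conventions introduced when $\partial_{n-1}$ acts on $X_{n-1}\otimes X_{[n-1]\setminus I'}$ versus its action in the 1-cube cone construction, this gives the differential of the cone of $t(\alpha)$.

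I expect the only delicate point to be sign bookkeeping from the Koszul sign rule, but since the statement already allows for a harmless sign discrepancy and since any consistent choice of ordering on $[n]$ yields an isomorphic totalisation (as remarked after the definition of $t$), these signs will not obstruct the identification. Finally, the acyclicity consequence is immediate: a cone $t(g:B\to C)$ is acyclic if and only if $g$ is a quasi-isomorphism, so acyclicity of $\underline{A}$, i.e.\ of $t(\underline{A})\cong t(t(\alpha))$, is equivalent to $t(\alpha)$ being a quasi-isomorphism. This same reasoning applies in the enriched setting of hypercubes valued in $Z^0\dgMod\Ascr$ by checking everything objectwise, as allowed by \eqref{eq: extending t to dg modules}.
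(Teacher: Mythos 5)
Your proof is correct and follows essentially the same approach as the paper: split the summands of $t(\underline{A})$ according to whether $I$ contains the last index $n-1$, identify the two pieces with $X_{n-1}\otimes t(\underline{A}_0)$ and $t(\underline{A}_1)$, and check that the differential decomposes as (internal $\pm$ edges with $l<n-1$) $+$ (the edge $n-1$ giving $t(\alpha)$). The paper additionally notes that choosing the ordering so that $n-1$ is smallest in the $X$-products gives an honest equality, which is the same sign observation you make informally.
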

		\begin{proof}
			In fact, with a specific choice for the signs in the complexes, we get an honest equality.
			(Different choices would then lead to compensating minus signs.) 
			Relabelling to power set labels we have $\underline{A}_0 = \{A_I\}_{I\subset[n-1]}$ and $\underline{A}_1 = \{A_{I\cup\{n-1\}}\}_{I\subset[n-1]}$.
			We can think of $t(t(\underline{A}_0)\xrightarrow{t(\alpha)}t(\underline{A}_1))$ as $X_{n-1}t(\underline{A}_0)\oplus t(\underline{A}_1)$ with differential equal to that of $t(\underline{A})$.
			When picking an ordering of the products of the $X_i$'s so that $n-1$ is the smallest, we have equality of the complexes.
			(If one does not want to pick an ordering, the isomorphism is given by applying $X_{n-1}\partial_{n-1}$ to the summands indexed by $I\subseteq[n-1]$ and the identity to the others.)
		\end{proof}
		
		The following lemma follows immediately from the previous lemma.
		
		\begin{lemma}\label{lem: seeing acyclic on faces}
			A hypercube having two opposing acyclic faces is itself acyclic.
			Consequently, a hypercube $\underline{A}$ such that all squares `in one direction' are acyclic, i.e.\ the square $\underline{A}(\bullet,\bullet, i_3,\dots,i_n)$ (or some permutation) is acyclic for all $\{i_3,\dots,i_n\}\subseteq \{0,1\}$, is an acyclic hypercube.
		\end{lemma}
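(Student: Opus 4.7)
The plan is to deduce both claims directly from Lemma \ref{lem: t(A)=t(t(alpha))}, which identifies the totalisation of an $n$-cube, up to signs, with the cone of the induced morphism between the totalisations of two opposing $(n-1)$-dimensional faces.

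For the first claim, I would choose the coordinate direction orthogonal to the two given opposing acyclic faces and apply Lemma \ref{lem: n-cube can be viewed as morphism of (n-1)-cubes} along that direction (up to permuting coordinates, I may assume this is the last direction). This exhibits $\underline{A}$ as a morphism $\alpha: \underline{A}_0 \to \underline{A}_1$ of $(n-1)$-cubes, where $\underline{A}_0$ and $\underline{A}_1$ are precisely the two opposing faces assumed acyclic. Then $t(\underline{A}_0)$ and $t(\underline{A}_1)$ are acyclic complexes, and Lemma \ref{lem: t(A)=t(t(alpha))} identifies $t(\underline{A})$ (up to signs, which do not affect acyclicity) with $\cone(t(\alpha))$. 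Since the cone of any morphism between acyclic complexes is itself acyclic, we conclude $\underline{A}$ is acyclic.

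For the second claim, I would proceed by induction on $n$. The base case $n=2$ is tautological: the hypothesis \emph{is} the acyclicity of $\underline{A}$. For the inductive step $n \geq 3$, suppose (after possibly permuting coordinates) that all squares $\underline{A}(\bullet,\bullet,i_3,\dots,i_n)$ are acyclic. Split off a coordinate distinct from the first two, say the last, to view $\underline{A}$ as a morphism $\alpha: \underline{A}_0 \to \underline{A}_1$ of $(n-1)$-cubes via Lemma \ref{lem: n-cube can be viewed as morphism of (n-1)-cubes}. By construction, every square $\underline{A}_k(\bullet,\bullet,i_3,\dots,i_{n-1}) = \underline{A}(\bullet,\bullet,i_3,\dots,i_{n-1},k)$ for $k \in \{0,1\}$ is one of the hypothesised acyclic squares. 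The inductive hypothesis thus gives that both $\underline{A}_0$ and $\underline{A}_1$ are acyclic $(n-1)$-cubes, so the first part of the lemma (applied to $\underline{A}$ viewed with its two opposing faces $\underline{A}_0$ and $\underline{A}_1$) yields acyclicity of $\underline{A}$.

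I do not anticipate any substantive obstacle; the argument is a clean two-line application of Lemma \ref{lem: t(A)=t(t(alpha))}, with the second claim reduced to the first by a straightforward induction. The only bookkeeping required is in tracking which coordinate direction is split off at each step, and since the signs mentioned in Lemma \ref{lem: t(A)=t(t(alpha))} are immaterial for acyclicity, no further care is needed.
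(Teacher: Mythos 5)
Your proof is correct and takes essentially the same route as the paper, which simply asserts that the lemma follows from Lemma \ref{lem: t(A)=t(t(alpha))} without giving details; you have spelled out the implicit argument (the opposing acyclic faces correspond to $\underline{A}_0$ and $\underline{A}_1$, a morphism of acyclics is a quasi-isomorphism so the cone is acyclic, and the second claim is a routine induction on $n$ using the first part).
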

		
		\begin{definition}
			Let $\underline{A}$ and $\underline{B}$ be $n$-cubes sharing a face, we define the \emph{stacking of $\underline{A}$ and $\underline{B}$} to be the $n$-cube obtained by composing the corresponding morphisms of $(n-1)$-cubes from Lemma \ref{lem: n-cube can be viewed as morphism of (n-1)-cubes} along their common face.
		\end{definition}
		\begin{example}
			This does exactly what you would expect.
			The stacking of 
			\[
			\begin{tikzcd}[sep=0.75em]
				& A && E \\
				B && F \\
				& C && G \\
				D && H
				\arrow[from=2-1, to=4-1]
				\arrow[from=1-2, to=3-2]
				\arrow[from=1-4, to=3-4]
				\arrow[from=3-4, to=4-3]
				\arrow[from=3-2, to=4-1]
				\arrow[from=1-4, to=2-3]
				\arrow[from=1-2, to=2-1]
				\arrow["{g_2}"'{pos=0.3}, from=3-4, to=3-2]
				\arrow["{g_1}"', from=1-4, to=1-2]
				\arrow["{g_4}"', from=4-3, to=4-1]
				\arrow["{g_3}"'{pos=0.3}, from=2-3, to=2-1, crossing over]
				\arrow[from=2-3, to=4-3, crossing over]
			\end{tikzcd}\quad\text{and}\quad
			\begin{tikzcd}[sep=0.75em]
				& E && I \\
				F && J \\
				& G && K \\
				H && L
				\arrow[from=2-1, to=4-1]
				\arrow[from=1-2, to=3-2]
				\arrow[from=1-4, to=3-4]
				\arrow[from=3-4, to=4-3]
				\arrow[from=3-2, to=4-1]
				\arrow[from=1-4, to=2-3]
				\arrow[from=1-2, to=2-1]
				\arrow["{f_2}"'{pos=0.3}, from=3-4, to=3-2]
				\arrow["{f_1}"', from=1-4, to=1-2]
				\arrow["{f_4}"', from=4-3, to=4-1]
				\arrow["{f_3}"'{pos=0.3}, from=2-3, to=2-1, crossing over]
				\arrow[from=2-3, to=4-3, crossing over]
			\end{tikzcd}
			\]
			is 
			\[
			\begin{tikzcd}[sep=1em]
				& A && I \\
				B && J \\
				& C && K\rlap{ .} \\
				D && L
				\arrow[from=2-1, to=4-1]
				\arrow[from=1-2, to=3-2]
				\arrow[from=1-4, to=3-4]
				\arrow[from=3-4, to=4-3]
				\arrow[from=3-2, to=4-1]
				\arrow[from=1-4, to=2-3]
				\arrow[from=1-2, to=2-1]
				\arrow["{g_2 f_2}"'{pos=0.2}, from=3-4, to=3-2]
				\arrow["{g_1 f_1}"', from=1-4, to=1-2]
				\arrow["{g_4 f_4}"', from=4-3, to=4-1]
				\arrow["{g_3 f_3}"'{pos=0.2}, from=2-3, to=2-1, crossing over]
				\arrow[from=2-3, to=4-3, crossing over]
			\end{tikzcd}
			\]
		\end{example}
		
		\begin{lemma}\label{lem: stacking acyclic is acyclic}
			The stacking of acyclic hypercubes is acyclic.
		\end{lemma}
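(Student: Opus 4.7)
The plan is to use Lemmas \ref{lem: n-cube can be viewed as morphism of (n-1)-cubes} and \ref{lem: t(A)=t(t(alpha))} to reduce the statement to a simple fact: the composition of quasi-isomorphisms is a quasi-isomorphism.

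More precisely, let $\underline{A}$ and $\underline{B}$ be acyclic $n$-cubes sharing a face, so that by Lemma \ref{lem: n-cube can be viewed as morphism of (n-1)-cubes} (applied to the direction along which they are stacked) they correspond to morphisms $\alpha:\underline{A}_0\to\underline{A}_1$ and $\beta:\underline{B}_0\to\underline{B}_1$ of $(n-1)$-cubes with $\underline{A}_1=\underline{B}_0$ the common face. By definition of stacking, the resulting $n$-cube corresponds to the composed morphism $\beta\circ\alpha:\underline{A}_0\to\underline{B}_1$ of $(n-1)$-cubes. Since $t$ is a functor, $t(\beta\circ\alpha)=t(\beta)\circ t(\alpha)$.

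Now Lemma \ref{lem: t(A)=t(t(alpha))} tells us that acyclicity of an $n$-cube is equivalent to the corresponding $t$ of the associated morphism being a quasi-isomorphism. By hypothesis both $t(\alpha)$ and $t(\beta)$ are quasi-isomorphisms, hence so is their composition $t(\beta)\circ t(\alpha)=t(\beta\circ\alpha)$. Applying Lemma \ref{lem: t(A)=t(t(alpha))} once more, the stacking is acyclic.

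There is no real obstacle; the only subtlety worth mentioning is that the equality in Lemma \ref{lem: t(A)=t(t(alpha))} holds only up to signs, but this does not affect whether a morphism of complexes is a quasi-isomorphism, so the argument is unaffected. The ``direction'' used to split $\{0,1\}^n\cong\{0,1\}\times\{0,1\}^{n-1}$ should be the one along which the two cubes are stacked, which is precisely the direction for which the common face is $\underline{A}_1=\underline{B}_0$.
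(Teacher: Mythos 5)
Your argument is correct and is exactly the one the paper has in mind: unwind the definition of stacking via Lemma \ref{lem: n-cube can be viewed as morphism of (n-1)-cubes}, apply Lemma \ref{lem: t(A)=t(t(alpha))} to translate acyclicity into $t(\alpha)$, $t(\beta)$ being quasi-isomorphisms, and conclude by functoriality of $t$ and closure of quasi-isomorphisms under composition. The paper's proof is just a terser version of the same reasoning.
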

		\begin{proof}
			This follows immediately from the definition and Lemma \ref{lem: t(A)=t(t(alpha))}.
		\end{proof}
		
		\begin{definition}
			Let $\alpha:\underline{A}_0\to \underline{A}_1$ and $\beta:\underline{B}_0\to \underline{B}_1$ be $n$-cubes sharing the face $\underline{A}_1=\underline{B}_0$, we define the \emph{extension of $\underline{A}$ by $\underline{B}$} (the order matters) to be the $(n+1)$-cube obtained as follows.
			Denote by $\underline{C}$ the $n$-cube corresponding to $\id:\underline{B}_1\to \underline{B}_1$.
			Then, $\beta\alpha$ and $\beta$ define a morphism $\gamma:\underline{A}\to \underline{C}$ by Lemma \ref{lem: n-cube can be viewed as morphism of (n-1)-cubes}:
			\[
			\begin{tikzcd}[sep=1em]
				& {\underline{A}_0} &&& {\underline{A}_1=\underline{B}_0} \\
				{} &&&& {} & {} \\
				{} &&&& {} & {} \\
				& {\underline{B}_1} &&& {\underline{B}_1}\rlap{ .}
				\arrow["\id", from=4-2, to=4-5]
				\arrow["\beta\alpha"', from=1-2, to=4-2]
				\arrow["\beta", from=1-5, to=4-5]
				\arrow["\alpha", from=1-2, to=1-5]
				\arrow["\gamma="', no head, dashed, from=2-1, to=3-1, start anchor= real center, end anchor=real center]
				\arrow[dashed, no head, from=3-1, to=3-5, start anchor= real center, end anchor={[xshift=3ex]real center}]
				\arrow[dashed, no head, from=3-5, to=2-5, start anchor= real center, end anchor=real center, shift right=5]
				\arrow[dashed, no head, from=2-1, to=2-5, start anchor= real center, end anchor={[xshift=3ex]real center}]
			\end{tikzcd}
			\]
			We define the extension to be the $(n+1)$-cube corresponding, again by Lemma \ref{lem: n-cube can be viewed as morphism of (n-1)-cubes}, to $\gamma$.
		\end{definition}
		\begin{example}
			It is perhaps less clear what this construction does. 
			The extension of 
			\[
			\begin{tikzcd}[sep=1.5em]
				B && D \\
				\\
				A && C
				\arrow[from=3-1, to=1-1]
				\arrow[from=1-1, to=1-3, "b"]
				\arrow[from=3-1, to=3-3, "a"]
				\arrow[from=3-3, to=1-3]
			\end{tikzcd}\quad\text{by}\quad
			\begin{tikzcd}[sep=1.5em]
				D && F \\
				\\
				C && E
				\arrow[from=3-1, to=1-1]
				\arrow[from=1-1, to=1-3, "d"]
				\arrow[from=3-1, to=3-3, "c"]
				\arrow[from=3-3, to=1-3]
			\end{tikzcd}
			\]
			is
			\[
			\begin{tikzcd}[sep= 0.75em]
				& B && D \\
				F && F \\
				& A && C\rlap{ .} \\
				E && E
				\arrow[from=3-2, to=1-2]
				\arrow["b", from=1-2, to=1-4]
				\arrow["a"{pos=0.3}, from=3-2, to=3-4]
				\arrow[from=3-4, to=1-4]
				\arrow[equal, from=2-1, to=2-3, crossing over]
				\arrow[equal, from=4-1, to=4-3]
				\arrow["d"', from=1-4, to=2-3]
				\arrow["c"', from=3-4, to=4-3]
				\arrow["ca"', from=3-2, to=4-1]
				\arrow["db"', from=1-2, to=2-1]
				\arrow[from=4-1, to=2-1]
				\arrow[from=4-3, to=2-3, crossing over]
			\end{tikzcd}
			\]
		\end{example}
		
		\begin{lemma}\label{lem: extension of acyclic is acyclic}
			The extension of an acyclic hypercube, by a (not necessarily acyclic) hypercube, is acyclic.
		\end{lemma}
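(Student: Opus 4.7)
The plan is to invoke Lemma \ref{lem: t(A)=t(t(alpha))} twice. By construction, the extension is the $(n+1)$-cube corresponding to the morphism $\gamma:\underline{A}\to \underline{C}$ of $n$-cubes, so Lemma \ref{lem: t(A)=t(t(alpha))} reduces acyclicity of the extension to showing that $t(\gamma):t(\underline{A})\to t(\underline{C})$ is a quasi-isomorphism.

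The first complex is acyclic by hypothesis: $\underline{A}$ is assumed to be an acyclic $n$-cube, so $t(\underline{A})$ is acyclic. For the second complex, the key observation is that $\underline{C}$ was defined precisely as the $n$-cube corresponding to $\mathrm{id}:\underline{B}_1\to\underline{B}_1$. Applying Lemma \ref{lem: t(A)=t(t(alpha))} a second time, now to $\underline{C}$ itself, yields $t(\underline{C})\cong\cone(\mathrm{id}_{t(\underline{B}_1)})$ up to signs, which is manifestly acyclic.

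Since any chain map between acyclic complexes is automatically a quasi-isomorphism, $t(\gamma)$ is one, and the extension is acyclic. I do not expect any serious obstacle: the construction of the extension is rigged so that the target cube $\underline{C}$ has identity edges in the ``extension direction'', forcing its totalisation to be acyclic independently of $\underline{B}$, while the source cube $\underline{A}$ contributes acyclicity by assumption. Note in particular that the possibly non-acyclic cube $\underline{B}$ and the composite $\beta\alpha$ never have to be analysed separately — they enter only through the morphism $\gamma$, whose behaviour is irrelevant once both its source and target totalisations are known to be acyclic.
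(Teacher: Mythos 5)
Your proof is correct and is essentially the same as the paper's, which simply observes that $\underline{A}$ and $\underline{C}$ are both acyclic and invokes Lemma \ref{lem: seeing acyclic on faces} (two opposing acyclic faces imply acyclicity). You have merely unwound that corollary back to Lemma \ref{lem: t(A)=t(t(alpha))} and spelled out the elementary fact that any chain map between acyclic complexes is a quasi-isomorphism.
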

		\begin{proof}
			With notation as in the definition we have that $\underline{A}$ and $\underline{C}$ are acyclic.
			Hence, the result follows immediately from Lemma \ref{lem: seeing acyclic on faces}.
		\end{proof}

\section{Filtered schemes}\label{sec: filt sch}
		In this section, we set up the general theory of (finite length) filtered schemes.
		As we will see, these are an alternative incarnation of the $\Acal$-spaces from \cite{KuznetsovLunts}.
		For this reason many of the results proved in this section for filtered schemes will directly parallel or follow in a quite straightforward manner from results in loc.\ cit.
		
	\subsection{The category of filtered schemes}\label{subsec: cat fSch}
		\subsubsection{Generalities}
		We start with some basic definitions.	
		\begin{definition}
			A \emph{filtered scheme} is a scheme $(X,\Ocal_X)$ equipped with an ascending filtration $(F^i\Ocal_X)_{i\in \mathbb Z}$ of quasi-coherent sheaves of ideals satisfying the following:\footnote{In the left-hand side of \ref{eq: in def filt scheme}, $F^i\mathcal O_XF^j\mathcal O_X$ is the image of the natural morphism $ F^i\mathcal O_X\otimes_{\Ocal_X}F^j\mathcal O_X\to \mathcal O_X$. More precisely this is the subsheaf of $\mathcal O_X$ whose sections can locally be written as sums of products of sections of $F^i\mathcal O_X$ and $F^j\mathcal O_X$.}
			\begin{enumerate}
				\item $F^0\Ocal_X=\Ocal_X$,
				\item\label{eq: in def filt scheme} $F^i\mathcal O_XF^j\mathcal O_X\subseteq F^{i+j}\mathcal O_X$ for all $i$, $j\in \mathbb Z$.
			\end{enumerate}
			If we want to make the filtration explicit in the notation we write $F^*:=F^*\mathcal O_X:=(F^i\mathcal O_X)_{i\in\mathbb Z}$ and denote the filtered scheme by $(X, F^*)$.
		\end{definition}
		We say that a filtered scheme \emph{has finite length $n$}, or is an \emph{$n$-filtered scheme}, if $F^{-n}\Ocal_X=0$.
		Explicitly, this means that the filtration has the following form 
		\[
		0=F^{-n}\Ocal_X\subseteq F^{-n+1}\Ocal_X\subseteq\dots\subseteq F^{-1}\Ocal_X\subseteq F^{0}\Ocal_X =\Ocal_X.
		\]
		The $n$ is part of the data, we will therefore sometimes denote an $n$-filtered scheme by $(X,{}_{n}F^*)$.
		Moreover, we often adopt a naming convention for the filtration of a filtered scheme of finite length that reflects its length.
		For example, $(X,F^*)$ and $(Y,F^*)$ are implicitly understood to have the same length, whilst $(Z,G^*)$ could have an a priori different length.
		Note that finite filtrations are solely a non-reduced phenomenon, every finite filtration on a reduced scheme is the trivial one.
		
		Usually, the filtration considered will be finite. 
		Therefore, whenever we say `filtered scheme' we really have `finite length filtered scheme' in mind.
		
		\begin{definition}
			A \emph{morphism of filtered schemes} $(X,F^*)\to(Y,G^*)$ consists of a morphism of schemes $(f,f^\sharp):X\to Y$ such that the morphism on the structure sheaves\footnote{The direct and inverse image of a filtered sheaf of rings have a natural induced filtration (as they are left exact).}
			\[
			f^\sharp : (\mathcal{O}_Y, G^*)\to (f_*\mathcal{O}_X, F^*) 
			\]
			or equivalently
			\[
			f^\sharp : (f^{-1}\mathcal{O}_Y, G^*)\to (\mathcal{O}_X, F^*) 
			\]
			is a morphism of filtered sheaves of rings.
			Explicitly, we require that
			\[
			f^\sharp(G^i\mathcal{O}_Y) \subseteq f_* F^i\mathcal{O}_X \Leftrightarrow f^\sharp(f^{-1} G^i\mathcal{O}_Y) \subseteq F^i\mathcal{O}_X
			\]
			for all $i\in \mathbb Z$.
		\end{definition}
		
		In this way, we obtain a category.
		We denote the \emph{category of filtered schemes} by $\fSch$ and the full subcategory of $n$-filtered schemes by $n$-$\fSch$.
		
		Let $P$ be a property of schemes or of morphisms of schemes.
		We say that a filtered scheme $(X, F^*)$ has property $P$ if the underlying scheme $X$ has property $P$.
		Similarly, a morphism of filtered schemes $(X,F^*)\to(Y,G^*)$ has property $P$ if the underlying morphism of schemes has property $P$.
		
		\subsubsection{Rees algebra}
		To define appropriate `modules' over filtered schemes, we make use of sheaves of graded modules over the Rees algebra associated to the filtered structure sheaf.
		Therefore, we start by motivating this choice and collecting some facts concerning these. 
		
		Let $(X,F^*)$ be a filtered scheme. 
		We can consider the category $\Filt(X,F^*)$ of sheaves of filtered $\Ocal_X$-modules, or simply \emph{filtered modules}.
		Its objects are $\Ocal_X$-modules $\Mcal$ equipped with an exhaustive\footnote{That is, $\Mcal=\cup_{i\in\ZZ}F^i\Mcal$, where the right-hand side includes a sheafification.} ascending filtration $(F^i\Mcal)_{i\in\ZZ}$ compatible with that of $(\Ocal_X,F^*)$ under the $\Ocal_X$-action on $\Mcal$.
		`Unfortunately', however, this category is not abelian, but merely \emph{quasi-abelian}\footnote{A quasi-abelian category is a pre-abelian category in which the collection of kernel-cokernel pairs forms an exact structure.}.
		As homological algebra is our bread and butter, and works easiest in abelian categories, we instead would like to consider an abelian category that best encapsulates $\Filt(X,F^*)$.
		This is done by considering graded modules over the associated Rees algebra.
		
		Define the \emph{Rees algebra} associated to $(\Ocal_X,F^*)$ to be
		\[
		\tOcal_{(X,F^*)}:= \Rees(\Ocal_X,F^*):=\bigoplus_{i\in\mathbb Z} F^{i}\Ocal_X t^i\subseteq \Ocal_X[t,t^{-1}],
		\]
		which obtains its ring structure and grading by viewing it as a subalgebra of $\mathcal{O}_X[t,t^{-1}]$.
		The indeterminate $t$ of degree one is helpful to keep track of the degree of elements, but we often omit it.
		This is a sheaf of graded $\Ocal_X$-algebras.
		As the filtration is usually clear from context, we often use the slightly abusive notation $\tOcal_X$ instead of $\tOcal_{(X,F^*)}$.
		
		Next, consider the category $\grMod(\tOcal_X)$ consisting of sheaves of graded modules over $\tOcal_X$, or simply \emph{graded modules}.
		Its objects are families\footnote{		
			This is the `correct' way of thinking of a sheaf of graded modules, as opposed to as a single sheaf of modules $\oplus_i \Mcal^i$ with a direct sum decomposition.
			Although, as the category has countable direct sums, both descriptions are equivalent.
			However, the benefit of the former, for example, is that it makes it clearer how to define the pushforward (as the pushforward of sheaves need not commute with direct sums, and so, will not commute with forgetting the grading).		
		} $( \Mcal^i )_{i\in\ZZ}$ of $\Ocal_X$-modules together with morphisms
		\[
		\Mcal^i\times F^j\Ocal_X\to \Mcal^{i+j},\quad i,j\in \ZZ,
		\]
		satisfying the usual associativity and unitality conditions.
		We will usually simply denote our graded modules as 
		\[
		\oplus_{i} \Mcal^{i},
		\]
		but one should really think of them as a collection of $\Ocal_X$-modules $( \Mcal^i )_{i\in\ZZ}$.
		Graded modules have a natural $\ZZ$-action given by shifting, this is defined by \[\Mcal(i)^j:=\Mcal^{i+j}.\]
		(We use round brackets to distinguish this from the shift when viewed as a complex concentrated in degree zero.)
		
		We have the following.
		\begin{proposition}\label{prop: Rees gives abelian hull}
			The category $\grMod(\tOcal_X)$ is the `(right) abelian hull' of $\Filt(X,F^*)$.
			This means that there exists a fully faithful functor 
			\begin{align*}
				\iota: \Filt(X,F^*)&\to \grMod(\tOcal_X),\\
				(\Mcal,F^*)&\mapsto \oplus_i F^i\Mcal
			\end{align*}
			that preserves and reflects exactness and is universal in some precise sense (see \cite[Proposition 1.2.34]{Schneiders}).
			Its essential image consists of the $t$-torsion free graded modules, where $t$ is the distinguished degree one element of $\tOcal_X$.
			
			Moreover, $\iota$ induces an equivalence $\bD(\Filt(X,F^*))\cong\bD(\grMod(\tOcal_X))$ of derived categories.
		\end{proposition}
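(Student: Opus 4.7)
The plan is to proceed in four steps, the first three establishing the `abelian hull' part and the last the derived equivalence. First I would verify that $\iota$ is well-defined: for a filtered module $(\Mcal, F^*)$, the family $(F^i\Mcal)_{i\in\ZZ}$ inherits a graded $\tOcal_X$-module structure from the inclusion maps $F^i\Mcal \hookrightarrow F^{i+1}\Mcal$ (playing the role of multiplication by $t$), together with the filtration compatibility $F^i\Mcal \otimes F^j\Ocal_X \to F^{i+j}\Mcal$. Functoriality is immediate from the requirement that morphisms of filtered modules respect filtrations.

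Next I would construct an explicit quasi-inverse on the full subcategory of $t$-torsion free graded modules. Given $\Ncal^\bullet = \oplus_i \Ncal^i$ with each map $t \colon \Ncal^i \to \Ncal^{i+1}$ injective, set $\Mcal := \operatorname{colim}_i \Ncal^i$ and $F^i\Mcal := \operatorname{image}(\Ncal^i \to \Mcal)$; the $\Ocal_X$-action is inherited by restricting the $\tOcal_X$-action to degree zero. A direct verification shows this is inverse to $\iota$ up to natural isomorphism, identifying the essential image of $\iota$ with the $t$-torsion free graded modules. Full faithfulness follows because any morphism of graded modules automatically commutes with $t$-multiplication, so is determined by, and determines, a compatible family of maps $F^i\Mcal \to F^i\Ncal$, i.e.\ a morphism of filtered modules.

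For the exactness statement, the key observation is that a sequence $0 \to (\Mcal_1,F^*) \to (\Mcal_2,F^*) \to (\Mcal_3,F^*) \to 0$ is strictly short exact in the quasi-abelian category $\Filt(X,F^*)$ precisely when each $0 \to F^i\Mcal_1 \to F^i\Mcal_2 \to F^i\Mcal_3 \to 0$ is short exact in $\QCoh(X)$, which is exactly the condition for $\iota$ to send it to a short exact sequence of graded $\tOcal_X$-modules. The universal property is then the content of \cite[Proposition 1.2.34]{Schneiders}, which identifies $\grMod(\tOcal_X)$ with the right abelian hull (equivalently, the left heart) of $\Filt(X,F^*)$.

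Finally, for the derived equivalence, the cleanest route is to invoke Schneiders's general theorem that the natural embedding of a quasi-abelian category into its left heart induces an equivalence on (unbounded) derived categories; combined with the identification of the previous paragraph this yields $\bD(\Filt(X,F^*)) \cong \bD(\grMod(\tOcal_X))$. The main obstacle is precisely this last step: a direct, elementary proof would require constructing, for every complex of graded modules, a quasi-isomorphic replacement with $t$-torsion free terms, which is delicate since the $t$-torsion free subcategory is not closed under quotients (e.g.\ $\tOcal_X/t\tOcal_X$ is $t$-torsion). Appealing to Schneiders's machinery bypasses this entirely by working intrinsically with the quasi-abelian structure.
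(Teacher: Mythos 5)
Your proposal follows essentially the same route as the paper — constructing $\iota$, identifying its essential image with the $t$-torsion free graded modules, and appealing to Schneiders' quasi-abelian machinery for the universal property and the derived equivalence; your explicit verifications of well-definedness, the quasi-inverse, full faithfulness, and the exactness characterisation are all sound. However, there is a genuine gap at the step where you invoke \cite[Proposition 1.2.34]{Schneiders} as ``identifying $\grMod(\tOcal_X)$ with the right abelian hull.'' That proposition states a universal property \emph{of} the abelian hull; it is not a criterion for recognising that a given abelian category \emph{is} the hull. Establishing the identification — which is what grounds the subsequent appeal to the Schneiders equivalence $\bD(\Filt(X,F^*))\cong\bD(\grMod(\tOcal_X))$ — requires verifying an additional generation condition, namely that every object of $\grMod(\tOcal_X)$ is a cokernel of a morphism between objects in the essential image of $\iota$, equivalently that every graded $\tOcal_X$-module is a quotient of a $t$-torsion free one. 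The paper supplies precisely this by adapting \cite[Proposition 3.14]{SchapiraSchneiders}, and cites the recognition criterion \cite[Proposition 1.2.36]{Schneiders} alongside the universal property 1.2.34 to close the argument.

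The missing verification is not hard — direct sums of twists $\tOcal_X(i)$ are $t$-torsion free, and every graded module is locally a quotient of such; the content of \cite[Proposition 3.14]{SchapiraSchneiders} is to handle the patching in the sheaf setting — but it is the hinge on which the whole identification turns, and without it the remainder of your argument only establishes properties of a fully faithful exact functor whose target has not yet been shown to be the abelian hull of its source. Your closing observation that the $t$-torsion free subcategory is not closed under quotients is correct but points away from the issue: what is needed is not closure under quotients but the ``dual'' statement that every object is a quotient of a torsion-free one, which is exactly the step that is absent.
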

		\begin{proof}[Sketch of proof]
			This follows, for example, from \cite[Propositions 1.2.32 and 1.2.36]{Schneiders} adapting \cite[Proposition 3.14]{SchapiraSchneiders} to show that any graded module is a quotient of a filtered module. 
			See also \cite[Appendix B]{BondalVandenBergh}.
		\end{proof}
		We henceforth identify $\Filt(X,F^*)$ with the full subcategory of $\grMod(\tOcal_X)$ consisting of $t$-torsion free modules.
		Moreover, as we are ultimately only interested in the derived category associated to a filtered scheme, we see that there is no harm in simply considering graded modules over the Rees algebra. 
		
		When $(X,F^*)$ has finite length $n$ our main interest will be in the full abelian subcategory $\grMod^n(\tOcal_X)$ of $\grMod(\tOcal_X)$ consisting of \emph{length $n$ graded modules} $\Mcal=\oplus_i\Mcal^i$ for which $\Mcal^i=0$ for $i\leq-n$ and multiplication by $t$ induces an isomorphism $\Mcal^i\isoto \Mcal^{i+1}$ for $i\geq 0$. 
		We also allow $n=\infty$ in which case we only require the latter condition.
		\begin{remark}\label{rem: Rees gives abelian hull}
			There are obvious analogous categories on the filtered side, $\Filt^n(X,F^*)$ consists of \emph{length $n$ filtered modules}, i.e.\ filtered modules $(\Mcal,F^*)$ with $F^0\Mcal=\Mcal$ and $F^{-n}\Mcal=0$.
			Proposition \ref{prop: Rees gives abelian hull} restricts nicely to this setting.
		\end{remark}
		
		We end this subsection with a result that will be important when defining pullback functors below.
		\begin{lemma}\label{lem: adjoints Gr^N in Gr^infty}
			Let $(X,F^*)$ be a filtered scheme (not necessarily of length $n$). 
			The inclusion 
			\[
			\grMod^n(\tOcal_X)\hookrightarrow \grMod^\infty(\tOcal_X)
			\]
			has a left adjoint $l^n$ which, on objects, is given by
			\[
			\Mcal\mapsto\Mcal/\Mcal_*,				
			\]
			where $\Mcal_*$ is the graded $\tOcal_X$-submodule of $\Mcal$ generated by $\oplus_{i\leq -n}\Mcal^i$. 
		\end{lemma}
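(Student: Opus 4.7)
The plan is to prove both parts by direct verification: first that $l^n(\Mcal)$ lands in $\grMod^n(\tOcal_X)$, and then that taking the quotient by $\Mcal_*$ gives the universal property.

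First I would check the two defining conditions for length $n$ graded modules. The condition $(l^n\Mcal)^j = 0$ for $j\leq -n$ is immediate from the construction, since $\Mcal_*$ contains $\oplus_{i\leq -n}\Mcal^i$ by definition, so $(\Mcal_*)^j=\Mcal^j$ for these $j$. The second condition---that multiplication by $t$ gives an isomorphism $(l^n\Mcal)^j\to(l^n\Mcal)^{j+1}$ for $j\geq 0$---is the less trivial part, and I expect this to be the main obstacle. My approach is to apply the five lemma to the short exact sequence
\[
0\to \Mcal_*\to \Mcal\to \Mcal/\Mcal_*\to 0
\]
in degrees $j$ and $j+1$: since multiplication by $t$ is an isomorphism on $\Mcal$ in non-negative degrees by hypothesis, it suffices to show that it is also an isomorphism on $\Mcal_*$ in these degrees. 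Injectivity is immediate from the injectivity on $\Mcal$. For surjectivity, I would unravel the generation: a homogeneous element of $(\Mcal_*)^{j+1}$ for $j\geq 0$ is a sum of elements $m\cdot a$ with $m\in \Mcal^i$, $i\leq -n$, and $a\in (\tOcal_X)^{j+1-i}$. Since $j+1-i\geq j+1+n\geq 1$, the Rees algebra satisfies $(\tOcal_X)^{j+1-i}=F^{j+1-i}\Ocal_X=\Ocal_X$, and the same element $a$ viewed in degree $j-i\geq n\geq 1$ also lies in $\Ocal_X$; multiplication by $t$ between these two degree components of $\tOcal_X$ is the identity on $\Ocal_X$. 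Thus $m\cdot a=(m\cdot a')\cdot t$ with $m\cdot a'\in(\Mcal_*)^j$, giving surjectivity.

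Next I would verify the adjunction. Given $\Ncal\in\grMod^n(\tOcal_X)$ and a morphism $\phi:\Mcal\to\Ncal$ in $\grMod^\infty(\tOcal_X)$, the components of $\phi$ kill $\Mcal^i$ for $i\leq -n$ because $\Ncal^i=0$. Since $\phi$ is $\tOcal_X$-linear and $\Mcal_*$ is by definition the smallest graded $\tOcal_X$-submodule containing $\oplus_{i\leq -n}\Mcal^i$, we conclude $\phi(\Mcal_*)=0$, so $\phi$ factors uniquely through $\Mcal/\Mcal_*$. This factorisation is clearly functorial in both variables, and uniqueness is automatic from the surjectivity of $\Mcal\to\Mcal/\Mcal_*$, so we obtain the natural bijection
\[
\Hom_{\grMod^n(\tOcal_X)}(l^n\Mcal,\Ncal)\cong\Hom_{\grMod^\infty(\tOcal_X)}(\Mcal,\Ncal)
\]
as required.

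The only conceptually delicate point is the surjectivity of $t$ on $\Mcal_*$ in non-negative degrees, which relies critically on the fact that $F^k\Ocal_X=\Ocal_X$ for $k\geq 0$, so that the degree components of the Rees algebra stabilise and multiplication by $t$ between them is the identity on $\Ocal_X$. Everything else is formal.
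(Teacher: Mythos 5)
Your proof is correct and takes essentially the same approach as the paper: construct $l^n\Mcal$ as the quotient $\Mcal/\Mcal_*$ and deduce the adjunction from the observation that any morphism to a length-$n$ module kills $\Mcal_*$. The paper's proof is terser, recording only the explicit degree-wise formula $\Mcal_*^i=\Mcal^{-n}t^{n+i}$ for $i>-n$ (and $\Mcal_*^i=\Mcal^i$ for $i\leq-n$), from which the $t$-isomorphism condition you check via the five lemma is read off directly; your argument fills in that verification in a slightly different, but equivalent, way.
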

		\begin{proof}
			More explicitly, we have
			\[
			\Mcal_*^i=  \begin{cases} 
				\Mcal^{-n}t^{n+i}       & \text{if } i > -n ,\\
				\Mcal^i & \text{if } i \leq -n ,
			\end{cases}	
			\]
			and
			\[
			l^n(\Mcal)^i=  \begin{cases} 
				\Mcal^i/(\Mcal^{-n}t^{n+i})       & \text{if } i > -n ,\\
				0 & \text{if } i \leq -n .
			\end{cases}	
			\]	
			As any morphism from $\Mcal$ to an object of $\grMod^n(\tOcal_X)$ sends $\Mcal_*$ to zero, and hence factors uniquely through $l^n(\Mcal)$, the adjunction follows.
		\end{proof}
		\begin{remark}
			The inclusion $\grMod^n(\tOcal_X)\hookrightarrow \grMod^\infty(\tOcal_X)$ also has a right adjoint $r^n$, the sections of $r^n(\Mcal)^i$ are those sections of $\Mcal^i$ that (locally) get annihilated by $\oplus_{j\leq-n-i} F^j\Ocal_X$.
			We will not use this adjoint, so we do not go into further detail.
		\end{remark}
		We refer to the left, respectively, right adjoint of the inclusion 
		\[
		\grMod^n(\tOcal_X)\hookrightarrow \grMod^\infty(\tOcal_X)
		\] 
		as the \emph{left, respectively, right $n$th truncation functor}.
		In the next lemma, we collect some relations that will be used below.
		They follow from the uniqueness of adjoints.
		\begin{lemma}\label{lem: rel ln's}
			The left truncation functors $l^n:\grMod^\infty(\tOcal_X) \to \grMod^n(\tOcal_X)$  satisfy the following:
			\begin{enumerate}
				\item if $m\geq n$, then $l^nl^m=l^n$,
				\item for $i\geq 0$, $l^n(-)(i)=l^{n+i}(-(i))$.
			\end{enumerate}
		\end{lemma}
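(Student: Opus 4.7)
The plan is to deduce both identities by showing that in each case the two proposed functors corepresent the same functor on the relevant target category, after which Yoneda (equivalently, the uniqueness of left adjoints) delivers the isomorphism. The defining adjunction
\[
\Hom_{\grMod^n(\tOcal_X)}(l^n(M),N)\cong\Hom_{\grMod^\infty(\tOcal_X)}(M,N)\quad\text{for }N\in\grMod^n(\tOcal_X),
\]
together with the explicit description of $l^n$ from the previous lemma, are the only inputs I will use.

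For (1), I would first observe that for $m\geq n$ one has $\grMod^n(\tOcal_X)\subseteq \grMod^m(\tOcal_X)$: the vanishing condition $M^j=0$ for $j\leq -n$ implies $M^j=0$ for $j\leq -m$, while the $t$-iso condition for $j\geq 0$ is identical in both categories. The restriction of $l^n$ to the full subcategory $\grMod^m(\tOcal_X)$ then serves as a left adjoint to the inclusion $\grMod^n(\tOcal_X)\hookrightarrow\grMod^m(\tOcal_X)$, since the $\Hom$ appearing in the adjunction is unchanged when passing to a full subcategory. Composing left adjoints along $\grMod^n\hookrightarrow\grMod^m\hookrightarrow\grMod^\infty$ and invoking the uniqueness of left adjoints yields $l^n\cong l^n|_{\grMod^m}\circ l^m$, which is exactly $l^n l^m\cong l^n$.

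For (2), a direct check on the two defining conditions of $\grMod^n$ shows that the shift $(-)(i):\grMod^\infty\isoto\grMod^\infty$ (an equivalence with inverse $(-)(-i)$) restricts, for $i\geq 0$, to a functor $\grMod^n\to\grMod^{n+i}$; in particular $l^n(M)(i)\in\grMod^{n+i}$. The crucial observation is that for $N\in\grMod^{n+i}$ the shift $N(-i)\in\grMod^\infty$ satisfies $N(-i)^j=N^{j-i}=0$ for $j\leq -n$ (using $i\geq 0$). Hence, for any $L\in\grMod^\infty$, any morphism $L\to N(-i)$ necessarily kills the submodule $L_*$ generated by $\bigoplus_{j\leq -n}L^j$ (its image in the relevant degrees is zero), and so factors uniquely through $l^n(L)$. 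Combining this with the equivalence $(-)(i)$ and the universal property of $l^{n+i}$, for $M\in\grMod^\infty$ and $N\in\grMod^{n+i}$ we obtain the natural chain
\[
\Hom(l^n(M)(i),N)=\Hom(l^n(M),N(-i))=\Hom(M,N(-i))=\Hom(M(i),N)=\Hom(l^{n+i}(M(i)),N),
\]
and Yoneda yields (2). I do not anticipate a serious obstacle; the main care needed is bookkeeping of the ambient category in which each $\Hom$ is computed and the verification that morphisms to objects whose low-degree components vanish automatically factor through $l^n$.
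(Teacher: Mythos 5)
Your proof correctly fleshes out the paper's one-line appeal to uniqueness of adjoints, and the decomposition into (1) composing left adjoints along nested full subcategories and (2) a Yoneda chain through the shift equivalence is exactly the intended argument. Part (1) is clean. In part (2), two parenthetical claims are slightly off but fortunately not load-bearing: for $N\in\grMod^{n+i}$ and $i>0$ the shift $N(-i)$ need \emph{not} lie in $\grMod^\infty$ (multiplication by $t$ may fail to be an isomorphism in degrees $0\leq j<i$, e.g.\ already for $n=0$, $i=1$), and correspondingly $(-)(i)$ is a functor but not an autoequivalence of $\grMod^\infty$ (its would-be inverse $(-)(-i)$ does not preserve $\grMod^\infty$). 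Your chain of isomorphisms nevertheless holds verbatim once all Homs are read in the ambient category $\grMod(\tOcal_X)$ of \emph{all} graded modules, where $(-)(i)$ genuinely is an equivalence: the only facts you actually use are the vanishing $N(-i)^j=0$ for $j\leq -n$ (which is correct and forces any morphism to kill $M_*$) and the adjunction $l^{n+i}\dashv(\grMod^{n+i}\hookrightarrow\grMod^\infty)$, both of which are full-subcategory statements and hence compatible with passing to $\grMod$. Since both $l^n(M)(i)$ and $l^{n+i}(M(i))$ land in $\grMod^{n+i}$, Yoneda there concludes.
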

		
		\subsubsection{Module categories}
		To any filtered scheme $(X,F^*)$ we associate an abelian \emph{category of (quasi-coherent) sheaves of modules}, or simply \emph{(quasi-coherent) modules}, by considering (certain) graded modules over the Rees algebra associated to the filtered structure sheaf.
		Define
		\begin{align*}
			\Mod(X,F^*)&:=\grMod(\tOcal_X),\\
			\QCoh(X,F^*)&:=\grMod_{\QCoh}(\tOcal_X),
		\end{align*}
		where the latter notation means graded $\tOcal_X$-modules that are quasi-coherent as underlying $\Ocal_X$-modules (which is equivalent to every graded component being a quasi-coherent $\Ocal_X$-module)\footnote{
			As $F^0\Ocal_X=\Ocal_X$, we have a morphism of graded rings $\Ocal_X\to \tOcal_X$, where we view $\mathcal{O}_X$ as concentrated in degree zero. 
			Thus any graded $\tOcal_X$-module is a graded $\Ocal_X$-module (and a direct sum of modules is quasi-coherent if and only if every summand is quasi-coherent).}. 
		
		When $(X,F^*)$ has finite length $n$ we consider moreover the following subcategories of \emph{length $n$ (quasi-coherent) modules}
		\begin{align*}
			\Mod^n(X,F^*)&:=\grMod^n(\tOcal_X),\\
			\QCoh^n(X,F^*)&:=\grMod_{\QCoh}^n(\tOcal_X).
		\end{align*}
		Furthermore, \emph{length $n$ coherent modules} also make sense in this setting. 
		We define
		\[
		\Coh^n(X,F^*):=\grMod_{\Coh}^n(\tOcal_X)
		\]
		as the length $n$ graded $\tOcal_X$-modules $\Mcal=\oplus_{i} \Mcal^{i}$ such that $\oplus_{i\leq 0} \Mcal^{i}$ is coherent as $\Ocal_X$-module (which is equivalent to $\Mcal^{-n+1}, \dots,\ \Mcal^{-1}$ and $\Mcal^{0}$ being coherent).
		
		There are other ways of characterising these modules.
		\begin{lemma}\label{lem: char (Q)Coh}
			Let $\Mcal$ be an object of $\Mod(X,F^*)$.
			The following are equivalent:\:\!\footnote{This uses the fact that the filtration consists of quasi-coherent ideals, hence that $\tOcal_X$ is a quasi-coherent $\Ocal_X$-module.}
			\begin{enumerate}
				\item\label{item: char Qcoh1} $\Mcal\in\QCoh(X,F^*)$,
				\item\label{item: char Qcoh2} 
				for every point $x\in X$ there exists an open neighbourhood $x\in U\subseteq X$ such that $\Mcal|_U$ is isomorphic to the cokernel of a map					\[
				\oplus_{i\in I} \tOcal_U(k_i)\to \oplus_{j\in J} \tOcal_U(l_j),
				\]
				for some sets $I$ and $J$ and integers $k_i$ and $l_j$.
			\end{enumerate}
			Moreover, we can change both \ref{item: char Qcoh1} and \ref{item: char Qcoh2} above by
			\begin{itemize}
				\item \ref{item: char Qcoh1} $\leftrightarrow$ $\Mcal\in\QCoh^\infty(X,F^*)$, 
				\\ \ref{item: char Qcoh2} $\leftrightarrow$ the integers $k_i,l_i$ are positive,
			\end{itemize}		
			
			Furthermore, suppose that $(X,F^*)$ is of finite length $n$.
			Then, we can change \ref{item: char Qcoh1} and \ref{item: char Qcoh2} above by
			\begin{itemize}
				\item \ref{item: char Qcoh1} $\leftrightarrow$ $\Mcal\in\QCoh^n(X,F^*)$, \\ \ref{item: char Qcoh2} $\leftrightarrow$ $\tOcal_U(k_i)$ and $\tOcal_U(l_j)$ replaced by $l^n(\tOcal_U(k_i))$ and $l^n(\tOcal_U(l_j))$, and the integers $k_i,l_i\in \{0,\dots,n-1\}$,
			\end{itemize}
			and if, in addition, every $F^i\Ocal_X$ is coherent as $\Ocal_X$-module\:\!\footnote{
				Meaning that $\tOcal_X\in\Coh^n(X,F^*)$ (which is clearly necessary for the lemma to hold).
				This holds automatically when $(X,F^*)$ is locally Noetherian as coherent modules are better behaved then. 
				For example, coherent = finite presentation = quasi-coherent finite type, and quasi-coherent submodules/quotients of coherent modules are again coherent.
				In general the structure sheaf need not be coherent, but when it is coherent = finite presentation. 
				This is essentially the content of this part of the lemma. }
			by
			\begin{itemize}
				\item \ref{item: char Qcoh1} $\leftrightarrow$ $\Mcal\in\Coh^n(X,F^*)$, \\ \ref{item: char Qcoh2} $\leftrightarrow$ $\tOcal_U(k_i)$ and $\tOcal_U(l_j)$ replaced by $l^n(\tOcal_U(k_i))$ and $l^n(\tOcal_U(l_j))$, the sets $I$ and $J$ taken finite and the integers $k_i,l_i\in \{0,\dots,n-1\}$.
			\end{itemize}
		\end{lemma}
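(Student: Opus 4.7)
My plan is to handle the easy direction \ref{item: char Qcoh2} $\Rightarrow$ \ref{item: char Qcoh1} uniformly in all four versions, and to obtain \ref{item: char Qcoh1} $\Rightarrow$ \ref{item: char Qcoh2} by passing to an affine neighborhood, choosing a homogeneous generating set of $\Mcal|_U$ as a graded $\tOcal_U$-module, and iterating the construction on the kernel.

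For the easy direction I would note that $\tOcal_X$ is quasi-coherent as an $\Ocal_X$-module, its graded components being the quasi-coherent ideals $F^i\Ocal_X$, so each $\tOcal_U(k)$ is a quasi-coherent graded module. Since direct sums and cokernels preserve quasi-coherence, the presentations in \ref{item: char Qcoh2} yield members of $\QCoh(X,F^*)$. For the length $n$ versions one additionally observes that $l^n(\tOcal_U(k))$ is $n$-length by construction and that $\grMod^n(\tOcal_U)$ is closed under direct sums and cokernels (inside $\grMod(\tOcal_U)$). The coherent version uses the hypothesis $F^i\Ocal_X \in \Coh(X)$ to conclude that $\tOcal_X$ is a coherent sheaf of (graded) rings, so finite cokernels of finite sums of the $l^n(\tOcal_U(k))$'s remain coherent.

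For the hard direction I fix $x \in X$ and an affine open $U = \Spec A$ around it; then $\tOcal_U$ is associated to the graded ring $\tilde A = \bigoplus_i \Gamma(U, F^i\Ocal_X)$, and $\Mcal|_U$ corresponds to a graded $\tilde A$-module $M$. Choosing a set $\{m_\alpha\}_{\alpha \in J}$ of homogeneous generators of $M$, with $m_\alpha$ of degree $-l_\alpha$, defines a surjection $\bigoplus_{\alpha \in J} \tOcal_U(l_\alpha) \twoheadrightarrow \Mcal|_U$ by sending the canonical degree $-l_\alpha$ generator of $\tOcal_U(l_\alpha)$ to $m_\alpha$; iterating this on the quasi-coherent kernel (possibly shrinking $U$ further in the coherent case) yields the desired two-term presentation. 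The restrictions on the shifts for the subcases come from restricting the generating sets: in $\QCoh^\infty$ the identity $M^i = M^0 \cdot t^i$ for $i \geq 0$ lets one take $l_\alpha \geq 0$, and in $\QCoh^n$ the additional vanishing $M^{-n} = 0$ lets one take $l_\alpha \in \{0,\ldots,n-1\}$. Applying the left truncation functor $l^n$ of Lemma \ref{lem: adjoints Gr^N in Gr^infty}, which is a left adjoint and hence preserves surjections, converts the $\tOcal_U(l_\alpha)$'s into the required $l^n(\tOcal_U(l_\alpha))$'s while leaving $\Mcal|_U$ (which already satisfies $l^n(\Mcal|_U) = \Mcal|_U$) unchanged.

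The main delicacy I anticipate is the bookkeeping between the infinite- and finite-length pictures: one must verify that applying $l^n$ to a two-term presentation in $\grMod^\infty(\tOcal_U)$ really does produce a two-term presentation in $\grMod^n(\tOcal_U)$ with exactly the building blocks $l^n(\tOcal_U(k))$ for $k \in \{0,\ldots,n-1\}$, which ultimately reduces to the shift identity $l^n(-)(i) = l^{n+i}(-(i))$ from Lemma \ref{lem: rel ln's} and to right exactness of $l^n$. The coherent case then follows by tracking that the generating sets can be taken finite throughout, using that $\tOcal_X$ is a coherent sheaf of rings.
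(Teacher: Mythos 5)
Your proposal is correct and coincides with the first of the paper's two (very terse) arguments: reduce to an affine open $U=\Spec A$, use quasi-coherence of the graded components to identify $\Mcal|_U$ with a graded $\tilde A$-module, pick homogeneous generators to get a free surjection from shifts of $\tOcal_U$, and iterate on the kernel; your careful bookkeeping of the allowable shift degrees (via $M^i = M^0 t^i$ for $i\ge 0$ in $\grMod^\infty$, the vanishing $M^{\le -n}=0$ in $\grMod^n$, and applying the right-exact left adjoint $l^n$ together with Lemma~\ref{lem: rel ln's}) is exactly the content the paper leaves implicit in the phrase ``the claim follows from this.'' The paper also sketches a second route (building a free surjection componentwise over $\Ocal_X$, tensoring up to $\tOcal_X$, composing with multiplication, then truncating), which sidesteps the affine reduction, but your version is the one the paper lists first and is fully adequate.
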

		\begin{proof}
			One way to prove the non-obvious direction is noting that affine locally on $\Spec(R)$ a (quasi-)coherent graded module $\Mcal$ corresponds to an $R$-module $M$ which is naturally a graded $\Rees(R,F^*)$-module. 
			The claim follows from this.
			
			An alternative way, is noting that $\QCoh$, $\QCoh^\infty$, $\QCoh^n$ and $\Coh^n$ are closed under the appropriate operations, kernels and (possibly infinite) direct sums, and contain the $\tOcal_X(i)$'s or $l^n(\tOcal_X(i))$'s.
			Hence, it suffices to locally construct a surjection from a (truncated) free graded module.
			This can be done by constructing a surjection from a free $\Ocal_X$-module in every graded component, tensoring with $\tOcal_X$ over $\Ocal_X$, postcomposing with multiplication and then truncating.
		\end{proof}
		We sometimes refer to $\Mod^n$, $\QCoh^n$ and $\Coh^n$ as \emph{truncated module categories}.
		When working with filtered schemes of finite length, these are the categories that are of importance.
		Furthermore, the natural embedding of Proposition \ref{prop: Rees gives abelian hull} clearly restricts well to (the truncated) (quasi-)coherent filtered $(\Ocal_X,F^*)$-modules.
		So, we likewise identify these types of filtered modules with their essential image. 
		
		\subsubsection{Generalised morphisms, pullback and pushforward}\label{subsubsec: gen mor}
		In general, the usual pullback and pushforward of schemes, suitably reinterpreted for the graded context, will not preserve the truncated module categories.
		Therefore, in order to obtain a pullback/pushforward-like adjunction between them, we have to post-compose with the left/right truncation functors.
		Unfortunately, doing so can break the functoriality of the composition of the pullback and pushforward functors when the filtration length decreases from source to target\footnote{
			When the lengths increase from source to target, everything works out fine. For morphisms $(X,F^*)\xrightarrow{f}(Y,G^*)\xrightarrow{g}(Z,H^*)$ with $n_X\leq n_Y\leq n_Z$ we have a natural isomorphism and equalities $l^{n_X}(g\circ f)^*\cong l^{n_X}f^*\circ l^{n_Y}g^*$, $r^{n_Y}f_*=f_*$, $r^{n_Z}g_*=g_*$ and $r^{n_Z}(g\circ f)_*=(g\circ f)_*$, when interpreted as functors on the `correct' domains. (The first isomorphism can be seen using the latter equalities and uniqueness of adjoints.) However, as morphisms of this form do not naturally appear in this work, we do not consider them.
		}, as the truncation functors do not always compose well themselves.
		To overcome this we define a slightly more general type of morphism between filtered schemes which allows us to decrease the length.
		For this, we include a Veronese into the definition, where the \emph{$d$-Veronese} of a graded object $M=(M^i)_{i\in\ZZ}$ is the graded object 
		\[
		M^{(d)}:=(M^{di})_{i\in\ZZ}
		\]
		obtained by keeping only the multiples of $d$ in the grading.
		
		\begin{definition}
			A \emph{generalised morphism, or $d$-morphism,} from a $dn$-filtered scheme to an $n$-filtered scheme $(f,d):(X,{}_{dn}F^*)\rightsquigarrow(Y,{}_{n}G^*)$ consists of a morphism of schemes $(f,f^\sharp):X\to Y$ such that
			\[
			f^\sharp : (\mathcal{O}_Y, G^*)\to (f_*\mathcal{O}_X, F^*)^{(d)},
			\]
			or equivalently
			\[
			f^\sharp : (f^{-1}\mathcal{O}_Y, G^*)\to (\mathcal{O}_X, F^*)^{(d)},
			\]
			is a morphism of filtered sheaves of rings.
			Explicitly, we require that
			\begin{equation}\label{eq: generalised morphism}
				f^\sharp(G^i\mathcal{O}_Y) \subseteq f_* F^{di}\mathcal{O}_X \Leftrightarrow f^\sharp(f^{-1} F^i\mathcal{O}_Y) \subseteq F^{di}\mathcal{O}_X
			\end{equation}
			for all $i\in \mathbb Z$.
		\end{definition}
		
		Note that generalised morphisms compose as they should, i.e.\ $(g,e)\circ(f,d) = (g\circ f,de)$.
		We sometimes leave the `$(\ ,d)$ part' out of the notation and denote a $d$-morphism, for $d>1$, by a squiggly arrow $\rightsquigarrow$.
		Two special cases of $d$-morphisms are worth highlighting:
		\begin{itemize}
			\item when $d=1$, we obtain the usual notion of a morphism between filtered schemes of the same length,
			\item when $f=\id$ and we have equality instead of inclusion in Equation \eqref{eq: generalised morphism}, we obtain a \emph{$d$-refinement}.
			This is a one-sided inverse to taking the $d$-Veronese, i.e.\ a $d$-refinement of an $n$-filtered scheme $(X,F^*)$ is a $dn$-filtered scheme $(X,G^*)$ such that $G^{di}\Ocal_X=F^i\Ocal_X$ for all $i\in\ZZ$.
			Refinements are certainly not unique. 
			We always view a $d$-refinement of an $n$-filtered scheme as a $dn$-filtered scheme.		
		\end{itemize}
		In fact, any generalised morphism can be written as the composition of an ordinary morphism and a refinement. 
		Indeed, let $(f,d):(X,{}_{dn}F^*)\rightsquigarrow(Y,{}_{n}G^*)$ be a $d$-morphism. 
		Define a $dn$-filtered scheme $(Y,G'^{*})$ via\footnote{This is, in some sense,  the left adjoint to taking the $d$-Veronese, see also the proof of Lemma \ref{lem: pull/push}.}
		\[
		G'^{i}\Ocal_Y:=G^{\lfloor i/d\rfloor}\Ocal_Y,
		\]
		then $(f,d)$ decomposes as 
		\[
		\begin{tikzcd}
			(X,{}_{dn}F^*)\arrow[r, "{(f,1)}"] & (Y,{}_{dn}G'^{^*})\arrow[r, "{(\id,d)}", squiggly] & (Y,{}_{n}G^*),
		\end{tikzcd}
		\]
		where the first morphism is an ordinary morphism and the second is a refinement.
		As refinements give well-behaved functors at the level of the truncated morphism categories we obtain the following.
		
		\begin{lemma}\label{lem: pull/push}
			Any generalised morphism of filtered schemes $$(f,d):(X,{}_{dn}F^*)\rightsquigarrow(Y,{}_{n}G^*)$$ induces a pullback/pushforward-like adjunction
			\[
			\begin{tikzcd}[sep=2.5em]
				{\Mod^{dn}(X,F^*)} \\ {\Mod^{n}(Y,G^*)}\rlap{ ,}
				\arrow[""{name=0, anchor=center, inner sep=0}, "{(f,d)_{*}}", bend left=45, from=1-1, to=2-1]
				\arrow[""{name=1, anchor=center, inner sep=0}, "{(f,d)^{*}}", bend left=45, from=2-1, to=1-1]
				\arrow["\dashv"{anchor=center}, draw=none, from=1, to=0]
			\end{tikzcd}
			\]
			which, when $f$ is quasi-compact and quasi-separated, restricts to quasi-coherent modules and, when in addition $f$ is proper and the filtered schemes are locally Noetherian, restricts to coherent modules.
			
			Moreover, these compose well: if $n=em$ and $(g,e):(Y,{}_{em}G^*)\rightsquigarrow(Z,{}_{m}H^*)$ is an $e$-morphism we have
			$(g,e)_{*}\circ(f,d)_{*} = (g\circ f,de)_{*}$ (honest equality) and consequently canonically $(f,d)^{*}\circ(g,e)^{*} \cong (g\circ f,de)^{*}$.
		\end{lemma}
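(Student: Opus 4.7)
My plan is to establish the adjunction separately for the two basic types of generalised morphisms identified just before the statement, ordinary morphisms and refinements, and obtain the general case by composition. For an ordinary morphism $(f,1)\colon (X,F^*)\to(Y,G^*)$ between $n$-filtered schemes, the filtered ring morphism $f^{\sharp}$ gives a morphism of sheaves of graded rings $\tOcal_Y\to f_*\tOcal_X$, and I use the usual sheaf-theoretic adjunction between $\grMod(\tOcal_X)$ and $\grMod(\tOcal_Y)$: pushforward is the underlying $f_*$ taken grading-wise, and pullback is tensoring along the ring map. Since the underlying $f_*$ is $\Ocal_Y$-linear and hence commutes with multiplication by $t$, pushforward sends length $n$ modules to length $n$ modules. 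The pullback need not preserve length, but postcomposing with the truncation $l^n$ yields a functor $\Mod^n(Y,G^*)\to \Mod^n(X,F^*)$ that is left adjoint to pushforward by Lemma~\ref{lem: adjoints Gr^N in Gr^infty}.

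For a $d$-refinement $(\id,d)\colon (X,F^*)\rightsquigarrow (X,G^*)$ with $G^i\Ocal_X=F^{di}\Ocal_X$, one has $\widetilde{\Ocal}_X^{G}=(\widetilde{\Ocal}_X^{F})^{(d)}$ as a subalgebra of $\widetilde{\Ocal}_X^{F}$. The $d$-Veronese $(-)^{(d)}$ then serves as $(\id,d)_*$, and it visibly sends length $dn$ modules to length $n$ modules. A left adjoint $(\id,d)^*$ is supplied by tensoring with $\widetilde{\Ocal}_X^{F}$ over $\widetilde{\Ocal}_X^{G}$ and then applying $l^{dn}$. Composing these two adjunctions along the decomposition $(f,d)=(\id,d)\circ (f,1)$ recorded just before the statement yields the claimed adjoint pair $((f,d)^*,(f,d)_*)$ for an arbitrary generalised morphism.

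The quasi-coherent and coherent preservation assertions concern only the pushforward, because $(f,d)^*$ by construction sends shifted (truncated) Rees modules to modules of the local shape demanded by Lemma~\ref{lem: char (Q)Coh}, and these classes are stable under the truncations used. For the pushforward, the same characterisation reduces the question to the classical statements that the underlying $f_*$ sends quasi-coherents to quasi-coherents when $f$ is quasi-compact and quasi-separated, and coherents to coherents when $f$ is proper and the schemes are locally Noetherian, applied grading-wise, together with the trivial observation that the $d$-Veronese preserves both classes.

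For compositionality, componentwise sheaf-theoretic pushforward is strictly functorial, the Veronese operations satisfy $(-)^{(e)}\circ(-)^{(d)}=(-)^{(de)}$, and pushforward commutes with Veronese because the two operations affect orthogonal features of the data (the underlying sheaf versus the grading). Hence the identity $(g,e)_*\circ(f,d)_*=(g\circ f,de)_*$ holds strictly, and the canonical isomorphism $(f,d)^*\circ(g,e)^*\cong (g\circ f,de)^*$ then follows from uniqueness of left adjoints. The main obstacle I expect is the bookkeeping required to confirm that the Veronese adjoint composed with $l^{dn}$ really does define a bona fide left adjoint between the truncated categories, and more generally that the various truncations appearing in a composition interact correctly; here the identities $l^n l^m=l^n$ for $m\geq n$ and $l^n(-)(i)=l^{n+i}(-(i))$ from Lemma~\ref{lem: rel ln's} are the key tool.
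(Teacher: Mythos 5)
Your proposal follows essentially the same route as the paper: factoring $(f,d)$ as the ordinary morphism $(f,1)\colon(X,{}_{dn}F^*)\to(Y,{}_{dn}G'^*)$ with $G'^i\Ocal_Y:=G^{\lfloor i/d\rfloor}\Ocal_Y$ followed by the refinement $(\id,d)\colon(Y,G'^*)\rightsquigarrow(Y,G^*)$, defining $(f,d)_*$ as componentwise $f_*$ followed by the $d$-Veronese and $(f,d)^*$ as the left adjoint of this composite. One simplification the paper makes that sidesteps the bookkeeping you flag at the end: instead of tensoring with the Rees algebra and truncating, the left adjoint to $(-)^{(d)}$ is given by the explicit formula $\epsilon(\Mcal)^j:=\Mcal^{\lfloor j/d\rfloor}$, which already lands in $\Mod^{dn}(Y,G'^*)$ with no truncation needed; the only place a truncation appears is in $l^{dn}\circ f^*$, where one must first check (via Lemma~\ref{lem: char (Q)Coh}) that $f^*$ lands in $\Mod^\infty$.
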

		\begin{proof}
			As above define a $dn$-filtered scheme $(Y,G'^{*})$ via	$G'^{i}\Ocal_Y:=G^{\lfloor i/d\rfloor}\Ocal_Y$.
			Taking the $d$-Veronese induces a functor 
			\begin{align*}
				(-)^{d}:\Mod^{dn}(Y,G'^*)&\to\Mod^n(Y,G^*), \\
				\oplus_{i}\Mcal^{i}&\mapsto \oplus_{i}\Mcal^{di}
			\end{align*}
			admitting a left adjoint
			\begin{align*}
				\epsilon:\Mod^n(Y,G^*)&\to\Mod^{dn}(Y,G'^*),	\\
				\oplus_{i}\Mcal^{i}&\mapsto \oplus_{i}\Mcal^{\lfloor i/d \rfloor}\rlap{ .}		
			\end{align*}
			We define $(f,d)^{*}$ and $(f,d)_{*}$ through the following diagram:
			\begin{equation}\label{eq: def pullpush gen morph}
				\begin{tikzcd}
					{\Mod^{dn}(X,F^*)} & {\Mod^{dn}(Y,G'^*)} & {\Mod^{n}(Y,G^*)}\rlap{ .}
					\arrow[""{name=0, anchor=center, inner sep=0}, "{f_{*}}"{description}, bend left=20, from=1-1, to=1-2]
					\arrow[""{name=1, anchor=center, inner sep=0}, "{(-)^{(d)}}"{description}, bend left=20, from=1-2, to=1-3]
					\arrow[""{name=2, anchor=center, inner sep=0}, "\epsilon"{description}, bend left=20, from=1-3, to=1-2]
					\arrow[""{name=3, anchor=center, inner sep=0}, "{l^{dn}f^{*}}"{description}, bend left=20, from=1-2, to=1-1]
					\arrow["{(f,d)_{*}}", bend left=35, from=1-1, to=1-3]
					\arrow["{(f,d)^{*}}", bend left=35, from=1-3, to=1-1]
					\arrow["\dashv"{anchor=center, rotate=90}, draw=none, from=2, to=1]
					\arrow["\dashv"{anchor=center, rotate=90}, draw=none, from=3, to=0]
				\end{tikzcd}
			\end{equation}
			Here $f_*$ applies the usual pushforward component-wise: 
			\[
			f_*\left(\oplus_{i}\Mcal^{i}\right):=\oplus_{i}f_*\Mcal^{i},
			\] 
			whilst $f^*$ is defined as one expects, using the graded tensor product:
			\[
			f^*(\oplus_{i}\Mcal^{i}):=f^{-1}\left(\oplus_{i}\Mcal^{i}\right)\otimes_{f^{-1}\left(\tOcal_{(Y,G'^*)}\right)}\tOcal_{(X,F^*)}.
			\]
			The latter will in general have a longer length than $dn$ which is why there is a truncation in diagram \eqref{eq: def pullpush gen morph}. 
			We may apply this truncation as $f^*$ maps $\Mod^\infty$ into $\Mod^\infty$, this can be seen using the description of Lemma \ref{lem: char (Q)Coh}.
			
			The claims concerning the restrictions to (quasi-)coherent modules follow immediately from the corresponding statements for usual schemes.
			The composition claims follow from the \emph{equalities}
			\[
			(g,e)_{*}\circ(f,d)_{*}=(-)^{e}g_{*}(-)^{d}f_{*}=(-)^{e}(-)^{d}g_{*}f_{*}=(-)^{ed}(g\circ f)_{*}=(g\circ f,de)_{*}.\qedhere
			\]
		\end{proof}
		
		\subsubsection{Refinements}
		The operation of taking $d$-refinements can be used to, in some sense, remove the non-reducedness of a filtered scheme\footnote{
			In \cite{KuznetsovLunts}, they consider $\Acal$-spaces to do exactly this (we briefly recall $\Acal$-spaces in the next subsection).
			They associate to a non-reduced scheme $X$ an $\Acal$-space $(X,\Acal_X)$ to remove the non-reducedness (if $X_{\operatorname{red}}$ is smooth, $(X,\Acal_X)$ is smooth). 
			This is in fact a special instance of taking a refinement.
		}.
		The following shows that in many cases refinements exist and can be made functorial for morphisms between filtered schemes of the same length.
		
		\begin{proposition}\label{prop: existence refinements}
			Let $(f,f^\sharp):(X,F^*)\to (Y,F^*)$ be a morphism of $n$-filtered schemes and suppose we are given ideal sheaves $\Ical_X\subseteq\Ocal_X$ and $\Ical_Y\subseteq\Ocal_Y$ such that
			\[
			\Ical_?^d\subseteq F^{-1}\Ocal_?\quad\text{and}\quad f^\sharp(\Ical_Y)\subseteq f_*\Ical_X.
			\]
			Then, there exist compatible $d$-refinements, i.e.\ a commutative square
			\[
			\begin{tikzcd}[row sep=1.5em, column sep=2.5em]
				(X,F^*)\arrow["f"', d] & (X,G^*)\arrow["{(\id,d)}"', l,squiggly]\arrow[d] \\
				(Y,F^*) & (Y,G^*)\rlap{ ,}\arrow["{(\id,d)}"', l,squiggly]
			\end{tikzcd}
			\]
			where $G^*$ is a $d$-refinement of $F^*$.
			Moreover, if $F^{-1}\Ocal_? \subseteq \Ical_?$ one has $G^{-1}\Ocal_?=\Ical_?$.
		\end{proposition}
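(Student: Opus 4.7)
The plan is to construct $G^*$ explicitly by interpolating between the consecutive pieces of $F^*$ using powers of the given ideal $\Ical_?$. Specifically, for $0\leq i<n$ and $0\leq j<d$, I propose setting
\[
G^{-di-j}\Ocal_? := F^{-(i+1)}\Ocal_? + \Ical_?^{\,j}\cdot F^{-i}\Ocal_?,
\]
with $G^0\Ocal_?=\Ocal_?$ and $G^{-dn}\Ocal_?=0$. The first thing to check is that $G^{-di}\Ocal_?=F^{-i}\Ocal_?$, which is immediate since $F^{-(i+1)}\subseteq F^{-i}$, and that $G^{-d(i+1)}\Ocal_?=F^{-(i+1)}\Ocal_?$, which uses precisely the hypothesis $\Ical_?^{\,d}\subseteq F^{-1}\Ocal_?$ to get $\Ical_?^{\,d}\cdot F^{-i}\Ocal_?\subseteq F^{-(i+1)}\Ocal_?$. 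So by construction $G^*$ is a $d$-refinement of $F^*$ at the level of graded pieces.

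Next I would verify that $G^*$ is actually a descending filtration and is multiplicatively closed. The descending property for consecutive indices is clear from $\Ical_?^{\,j+1}\subseteq \Ical_?^{\,j}$. For multiplicativity, given two indices $di+j$ and $dk+l$ with $0\leq j,l<d$, I would expand the product
\[
(F^{-(i+1)}+\Ical_?^{\,j}F^{-i})\cdot (F^{-(k+1)}+\Ical_?^{\,l}F^{-k})
\]
into four terms and check, splitting on whether $j+l<d$ or $j+l\geq d$, that each summand lands in $G^{-d(i+k)-(j+l)}\Ocal_?$. The only slightly delicate case is $j+l=d+r$, where the cross terms $\Ical_?^{\,j}F^{-(i+k+1)}$ and $\Ical_?^{\,l}F^{-(i+k+1)}$ land in $\Ical_?^{\,r}F^{-(i+k+1)}$ because $j,l\leq d$ forces $j,l\geq r$, and the last summand $\Ical_?^{\,d+r}F^{-(i+k)}$ is absorbed into $\Ical_?^{\,r}F^{-(i+k+1)}$ via $\Ical_?^{\,d}\subseteq F^{-1}\Ocal_?$.

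For functoriality, the hypotheses $f^\sharp(F^{-(i+1)}\Ocal_Y)\subseteq f_*F^{-(i+1)}\Ocal_X$ (which comes from $f$ being a morphism of $F^*$-filtered schemes) and $f^\sharp(\Ical_Y)\subseteq f_*\Ical_X$ combine to give $f^\sharp(\Ical_Y^{\,j}\cdot F^{-i}\Ocal_Y)\subseteq f_*(\Ical_X^{\,j}\cdot F^{-i}\Ocal_X)$, hence $f^\sharp(G^{-di-j}\Ocal_Y)\subseteq f_*G^{-di-j}\Ocal_X$, showing that $f$ upgrades to a morphism of $G^*$-filtered schemes. The triangles in the diagram built from $(\id,d)$ arrows commute tautologically since the underlying map of schemes is the identity and $G^*$ restricts to $F^*$ on the $d$-Veronese.

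Finally, for the last claim: if $F^{-1}\Ocal_?\subseteq \Ical_?$, then $G^{-1}\Ocal_?=F^{-1}\Ocal_?+\Ical_?\cdot\Ocal_?=\Ical_?$. The main obstacle I anticipate is purely bookkeeping, namely verifying the multiplicativity of $G^*$ in the $j+l\geq d$ case, where one must keep track of the interaction between the cross terms and the estimate $\Ical_?^{\,d}\subseteq F^{-1}\Ocal_?$; all other steps are formal consequences of the definition.
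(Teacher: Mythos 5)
Your construction is exactly the one the paper uses: your $G^{-di-j}=F^{-(i+1)}\Ocal_?+\Ical_?^{\,j}F^{-i}\Ocal_?$ is, after reindexing, precisely the paper's $G^{jd-r}\Ocal_?=F^{j}\Ocal_?\Ical_?^{r}+F^{j-1}\Ocal_?$. The paper simply writes down this formula and leaves the verification implicit, whereas you spell out the multiplicativity and functoriality checks; both are correct.
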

		\begin{proof}
			Defining
			\[
			G^{i}\Ocal_? :=	\begin{cases}
				F^{0}\Ocal_? & \text{for }i\geq 0 \\
				F^{j}\Ocal_?\Ical_?^r + F^{j-1}\Ocal_? & \text{for }i<0\text{ and }i=jd-r\text{ with }j\leq0\leq r<d
			\end{cases}
			\]
			does the trick.
		\end{proof}
		
		Recall that $n$-$\fSch$ denotes the category of $n$-filtered schemes with ordinary (i.e.\ not generalised) morphisms between them. 
		\begin{corollary}\label{cor: refinements}
			We obtain a functor 
			\begin{align*}
				d\text{-}\mathop\mathrm{ref}:n\text{-}\fSch/(Y,F^*)&\to dn\text{-}\fSch \\
				(f:(X,F^*)\to(Y,F^*)) &\mapsto \text{refinement using }f^{-1}\Ical_Y\cdot \Ocal_X
			\end{align*}
			together with a natural transformation $d\text{-}\mathop\mathrm{ref}\to \mathop\mathrm{id}$ consisting of the generalised morphisms $(\id,d)$. 
		\end{corollary}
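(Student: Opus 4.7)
The plan is to apply Proposition \ref{prop: existence refinements} systematically across the overcategory $n\text{-}\fSch/(Y,F^*)$, with a fixed ideal $\Ical_Y\subseteq\Ocal_Y$ satisfying $\Ical_Y^d\subseteq F^{-1}\Ocal_Y$ serving as implicit input data for the corollary. For each object $f:(X,F^*)\to(Y,F^*)$, I will set $\Ical_X := f^{-1}\Ical_Y\cdot\Ocal_X$; filtration-compatibility of $f^\sharp$ gives
\[
\Ical_X^d = f^{-1}(\Ical_Y^d)\cdot\Ocal_X \subseteq f^{-1}(F^{-1}\Ocal_Y)\cdot\Ocal_X \subseteq F^{-1}\Ocal_X,
\]
so the hypotheses of the proposition are met for the identity morphism on $(X,F^*)$, and the resulting $d$-refinement $(X,G^*)$ is the value of $d\text{-}\mathop\mathrm{ref}$ on this object.

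For a morphism $g:(X,F^*)\to(X',F^*)$ in the overcategory (so $f=f'\circ g$), associativity of the extension of ideals along morphisms of schemes yields $\Ical_X = g^{-1}\Ical_{X'}\cdot\Ocal_X$ and, in particular, $g^\sharp(\Ical_{X'})\subseteq g_*\Ical_X$. A second invocation of Proposition \ref{prop: existence refinements}, now applied to $g$ together with the pair $(\Ical_{X'},\Ical_X)$, then lifts $g$ to a morphism $(X,G^*)\to(X',G^*)$ of $dn$-filtered schemes, which I declare to be $d\text{-}\mathop\mathrm{ref}(g)$.

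The only point genuinely requiring attention is functoriality, but no real obstacle arises: the refined filtration $G^*$ is \emph{uniquely} determined by $F^*$ and $\Ical_X$ via the explicit formula in the proof of the proposition, so preservation of identities is immediate, while compatibility with compositions $(X'',F^*)\xrightarrow{g'}(X',F^*)\xrightarrow{g}(X,F^*)$ over $(Y,F^*)$ reduces to the identity $\Ical_{X''} = g'^{-1}\Ical_{X'}\cdot\Ocal_{X''}$ expressing the functoriality of ideal extension. Finally, the generalised morphisms $(\id,d):(X,G^*)\rightsquigarrow(X,F^*)$ assemble into the claimed natural transformation $d\text{-}\mathop\mathrm{ref}\to\id$; its naturality with respect to $g$ is exactly the commutative square provided by the proposition. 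Thus the work is essentially a bookkeeping exercise layered on top of Proposition \ref{prop: existence refinements}.
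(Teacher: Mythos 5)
Your proposal is correct and follows the intended route: the corollary is precisely the observation that the refinement formula from the proof of Proposition \ref{prop: existence refinements} depends only on $F^*$ and the chosen ideal, so taking $\Ical_X := f^{-1}\Ical_Y\cdot\Ocal_X$ and noting that $\Ical_X = g^{-1}\Ical_{X'}\cdot\Ocal_X$ whenever $f = f'\circ g$ over $(Y,F^*)$ makes the hypotheses of the proposition hold functorially. The commutative square in the proposition is exactly the naturality square for $(\id,d)$, and uniqueness of the lift (a morphism of filtered schemes is determined by its underlying scheme morphism, filtration-compatibility being a property rather than extra data) gives functoriality for free.

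One cosmetic point worth being explicit about: the natural transformation $d\text{-}\mathrm{ref}\to\mathrm{id}$ has components which are $d$-morphisms rather than ordinary ones, so it only makes sense after post-composing both functors with the inclusions into the category admitting generalised morphisms (e.g.\ $\underline{\fSch}$ or its variant). You implicitly do this, but since $d\text{-}\mathrm{ref}$ lands in $dn\text{-}\fSch$ while $\mathrm{id}$ lands in $n\text{-}\fSch/(Y,F^*)$, it is worth spelling out; the paper is similarly terse here.
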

		
	\subsection{Relation to schemes and \texorpdfstring{$\Acal$}{A}-spaces}\label{subsec: rel schemes and A-spaces}
		Succinctly, we have
		\[
		\{\text{schemes}\}\subset \{\text{$\Acal$-spaces of \cite{KuznetsovLunts}}\}\subset \{\text{filtered schemes}\}.
		\]
		In this section, we briefly elaborate on this.
		
		\subsubsection{Schemes}
		A $1$-filtered scheme $(X,F^*)$ is simply a scheme $X$ endowed with the trivial filtration
		\[
		F^{\geq 0}\Ocal_X=\Ocal_X\quad\text{and}\quad F^{<0}\Ocal_X=0.
		\]
		It follows immediately that the category of $1$-filtered schemes $1$-$\fSch$ is simply the category of schemes $\Sch$.
		In addition, the module categories are compatible
		\begin{align*}
			\Mod^1(X,F)&\cong \Mod(X), \\
			\QCoh^1(X,F)&\cong \QCoh(X), \\
			\Coh^1(X,F)&\cong \Coh(X),
		\end{align*}
		where the right-hand sides are the `usual ones', i.e.\ respectively the category of sheaves of modules, quasi-coherent sheaves of modules and coherent sheaves of modules over the scheme $X$.
		Moreover, it is clear from their definition in Lemma \ref{lem: pull/push} that the pullback/pushforward functor obtained from a morphism of $1$-filtered schemes is simply the ordinary pullback/pushforward functor of schemes in this case.
		
		For future reference, we note the following.
		\begin{lemma}\label{lem: adjunction with gr-i}
			Let $(X,F^*)$ be an $n$-filtered scheme.
			For any $0\leq i<n$ we have an adjunction
			\[
			\begin{tikzcd}[sep=2.5em]
				{\Mod^{n}(X,F^*)} \\ {\Mod(X)}\rlap{ .}
				\arrow[""{name=0, anchor=center, inner sep=0}, "{\Ncal = \oplus_j \Ncal^j\mapsto \Ncal^{-i}=:\gr_{-i}\Ncal}", bend left=45, from=1-1, to=2-1]
				\arrow[""{name=1, anchor=center, inner sep=0}, "{\Mcal\mapsto \Mcal\otimes_{\Ocal_X}l^n(\tOcal_X(i))}", bend left=45, from=2-1, to=1-1]
				\arrow["\dashv"{anchor=center}, draw=none, from=1, to=0]
			\end{tikzcd}
			\]
		\end{lemma}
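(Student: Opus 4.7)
The plan is to factor the claimed adjunction through two auxiliary ones. First, I will observe the identity
$$\Mcal\otimes_{\Ocal_X}l^n(\tOcal_X(i)) \cong l^n(\Mcal\otimes_{\Ocal_X}\tOcal_X(i))$$
in $\Mod^n(X,F^*)$. This will follow from the explicit description of $l^n$ in Lemma \ref{lem: adjoints Gr^N in Gr^infty} as the quotient by the graded $\tOcal_X$-submodule generated by the degree $\leq -n$ part, together with right-exactness of $\Mcal\otimes_{\Ocal_X}(-)$; concretely, in each degree $j>-n$ the tensor product commutes with the relevant cokernel.

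Next, I will establish that for any $\Ncal'\in\Mod^\infty(X,F^*)$ there is a natural isomorphism
$$\Hom_{\Mod^\infty(X,F^*)}(\Mcal\otimes_{\Ocal_X}\tOcal_X(i),\Ncal') \cong \Hom_{\Ocal_X}(\Mcal,\gr_{-i}\Ncal').$$
The key point is that $\tOcal_X(i)$ is the free graded $\tOcal_X$-module of rank one on a generator in degree $-i$, namely $1\in\Ocal_X=\tOcal_X^0=\tOcal_X(i)^{-i}$; any $\tOcal_X$-linear morphism $\tOcal_X(i)\to \Ncal'$ is therefore uniquely determined by an arbitrary $\Ocal_X$-linear morphism $\Ocal_X\to \Ncal'^{-i}$. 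An application of tensor-hom adjunction over $\Ocal_X$ upgrades this to the general statement above.

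Finally, for $\Ncal\in\Mod^n(X,F^*)\subseteq\Mod^\infty(X,F^*)$, combining the above with the adjunction $l^n\dashv(\text{inclusion})$ from Lemma \ref{lem: adjoints Gr^N in Gr^infty} yields a chain of natural isomorphisms
\begin{align*}
\Hom_{\Mod^n}(\Mcal\otimes_{\Ocal_X}l^n(\tOcal_X(i)),\Ncal)
&\cong \Hom_{\Mod^n}(l^n(\Mcal\otimes_{\Ocal_X}\tOcal_X(i)),\Ncal)\\
&\cong \Hom_{\Mod^\infty}(\Mcal\otimes_{\Ocal_X}\tOcal_X(i),\Ncal)\\
&\cong \Hom_{\Ocal_X}(\Mcal,\gr_{-i}\Ncal),
\end{align*}
which is precisely the desired adjunction. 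I do not anticipate any significant obstacle: the crux is merely the compatibility of $l^n$ with tensoring by an $\Ocal_X$-module from the left and the freeness of the shifted Rees algebra as a graded $\tOcal_X$-module, both of which reduce to routine verifications.
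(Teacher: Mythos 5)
Your proposal is correct and takes essentially the same approach as the paper's own proof. The paper applies tensor--hom adjunction over $\tOcal_X$ directly (viewing $l^n(\tOcal_X(i))$ as an $(\Ocal_X,\tOcal_X)$-bimodule) and then simplifies $\sHom_{\tOcal_X}(l^n(\tOcal_X(i)),\Ncal)\cong\sHom_{\tOcal_X}(\tOcal_X(i),\Ncal)\cong\Ncal^{-i}$, which uses the same two ingredients you invoke — the $l^n\dashv(\text{inclusion})$ adjunction and the freeness of $\tOcal_X(i)$ — just in the opposite order, thereby sidestepping the auxiliary compatibility $\Mcal\otimes_{\Ocal_X}l^n(-)\cong l^n(\Mcal\otimes_{\Ocal_X}(-))$ that your argument establishes first.
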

		\begin{proof}
			By viewing $l^n(\tOcal_X(i))$ as an $( \Ocal_X, \tOcal_X )$-bimodule, this essentially boils down to some form of the tensor-hom adjunction
			\begin{align*}
				&\Mod^{n}(X,F^*)(\Mcal \otimes_{\Ocal_X} l^n(\tOcal_X(i)), \Ncal)  \\ 
				\cong& \Mod(X)(\Mcal, \sHom_{\tOcal_X}(l^n(\tOcal_X(i)), \Ncal)) \\
				\cong& \Mod(X)(\Mcal, \sHom_{\tOcal_X}(\tOcal_X(i), \Ncal)) \\
				\cong& \Mod(X)(\Mcal, \Ncal^{-i}). \qedhere
			\end{align*}					
		\end{proof}
		\begin{remark}
			The case $i=0$ corresponds to `forgetting the filtration'. 
			This is the pullback/ pushforward adjunction along the generalised morphism $(\id,n):(X,F^*)\to X$.
			Indeed, $(\id,n)^*(\Mcal)=\Mcal\otimes_{\Ocal_X}\tOcal_X$ and $(\id,n)_*(\oplus_j \Ncal^j)=\Ncal^0$. 
		\end{remark}
		\begin{remark}\label{rem: adjunction with oplus gr-i}
			The functor $\oplus_{i=0}^{n-1}\gr_{-i}:\Mod^n(X,F^*)\to\Mod(X)$ is (essentially) restriction of scalars along the inclusion of the zeroth graded piece $\Ocal_X\hookrightarrow\tOcal_X$ (together with forgetting the positive degree terms, but there is no extra information in there anyway).
			It follows that this has a left adjoint given by $\Mcal\mapsto \Mcal\otimes_{\Ocal_X} \oplus_{i=0}^{n-1}l^n(\tOcal_X(i))$.
		\end{remark}
		
		\subsubsection{\texorpdfstring{$\Acal$}{A}-spaces}
		Let $X$ be a scheme and $\Acal$ a coherent sheaf of $\Ocal_X$-algebras.
		We denote by $\Mod(X,\Acal)$ the category of right $\Acal$-modules, and by $\QCoh(X,\Acal)$ (respectively, $\Coh(X,\Acal)$) the subcategories consisting of those right $\Acal$-modules that are quasi-coherent (respectively, coherent); as $\Acal$ is coherent as $\Ocal_X$-module this is equivalent to the underlying $\Ocal_X$-module being quasi-coherent (respectively, coherent).
		In \cite{KuznetsovLunts} the authors consider specific types of sheaves of coherent $\Ocal_X$-algebras, called \emph{Auslander algebras} (see also Remark \ref{rem: Auslander} below) after \cite[Corollary on page 551]{Auslander} where similar algebras, over Artin algebras, first appeared.
		A pair of a scheme together with an Auslander algebra over it is called an \emph{$\Acal$-space} in \cite{KuznetsovLunts}.
		As the following proposition shows, these $\Acal$-spaces are exactly the finite length filtered schemes equipped with the $\Ical$-adic filtration for some nilpotent ideal $\Ical\subset\Ocal_X$.
		
		\begin{proposition}\label{prop: Auslander}
			Let $(X,F^*)$ be an $n$-filtered scheme and put 
			\[
			\Pcal:=\oplus_{i=0}^{n-1}l^n(\tOcal_X(i))\text{ and }\Acal_{F^*}:=\sEnd_{\tOcal_X}(\Pcal).
			\]
			Then, $E(-):=\sHom_{\tOcal_X}(\Pcal,-)$ induces equivalences
			\begin{align*}
				\Mod^n(X,F)&\cong \Mod(X,\Acal_{F^*}), \\
				\QCoh^n(X,F)&\cong \QCoh(X,\Acal_{F^*}), \\
				\Coh^n(X,F)&\cong \Coh(X,\Acal_{F^*}).
			\end{align*}
		\end{proposition}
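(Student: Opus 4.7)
The plan is a Morita-type argument: show that $\Pcal$ is a projective generator of $\Mod^n(X,F^*)$ whose endomorphism sheaf is exactly $\Acal_{F^*}$, and deduce the equivalence from there. First, by Lemma \ref{lem: adjunction with gr-i}, the functor $\sHom_{\tOcal_X}(l^n(\tOcal_X(i)),-)$ is naturally isomorphic to the grading-component functor $\gr_{-i}(-)\colon \Mod^n(X,F^*)\to\Mod(X)$, which is exact. Hence each summand of $\Pcal$ is projective, and $E=\sHom_{\tOcal_X}(\Pcal,-)\cong\oplus_{i=0}^{n-1}\gr_{-i}(-)$ is an exact functor that detects the zero object (compare Remark \ref{rem: adjunction with oplus gr-i}); so $\Pcal$ is a projective generator.

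Next, $\Pcal$ carries a natural left $\Acal_{F^*}$-action by evaluation of endomorphisms, which makes it into an $(\Acal_{F^*},\tOcal_X)$-bimodule. This yields an adjoint pair
\[
(-\otimes_{\Acal_{F^*}}\Pcal) \dashv E,
\]
with the left adjoint landing in $\Mod^n(X,F^*)$ because $\Pcal$ does. I would then verify that the unit and counit of this adjunction are isomorphisms by checking on generators. On $\Pcal$ the counit reduces to the identification $\Acal_{F^*}\otimes_{\Acal_{F^*}}\Pcal\cong\Pcal$, and on $\Acal_{F^*}$ the unit is equally immediate. To extend this to arbitrary objects, both functors commute with colimits (each being a left adjoint in one of the two adjunctions) and both categories are generated under colimits by these generators; alternatively, one can reduce to the affine case using Lemma \ref{lem: char (Q)Coh} and invoke the classical Morita theorem for rings.

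Finally, for the restrictions to $\QCoh^n$ and $\Coh^n$: both directions of the equivalence preserve quasi-coherence, since by Lemma \ref{lem: char (Q)Coh} a graded $\tOcal_X$-module is quasi-coherent iff it is locally a cokernel of a morphism between coproducts of shifts of the $l^n(\tOcal_X(i))$, and these presentations are sent by $E$ to presentations of right $\Acal_{F^*}$-modules (and conversely via the tensor). The same argument handles coherence, using the additional hypothesis that each $F^i\Ocal_X$ is coherent, which implies $\Acal_{F^*}$ is itself coherent as an $\Ocal_X$-algebra and that the relevant presentations can be taken with finite index sets. The main obstacle I anticipate is the careful bookkeeping of the bimodule structure in the sheaf-theoretic setting, and in particular confirming that $E$ indeed lands in \emph{right} $\Acal_{F^*}$-modules compatibly with the conventions fixed in the paper; once that is in place, the proof reduces cleanly to the affine/classical Morita picture.
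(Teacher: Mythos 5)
Your proposal is correct and follows essentially the same route as the paper: identify $\Pcal$ as a progenerator (using that $\sHom_{\tOcal_X}(l^n(\tOcal_X(i)),-)\cong\gr_{-i}$), set up the adjunction $(-\otimes_{\Acal_{F^*}}\Pcal)\dashv E$, and reduce to the affine/stalk-wise case where classical Morita theory applies. The only notable difference is in the restriction to $\QCoh^n$ and $\Coh^n$: you argue via Lemma~\ref{lem: char (Q)Coh} that $E$ and its adjoint carry presentations to presentations, whereas the paper simply observes that $E$ does not change the underlying $\Ocal_X$-module at all (it maps $\Mcal=\oplus_i\Mcal^i$ to the row $(\Mcal^0\ \Mcal^{-1}\ \dots\ \Mcal^{-n+1})$, merely swapping the $\tOcal_X$-action for the matrix $\Acal_{F^*}$-action), so (quasi-)coherence is manifestly preserved. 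The paper's observation is a bit cleaner and avoids the need to invoke the presentation lemma; your remark about the implicit coherence assumption on $\Acal_{F^*}$ (equivalently, on the $F^i\Ocal_X$) for the $\Coh$ case is a fair point that the paper leaves tacit in its definition of $\Coh(X,\Acal)$.
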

		\begin{remark}\label{rem: Auslander}
			To make the link with $\Acal$-spaces complete, note that $\Acal_{F^*}$ can be interpreted as the following matrix
			\[	
			\begin{pmatrix}
				\Ocal_X & F^{-1}\Ocal_X & F^{-2}\Ocal_X & \dots & F^{1-n}\Ocal_X \\
				\Ocal_X/F^{1-n}\Ocal_X & \Ocal_X/F^{1-n}\Ocal_X & F^{-1}\Ocal_X/F^{1-n}\Ocal_X & \dots & 	F^{2-n}\Ocal_X/F^{1-n}\Ocal_X \\
				\Ocal_X/F^{2-n}\Ocal_X & \Ocal_X/F^{2-n}\Ocal_X & \Ocal_X/F^{2-n}\Ocal_X & \dots & 	F^{3-n}\Ocal_X/F^{2-n}\Ocal_X \\
				\vdots & \vdots & \vdots & \ddots & \vdots \\
				\Ocal_X/F^{-1}\Ocal_X & \Ocal_X/F^{-1}\Ocal_X & \Ocal_X/F^{-1}\Ocal_X & \dots & 	\Ocal_X/F^{-1}\Ocal_X 
			\end{pmatrix}.
			\]
			We will refer to (sheaves of) algebras of this form as Auslander algebras.
			In \cite{KuznetsovLunts} they considered algebras of this form where $F^*$ is the $\Ical$-adic filtration for some nilpotent ideal $\Ical\subset\Ocal_X$.
		\end{remark}
		\begin{proof}
			The functor $E$ has a left adjoint $(-\otimes_{\Acal_{F^*}} \Pcal)$.
			Therefore, checking that $E$ is an equivalence can be done locally at the level of stalks (i.e.\ checking that the (co)unit is an isomorphism of sheaves can be done stalk-wise).
			The claim then follows by Morita theory as, in the affine case, $\Pcal$ is a progenerator\footnote{That is, a small projective generator. An object $X$ in an abelian category with coproducts is called a progenerator if the covariant hom functor $\Hom(X,-)$ commutes with all colimits and is faithful.} for $\Mod^n(X,F)$. (This follows as $\sHom_{\tOcal_X}(l^n(\tOcal_X(i)),\Mcal)\cong \Mcal^{-i}$.)
			
			As (quasi-)coherence is determined at the underlying $\Ocal_X$-module level, $E$ restricts suitably since it does not change the underlying module, it only `switches' the $\tOcal_X$- and $\Acal$-module structures.
			A module $\Mcal=\oplus_{i}\Mcal^{i}$ in $\Mod^n(X,F)$ gets mapped to the row $(	\Mcal^{0}\  \Mcal^{-1}\  \dots\   \Mcal^{-n+1} )$ where the $\Acal$-action is simply matrix multiplication from the right, induced by the $\tOcal_X$-action of $\Mcal$. 
		\end{proof}
		
		This equivalence is compatible with pullback/pushforward functors.
		Let $f:(X,F^*)\to(Y,F^*)$ be a morphism of $n$-filtered schemes (we consider only ordinary morphisms since refinements/Veroneses, and hence generalised morphisms, do not have a natural description on the $\Acal$-space side), and consider
		\begin{align*}
			\Acal_X:=\sEnd_{\tOcal_X}(\oplus_{i=0}^{n-1}l^n(\tOcal_X(i)))\quad\text{and}\quad 					\Acal_Y:=\sEnd_{\tOcal_Y}(\oplus_{i=0}^{n-1}l^n(\tOcal_Y(i))).
		\end{align*}
		Then, $f$ induces a morphism $(X,\Acal_X)\to (Y,\Acal_Y)$ of the corresponding $\Acal$-spaces, which we also denote by $f$, and the following diagrams (2-)commute:
		\begin{equation}\label{eq: pull/push compat Auslander}
			\begin{tikzcd}[sep=1.5em]
				\Mod^n(X,F^{*}) & \Mod(X,\Acal_X) \\
				\Mod^n(Y,F^{*}) & \Mod(Y,\Acal_Y)
				\arrow["\sim", from=1-1, to=1-2]
				\arrow["\sim", from=2-1, to=2-2]
				\arrow["{f^{*}}", from=2-1, to=1-1]
				\arrow["{f^{*}}"', from=2-2, to=1-2]
			\end{tikzcd}\quad\text{and}\quad
			\begin{tikzcd}[sep=1.5em]
				\Mod^n(X,F^{*}) & \Mod(X,\Acal_X) \\
				\Mod^n(Y,F^{*}) & \Mod(Y,\Acal_Y)\rlap{ .}
				\arrow["\sim", from=1-1, to=1-2]
				\arrow["\sim", from=2-1, to=2-2]
				\arrow["{f_{*}}"', from=1-1, to=2-1]
				\arrow["{f_{*}}", from=1-2, to=2-2]
			\end{tikzcd}
		\end{equation}
		Of course, these also suitably restrict to $\QCoh$ and $\Coh$ in the appropriate settings.
		For example, the commutativity of the left diagram 
		follows essentially from the fact that both legs of the diagram send $l^n(\tOcal_X(i))$ to the $i$th row of $\Acal_Y$ (we start counting rows and columns of matrices at zero) or 
		can be seen using (a slight generalisation of) Lemma \ref{lem: E(-otimes-) = E(-)otimes tE(-)} below.
		The commutativity of the right diagram then follows by uniqueness of adjoints.
		
		In \S\ref{subsubsec: derived compat}, we observe that these compatibilities descend to the derived level.
		
	\subsection{Derived category}\label{subsec: dercat}
		We define \emph{the derived category of an $n$-filtered scheme $(X,F^*)$} as
		\[
		\bD(X,F^*):=\bD(\QCoh^n(X,F^*)).
		\]
		Here, the length of the filtered scheme is left out the notation, if we want to make the length explicit, we write $\bD(X,{}_nF^*)$.
		\begin{remark}\label{rem: D_{Qc}=D(QCoh)}
			For $(X,F^*)$ quasi-compact and separated, the natural functor $$\bD(\QCoh^n(X,F^*))\to \bD_{\QCoh}(\Mod^n(X,F^*))$$ is an equivalence. 
			The same proof as \cite[Proposition 1.3]{AlonsoJeremiasLipman} works. 
			See also \cite[Theorem 2.10, pg.\ 222]{Boreletal} for the bounded case. 
			Moreover, in fact, one can actually deduce the filtered case from the non-filtered case without needing to reprove anything (by using compatibilities with the forgetful functor to the underlying $\Ocal_X$-module structure).
			
			Calling $\bD(\QCoh^n(X,F^*))$ the derived category of $(X,F^*)$ is therefore only really sensible when $(X,F^*)$ is quasi-compact and separated. 
			For more general filtered schemes one should look at $\bD_{\QCoh}(\Mod^n(X,F^*))$, the full subcategory of the derived category of modules consisting of complexes with quasi-coherent cohomology, instead of $\bD(\QCoh^n(X,F^*))$ (e.g.\ to have the existence of enough flat objects and h-flat complexes).
		\end{remark}
		\begin{remark}
			By Proposition \ref{prop: Rees gives abelian hull} and Remark \ref{rem: Rees gives abelian hull}, we see that $\bD(X,F^*)$ is equivalent to the derived category of length $n$ filtered quasi-coherent modules over $(X,F^*)$.
			Of course, by Proposition \ref{prop: Auslander} it is moreover equivalent to the derived category of right quasi-coherent modules over the associated Auslander algebra.
		\end{remark}
		
		In this subsection, we show that $\QCoh^n(X,F^*)$ has enough h-injective and h-flat complexes.
		As these are `adapted' to, respectively, the pushforward and pullback functor associated to any generalised morphism, it follows that we obtain an induced adjoint pair at the derived level.
		Showing the existence of h-flat complexes requires some work, e.g.\ we have to define what this means in the filtered context.  
		Moreover, as the existence of the latter requires our filtered schemes to be quasi-compact and separated, we will usually make this assumption from here on out.
		
		Before doing this, let us note the following.
		
		\begin{lemma}\label{lem: Db(Coh)=Bcoh(QCoh)}
			Let $(X,F^*)$ be a Noetherian $n$-filtered scheme. 
			We have 
			\[
			\bD^b_{\Coh}(X,F^*)\cong \bD^b(\Coh^n(X,F^*)),
			\]
			where the left term is the full subcategory of $\bD(X,F^*)$ consisting of complexes of modules with bounded and coherent cohomology and the right term is the full subcategory of $\bD(\Coh^n(X,F^*))$ of complexes with bounded cohomology.
		\end{lemma}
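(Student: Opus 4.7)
The plan is to reduce the statement to a classical one about coherent versus quasi-coherent sheaves on a Noetherian scheme, and then invoke a standard abstract derived-category criterion. First, via Proposition \ref{prop: Auslander}, the inclusion $\Coh^n(X,F^*) \subseteq \QCoh^n(X,F^*)$ is equivalent to $\Coh(X,\Acal_{F^*}) \subseteq \QCoh(X,\Acal_{F^*})$, where $\Acal_{F^*}$ is a coherent sheaf of $\Ocal_X$-algebras over a Noetherian scheme. In this setting $\QCoh(X,\Acal_{F^*})$ is a locally Noetherian Grothendieck category whose Noetherian objects are precisely the coherent $\Acal_{F^*}$-modules, behaving entirely analogously to the familiar pair $\Coh X \subseteq \QCoh X$.

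Next I would apply the standard abstract criterion: if $\Bscr$ is a Serre subcategory of a Grothendieck abelian category $\Ascr$ such that every object of $\Ascr$ is the filtered colimit of its subobjects lying in $\Bscr$, then the natural functor $\bD^b(\Bscr) \to \bD^b_{\Bscr}(\Ascr)$ is an equivalence (see e.g.\ \cite{stacks-project} in the sheaf-theoretic case). Full faithfulness follows from the Serre property by comparing Ext groups; essential surjectivity goes by induction on cohomological amplitude, at each stage using the colimit hypothesis to replace a quasi-coherent cocycle by a coherent submodule through which the relevant connecting maps factor.

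Both hypotheses of the abstract criterion transfer from the scheme case. The Serre property is straightforward: kernels, cokernels and extensions in $\QCoh^n(X,F^*)$ are computed componentwise at the underlying $\Ocal_X$-module level, and coherence on a Noetherian scheme is preserved under these. The filtered colimit property is the more delicate point. Given $\Mcal \in \QCoh^n(X,F^*)$, I would pick coherent $\Ocal_X$-submodules of the finitely many non-trivial graded components $\Mcal^0, \Mcal^{-1}, \dots, \Mcal^{-n+1}$ (possible since $X$ is Noetherian) and close them under the $\tOcal_X$-action. This yields a directed family of coherent $\tOcal_X$-submodules of $\Mcal$ whose colimit recovers $\Mcal$; the coherence of the closure relies crucially on the coherence of each $F^i\Ocal_X$, which is available thanks to the Noetherian hypothesis. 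This closure step is the only real obstacle, and once it is handled the conclusion is immediate.
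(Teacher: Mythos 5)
Your plan is correct and follows exactly the route the paper intends: the paper's own proof is simply the terse instruction to ``copy the usual proof'' of Stacks Project Lemma 0FDA, which is precisely the abstract Serre-subcategory plus filtered-colimit argument you spell out, and you correctly identify the one non-formal step (approximating a quasi-coherent graded $\tOcal_X$-module by coherent graded submodules, using boundedness of the grading and coherence of each $F^i\Ocal_X$ in the Noetherian setting). The additional detour through Proposition~\ref{prop: Auslander} is a valid alternative but is not needed, since the filtered colimit and Serre properties can be checked directly componentwise, as you also observe.
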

		\begin{proof}
			Copy the usual proof, see e.g.\ \cite[\href{https://stacks.math.columbia.edu/tag/0FDA}{Lemma 0FDA}]{stacks-project}.
			%
		\end{proof}
		
		\subsubsection{H-injective complexes}
		For the existence of enough injective modules and h-injective complexes it is enough to note that $\QCoh^n(X,F^*)$ is a Grothendieck abelian category.
		\begin{lemma}\label{lem: h-inj exist}
			Let $(X,F^*)$ be an $n$-filtered scheme.
			The category  $\QCoh^n(X,F^*)$ is a Grothendieck abelian category.
			Consequently, there exist functorial embeddings into injective modules and every complex functorially admits an injective quasi-isomorphism into an h-injective complex with injective components.
		\end{lemma}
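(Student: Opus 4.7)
The strategy is to verify the three defining properties of a Grothendieck abelian category for $\QCoh^n(X,F^*)$: being abelian, admitting exact filtered colimits, and possessing a generator. Once Grothendieck-abelianness is in hand, the ``consequently'' part of the statement is formal.

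For abelianness, observe that $\QCoh^n(X,F^*)$ sits inside $\grMod(\tOcal_X)$ cut out by two conditions: quasi-coherence of the underlying $\Ocal_X$-module, testable componentwise, and the length-$n$ constraint consisting of vanishing in degrees $\leq -n$ together with $t$ inducing isomorphisms in nonnegative degrees. Both are preserved under kernels and cokernels formed in $\grMod(\tOcal_X)$: quasi-coherence persists under kernels and cokernels in $\Mod(\Ocal_X)$, the vanishing is inherited, and multiplication by $t$ in nonnegative degrees remains an isomorphism on sub- and quotient modules. Filtered colimits are handled analogously: they commute with the forgetful functor to graded $\Ocal_X$-modules, where exactness and preservation of quasi-coherence are standard, and they respect the length-$n$ constraint componentwise.

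The main work lies in exhibiting a generator, and here I would exploit the equivalence of Proposition \ref{prop: Auslander}, which identifies $\QCoh^n(X,F^*)$ with $\QCoh(X,\Acal_{F^*})$, the category of quasi-coherent right modules over the Auslander sheaf of algebras. Since $\Acal_{F^*}$ is itself a quasi-coherent sheaf of $\Ocal_X$-algebras, the Grothendieck-abelianness of $\QCoh(X,\Acal_{F^*})$ is a well-known extension of the corresponding fact for $\QCoh(X)$ (Stacks Project, tag 077K): given any generator $\Gcal$ of $\QCoh(X)$, the quasi-coherent right $\Acal_{F^*}$-module $\Gcal\otimes_{\Ocal_X}\Acal_{F^*}$ is a generator, as is witnessed by the tensor-hom adjunction together with the faithfulness of the forgetful functor to $\QCoh(X)$. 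Unwinding the equivalence, a generator of $\QCoh^n(X,F^*)$ can equivalently be described as $\Gcal\otimes_{\Ocal_X}\bigoplus_{i=0}^{n-1}l^n(\tOcal_X(i))$ by appealing to the adjunction of Remark \ref{rem: adjunction with oplus gr-i}, whose right adjoint is conservative.

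The ``consequently'' statements then follow from general Grothendieck-abelian machinery. The existence of functorial injective embeddings is Grothendieck's classical theorem, obtained via the small object argument on the generating set. The existence of functorial quasi-isomorphisms into h-injective complexes with componentwise injective terms is a well-known result for unbounded derived categories of Grothendieck abelian categories, obtainable for instance from the injective model structure on complexes. The main anticipated obstacle in the whole proof is the construction of the generator, specifically verifying that tensoring with the ``progenerator'' $\bigoplus_{i=0}^{n-1}l^n(\tOcal_X(i))$ transports generators of $\QCoh(X)$ to generators of $\QCoh^n(X,F^*)$; but this follows mechanically from the conservativity of $\bigoplus_{i=0}^{n-1}\gr_{-i}$.
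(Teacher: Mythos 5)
Your proof is correct and follows essentially the same two routes the paper itself offers: transferring through the Auslander-algebra equivalence of Proposition \ref{prop: Auslander}, and (equivalently) lifting a generator of $\QCoh(X)$ along the left adjoint to $\bigoplus_{i=0}^{n-1}\gr_{-i}$ from Remark \ref{rem: adjunction with oplus gr-i}. The one small wording point is that the standard statement is that the left adjoint of a \emph{faithful} functor preserves generators — you say ``conservativity,'' which happens to coincide here because $\bigoplus\gr_{-i}$ is exact, but faithfulness is the property actually invoked.
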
 
		\begin{proof}
			That $\QCoh^n(X,F^*)$ is Grothendieck follows immediately from Proposition \ref{prop: Auslander} and the corresponding statement on the $\Acal$-space side \cite[Lemma 5.6]{KuznetsovLunts}.
			The other claims are well-known to follow from this (see e.g.\ \cite[\href{https://stacks.math.columbia.edu/tag/079H}{Theorem 079H}]{stacks-project} and \cite[\href{https://stacks.math.columbia.edu/tag/079P}{Theorem 079P}]{stacks-project}).
			
			Alternatively, showing that $\QCoh^n(X,F^*)$ is Grothendieck can be done directly at the filtered level.
			The fact that it is abelian and has exact filtered colimits is clear. 
			To show that it has a generator, use Remark \ref{rem: adjunction with oplus gr-i} to `lift' the generator of $\QCoh(X)$ (the left adjoint to a faithful functor preserves generators).
		\end{proof}
		As h-injective complexes are adapted to any additive functor, these can be used to compute right derived functors.
		Thus, the right derived pushforward exists and can be computed using h-injective complexes.
		
		\subsubsection{H-flat complexes}
		We define flat modules and h-flat complexes for filtered schemes.
		This is done in such a way that they are compatible with Proposition \ref{prop: Auslander}.
		
		\paragraph{\textit{The monoidal structure}}
		Let $(X,F^*)$ be an $n$-filtered scheme.
		We start by defining a monoidal structure on $\Mod^n(X,F^*)$, which, using the descriptions of Lemma \ref{lem: char (Q)Coh}, is seen to restrict to $\QCoh^n(X,F^*)$ and $\Coh^n(X,F^*)$.
		First, note that the usual graded tensor product of two modules in $\Mod^\infty(X,F^*)$ remains in $\Mod^\infty(X,F^*)$. 
		Hence, it makes sense to truncate this resulting tensor product.
		\begin{definition}
			For $\Mcal$ and $\Ncal$ in $\Mod^n(X,F^*)$ we define their \emph{tensor product} 
			\[
			\Mcal\otimes_{(X,F^*)|n}\Ncal := l^n( \Mcal\otimes_{\tOcal_X}\Ncal ) \in \Mod^n(X,F^*),
			\]
			where $-\otimes_{\tOcal_X}-$ is the usual tensor product of sheaves of graded modules.
		\end{definition}
		As the filtration is usually clear from context, we often use the slightly abusive notation $\otimes_{X|n}$ instead of $\otimes_{(X,F^*)|n}$.
		
		The following lemma immediately implies that we obtain a symmetric monoidal structure on $\Mod^n(X,F^*)$.
		\begin{lemma}\label{lem: truncating tensor}
			For $\Mcal$ and $\Ncal$ in $\Mod^\infty(X,F^*)$ the natural morphism
			\[
			l^n( \Mcal\otimes_{\tOcal_X}\Ncal ) \isoto l^n( l^n\Mcal\otimes_{\tOcal_X}\Ncal ),
			\]
			induced by the natural morphism $\Mcal\to l^n\Mcal$, is an isomorphism.
		\end{lemma}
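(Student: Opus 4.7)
The plan is to deduce the isomorphism by applying the right-exact functor $l^n$ to the tensor product of the short exact sequence
\[
0 \to \Mcal_* \to \Mcal \to l^n\Mcal \to 0
\]
provided by Lemma \ref{lem: adjoints Gr^N in Gr^infty}, where $\Mcal_* \subseteq \Mcal$ is the $\tOcal_X$-submodule generated by $\oplus_{i \leq -n}\Mcal^i$. Since $-\otimes_{\tOcal_X}\Ncal$ is right exact, I get
\[
\Mcal_* \otimes_{\tOcal_X} \Ncal \to \Mcal \otimes_{\tOcal_X} \Ncal \to l^n\Mcal \otimes_{\tOcal_X} \Ncal \to 0,
\]
and $l^n$ is right exact since it is a left adjoint, producing the right-exact sequence
\[
l^n(\Mcal_* \otimes_{\tOcal_X} \Ncal) \to l^n(\Mcal \otimes_{\tOcal_X} \Ncal) \to l^n(l^n\Mcal \otimes_{\tOcal_X} \Ncal) \to 0
\]
in which the second arrow is precisely the map in the statement. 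It therefore suffices to prove $l^n(\Mcal_* \otimes_{\tOcal_X} \Ncal) = 0$.

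For this I will use the criterion that for any $P \in \Mod^\infty(X,F^*)$ one has $l^n(P) = 0$ exactly when $P = P_*$, i.e.\ when sheaf-locally every graded section of $P$ is a $\tOcal_X$-linear combination of sections in degrees $\leq -n$. Because $\Mcal_*$ is by definition generated over $\tOcal_X$ by $\oplus_{i \leq -n}\Mcal^i$, the tensor product $\Mcal_* \otimes_{\tOcal_X} \Ncal$ is generated as a $\tOcal_X$-module by simple tensors $m \otimes n'$ with $m$ a section of $\Mcal^i$ for some $i \leq -n$. If $i + \deg n' \leq -n$ there is nothing to do, so assume $\deg n' > -n - i \geq 0$. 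Since $\Ncal \in \Mod^\infty$, multiplication by $t$ gives an isomorphism $\Ncal^j \isoto \Ncal^{j+1}$ for every $j \geq 0$, so locally $n' = n_0 \cdot t^{\deg n'}$ for a unique section $n_0$ of $\Ncal^0$. Using that $\tOcal_X \subseteq \Ocal_X[t,t^{-1}]$ is commutative, I then compute
\[
m \otimes n' = m \otimes (n_0 \cdot t^{\deg n'}) = (m \otimes n_0) \cdot t^{\deg n'},
\]
exhibiting $m \otimes n'$ as a $\tOcal_X$-multiple of the section $m \otimes n_0$, which lives in degree $i \leq -n$, as required.

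I do not expect any serious obstacle: the argument is essentially a bookkeeping exercise. The one mild subtlety is that the generation condition for $\Mcal_* \otimes_{\tOcal_X} \Ncal$ only needs to be checked sheaf-locally (because $\otimes_{\tOcal_X}$ involves sheafification), but the condition $P = P_*$ is itself local, and on a small enough open the factorisation $n' = n_0 \cdot t^{\deg n'}$ above is available, so the argument goes through without trouble.
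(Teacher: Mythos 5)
Your proof is correct, but it takes a genuinely different route from the paper's. The paper reduces, via right-exactness of both sides in the \emph{second} variable, to the case $\Ncal = \tR(i)$ for $i\geq 0$, and then verifies the isomorphism by a short direct computation using the identities of Lemma \ref{lem: rel ln's}. You instead exploit right-exactness in the \emph{first} variable, applying $l^n(-\otimes_{\tOcal_X}\Ncal)$ to the short exact sequence $0 \to \Mcal_* \to \Mcal \to l^n\Mcal \to 0$ coming from Lemma \ref{lem: adjoints Gr^N in Gr^infty}, which reduces the statement to the vanishing $l^n(\Mcal_*\otimes_{\tOcal_X}\Ncal)=0$. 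You then verify this vanishing by a transparent degree-chasing argument: $\Mcal_*\otimes_{\tOcal_X}\Ncal$ is generated by tensors $m\otimes n'$ with $\deg m\leq -n$, and the $\Mod^\infty$ condition on $\Ncal$ (invertibility of $t$ on degrees $\geq 0$) lets you write $n'=n_0\cdot t^{\deg n'}$ with $n_0$ of degree $0$ whenever $\deg n'>0$, exhibiting the generator as a $\tOcal_X$-multiple of a section in degree $\leq -n$. Both arguments are sound; the paper's is a touch more compact once Lemma \ref{lem: rel ln's} is in hand, while yours is more elementary and works directly with the module structure rather than through the truncation-shift calculus. One small remark: since $t^j:\Ncal^0\to\Ncal^j$ is an isomorphism of sheaves for $j>0$, the factorisation $n'=n_0\cdot t^{\deg n'}$ is actually available globally on any open, so the ``small enough open'' caveat in your last paragraph is not needed; the real localisation issue is only the standard one that $\otimes_{\tOcal_X}$ involves sheafification, which is handled, as you note, because the condition $P=P_*$ can be checked on stalks.
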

		\begin{proof}
			By looking at the stalks we reduce to the case of graded modules over the Rees algebra $\tR$ of a finite length filtered ring $(R,F^*)$.
			In this case it suffices to take $\Ncal=\tR(i)$ for $i\geq 0$ as both sides are right exact in $\Ncal$. (The description of Lemma \ref{lem: char (Q)Coh} holds globally in the affine case.)
			The required isomorphism then reduces to the following string of isomorphisms:
			\begin{align*}
				l^n(\Mcal\otimes_{\tR}\tR(i)) &\cong l^n(\Mcal(i)) \\	
				&\cong l^n(l^{n+i}( \Mcal(i) ) )\rlap{\quad (Lemma \ref{lem: rel ln's})} \\	
				&\cong l^n(l^{n}(\Mcal) (i) )\rlap{\quad (Lemma \ref{lem: rel ln's})} \\
				&\cong l^n(l^{n}\Mcal\otimes_{\tR} \tR (i) ).\qedhere
			\end{align*}
		\end{proof}
		
		Next, we show that this monoidal structure is compatible, in a suitable way, with the equivalence of Proposition \ref{prop: Auslander}.
		So, let, as in the proposition, $\Pcal:=\oplus_{i=0}^{n-1}l^n(\tOcal_X(i))$, $\Acal:=\Acal_{F^*}:=\sEnd_{\tOcal_X}(\Pcal)$ and $E(-):=\sHom_{\tOcal_X}(\Pcal,-)$.
		We start by associating to any $\Mcal$ in $\Mod^n(X,F^*)$ a natural $(\Acal,\Acal)$-bimodule.
		Define
		\[
		\tE(\Mcal):=E(\Pcal\otimes_{X|n}\Mcal)=\sHom_{\tOcal_X}(\Pcal, \Pcal\otimes_{X|n}\Mcal).
		\]
		This carries a natural left $\Acal$-module structure by functoriality of the tensor product. 
		Concisely, a section $a$ of $\Acal$ acts by mapping sections $g\mapsto (a\otimes_{X|n}1_\Mcal)\circ g$.
		Note that $\Acal=\tE(\tOcal_X)$.
		By writing 
		\begin{align*}
			\tE(\Mcal)&=E(\Pcal\otimes_{X|n}\Mcal)=E(\oplus_{i=0}^{n-1}l^n(\tOcal_X(i))\otimes_{X|n}\Mcal) \\
			&\cong\oplus_{i=0}^{n-1}E(l^n(\tOcal_X(i))\otimes_{X|n}\Mcal)\cong\oplus_{i=0}^{n-1}E(l^n(\Mcal(i)),
		\end{align*}
		we can think of $\tE(\Mcal)$ as an $n$-by-$n$ matrix, the left and right $\Acal$-module structures are simply matrix multiplication.
		When considering matrices we will number the rows and columns starting at zero.
		We write $e_i$ for the idempotent of $\Acal$ corresponding to projection-inclusion of the $l^n(\tR(i))$ component of $\Pcal$ (thinking of $\Acal$ as a matrix, $e_i$ corresponds to the matrix with a one in the $(i,i)$-position and zeroes everywhere else).
		
		\begin{lemma}\label{lem: E(-otimes-) = E(-)otimes tE(-)}
			For $\Mcal$ and $\Ncal$ in $\Mod^n(X,F^*)$ composition induces a functorial isomorphism
			\begin{align*}
				E(\Mcal)\otimes_\Acal \tE(\Ncal) &\isoto E(\Mcal\otimes_{X|n}\Ncal),\\
				f\otimes_\Acal g  &\longmapsto (f\otimes_{X|n}1_\Ncal)\circ g
			\end{align*}
			of right $\Acal$-modules.
		\end{lemma}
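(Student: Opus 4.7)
My plan is to exploit the Morita equivalence $E:\Mod^n(X,F^*)\isoto \Mod(X,\Acal)$ established in Proposition \ref{prop: Auslander}, whose quasi-inverse is $T := -\otimes_\Acal \Pcal$. First I would check that the composition rule $f\otimes g\mapsto (f\otimes_{X|n}1_\Ncal)\circ g$ descends to a well-defined map $\mu$ over $\otimes_\Acal$: since the left $\Acal$-action on $\tE(\Ncal)=\sHom_{\tOcal_X}(\Pcal,\Pcal\otimes_{X|n}\Ncal)$ sends $g\mapsto (a\otimes_{X|n}1_\Ncal)\circ g$, it intertwines with the right $\Acal$-action on $E(\Mcal)$ under composition, and the resulting $\mu$ is a morphism of right $\Acal$-modules.

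Since $T$ is an equivalence, it suffices to show that $T(\mu)$ is an isomorphism. Applied to the codomain, the counit gives $T(E(\Mcal\otimes_{X|n}\Ncal))\cong \Mcal\otimes_{X|n}\Ncal$. For the domain, using associativity of the tensor product (keeping track of the $(\Acal,\Acal)$-bimodule structure on $\tE(\Ncal)$ and the $(\Acal,\tOcal_X)$-bimodule structure on $\Pcal$) one obtains
\[
T\bigl(E(\Mcal)\otimes_\Acal \tE(\Ncal)\bigr)\cong E(\Mcal)\otimes_\Acal \bigl(\tE(\Ncal)\otimes_\Acal \Pcal\bigr).
\]
The inner factor satisfies $\tE(\Ncal)\otimes_\Acal \Pcal = T(\tE(\Ncal)) = T(E(\Pcal\otimes_{X|n}\Ncal))\cong \Pcal\otimes_{X|n}\Ncal$ via the counit, and a further associativity identifies the domain with $(E(\Mcal)\otimes_\Acal \Pcal)\otimes_{X|n}\Ncal = T(E(\Mcal))\otimes_{X|n}\Ncal\cong \Mcal\otimes_{X|n}\Ncal$. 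Hence both sides of $T(\mu)$ are canonically isomorphic to $\Mcal\otimes_{X|n}\Ncal$, and unwinding the definition of $\mu$ one sees that $T(\mu)$ corresponds to the identity under these identifications, whence $\mu$ itself is an isomorphism.

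The main obstacle I expect is the bookkeeping needed to keep the various bimodule structures aligned—in particular the left versus right $\Acal$-actions on $\tE(\Ncal)$—and to ensure that the chain of associativity isomorphisms really sends $T(\mu)$ to the identity rather than to some twist or sign.

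As a backup plan, one could instead check the assertion locally on stalks, reducing to the affine setting where $\Pcal$ is a progenerator and the claim becomes a classical Morita-theoretic identity. There one can argue that both sides of $\mu$ are right exact in $\Mcal$ and preserve direct sums (using that $E$ is exact), and then verify the isomorphism on the generating modules $\Mcal = l^n(\tOcal_X(i))$ directly: in that case $E(\Mcal)=\Acal e_i$, so the domain of $\mu$ simplifies to $e_i\tE(\Ncal)$, which matches $E(l^n(\tOcal_X(i))\otimes_{X|n}\Ncal)=E(l^n(\Ncal(i)))$ via Lemma \ref{lem: truncating tensor}.
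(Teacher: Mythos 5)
Your backup plan is essentially the paper's own proof: reduce to stalks, use right exactness in $\Mcal$, and verify the map on the generators $\Mcal = l^n(\tOcal_X(i))$. (One small slip there: $E(l^n(\tOcal_X(i)))$ is the $i$th \emph{row} $e_i\Acal$, not $\Acal e_i$; it must be a right $\Acal$-module so that $e_i\Acal\otimes_\Acal\tE(\Ncal)\cong e_i\tE(\Ncal)$ makes sense.) Your main approach is a genuinely different route. Where the paper reduces to generators and computes directly, you apply the quasi-inverse $T=-\otimes_\Acal\Pcal$ of the Morita equivalence of Proposition \ref{prop: Auslander} and show $T(\mu)$ is an isomorphism by a chain of counits and associativities; unwinding, $T(\mu)$ sends $f\otimes g\otimes p\mapsto (f\otimes_{X|n}1_\Ncal)(g(p))$, which does agree with the identifications on the source, so the sign/twist worry you raise is unfounded. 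The one step that deserves more than a wave at associativity is $E(\Mcal)\otimes_\Acal(\Pcal\otimes_{X|n}\Ncal)\cong(E(\Mcal)\otimes_\Acal\Pcal)\otimes_{X|n}\Ncal$: since $\otimes_{X|n}=l^n(-\otimes_{\tOcal_X}-)$, this is not a formal associativity but requires that $l^n$ commute with $E(\Mcal)\otimes_\Acal-$ on $\Mod^\infty$. This does hold (both functors are right exact and $E(\Mcal)\otimes_\Acal-$ preserves the $\tOcal_X$-grading componentwise, so the degree-$\leq -n$ generated submodule is respected), but it is exactly the kind of compatibility the paper avoids by working on generators. Overall your main argument buys conceptual clarity by treating $E$ as a genuine monoidal-type equivalence with $\tE$ implementing $\otimes_{X|n}$, at the cost of invoking the full Morita equivalence and needing the $l^n$ commutation lemma, whereas the paper's route (your backup) is more hands-on and self-contained.
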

		\begin{proof}
			As in the proof of Lemma \ref{lem: truncating tensor} we reduce to the case of graded modules over the Rees algebra $\tR$ of a finite length filtered ring $(R,F^*)$ and $\Mcal=l^n(\tR(i))$ for $0\leq i<n$.
			The required isomorphism then reduces to the following string of isomorphisms:
			\begin{align*}
				E(l^n(\tR(i))) \otimes_{\Acal} \tE(\Ncal) &= e_i\Acal \otimes_{\Acal} \tE(\Ncal) \\
				&\cong e_i\tE(\Ncal) \\
				&\cong \Hom(\Pcal, l^n(\tR(i))\otimes_{X|n}\Ncal) \\
				&= E(l^n(\tR(i))\otimes_{X|n}\Ncal). \qedhere
			\end{align*}
		\end{proof}
		
		We introduce a third functor, mapping to left $\Acal$-modules.
		For this let $\gr_0$ be the projection on the zeroth graded component (defined in Lemma \ref{lem: adjunction with gr-i}).
		Define 
		\[
		E'(\Mcal):=\tE(\Mcal)e_0=\gr_0(\Pcal\otimes_{X|n}\Mcal),
		\]
		which has a natural left $\Acal$-action through the action on $\Pcal$.
		Writing $E'(\Mcal)$ as a column vector, this is $(	\Mcal^{0}\  \Mcal^{0}/t\Mcal^{-1}\  \dots\   \Mcal^{0}/t^{n-1}\Mcal^{-n+1} )^T$.
		Recall that we have the subcategory $\Filt^n(X,F^*)$ of $\Mod(X,F^*)$ consisting of filtered modules, i.e.\ those modules for which multiplication by the distinguished degree one element $t$ in $\tOcal_X$ is injective.
		The functor $E'$ is right exact and is furthermore fully faithful, exact and reflects exactness on filtered modules (see Lemma \ref{lem: E' on filt} below).
		It should be stressed that this functor is generally not an equivalence; it need not be fully faithful on non-filtered modules and need not be essentially surjective.
		
		\begin{lemma}\label{lem: gr0(-otimes-) = E(-)otimes E'(-)}
			With notation as in the previous lemma.
			We have a functorial isomorphism
			\[
			E(\Mcal)\otimes_\Acal E'(\Ncal) \isoto\gr_0(\Mcal\otimes_{X|n}\Ncal)
			\]
			of $\Ocal_X$-modules.
		\end{lemma}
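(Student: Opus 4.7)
The plan is to reduce the statement to the previous lemma by pushing the defining idempotent $e_0$ through the tensor product. Unpacking the definition, $E'(\Ncal) = \tE(\Ncal)e_0$, so
\[
E(\Mcal)\otimes_\Acal E'(\Ncal) \;=\; E(\Mcal)\otimes_\Acal \tE(\Ncal)e_0 \;\cong\; \bigl(E(\Mcal)\otimes_\Acal \tE(\Ncal)\bigr)e_0,
\]
the latter being the usual compatibility of the tensor product with right multiplication by an element of $\Acal$. Applying Lemma~\ref{lem: E(-otimes-) = E(-)otimes tE(-)} immediately produces a functorial isomorphism with $E(\Mcal\otimes_{X|n}\Ncal)\,e_0$.

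It then remains to identify $E(\Kcal)e_0$ with $\gr_0\Kcal$, naturally in $\Kcal\in\Mod^n(X,F^*)$, and to apply this to $\Kcal = \Mcal\otimes_{X|n}\Ncal$. Since $e_0$ corresponds to the projection-inclusion of the summand $l^n(\tOcal_X) = l^n(\tOcal_X(0))$ of $\Pcal$, we have
\[
E(\Kcal)e_0 \;=\; \sHom_{\tOcal_X}\bigl(l^n(\tOcal_X),\Kcal\bigr),
\]
and a degree-zero graded $\tOcal_X$-linear map out of $l^n(\tOcal_X)$ is determined by the image of the unit, giving a natural identification with $\Kcal^0 = \gr_0\Kcal$. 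This is just the $i=0$ case of Lemma~\ref{lem: adjunction with gr-i}, applied via the tensor-hom adjunction.

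The only point that deserves any care is the very first isomorphism, namely that tensoring over $\Acal$ commutes with the right action of $e_0$; as in the proof of the previous lemma this reduces to a stalk-wise verification for graded modules over the Rees algebra of a finite length filtered ring, where it is elementary. I expect this to be the main, but still minor, obstacle; everything else is formal.
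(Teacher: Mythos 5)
Your argument is exactly the paper's: the published proof simply says to multiply the isomorphism of Lemma~\ref{lem: E(-otimes-) = E(-)otimes tE(-)} on the right by $e_0$ and to note $E(-)e_0 = \gr_0(-)$, which is precisely what you do, spelled out. The one point you flag as requiring care — commuting $\otimes_\Acal$ past right multiplication by $e_0$ — is in fact purely formal for any $(\Acal,\Acal)$-bimodule (it is the associativity $M\otimes_\Acal(N\otimes_\Acal \Acal e_0)\cong(M\otimes_\Acal N)\otimes_\Acal \Acal e_0$), so no stalk-wise check is needed.
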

		\begin{proof}
			This follows from Lemma \ref{lem: E(-otimes-) = E(-)otimes tE(-)} by multiplying on the right with $e_0$ and noting that $E(-)e_0=\gr_0(-)$.
		\end{proof}
		
		In the affine setting, the next lemma follows immediately (as there are enough projectives).
		This will be true more generally once we can speak of h-flat complexes.
		\begin{lemma}\label{lem: derived E(-otimes-) = E(-)otimes tE(-)}
			Let $M^\bullet$ and $N^\bullet$ be complexes over\:\!\footnote{To avoid confusion, here we simply mean modules, i.e.\ this is $\QCoh^n(\Spec R,F^*)$.} $\Mod^n(R,F^*)$.
			We have
			\[
			E(M^\bullet)\otimes_\Acal^\bL \bL E'(N^\bullet) \cong \gr_0(M^\bullet\otimes_{X|n}^\bL N^\bullet),
			\]
			where the derived functors are computed with h-projective resolutions (these are cheap to construct since filtered colimits are exact \cite[Corollary 3.5]{Spaltenstein}).
		\end{lemma}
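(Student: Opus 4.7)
The plan is to reduce to the underived identity in Lemma~\ref{lem: gr0(-otimes-) = E(-)otimes E'(-)} by passing to h-projective resolutions on both sides. Choose h-projective resolutions $P^\bullet \to M^\bullet$ and $Q^\bullet \to N^\bullet$ in $\Mod^n(R,F^*)$; these exist and are functorial by the standard Spaltenstein-type argument, as the proof hint indicates. By definition, $\bL E'(N^\bullet) \cong E'(Q^\bullet)$ in $\bD(\Acal^{\mathrm{op}})$.

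Next I will analyse both sides. For the right-hand side, since $P^\bullet$ is h-projective and, in a module category, h-projective complexes are h-flat (any such complex is a direct summand in the homotopy category of a semi-free one, and tensoring with semi-free complexes of projectives is exact), we have $M^\bullet \otimes_{X|n}^\bL N^\bullet \cong P^\bullet \otimes_{X|n} Q^\bullet$. The functor $\gr_0$ is exact (it is essentially extraction of a graded component; cf.\ Lemma~\ref{lem: adjunction with gr-i}), so $\gr_0(M^\bullet \otimes_{X|n}^\bL N^\bullet) \cong \gr_0(P^\bullet \otimes_{X|n} Q^\bullet)$. For the left-hand side, by Proposition~\ref{prop: Auslander}, $E$ is an equivalence of abelian categories, hence preserves projectives and therefore sends h-projective complexes to h-projective complexes. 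Thus $E(P^\bullet)$ is h-projective, in particular h-flat, over $\Acal$, so $E(M^\bullet) \otimes_\Acal^\bL \bL E'(N^\bullet) \cong E(P^\bullet) \otimes_\Acal E'(Q^\bullet)$.

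It remains to identify $\gr_0(P^\bullet \otimes_{X|n} Q^\bullet)$ with $E(P^\bullet) \otimes_\Acal E'(Q^\bullet)$. This follows by applying Lemma~\ref{lem: gr0(-otimes-) = E(-)otimes E'(-)} componentwise and assembling over the double complex (or equivalently its total complex): the naturality asserted in that lemma lets one promote the pointwise isomorphism to an isomorphism of complexes, compatible with the tensor-product differentials. Composing the three isomorphisms yields the claimed identity in the derived category.

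The principal technical point to verify is the claim that h-projective complexes in $\Mod^n(R,F^*)$ are h-flat for the truncated tensor product $\otimes_{X|n}$, and similarly in $\Mod(\Acal)$. Both categories are equivalent (via $E$) to module categories over an ordinary ring, namely the Auslander algebra $\Acal_{F^*}$, where the assertion is standard. The compatibility with the truncation $l^n$ is harmless because $l^n$ is a left adjoint and hence right exact (Lemma~\ref{lem: adjoints Gr^N in Gr^infty}), and Lemma~\ref{lem: truncating tensor} confirms that the truncation interacts well with the underlying graded tensor product; this is the only place where the finite length of the filtration really enters.
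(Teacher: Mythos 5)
Your overall approach — pick h-projective resolutions on both sides and upgrade the underived identity of Lemma~\ref{lem: gr0(-otimes-) = E(-)otimes E'(-)} termwise via naturality — is exactly the argument the paper has in mind; the paper itself gives essentially no proof beyond the preceding remark that ``in the affine setting the lemma follows immediately, as there are enough projectives.'' Your core chain of isomorphisms $E(M^\bullet)\otimes_\Acal^\bL \bL E'(N^\bullet)\cong E(P^\bullet)\otimes_\Acal E'(Q^\bullet)\cong\gr_0(P^\bullet\otimes_{X|n}Q^\bullet)$ is correct, and the observations that $E$ is an exact equivalence preserving (h-)projectives and that $\gr_0$ is exact are both right.

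However, the paragraph you flag as the ``principal technical point'' contains an imprecision and is also somewhat of a detour. The equivalence $E$ does \emph{not} carry $\otimes_{X|n}$ to $\otimes_\Acal$ by simple transport of structure: the correct compatibility, Lemma~\ref{lem: E(-otimes-) = E(-)otimes tE(-)}, has $\tE$ (not $E$) in the second slot, so ``both categories are equivalent to modules over $\Acal_{F^*}$, where the assertion is standard'' does not by itself justify that h-projective complexes in $\Mod^n(R,F^*)$ are h-flat for the truncated tensor $\otimes_{X|n}$. This is a genuine subtlety because $l^n$ is only right exact; indeed the paper's notion of an h-flat complex, and Proposition~\ref{prop: h-flat intrinsic}, are introduced only \emph{after} this lemma (and the proposition about flat modules, which uses the present lemma, also precedes them), so invoking them here would risk circularity. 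Fortunately this worry is unnecessary: the statement of the lemma stipulates that the derived functors are to be computed with h-projective resolutions, so $M^\bullet\otimes_{X|n}^\bL N^\bullet$ is by convention $P^\bullet\otimes_{X|n}Q^\bullet$, and the three displayed isomorphisms already close the argument without you needing to establish any h-flatness property of $\otimes_{X|n}$. If you did want to make that extra property precise, you would do it via $E(P^\bullet\otimes_{X|n}\Ncal)\cong E(P^\bullet)\otimes_\Acal\tE(\Ncal)$ and exactness of $\tE$ on $\Filt^n$, not by pretending $E$ is monoidal.
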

		
		In the sequel it will be important to have some control over the essential image of $E'$.
		We describe this now.
		Let $\Mcal$ be a left $\Acal$-module, we can think of it as column vector $(	\Mcal^{0}\  \Mcal^{-1}\  \dots\   \Mcal^{-n+1} )^T$ where the left $\Acal$-action is given by matrix multiplication (i.e.\ $\Mcal^{-i}:=e_i\Mcal$).
		For any $0< i<n$ we have inclusions $\Acal e_i\hookrightarrow \Acal e_0$ of the $i$th column into the $0$th column which, by applying $\sHom_{\Acal}(-, \Mcal)$, gives rise to a morphism
		\begin{equation}\label{eq: s}
			s_i: \Mcal^0\to \Mcal^{-i}.
		\end{equation}
		(When thinking of $\Acal$ as a matrix, $s_i$ is given by multiplication with the matrix consisting of a one in the $(i,0)$th position and zeroes everywhere else.)
		These give us control over which left $\Acal$-modules lie in the image of $E'$.
		
		\begin{lemma}
			A left $\Acal$-module $\Mcal$ lies in the essential image of $E'$ if and only if the morphisms $s_i$ of equation \eqref{eq: s} are surjective.
			Moreover, in this case $\Mcal$ is of the form $E'(\Mcal'$) with $\Mcal'$ in $\Filt^n(X,F^*)$.
		\end{lemma}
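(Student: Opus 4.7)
The plan is to prove both directions by unpacking the explicit description of $E'$ given just above.

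For the \emph{only if} direction, I suppose $\Mcal \cong E'(\Mcal')$ with $\Mcal' \in \Filt^n(X, F^*)$ and compute directly from $E'(\Mcal') = \gr_0(\Pcal \otimes_{X|n} \Mcal')$: one finds that $e_0 E'(\Mcal') = \Mcal'^0$, while each other component $e_i E'(\Mcal')$ is a quotient of $\Mcal'^0$ by the appropriate piece of its filtration. The inclusion $\Acal e_i \hookrightarrow \Acal e_0$ is induced by the natural morphism $l^n(\tOcal_X) \to l^n(\tOcal_X(i))$ which is the identity on the degree-zero generator of $\Ocal_X$; after passing to quotients, $s_i$ becomes the canonical surjection from $\Mcal'^0$ onto the relevant quotient, and is therefore surjective.

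For the \emph{if} direction, I assume all $s_i$ are surjective and take $\Mcal'^0 := e_0\Mcal$, equipped with the $\Ocal_X$-module structure inherited from $\Ocal_X \cong e_0 \Acal e_0 \hookrightarrow \Acal$. As a candidate filtration on $\Mcal'^0$ I set $F^{i-n}\Mcal'^0 := \ker s_i$ for $1 \leq i < n$, $F^0 \Mcal'^0 := \Mcal'^0$, and $F^{-n}\Mcal'^0 := 0$. The claim will then be that this makes $\Mcal'$ into an object of $\Filt^n(X, F^*)$ with $E'(\Mcal') \cong \Mcal$. The filtration is increasing because each $s_{i+1}$ factors through $s_i$ via the composition $\Acal e_{i+1} \hookrightarrow \Acal e_i \hookrightarrow \Acal e_0$ (induced by $l^n(\tOcal_X) \to l^n(\tOcal_X(i)) \to l^n(\tOcal_X(i+1))$); and the isomorphism $\Mcal \cong E'(\Mcal')$ follows, since surjectivity gives bijections $e_i\Mcal \cong \Mcal'^0/\ker s_i \cong e_i E'(\Mcal')$ that respect the $\Acal$-action by construction.

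The hard part will be verifying compatibility of the candidate filtration with the ring filtration, that is, $F^{-k}\Ocal_X \cdot F^j \Mcal'^0 \subseteq F^{j-k}\Mcal'^0$. For $a \in F^{-k}\Ocal_X$ and $m \in \ker s_i$, if $k \geq i$ then $m$ also lies in $\ker s_k$ (as $s_k$ factors through $s_i$ by the analogous composition of column inclusions), so the identity $am = a|_{e_k}(s_k(m))$---obtained by viewing $a \in F^{-k}\Ocal_X \subseteq e_0 \Acal e_k$---forces $am = 0$. If instead $0 < k < i$, a direct matrix-algebra computation in the Auslander algebra $\Acal$ shows that the element of $e_{i-k}\Acal e_0$ inducing $s_{i-k}(a \cdot -)$ factorises as the class of $a$ in $e_{i-k}\Acal e_i$ composed with the inclusion $\Acal e_i \hookrightarrow \Acal e_0$ giving $s_i$; hence $s_{i-k}(am) = \tilde{a} \cdot s_i(m) = 0$, as required.
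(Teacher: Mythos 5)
Your proof is correct and follows the same strategy as the paper's (brief) argument: unpack $E'$ to see that the $s_i$ are quotient maps, giving the forward implication, and in the converse direction build a filtration on $e_0\Mcal$ using the kernels of the $s_i$ and check that $E'$ applied to the resulting filtered module recovers $\Mcal$.

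Two remarks are worth making. First, your indexing $F^{i-n}\Mcal'^0 := \ker s_i$ differs from the paper's $\Mcal'^{-i} := \ker s_i$; your choice appears to be the correct one. Indeed, one may cross-check against the identity $E'(\tOcal_X) = \Acal e_0$: the $(i,0)$-entry of the Auslander algebra in Remark \ref{rem: Auslander} is $\Ocal_X/F^{i-n}\Ocal_X$, so $e_i E'(\tOcal_X) = \Ocal_X/F^{i-n}\Ocal_X$ and $\ker s_i = F^{i-n}\Ocal_X$, not $F^{-i}\Ocal_X$. Moreover, since the natural morphisms $l^n(\tOcal_X) \to l^n(\tOcal_X(i)) \to l^n(\tOcal_X(i+1))$ give $\ker s_i \subseteq \ker s_{i+1}$, the assignment $\Mcal'^{-i} := \ker s_i$ would yield a \emph{descending} filtration, which is inconsistent. (The paper's explicit column-vector formula for $E'(\Mcal)$ displayed just before Lemma \ref{lem: gr0(-otimes-) = E(-)otimes E'(-)} seems to have the index reversed, and this propagates into the paper's sketch.) Second, you spell out the verification that $F^{-k}\Ocal_X\cdot F^j\Mcal'^0 \subseteq F^{j-k}\Mcal'^0$ via the matrix structure of $\Acal$, which the paper leaves implicit; this is a genuine addition. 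The case split you give covers $j<0$ and $k>0$; the remaining boundary cases ($j=0$, or $k=0$, or $k\geq n$) are immediate by essentially the same identity $1_{l,0}\cdot a_{00} = [a]_{l,0}$ or by $F^{-k}\Ocal_X = 0$, and you might wish to say a word to that effect. Overall: same approach as the paper, with more care.
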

		\begin{proof}
			For $\Mcal$ of the form $E'(\Mcal')$ the $s_i$ are exactly the quotient maps ${\Mcal'}^0\to{\Mcal'}^0/{\Mcal'}^{-i}$, so one implication is clear.
			For the other direction put ${\Mcal'}^{-i}=\ker(s_i)$. 
			This gives a filtration on $\Mcal_0$ which, viewed as an object of $\Mod^n(X,F^*)$, gets mapped by $E'$ to an $\Acal$-module isomorphic to $\Mcal$ (the $s_i$'s induce an isomorphism).
		\end{proof}
		
		\begin{lemma}\label{lem: Im E' closed under coker and contains injective}
			The essential image of $E'$ is closed under quotient objects and contains the injective left $\Acal$-modules.
		\end{lemma}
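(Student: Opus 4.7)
The plan is to reduce both claims to the criterion from the preceding lemma: a left $\Acal$-module $\Mcal$ lies in the essential image of $E'$ if and only if the natural maps $s_i: \Mcal^0\to\Mcal^{-i}$ are surjective for every $0<i<n$. Since the $s_i$ are obtained by applying $\sHom_{\Acal}(-,\Mcal)$ to the fixed inclusions $\iota_i:\Acal e_i\hookrightarrow\Acal e_0$, they are natural in $\Mcal$; this is the only structural input I will use.

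For closure under quotient objects, I would start with a surjection $q:\Mcal\twoheadrightarrow\Ncal$ where $\Mcal\in\Im(E')$. Applying the idempotent $e_j$ (which is an exact endofunctor of $\Mod(\Acal)$) yields surjections $q^{-j}:\Mcal^{-j}\twoheadrightarrow\Ncal^{-j}$ in every graded component. Naturality of the $s_i$ then produces a commutative square
\[
\begin{tikzcd}
\Mcal^0 \ar[r,"s_i^\Mcal",two heads]\ar[d,"q^0"',two heads] & \Mcal^{-i}\ar[d,"q^{-i}",two heads] \\
\Ncal^0 \ar[r,"s_i^\Ncal"'] & \Ncal^{-i}\rlap{ ,}
\end{tikzcd}
\]
in which three sides are surjections; chasing a section shows $s_i^\Ncal$ is surjective, hence $\Ncal\in\Im(E')$.

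For the second assertion, given an injective left $\Acal$-module $\Ical$, I would apply the exact functor $\sHom_{\Acal}(-,\Ical)$ to the monomorphism $\iota_i:\Acal e_i\hookrightarrow\Acal e_0$. This yields a surjection $\sHom_{\Acal}(\Acal e_0,\Ical)\twoheadrightarrow\sHom_{\Acal}(\Acal e_i,\Ical)$ which, under the canonical identifications $\sHom_{\Acal}(\Acal e_j,\Ical)\cong e_j\Ical=\Ical^{-j}$ sending $f\mapsto f(e_j)$, is precisely the map $s_i^\Ical$. Invoking the characterisation of $\Im(E')$ then concludes.

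There is no substantial obstacle here: all the heavy lifting is packaged into the preceding lemma, and the present statement is essentially a formal consequence of (i) the naturality of the $s_i$ together with the exactness of the idempotent functors $e_j(-)$ and (ii) the definition of injectivity. The mildly delicate point worth being careful about is the canonical identification used in the injective case, which must match the map $s_i$ on the nose rather than only up to an automorphism; this is immediate from tracing the element $e_j$ through Yoneda.
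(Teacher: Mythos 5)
Your proof is correct and takes essentially the same approach as the paper: the paper's proof is just the two terse sentences "Clearly, the condition of the $s_i$'s being surjective is closed under quotients. Moreover, by the definition of the $s_i$'s they are surjective for injective modules," and your argument unpacks exactly these, via naturality of $s_i$ and exactness of the idempotent functors for the first part, and exactness of $\sHom_{\Acal}(-,\Ical)$ on the monomorphism $\Acal e_i\hookrightarrow\Acal e_0$ for the second.
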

		\begin{proof}
			Clearly, the condition of the $s_i$'s being surjective is closed under quotients.
			Moreover, by the definition of the $s_i$'s they are surjective for injective modules.
		\end{proof}
		
		\begin{lemma}\label{lem: E' on filt}
			The functor $E'$ is fully faithful, exact and reflects exactness on $\Filt^n(X,F^*)$.  
		\end{lemma}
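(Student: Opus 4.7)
The plan is to exploit the description $e_i E'(\Mcal) \cong \gr_0(l^n(\Mcal(i))) = \Mcal^i / \Mcal^{i-n} t^n$ of the components of $E'(\Mcal)$ as a left $\Acal$-module. For $\Mcal \in \Filt^n(X,F^*)$ the $t$-injectivity yields a short exact sequence $0 \to \Mcal^{i-n} \xrightarrow{t^n} \Mcal^i \to e_i E'(\Mcal) \to 0$, functorial in $\Mcal$. Moreover, using the length-$n$ isomorphism $\Mcal^0 \xrightarrow{t^i} \Mcal^i$, the map $s_i$ from Equation \eqref{eq: s} factors as $\Mcal^0 \twoheadrightarrow \Mcal^0/\Mcal^{i-n} t^{n-i} \isoto \Mcal^i/\Mcal^{i-n} t^n$, so $\ker s_i = \Mcal^{i-n} t^{n-i}$; as $i$ ranges over $\{1,\dots,n-1\}$ these kernels recover the entire filtration of $\Mcal^0$.

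For \emph{full faithfulness}, any $\Acal$-linear $\phi : E'(\Mcal) \to E'(\Ncal)$ must commute with all the $s_i$, so its degree-zero component $\phi_0 : \Mcal^0 \to \Ncal^0$ sends $\ker s_i^\Mcal$ into $\ker s_i^\Ncal$ and hence preserves the filtration of $\Mcal^0$. Since a morphism of length-$n$ filtered modules in $\Mod^n(X,F^*)$ is uniquely determined by such a filtration-preserving degree-zero component (the negative-degree components being forced by $t$-injectivity and the positive-degree ones by the length-$n$ isomorphisms), and $E'$ is functorial, this yields the bijection $\Hom_{\Mod^n(X,F^*)}(\Mcal,\Ncal) \isoto \Hom_\Acal(E'(\Mcal), E'(\Ncal))$.

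For both exactness statements, given a sequence $0 \to A \to B \to C \to 0$ of modules in $\Filt^n(X,F^*)$, I would apply the $3 \times 3$ lemma to the diagram whose three rows are the degree-$(i-n)$ sequence, the degree-$i$ sequence, and the $i$th-component-of-$E'$ sequence, with vertical columns the short exact sequences $0 \to (-)^{i-n} \xrightarrow{t^n} (-)^i \to e_i E'(-) \to 0$ (short exact by $t$-injectivity on filtered modules). Exactness of $E'$ is then direct: the top and middle rows are exact by hypothesis, hence so is the cokernel row. For reflection, I would first note that $e_0 E'(\Mcal) = \Mcal^0$ (since $\Mcal^{-n} = 0$), so exactness of $E'$ at the zeroth position gives exactness of the original sequence in degree $0$; the length-$n$ isomorphisms then propagate this to all nonnegative degrees; and for each $i \in \{1,\dots,n-1\}$ the middle row (degree $i$) and bottom row (position $i$ of $E'$) of the $3 \times 3$ diagram are now both exact, so the top row (degree $i-n$) is too. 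This recovers exactness in all negative degrees, completing the argument. The only real subtlety is verifying that the abstract $s_i$ act on the explicit description of $E'(\Mcal)$ as I claimed above, but this is a direct unwinding of how the inclusions $\Acal e_i \hookrightarrow \Acal e_0$ are defined via the action of $\Acal = \sEnd(\Pcal)$.
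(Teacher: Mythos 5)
Your proposal is correct and takes essentially the same route as the paper: the paper's proof is precisely the $3\times 3$ (snake-lemma) argument you describe, built from the column short exact sequences $0\to\Mcal^{-i}\to\Mcal^0\xrightarrow{s_i}e_iE'(\Mcal)\to 0$, and it disposes of full faithfulness with the remark that "it is clear from the diagram how morphisms $\Kcal\to\Mcal$ and $E'(\Kcal)\to E'(\Mcal)$ determine each other" — which is exactly the filtration-preservation argument you spell out. The only differences are cosmetic: you index the column by degrees $(i-n,\,i,\,e_iE')$ whereas the paper uses $(-i,\,0,\,e_iE')$ (equivalent via the length-$n$ isomorphisms $t^i:\Mcal^0\isoto\Mcal^i$), and you split the reflection step into degree $0$, degrees $\geq 0$, then negative degrees, whereas the paper handles it uniformly. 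One side remark: your explicit computation $e_iE'(\Mcal)\cong\gr_0(l^n(\Mcal(i)))=\Mcal^i/\Mcal^{i-n}t^n\cong\Mcal^0/t^{n-i}\Mcal^{i-n}$ is correct, and in fact the entries of the column vector stated just before the lemma in the paper appear to be listed in reversed order for $n\geq 3$; this is only an indexing typo and is immaterial to the argument (all filtration quotients of $\Mcal^0$ occur among the $e_iE'(\Mcal)$ in some order), but it is worth being aware that your indexing is the accurate one.
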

		\begin{proof}
			Given a sequence $\Kcal\to\Mcal\to\Ncal$ in $\Filt^n(X,F^*)$ the exactness claims follow from the diagram 
			\[
			\begin{tikzcd}
				& 0 & 0 & 0 \\
				0 & {\Kcal^{-i}} & {\Mcal^{-i}} & {\Ncal^{-i}} & 0 \\
				0 & {\Kcal^0} & {\Mcal^0} & {\Ncal^0} & 0 \\
				0 & {\Kcal^0/\Kcal^{-i}} & {\Mcal^0/\Mcal^{-i}} & {\Ncal^0/\Ncal^{-i}} & 0 \\
				& 0 & 0 & 0
				\arrow[from=4-1, to=4-2]
				\arrow[from=4-2, to=4-3]
				\arrow[from=4-3, to=4-4]
				\arrow[from=4-4, to=4-5]
				\arrow[from=3-4, to=3-5]
				\arrow[from=2-4, to=2-5]
				\arrow[from=2-3, to=2-4]
				\arrow[from=3-3, to=3-4]
				\arrow[from=3-2, to=3-3]
				\arrow[from=2-2, to=2-3]
				\arrow[from=2-1, to=2-2]
				\arrow[from=3-1, to=3-2]
				\arrow[from=1-2, to=2-2]
				\arrow[from=2-2, to=3-2]
				\arrow["{s_i}", from=3-2, to=4-2]
				\arrow[from=4-2, to=5-2]
				\arrow[from=1-3, to=2-3]
				\arrow[from=2-3, to=3-3]
				\arrow["{s_i}", from=3-3, to=4-3]
				\arrow[from=4-3, to=5-3]
				\arrow[from=1-4, to=2-4]
				\arrow[from=2-4, to=3-4]
				\arrow["{s_i}", from=3-4, to=4-4]
				\arrow[from=4-4, to=5-4]
			\end{tikzcd}
			\]
			with exact columns and the fact that exactness of the upper or lower two rows implies the exactness of the third.
			Moreover, it is clear from the diagram how morphisms $\Kcal\to\Mcal$ and $E'(\Kcal)\to E'(\Mcal)$ determine each other.
		\end{proof}
		
		\paragraph{\textit{Flat modules}}
		We define flat modules as those that are flat after applying $E$, and will then give an intrinsic definition using $\otimes_{X|n}$.
		
		\begin{definition}
			A module $\Mcal$ in $\Mod^n(X,F^*)$ is called \emph{flat} if $E(\Mcal)$ is flat in $\Mod(X,\Acal)$.
		\end{definition}
		
		\begin{lemma}\label{lem: Lazard}
			The stalk at $x\in X$ of any flat module $\Mcal$ is a colimit of a directed system consisting of direct sums of $l^n(\tOcal_{X,x}(i))$'s.
			Consequently, $\Mcal \otimes_{X|n} -$ maps filtered modules to filtered modules.
			In particular, $\Mcal$ is filtered.
		\end{lemma}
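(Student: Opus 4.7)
The plan is to invoke Lazard's theorem for right modules over an associative ring, transported through the equivalence of Proposition \ref{prop: Auslander}, and then deduce the tensor-product statements from the explicit description of the colimit.

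First, I would pass to stalks at an arbitrary point $x\in X$. Since $\Pcal = \oplus_{i=0}^{n-1}l^n(\tOcal_X(i))$ is locally finitely presented, forming $\sHom_{\tOcal_X}(\Pcal,-)$ commutes with stalks, and the equivalence of Proposition \ref{prop: Auslander} specialises to an equivalence between the stalks of $\Mod^n(X,F^*)$ at $x$ and right $\Acal_x$-modules. By definition of flatness, $E(\Mcal)_x$ is then a flat right $\Acal_x$-module.

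Next, I would apply Lazard's theorem to the associative ring $\Acal_x$: every flat right $\Acal_x$-module is a filtered colimit of finitely generated free right $\Acal_x$-modules $\Acal_x^{\oplus m_\alpha}$. Since $E^{-1}(\Acal_x) = \Pcal_x = \oplus_{i=0}^{n-1}l^n(\tOcal_{X,x}(i))$ and $E^{-1}$ commutes with direct sums and filtered colimits (being an equivalence of abelian categories), $\Mcal_x$ is a filtered colimit of direct sums of the modules $l^n(\tOcal_{X,x}(i))$, which is the first claim.

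For the ``consequently'' part, I would first observe that each individual $l^n(\tOcal_X(i))\otimes_{X|n}-$ sends filtered modules to filtered modules: by Lemma \ref{lem: truncating tensor}, $l^n(\tOcal_X(i))\otimes_{X|n}\Ncal \cong l^n(\Ncal(i))$, which is a truncation of a shift of a $t$-torsion-free module and hence again $t$-torsion free. Direct sums and filtered colimits preserve $t$-torsion-freeness (since filtered colimits commute with kernels), so combining this with the first claim gives the conclusion. Finally, the ``in particular'' statement follows by specialising $\Ncal = l^n(\tOcal_X)$, the monoidal unit, which is filtered.

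The main obstacle is not conceptual but bookkeeping: one must be careful that the equivalence $E$, the tensor product $\otimes_{X|n}$, and the notion of flatness all commute with stalks in a way that legitimises the reduction to the classical Lazard theorem at a fixed point $x$. Once that compatibility is in place, everything reduces to the classical statement applied to the ordinary (associative but not necessarily commutative) ring $\Acal_x$.
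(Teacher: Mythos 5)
Your proposal is correct and follows essentially the same route as the paper: apply Govorov--Lazard to $E(\Mcal)_x$, transport back through the Morita equivalence $E$ to see that $\Mcal_x$ is a filtered colimit of direct sums of the $l^n(\tOcal_{X,x}(i))$'s, and then deduce the remaining claims by reducing to the case $\Mcal = l^n(\tOcal_X(i))$, which is legitimate because being $t$-torsion free is a stalk-local property closed under direct sums and filtered colimits. The only difference is cosmetic: you spell out the verification that $l^n(\tOcal_X(i))\otimes_{X|n}\Ncal \cong l^n(\Ncal(i))$ is $t$-torsion free (via Lemma~\ref{lem: truncating tensor}), whereas the paper simply declares the base case ``clear.''
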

		\begin{proof}
			By the Govorov--Lazard Theorem applied to $E(\Mcal)_x$ we see that $\Mcal_x$ is a direct limit of the $l^n(\tOcal_{X,x}(i))$'s. 
			
			For the second statement we can reduce, as being filtered can be checked at stalks and is closed under filtered colimits, to the case $\Mcal = l^n(\tOcal_X(i))$ in which case the claim is clear.
		\end{proof}
		
		\begin{proposition}
			A module $\Mcal$ is flat if and only if $\Mcal\otimes_{X|n}-$ is exact on $\Filt^n(X,F^*)$.
		\end{proposition}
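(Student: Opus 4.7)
The plan is to move between $\Filt^n(X,F^*)$, $\Mod^n(X,F^*)$, and the category of left $\Acal$-modules via the equivalence $E$ of Proposition \ref{prop: Auslander} and the auxiliary functors $E'$ and $\tE$.

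For the forward implication, assume $\Mcal$ is flat, so $E(\Mcal)$ is a flat right $\Acal$-module. A preliminary observation is that $\tE$ is exact on all of $\Mod^n(X,F^*)$: indeed, $\tE=E\circ(\Pcal\otimes_{X|n}(-))$ with $E$ an exact equivalence, and each summand $l^n(\tOcal_X(i))$ of $\Pcal$ is projective in $\Mod^n(X,F^*)$, being the image under the left adjoint $l^n$ of a projective graded $\tOcal_X$-module. Now, given a short exact sequence in $\Filt^n(X,F^*)$, applying $\tE$ yields a short exact sequence of left $\Acal$-modules; flatness of $E(\Mcal)$ preserves exactness upon tensoring with $E(\Mcal)$ over $\Acal$; and Lemma \ref{lem: E(-otimes-) = E(-)otimes tE(-)} identifies the result with $E$ applied to $\Mcal\otimes_{X|n}(-)$ of the original sequence. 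Since $E$ is an exact equivalence, the desired exactness of $\Mcal\otimes_{X|n}(-)$ follows.

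For the backward implication, assume $\Mcal\otimes_{X|n}(-)$ is exact on $\Filt^n(X,F^*)$, and aim to show $\mathrm{Tor}_1^{\Acal}(E(\Mcal), N)=0$ for every left $\Acal$-module $N$. First consider $N$ in the essential image of $E'$, say $N=E'(\Ncal)$ with $\Ncal\in\Filt^n(X,F^*)$. Since the projective modules $l^n(\tOcal_X(i))$ lie in $\Filt^n(X,F^*)$ and submodules of filtered modules remain filtered, such an $\Ncal$ admits a resolution by objects of $\Filt^n(X,F^*)$ that are flat in $\Mod^n(X,F^*)$. The hypothesis then forces $\mathrm{Tor}_i^{X|n}(\Mcal,\Ncal)=0$ for all $i\geq 1$, and Lemma \ref{lem: derived E(-otimes-) = E(-)otimes tE(-)}, combined with $\bL E'(\Ncal)\simeq E'(\Ncal)$ (by exactness of $E'$ on $\Filt^n(X,F^*)$ applied to the above resolutions), yields
\[
\mathrm{Tor}_i^{\Acal}(E(\Mcal), E'(\Ncal)) \cong \gr_0\!\left(\mathrm{Tor}_i^{X|n}(\Mcal,\Ncal)\right)=0 \quad\text{for all } i\geq 1.
\]
For a general left $\Acal$-module $N$, embed $N\hookrightarrow I$ into an injective $\Acal$-module; by Lemma \ref{lem: Im E' closed under coker and contains injective} both $I$ and $I/N$ lie in the essential image of $E'$. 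The long exact sequence of $\mathrm{Tor}^{\Acal}(E(\Mcal),-)$ associated to $0\to N\to I\to I/N\to 0$ then wedges $\mathrm{Tor}_1^{\Acal}(E(\Mcal),N)$ between $\mathrm{Tor}_2^{\Acal}(E(\Mcal),I/N)=0$ and $\mathrm{Tor}_1^{\Acal}(E(\Mcal),I)=0$, forcing it to vanish.

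The main technical hurdle is the backward direction, where an exactness hypothesis living on $\Filt^n(X,F^*)$ must be transferred to arbitrary left $\Acal$-modules. The crucial ingredients are the derived identification of Lemma \ref{lem: derived E(-otimes-) = E(-)otimes tE(-)}, which upgrades the input exactness to vanishing of all higher $\mathrm{Tor}^{X|n}$ groups once one has flat filtered resolutions at hand, together with Lemma \ref{lem: Im E' closed under coker and contains injective}, which guarantees that injective $\Acal$-modules and their quotients lie in the essential image of $E'$, thereby enabling the final dimension-shifting step.
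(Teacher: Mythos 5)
Your proof follows the paper's overall contour in both directions, but two points need repair.

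Your preliminary claim that $\tE$ is exact on all of $\Mod^n(X,F^*)$ is false, and the reason you give does not establish it. Since $\Pcal\otimes_{X|n}\Ncal\cong\oplus_{i=0}^{n-1}l^n(\Ncal(i))$ and $l^n$ is only exact on filtered modules (in general it is merely right exact, being a left adjoint), $\tE$ is exact only on $\Filt^n(X,F^*)$, which is precisely what the paper claims and is all your argument actually uses. Projectivity of $l^n(\tOcal_X(i))$ in the abelian category $\Mod^n(X,F^*)$ is a statement about $\Hom$, not about $\otimes_{X|n}$; the familiar ``projective implies flat'' for modules over a ring rests on free modules being sums of the monoidal unit, and $l^n(\tOcal_X(i))$ for $i>0$ is not of that form here. (One can already exhibit a short exact sequence in $\Mod^2$ with non-filtered cokernel on which $l^2((-)(1))$ loses left exactness; your overclaim would forbid this.) The forward direction is fine once the claim is restricted to $\Filt^n(X,F^*)$, since the sequence you feed to $\tE$ lives there.

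The genuine gap is in the backward direction: you argue globally, citing Lemma \ref{lem: derived E(-otimes-) = E(-)otimes tE(-)} and resolving $\Ncal$ by flat filtered modules at the level of the scheme, but that lemma is stated and proved only over $\Mod^n(R,F^*)$ for a filtered ring (it is phrased via h-projective resolutions, which are unavailable globally), and flat filtered resolutions are not available in general without quasi-compactness hypotheses that the proposition does not impose. The paper circumvents this with a stalkwise reduction that your proof omits: flatness of $E(\Mcal)$, exactness of $\Mcal\otimes_{X|n}-$, and the identification of Lemma \ref{lem: E(-otimes-) = E(-)otimes tE(-)} are all local and compatible with passage to stalks, $E$ commutes with taking stalks, and skyscraper extensions of filtered modules remain filtered. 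After inserting that reduction, your remaining argument --- resolving $\Ncal$ by flat filtered modules over the ring, deducing $\mathrm{Tor}_i^{\Acal}(E(\Mcal),E'(\Ncal))=0$ for $i\geq 1$ via the derived comparison, and dimension-shifting through the injective embedding using Lemma \ref{lem: Im E' closed under coker and contains injective} --- coincides with the paper's and is correct.
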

		\begin{proof}
			As $l^n$ is exact on filtered modules, we observe that $\tE(-)$ is exact on filtered modules.
			The only if direction thus follows from Lemma \ref{lem: E(-otimes-) = E(-)otimes tE(-)} by noting that $E$ is an equivalence and hence reflects exactness.
			
			For the other direction, we reduce to the usual affine setting, as everything in the statement can be checked stalkwise ($E$ commutes with taking stalks and skyscraper sheaves preserve filtered objects).
			We omit the calligraphic font.
			So, suppose $M\otimes_{R|n}-$ is exact on $\Filt^n(R,F^*)$.	
			Let $N$ be an arbitrary left $A$-module and consider a short exact sequence 
			\[
			0\to N\to I\to K\to 0
			\]
			with $I$ injective.
			By Lemma \ref{lem: Im E' closed under coker and contains injective} this short exact sequence is of the form 
			\[
			0\to N\to E'(N_1)\to E'(N_2)\to 0
			\]
			with $N_i$ in $\Filt^n(R,F^*)$.
			Hence, $\mathop\mathrm{Tor}^1_\Acal(E(M),N)=0$ by Lemma \ref{lem: derived E(-otimes-) = E(-)otimes tE(-)}.
			As $N$ was arbitrary, this implies that $E(M)$ is flat. 
		\end{proof}
		
		\paragraph{\textit{H-flat complexes}}
		Just like for flat modules we define h-flat complexes as those that are h-flat after applying $E$.
		Unfortunately we cannot give an intrinsic definition for these complexes using $\otimes_{X|n}$, but we give one for h-flat complexes with flat components.
		
		\begin{definition}
			A complex $\Mcal^\bullet$ over $\Mod^n(X,F^*)$ is called \emph{h-flat} if $E(\Mcal^\bullet)$ is h-flat over $\Mod(X,\Acal)$.
		\end{definition}
		
		\begin{proposition}\label{prop: h-flat intrinsic}
			A complex $\Mcal^\bullet$ with flat components is h-flat if and only if $\Mcal^{\bullet}\otimes_{X|n}\Ncal^{\bullet}$ is acyclic for any acyclic complex $\Ncal^\bullet$ over $\Filt^n(X,F^*)$.
		\end{proposition}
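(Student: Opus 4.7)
Both directions pass through the Morita equivalence $E:\Mod^n(X,F^*)\isoto\Mod(X,\Acal)$ of Proposition~\ref{prop: Auslander} and the two identifications from Lemmas~\ref{lem: E(-otimes-) = E(-)otimes tE(-)} and~\ref{lem: gr0(-otimes-) = E(-)otimes E'(-)}, namely $E(\Mcal)\otimes_{\Acal}\tE(\Ncal)\cong E(\Mcal\otimes_{X|n}\Ncal)$ and $E(\Mcal)\otimes_{\Acal}E'(\Ncal)\cong \gr_0(\Mcal\otimes_{X|n}\Ncal)$. Exactness and reflection of exactness for $E$ will be used throughout to shuttle acyclicity between the filtered side and the Auslander side.

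For the forward direction, assume $E(\Mcal^\bullet)$ is h-flat and let $\Ncal^\bullet$ be acyclic in $\Filt^n(X,F^*)$. Since $E(l^n(\tOcal_X(i)))=e_i\Acal$ is a projective summand of $\Acal$, every summand of $\Pcal$ is flat, so Lemma~\ref{lem: Lazard} makes $\Pcal\otimes_{X|n}-$ exact on filtered modules; hence $\Pcal\otimes_{X|n}\Ncal^\bullet$ and thus $\tE(\Ncal^\bullet)=E(\Pcal\otimes_{X|n}\Ncal^\bullet)$ are acyclic. H-flatness then yields acyclicity of $E(\Mcal^\bullet)\otimes_{\Acal}\tE(\Ncal^\bullet)\cong E(\Mcal^\bullet\otimes_{X|n}\Ncal^\bullet)$, and reflecting through $E$ gives the acyclicity of $\Mcal^\bullet\otimes_{X|n}\Ncal^\bullet$.

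For the reverse direction, the hypothesis on $\Mcal^\bullet$ forces $E(\Mcal^\bullet)$ to have flat components, so it remains to show that $E(\Mcal^\bullet)\otimes_{\Acal}\Ascr^\bullet$ is acyclic for every acyclic complex $\Ascr^\bullet$ of left $\Acal$-modules. The base case is $\Ascr^\bullet = E'(\Kcal^\bullet)$ for some acyclic $\Kcal^\bullet$ in $\Filt^n$: the second identification above together with exactness of $\gr_0$ reduces this directly to the assumption. To reduce the general case to the base case, I exploit that $\Mod(X,\Acal^{op})$ is Grothendieck with enough injectives and that, by Lemma~\ref{lem: Im E' closed under coker and contains injective}, all injectives lie in the essential image of $E'$, which is moreover closed under quotients (and under direct sums, by naturality of the morphisms $s_i$). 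A Cartan--Eilenberg style construction should then embed $\Ascr^\bullet$ in a short exact sequence $0\to\Ascr^\bullet\to\mathcal{I}^\bullet\to C^\bullet\to 0$ with $\mathcal{I}^\bullet$ and $C^\bullet$ both acyclic and termwise in the essential image of $E'$; Lemma~\ref{lem: E' on filt} (full faithfulness, exactness and reflection of exactness of $E'$ on $\Filt^n$) lifts $\mathcal{I}^\bullet$ and $C^\bullet$ to $E'$ of acyclic complexes in $\Filt^n$, so the base case applies to both. Tensoring the short exact sequence with the componentwise flat complex $E(\Mcal^\bullet)$ preserves termwise exactness, and the long exact sequence in cohomology finishes the argument.

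The main obstacle will be producing the short exact sequence $0\to\Ascr^\bullet\to\mathcal{I}^\bullet\to C^\bullet\to 0$ with both ambient complexes acyclic and with components in the essential image of $E'$. A naive Cartan--Eilenberg resolution gives termwise injective components but needs care both to keep the totalisation acyclic and to control its components inside $\operatorname{Im}E'$; the key input is that acyclic h-injective complexes in a Grothendieck category are contractible, combined with the closure properties of $\operatorname{Im}E'$ above. Once this dévissage is in place, everything else is a routine unpacking of the two tensor-product identifications from the Auslander formalism.
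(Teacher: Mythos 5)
Your forward direction matches the paper's. The reverse direction has the right skeleton — the base case $\Ascr^\bullet = E'(\Kcal^\bullet)$ via $\gr_0(\Mcal\otimes_{X|n}\Ncal)\cong E(\Mcal)\otimes_\Acal E'(\Ncal)$, the reduction of a general acyclic $\Ascr^\bullet$ to that base case by a short exact sequence into complexes in $\operatorname{Im}E'$, and the closure/lifting properties of $E'$ — but you explicitly flag the construction of that short exact sequence as an unresolved ``main obstacle,'' and this is exactly where your proposal is incomplete.

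The obstacle you perceive is already dissolved by a lemma you have in hand, namely Lemma~\ref{lem: h-inj exist}: every complex admits an injective quasi-isomorphism into an h-injective complex with injective components. Apply it directly to $\Ascr^\bullet$ (after reducing to the affine setting, a step your proposal omits, though it is what the paper does so that everything can be checked at stalks). The resulting monomorphism $\Ascr^\bullet \hookrightarrow \mathcal I^\bullet$ is a quasi-isomorphism, so since $\Ascr^\bullet$ is acyclic, $\mathcal I^\bullet$ is acyclic; you do not even need the contractibility of acyclic h-injective complexes, just their acyclicity. Its cokernel $C^\bullet$ is then acyclic too. The components of $\mathcal I^\bullet$ are injective, hence in $\operatorname{Im}E'$ by Lemma~\ref{lem: Im E' closed under coker and contains injective}, and the components of $C^\bullet$ are quotients of those, hence also in $\operatorname{Im}E'$. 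Full faithfulness and reflection of exactness of $E'$ on filtered modules (Lemma~\ref{lem: E' on filt}) then lift $\mathcal I^\bullet$ and $C^\bullet$ to $E'$ of acyclic complexes over $\Filt^n$. No Cartan--Eilenberg resolution or totalisation is needed: that machinery would at best reprove, with extra bookkeeping, what the functorial embedding already gives in a single step. With that point made, the rest of your argument (termwise-flat $E(\Mcal^\bullet)$ preserves the short exact sequence of complexes, and the long exact sequence closes the loop) coincides with the paper.
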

		\begin{proof}
			The only if direction follows again from Lemma \ref{lem: E(-otimes-) = E(-)otimes tE(-)}.
			
			For the other direction, we reduce to the usual affine setting, omitting the calligraphic font, as everything can again be checked at stalks.
			So, let $N^\bullet$ be an acyclic complex of left $A$-modules. 
			We have to show that $E(M^\bullet)\otimes_A N^\bullet$ is acyclic.
			By Lemma \ref{lem: h-inj exist} there exists a short exact sequence
			\[
			0\to N^\bullet \to I^\bullet \to K^\bullet\to 0,
			\]
			where every component of $I^\bullet $ is injective.
			Using Lemma \ref{lem: Im E' closed under coker and contains injective} and the fact that $E'$ is fully faithful and reflects exactness on filtered modules by Lemma \ref{lem: E' on filt}, this short exact sequence is of the form
			\[
			0\to N^\bullet\to E'(N_1^\bullet)\to E'(N_2^\bullet)\to 0
			\]
			with $N_i^\bullet$ acyclic complexes over $\Filt^N(R,F^*)$.
			As the components $E(M^i)$ are flat by assumption, this induces a short exact sequence of complexes
			\[
			0\to E(M^\bullet)\otimes_A N^\bullet\to E(M^\bullet)\otimes_A E'(N_1^\bullet)\to E(M^\bullet)\otimes_A E'(N_2^\bullet)\to 0.
			\]
			The claim now follows from Lemma \ref{lem: gr0(-otimes-) = E(-)otimes E'(-)} and the long exact sequence of cohomology.
		\end{proof}
		\begin{remark}
			Note that having flat components is only needed for the if direction.
		\end{remark}
		
		Because of the previous proposition, we will always assume that our h-flat complexes have flat components.
		We can make this assumption at no cost, as the usual construction \cite[Proposition 5.6]{Spaltenstein} of h-flat complexes automatically gives h-flat complexes with flat components.
		\begin{lemma}\label{lem: h-flat exist}
			Let $(X,F^*)$ be quasi-compact separated $n$-filtered scheme.
			The category $\QCoh^n(X,F^*)$ has enough flat modules, h-flat complexes and h-flat complexes with flat components.
		\end{lemma}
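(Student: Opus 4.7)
The plan is to reduce the claim, via the equivalence of Proposition~\ref{prop: Auslander}, to the analogous statement for $\QCoh(X,\Acal)$, where $\Acal := \Acal_{F^*}$ is the associated Auslander algebra on the quasi-compact separated scheme $X$. By definition, an object or complex in $\QCoh^n(X,F^*)$ is flat (respectively h-flat) if and only if its image under $E$ has the same property, so this reduction is lossless.

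For enough flat modules, I would follow the standard argument for quasi-coherent sheaves on a quasi-compact separated scheme. Cover $X$ by finitely many affine opens $j_\alpha \colon U_\alpha \hookrightarrow X$; separatedness ensures each $j_\alpha$ is affine, so $j_{\alpha,*}$ is exact, and combined with the exactness of $j_\alpha^*$ this implies that $j_{\alpha,*}$ preserves flatness. On each $U_\alpha$, every quasi-coherent $\Acal|_{U_\alpha}$-module is a quotient of a free $\Acal|_{U_\alpha}$-module. Assembling sections via the adjunction units yields, for any quasi-coherent $\Acal$-module $\Mcal$, a surjection onto $\Mcal$ from a direct sum of pushforwards of free $\Acal|_{U_\alpha}$-modules, which is flat by the previous sentence.

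For h-flat complexes with flat components, the plan is to run Spaltenstein's construction from \cite[Proposition~5.6]{Spaltenstein}. Given a complex $\Mcal^\bullet$, express it as the filtered colimit of its smart truncations $\tau_{\leq m}\Mcal^\bullet$, and inductively produce compatible surjective quasi-isomorphisms $P_m^\bullet \twoheadrightarrow \tau_{\leq m}\Mcal^\bullet$ from bounded-above complexes of flat quasi-coherent $\Acal$-modules (using the previous step together with the standard degree-by-degree construction of bounded-above flat resolutions). Setting $P^\bullet := \varinjlim_m P_m^\bullet$ yields a complex with flat components admitting a quasi-isomorphism onto $\Mcal^\bullet$. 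H-flatness of $P^\bullet$ follows since $\otimes_\Acal$ commutes with filtered colimits and filtered colimits of acyclics are acyclic in the Grothendieck category $\QCoh(X,\Acal)$ (cf.\ Lemma~\ref{lem: h-inj exist}), so h-flatness passes through filtered colimits of bounded-above complexes of flats. The main technical point is verifying that Spaltenstein's machine goes through in this sheafy, noncommutative setting, but all the needed ingredients --- Grothendieck-ness, exact filtered colimits, and compatibility of $\otimes_\Acal$ with colimits --- are in place. The bare h-flat statement is then immediate.
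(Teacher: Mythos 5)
Your strategy --- reduce via Proposition~\ref{prop: Auslander} to the Auslander-algebra side and argue there --- is exactly the paper's first approach: the paper simply cites \cite[Lemma~5.6]{KuznetsovLunts} for the $\Acal$-space statement (and offers reducing to the affine case as in \cite[Proposition~1.1]{AlonsoJeremiasLipman} together with \cite[Theorem~3.4]{Spaltenstein} as an alternative), whereas you try to spell the argument out. Your sketch of the Spaltenstein step is sound in outline: once one has enough quasi-coherent flats, exact filtered colimits, and a tensor product commuting with colimits, the production of h-flat resolutions with flat components proceeds as you describe.

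The gap is in the construction of enough flats. It is indeed true that each $j_\alpha$ is affine (by separatedness), that $j_{\alpha,*}$ preserves quasi-coherence and flatness of $\Acal$-modules (check it affine-locally using transitivity of flatness along the flat inclusion of rings), and that free $\Acal|_{U_\alpha}$-modules surject onto $\Mcal|_{U_\alpha}$. But ``assembling sections via the adjunction units'' does not produce a surjection $\bigoplus_\alpha j_{\alpha,*}\Fcal_\alpha \twoheadrightarrow \Mcal$: the unit of $j_\alpha^*\dashv j_{\alpha,*}$ is a morphism $\Mcal\to j_{\alpha,*}j_\alpha^*\Mcal$, pointing the \emph{wrong} way, and in general there is no nonzero morphism $j_{\alpha,*}\Fcal_\alpha\to\Mcal$ at all. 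Already for $\Acal=\Ocal_X$, $X=\mathbb{P}^1$, $U=\mathbb{A}^1$ and $\Fcal=\Ocal_U$ one has $\Hom_{\Ocal_X}(j_*\Ocal_U,\Ocal_X)=0$, since a nonzero map would be multiplication by a polynomial over $U$ and would fail to land in the structure sheaf over the other chart. The adjoint pointing the right way is the extension-by-zero $j_{\alpha,!}$, but that leaves the quasi-coherent world. The actual argument for enough flat quasi-coherent modules on a quasi-compact semi-separated scheme (and likewise over a quasi-coherent algebra $\Acal$) is genuinely more delicate than taking pushforwards of local free covers; this is precisely what the references the paper invokes handle. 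At this step you should either reproduce one of their arguments in detail or, as the paper does, simply cite them.
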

		\begin{proof}
			This follows immediately from Proposition \ref{prop: Auslander} and the corresponding statement on the $\Acal$-space side \cite[Lemma 5.6]{KuznetsovLunts}.
			
			Alternatively, showing that $\QCoh^n(X,F^*)$ has enough flat modules can be done by reducing to the affine situation following the argument of the proof in the non-filtered case, see e.g.\ the proof of \cite[Proposition 1.1]{AlonsoJeremiasLipman}.
			The existence of enough h-flat complexes and h-flat complexes with flat components then follows from this, see e.g.\ \cite[Theorem 3.4]{Spaltenstein}.
		\end{proof}
		\begin{remark}
			For the existence of enough quasi-coherent flat modules, one needs at least quasi-compactness with affine diagonal in order for the usual proof for schemes to go through,	as roughly it goes by showing the existence of enough flat modules by patching together over a finite affine cover (whose finite intersections remain affine).
		\end{remark}
		
		\paragraph{\textit{Derived pullback}}
		
		Let $(f,d):(X,{}_{dn}F^*)\to(Y,{}_{n}G^*)$ be a generalised morphism of a filtered scheme. 
		The following shows that the left derived pullback exists and can be computed using h-flat complexes (which by our convention have flat components).
		
		\begin{proposition}\label{prop: pullback preserve hflat}
			The pullback $(f,d)^*$ preserves flat modules, h-flat complexes and acyclic h-flat complexes.
		\end{proposition}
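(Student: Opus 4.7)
The approach is to factor $(f,d)$ as an ordinary morphism followed by a refinement and handle each primitive case separately. Explicitly, $(f,d) = (\id,d) \circ (f,1)$ with $(f,1):(X,{}_{dn}F^{*})\to(Y,{}_{dn}G'^{*})$ where $G'^{i}\Ocal_{Y}:=G^{\lfloor i/d\rfloor}\Ocal_{Y}$. Since pullbacks compose (Lemma~\ref{lem: pull/push}), it suffices to verify the three preservation properties for $(f,1)^{*}$ and $(\id,d)^{*}$ individually.

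For the ordinary morphism $(f,1)$, the compatibility square \eqref{eq: pull/push compat Auslander} combined with Proposition~\ref{prop: Auslander} identifies $(f,1)^{*}$, via $E$, with the usual pullback of quasi-coherent sheaves along the induced morphism $f:(X,\Acal_X)\to(Y,\Acal_Y)$ of $\Acal$-spaces. Since flatness and h-flatness in the filtered setting are defined through $E$, the three preservation claims transfer to the corresponding (standard) statements for pullbacks of quasi-coherent sheaves over a sheaf of algebras. These follow from the fact that such pullbacks are tensor products with a flat bimodule: preservation of flat modules is immediate, the Spaltenstein-type argument of \cite[Proposition 5.6]{Spaltenstein} yields preservation of h-flatness with flat components, and tensoring an acyclic h-flat complex with a flat module remains acyclic.

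For the refinement $(\id,d)$, inspecting the diagram \eqref{eq: def pullpush gen morph} with $f=\id$ gives $(\id,d)^{*}=l^{dn}\circ\epsilon$. A short check shows that $\epsilon$ already lands in $\Mod^{dn}(Y,G'^{*})$: since $\epsilon(\Mcal)^{i}=\Mcal^{\lfloor i/d\rfloor}$ vanishes once $\lfloor i/d\rfloor\leq -n$, and the required $t$-periodicity in nonnegative degrees holds by construction, $l^{dn}$ is the identity on the essential image of $\epsilon$, so $(\id,d)^{*}=\epsilon$. The functor $\epsilon$ is a left adjoint (to the Veronese $(-)^{(d)}$), so commutes with colimits; it is exact, being mere relabelling of graded pieces; and it commutes with stalks. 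By Lemma~\ref{lem: Lazard} every flat module is, stalkwise, a filtered colimit of direct sums of the $l^{n}(\tOcal_{Y,y}^{(G)}(i))$, and a direct calculation using $G'^{j}\Ocal_{Y,y}=G^{\lfloor j/d\rfloor}\Ocal_{Y,y}$ gives
\[
\epsilon\bigl(l^{n}(\tOcal_{Y,y}^{(G)}(i))\bigr)\cong l^{dn}(\tOcal_{Y,y}^{(G')}(di)),
\]
which is flat. This settles preservation of flat modules, and exactness of $\epsilon$ then gives preservation of acyclicity for free.

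The one remaining, and subtlest, point is preservation of h-flatness by $\epsilon$. Using Proposition~\ref{prop: h-flat intrinsic}, it suffices to check that for every acyclic $\Ncal^{\bullet}$ in $\Filt^{dn}(Y,G'^{*})$ the complex $\epsilon(\Mcal^{\bullet})\otimes_{Y|dn}\Ncal^{\bullet}$ is acyclic. I would exploit the fact that the $d$-Veronese $(\Ncal^{\bullet})^{(d)}$ is acyclic in $\Filt^{n}(Y,G^{*})$ and combine the adjunction $\epsilon\dashv(-)^{(d)}$ with the tensor--truncation compatibility of Lemma~\ref{lem: truncating tensor} to identify, for each $0\leq k<dn$, the graded piece $\gr_{-k}(\epsilon(\Mcal^{\bullet})\otimes_{Y|dn}\Ncal^{\bullet})$ with an expression of the form $\gr_{-k'}(\Mcal^{\bullet}\otimes_{Y|n}(-))$ applied to an acyclic filtered complex built from $\Ncal^{\bullet}$ (for suitable $k'$). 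Acyclicity of $\Mcal^{\bullet}$ against such tensor factors then concludes. I expect this identification---the projection-type formula relating tensor products across a refinement---to be the main obstacle; the cleanest route is likely to verify it first on the generators $l^{n}(\tOcal_{Y}(i))$ and extend by filtered colimits, using that $\epsilon$ commutes with colimits and preserves flat modules.
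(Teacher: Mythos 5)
Your proposal takes essentially the same route as the paper: factor $(f,d)^{*}$ through $\epsilon$ and the ordinary-morphism part, handle flat modules and acyclic complexes for $\epsilon$ directly (using Lemma \ref{lem: Lazard} and exactness), and identify a projection-type formula across the refinement as the crux of h-flatness preservation. The paper's actual decomposition is $(f,d)^{*}\Mcal = f^{-1}(\epsilon(\Mcal))\otimes_{f^{-1}\tOcal_{(Y,G'^{*})}|dn}\tOcal_{(X,F^{*})}$, which is exactly your $(f,1)^{*}\circ(\id,d)^{*}$, so the decomposition matches; the paper dispatches the tensor part via (a slight generalisation of) Lemma \ref{lem: E(-otimes-) = E(-)otimes tE(-)}, which is equivalent to your passage to the $\Acal$-space side. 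Two remarks, one minor, one more substantive.

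Minor: your claim that the $\Acal$-space pullback is ``tensor product with a flat bimodule'' is not correct — $\Acal_{X}$ need not be flat over $f^{-1}\Acal_{Y}$ (think of a closed immersion). What is true, and what the paper's Lemma \ref{lem: E(-otimes-) = E(-)otimes tE(-)} packages, is the standard change-of-ring fact: if $M$ is flat (or h-flat) over $A$ and $A\to B$ is any ring map, then $(M\otimes_{A}B)\otimes_{B}N \cong M\otimes_{A}N$ shows that $M\otimes_{A}B$ is flat (or h-flat) over $B$. The conclusion you want holds, just not for the stated reason.

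More substantive: your sketch of h-flatness for $\epsilon$ correctly isolates the projection-type formula
\[
\Mcal^{\bullet}\otimes_{(Y,G^{*})|n}(\Ncal^{\bullet})^{(d)}\;\cong\;\bigl(\epsilon(\Mcal^{\bullet})\otimes_{(Y,G'^{*})|dn}\Ncal^{\bullet}\bigr)^{(d)}
\]
(the paper's \eqref{eq: proj formula epsi-ver}) as the key, but it stops short of explaining how to get from this to full acyclicity of $\Kcal^{\bullet}:=\epsilon(\Mcal^{\bullet})\otimes_{(Y,G'^{*})|dn}\Ncal^{\bullet}$. The formula (together with its shifted versions $l^{dn}(\Kcal^{\bullet}(i))^{(d)}$ for $i\geq 0$) only controls the graded pieces $\gr_{-dk}$ and their ``relative'' cousins; to conclude that \emph{every} graded piece $\gr_{-k}$ for $0\leq k<dn$ is acyclic one needs the observation encapsulated in Lemma \ref{lem: ver + shift reflect exact on filt}, namely that for a complex with \emph{filtered} components, knowing $l^{dn}(\Mcal^{\bullet}(i))^{(d)}$ is acyclic for all $i\geq 0$ forces acyclicity. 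This is where the hypothesis that $\Mcal^{\bullet}$ has flat components becomes genuinely load-bearing (it guarantees $\Kcal^{\bullet}$ has filtered components via Lemma \ref{lem: Lazard}), and it is the one ingredient your outline does not surface. Your phrase ``Acyclicity of $\Mcal^{\bullet}$ against such tensor factors then concludes'' is too quick; without this last lemma the argument would only give acyclicity of the $d$-Veronese of $\Kcal^{\bullet}$, not of $\Kcal^{\bullet}$ itself.
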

		\begin{proof}
			Define a $dn$-filtered scheme $(Y,G'^{*})$ via $G'^{i}\Ocal_Y:=G^{\lfloor i/d\rfloor}\Ocal_Y$ and let 
			\begin{align*}
				\epsilon:\Mod^n(Y,G^*)&\to\Mod^{dn}(Y,G'^*),	\\
				\oplus_{i}\Mcal^i&\mapsto \oplus_{i}\Mcal^{\lfloor i/d \rfloor}		\rlap{ .}
			\end{align*}
			By definition the pullback 
			\[
			(f,d)^*\Mcal =  f^{-1}(\epsilon(\Mcal)) \otimes_{f^{-1}(\tOcal_{(Y,G'^*)})|dn} \tOcal_{(X,F^*)} .
			\]
			Using (a slight generalisation of) Lemma \ref{lem: E(-otimes-) = E(-)otimes tE(-)} it follows that the `tensor part' preserves flat modules and (acyclic) h-flat complexes.
			Therefore it suffices to show that $\epsilon$ preserves flat modules and h-flat complexes (it preserves acyclic complexes as it is exact).
			
			The fact that $\epsilon$ preserves flat modules follows from Lemma \ref{lem: Lazard} since 
			\[
			\epsilon( l^n(\tOcal_{(Y,G^*)}(i)) ) = l^{dn}(\tOcal_{(Y,G'^*)}(di))
			\]
			is flat.
			
			To show that $\epsilon$ preserves h-flat complexes, we will first show that the morphism 
			\begin{equation}\label{eq: proj formula epsi-ver}
				\Mcal^\bullet \otimes_{(Y,G^*)|n} (\Ncal^\bullet)^{(d)} \to ( \epsilon( \Mcal^\bullet ) \otimes_{(Y,G'^*)|dn} \Ncal^\bullet )^{(d)} 
			\end{equation}
			induced by $(-)^{(d)}\circ \epsilon = 1$ and lax monoidality of $(-)^{(d)}$ is an isomorphism\footnote{This is formally somewhat similar to the projection formula.}.
			First of all note that, by definition of the tensor product of complexes, it suffices to show the morphism \eqref{eq: proj formula epsi-ver} is an isomorphism for $\Mcal^\bullet=\Mcal$ and $\Ncal^\bullet=\Ncal$ being objects concentrated in degree zero.
			Moreover, we can reduce, as usual, to the affine case with $\Mcal=l^{n}(\tOcal_{Y,G}(i))$ and $0\leq i <n$.
			The required isomorphism then reduces to the following string of isomorphisms:
			\begin{align*}
				l^{n}(\tOcal_{(Y,G^*)}(i)) \otimes_{(Y,G^*)|n} \Ncal^{(d)} &\cong l^{n}( \Ncal^{(d)}(i) ) \\
				&= l^{dn}( \Ncal(di) )^{(d)} \\
				&\cong ( l^{dn}(\tOcal_{(Y,G'^*)}(di))  \otimes_{(Y,G'^*)|dn} \Ncal )^{(d)} \\
				&= ( \epsilon( l^{n}(\tOcal_{(Y,G^*)}(i)) ) \otimes_{(Y,G'^*)|dn} \Ncal )^{(d)}.
			\end{align*}
			
			Now, let $\Mcal^\bullet$ be an h-flat complex with flat components over $\Mod^n(Y,G^*)$.
			We have to show that 
			\[
			\Kcal^\bullet:=\epsilon( \Mcal^\bullet ) \otimes_{(Y,G'^*)|dn} \Ncal^\bullet
			\]
			is acyclic for every acyclic complex $\Ncal^\bullet$ over $\Filt^n(X,F^*)$.
			By shifting and using the isomorphism \eqref{eq: proj formula epsi-ver} we see that $l^{dn}(\Kcal^\bullet (i))^{(d)}$ is acyclic for all $i\geq 0$.
			As $\Mcal^\bullet$ has flat components, and we already know that $\epsilon$ preserves flatness, by Lemma \ref{lem: Lazard} we have that $\Kcal^\bullet$ has filtered components.
			Hence, it follows by Lemma \ref{lem: ver + shift reflect exact on filt} below that $\Kcal^\bullet$ is acyclic.		
		\end{proof}
		\begin{lemma}\label{lem: ver + shift reflect exact on filt}
			With notation as in the proof of the above proposition. 
			Let $\Mcal^\bullet$ be a complex over $\Filt^{dn}(Y,G'^*)$ and suppose $l^{dn}(\Mcal^\bullet(i))^{(d)}$ is acyclic for all $i\geq 0$.
			Then, $\Mcal^\bullet$ is acyclic.
		\end{lemma}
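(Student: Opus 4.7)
The plan is to show that each graded piece of $\Mcal^\bullet$, viewed as a complex of $\Ocal_Y$-modules, is acyclic. Write $\Mcal^\bullet_q$ for the $q$-th graded piece; then $\Mcal^\bullet$ is acyclic if and only if $\Mcal^\bullet_q$ is acyclic for every $q\in\ZZ$. Since $\Mcal^\bullet$ takes values in $\Mod^{dn}(Y,G'^*)$, we have $\Mcal^\bullet_q=0$ for $q\leq -dn$, and multiplication by $t$ identifies $\Mcal^\bullet_q$ with $\Mcal^\bullet_{q+1}$ for every $q\geq 0$. Hence it suffices to establish acyclicity of $\Mcal^\bullet_q$ in the range $-dn<q\leq 0$.

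The key step is to unpack the hypothesis. Fix $i\geq 0$ and $j'>-n$; by the explicit formula for the left truncation in Lemma \ref{lem: adjoints Gr^N in Gr^infty}, the $j'$-th graded piece of $l^{dn}(\Mcal^\bullet(i))^{(d)}$ is the cokernel of multiplication by $t^{dn+dj'}$ from $\Mcal^\bullet_{i-dn}$ to $\Mcal^\bullet_{i+dj'}$. Because the components of $\Mcal^\bullet$ lie in $\Filt^{dn}(Y,G'^*)$, multiplication by $t$ is injective on them, and so this quotient description lifts to a genuine short exact sequence of complexes of $\Ocal_Y$-modules
\[
0 \to \Mcal^\bullet_{i-dn} \xrightarrow{\;t^{dn+dj'}\;} \Mcal^\bullet_{i+dj'} \to Q^\bullet_{i,j'} \to 0,
\]
in which $Q^\bullet_{i,j'}$ is acyclic by assumption. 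The long exact sequence of cohomology then forces multiplication by $t^{dn+dj'}$ to induce a quasi-isomorphism $\Mcal^\bullet_{i-dn}\isoto \Mcal^\bullet_{i+dj'}$ for every admissible pair $(i,j')$.

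To conclude, first take $i=j'=0$: the source $\Mcal^\bullet_{-dn}$ vanishes, so $\Mcal^\bullet_0$ is acyclic. Now for $-dn<q<0$, apply the preceding step with $i=q+dn\geq 1$ and $j'=0$; this produces a quasi-isomorphism $\Mcal^\bullet_q\isoto \Mcal^\bullet_{q+dn}$ via $t^{dn}$. Since $q+dn\geq 1$ and $\Mcal^\bullet\in\Mod^{dn}$, the target identifies with $\Mcal^\bullet_0$, which was just shown to be acyclic. Hence $\Mcal^\bullet_q$ is acyclic throughout the required range, and the lemma follows. The one mildly delicate point is the promotion of the cokernel description to a short exact sequence: this is exactly where the filtered (as opposed to merely graded) assumption on the components of $\Mcal^\bullet$ enters, and without it the long exact sequence argument would only yield a surjection on cohomology rather than an isomorphism.
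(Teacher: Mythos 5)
Your proof is correct and follows essentially the same route as the paper's: reduce to showing each graded piece $\Mcal^\bullet_q$ for $-dn<q\leq 0$ is acyclic, identify the degree-zero graded piece of $l^{dn}(\Mcal^\bullet(q+dn))^{(d)}$ with the quotient of $\Mcal^\bullet_0$ by (the $t^{dn}$-image of) $\Mcal^\bullet_q$ — using the $\Filt^{dn}$ hypothesis to promote the cokernel description to a short exact sequence — and conclude with the long exact sequence. The only cosmetic difference is that the paper obtains the acyclicity of $\gr_0\Mcal^\bullet$ by direct identification with $\gr_0 l^{dn}(\Mcal^\bullet(0))^{(d)}$, whereas you extract it from the $i=j'=0$ instance of your quasi-isomorphism; and you phrase the final step as "quasi-isomorphic to an acyclic complex" rather than "third term of a triangle with two acyclic terms," but these are the same argument.
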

		\begin{proof}
			We have to show that $\gr_j\Mcal^\bullet$ is acyclic for all $-dn< j \leq 0$.
			This is clear for $j=0$ as 
			\[
			\gr_0\Mcal^\bullet = \gr_0l^{dn}(\Mcal^\bullet(0))^{(d)} .
			\]
			For $j<0$ note that 
			\[
			\gr_0l^{dn}(\Mcal^\bullet(j+dn))^{(d)} = \gr_0\Mcal^\bullet/\gr_j\Mcal^\bullet.
			\]
			Hence, the acyclicity of $\gr_j\Mcal^\bullet$ follows by the long exact sequence of cohomology obtained from the short exact sequence
			\[
			0 \to \gr_j\Mcal^\bullet \to \gr_0\Mcal^\bullet \to \gr_0\Mcal^\bullet/\gr_j\Mcal^\bullet \to 0. \qedhere
			\]
		\end{proof}
		\begin{remark}
			It is somewhat unfortunate that we had to use Proposition \ref{prop: h-flat intrinsic}, and thus as a result need to assume our h-flat complexes have flat components.
			The main reason for this is that taking refinements is not very natural on the $\Acal$-space side.
			So, using an intrinsic characterisation seems necessary to show that $\epsilon$ preserves h-flats.
		\end{remark}
		
		\subsubsection{Induced derived adjunction}
		As a direct result of the above we have the following proposition.
		\begin{proposition}
			Any generalised morphism of quasi-compact separated filtered schemes $(f,d):(X,{}_{dn}F^*)\rightsquigarrow(Y,{}_{n}G^*)$ induces a derived pullback/pushforward-like adjunction
			\[
			\begin{tikzcd}[sep=2.5em]
				{\bD(X,F^*)} \\ {\bD(Y,G^*)}\rlap{ .}
				\arrow[""{name=0, anchor=center, inner sep=0}, "{\bR(f,d)_{*}}", bend left=45, from=1-1, to=2-1]
				\arrow[""{name=1, anchor=center, inner sep=0}, "{\bL(f,d)^{*}}", bend left=45, from=2-1, to=1-1]
				\arrow["\dashv"{anchor=center}, draw=none, from=1, to=0]
			\end{tikzcd}
			\]
			Moreover, these compose well: if $n=em$ and $(g,e):(Y,{}_{em}G^*)\rightsquigarrow(Z,{}_{m}H^*)$ is an $e$-morphism of quasi-compact separated filtered schemes, then $\bL(f,d)^{*}\circ\bL(g,e)^{*} \cong \bL(g\circ f,de)^{*}$ and  $\bR(g,e)_{*}\circ\bR(f,d)_{*} \cong \bR(g\circ f,de)_{*}$.
		\end{proposition}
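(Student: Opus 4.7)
The plan is to bootstrap the underived adjunction from Lemma \ref{lem: pull/push} to the derived level using the resolutions produced by Lemmas \ref{lem: h-inj exist} and \ref{lem: h-flat exist}, together with Proposition \ref{prop: pullback preserve hflat}.

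First I would define $\bR(f,d)_*$ by applying $(f,d)_*$ term-wise to an h-injective resolution. Since $\QCoh^n(X,F^*)$ is Grothendieck by Lemma \ref{lem: h-inj exist}, every complex admits a functorial quasi-isomorphism to an h-injective complex, and h-injective complexes are adapted to any additive functor, so this gives a well-defined exact functor $\bR(f,d)_*:\bD(X,F^*)\to \bD(Y,G^*)$. Dually, I would define $\bL(f,d)^*$ by applying $(f,d)^*$ term-wise to an h-flat resolution (with flat components), which exists by Lemma \ref{lem: h-flat exist}. Proposition \ref{prop: pullback preserve hflat} tells us that $(f,d)^*$ preserves both h-flat complexes and acyclic h-flat complexes, which is exactly the condition needed for h-flats to be adapted to $(f,d)^*$. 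Hence $\bL(f,d)^*:\bD(Y,G^*)\to \bD(X,F^*)$ is well-defined.

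For the adjunction, I would use the standard abstract nonsense: given a quasi-isomorphism $P^\bullet\to M^\bullet$ with $P^\bullet$ h-flat and $N^\bullet\to I^\bullet$ with $I^\bullet$ h-injective, the underived adjunction of Lemma \ref{lem: pull/push} gives an isomorphism of hom-complexes
\[
\Hom^\bullet_{\Mod^{dn}(X,F^*)}((f,d)^*P^\bullet, I^\bullet) \cong \Hom^\bullet_{\Mod^{n}(Y,G^*)}(P^\bullet, (f,d)_* I^\bullet).
\]
Taking $H^0$ and using that $(f,d)^*P^\bullet$ computes $\bL(f,d)^*M^\bullet$, while $(f,d)_*I^\bullet$ computes $\bR(f,d)_*N^\bullet$, yields the derived adjunction. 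The only thing to check is that one can drop h-projective replacement on the right-hand term; this works because $I^\bullet$ being h-injective means hom out of any quasi-isomorphism is a quasi-isomorphism, and similarly on the left because $P^\bullet$ being h-flat makes $(f,d)^*P^\bullet$ well-behaved under quasi-isomorphisms into h-injectives.

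For the composition claim, I would start from the underived equalities $(g,e)_*\circ(f,d)_* = (g\circ f,de)_*$ and $(f,d)^*\circ(g,e)^* \cong (g\circ f,de)^*$ from Lemma \ref{lem: pull/push}. For pushforward, the composition passes to the derived level once we observe that $(f,d)_*$ sends h-injectives to $(g,e)_*$-acyclic objects: indeed, since $(g,e)_*$ is right adjoint to the exact functor $(g,e)^*$ (exact because it factors as the exact functors $\epsilon$ and $-\otimes_{(Y,G'^*)|en}\tOcal_{(Z,H^*)}$, wait, pullback need not be exact). More carefully, I would instead note that $(f,d)_*$ preserves h-injectivity, which follows from $(f,d)^*$ preserving acyclic complexes (by Proposition \ref{prop: pullback preserve hflat} applied to acyclic h-flats, after recalling acyclic complexes admit h-flat resolutions) combined with the adjunction. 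For the pullback composition, the isomorphism $\bL(f,d)^*\circ\bL(g,e)^*\cong \bL(g\circ f,de)^*$ follows formally from uniqueness of left adjoints once the pushforward composition is established.

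The main obstacle I expect is verifying that $\bR(f,d)_*$ actually preserves h-injectivity (or the equivalent statement that $(f,d)^*$ preserves acyclic complexes at the derived level), since the refinement factor $\epsilon$ in the decomposition of $(f,d)^*$ needs to interact cleanly with acyclicity. The machinery built in Lemma \ref{lem: ver + shift reflect exact on filt} and Proposition \ref{prop: pullback preserve hflat} should handle this, but care is needed to pick resolutions that are simultaneously h-flat and have flat components throughout the argument.
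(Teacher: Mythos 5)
Your treatment of existence of the derived functors and of the adjunction matches the paper's: h-injective and h-flat resolutions exist by Lemmas \ref{lem: h-inj exist} and \ref{lem: h-flat exist}, h-flats are adapted to the pullback by Proposition \ref{prop: pullback preserve hflat}, and the adjunction descends by comparing hom-complexes of resolutions (the paper cites the Stacks Project for this).

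However, your argument for the composition claims runs in the wrong direction and contains a genuine gap. You attempt to establish $\bR(g,e)_*\circ\bR(f,d)_*\cong\bR(g\circ f,de)_*$ first, by showing $(f,d)_*$ preserves h-injective complexes, and you claim this follows because ``$(f,d)^*$ preserves acyclic complexes.'' That is false: Proposition \ref{prop: pullback preserve hflat} only shows $(f,d)^*$ preserves \emph{acyclic h-flat} complexes, not acyclic complexes in general; the functor $(f,d)^*$ is only right exact, so it cannot send arbitrary acyclics to acyclics. (Noting that acyclic complexes admit h-flat resolutions does not help: $(f,d)^*$ applied to the original acyclic complex is generally not quasi-isomorphic to $(f,d)^*$ applied to its h-flat resolution, precisely because $(f,d)^*$ is not exact.) Consequently $(f,d)_*$ need not preserve h-injectivity, and this route is blocked. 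The paper proceeds the other way: it first proves $\bL(f,d)^*\circ\bL(g,e)^*\cong\bL(g\circ f,de)^*$, which \emph{is} immediate from Proposition \ref{prop: pullback preserve hflat} (the pullback preserves h-flats, so one h-flat resolution of the source computes both sides via the underived composition identity of Lemma \ref{lem: pull/push}), and then obtains the pushforward composition formally by uniqueness of right adjoints. Rearranging your argument to prove the pullback composition directly, and deducing the pushforward one by adjointness, closes the gap.
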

		\begin{proof}
			The derived functors exist as quasi-compact separated filtered schemes have enough h-flat and h-injective complexes and these are adapted to respectively the pullback and pushforward functor; \cite[\href{https://stacks.math.columbia.edu/tag/09T5}{Lemma 09T5}]{stacks-project} then shows that they are adjoint.
			
			To prove the composition claims, we note that, by Proposition \ref{prop: pullback preserve hflat}, we have $\bL(f,d)^{*}\circ\bL(g,e)^{*} \cong \bL(g\circ f,de)^{*}$.
			Consequently, by adjointness\footnote{This implies that the canonical map, obtained from the universal property of the right derived functor, is an isomorphism.} $\bR(g,e)_{*}\circ\bR(f,d)_{*} \cong \bR(g\circ f,de)_{*}$.
		\end{proof}
		
		As a corollary, we have the following.
		
		\begin{corollary}\label{cor: adjunction pullpush on RHom}
			With notation as in the above proposition, we have functorial isomorphisms
			\[
			\RHom_X({\bL(f,d)^{*}}M,N) \isoto \RHom_Y(M,{\bR(f,d)_{*}}N)\quad(\text{in }\bD(\Ab))
			\]
			for $M\in \bD(Y,G^*)$ and $N\in \bD(X,F^*)$.
		\end{corollary}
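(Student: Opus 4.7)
The plan is to upgrade the adjunction at the level of $\Hom$-sets (which is already contained in the preceding proposition) to an isomorphism at the level of global $\RHom$ complexes. The basic idea is that the underived adjunction $(f,d)^{*} \dashv (f,d)_{*}$ of Lemma \ref{lem: pull/push} already yields an isomorphism of hom-complexes; one then has to combine this with the existence of h-flat and h-injective resolutions (Lemmas \ref{lem: h-flat exist} and \ref{lem: h-inj exist}) to produce a natural morphism in $\bD(\Ab)$ and check it is a quasi-isomorphism.

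More concretely, first I would choose an h-flat resolution $P^{\bullet}\to M$ with flat components (available by Lemma \ref{lem: h-flat exist}) and an h-injective resolution $N\to I^{\bullet}$ (available by Lemma \ref{lem: h-inj exist}). By Proposition \ref{prop: pullback preserve hflat}, $(f,d)^{*}P^{\bullet}$ represents $\bL(f,d)^{*}M$, and by definition $(f,d)_{*}I^{\bullet}$ represents $\bR(f,d)_{*}N$. The underived tensor-hom-style adjunction gives, degreewise and compatibly with differentials, a natural isomorphism of hom-complexes
\[
\Hom^{\bullet}_{X}((f,d)^{*}P^{\bullet},I^{\bullet}) \;\cong\; \Hom^{\bullet}_{Y}(P^{\bullet},(f,d)_{*}I^{\bullet}).
\]
Since $I^{\bullet}$ is h-injective, the left-hand side computes $\RHom_{X}(\bL(f,d)^{*}M,N)$.

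For the right-hand side, choose an h-injective resolution $(f,d)_{*}I^{\bullet}\to J^{\bullet}$ in $\QCoh^{n}(Y,G^{*})$; postcomposition gives a map of complexes
\[
\Hom^{\bullet}_{Y}(P^{\bullet},(f,d)_{*}I^{\bullet}) \longrightarrow \Hom^{\bullet}_{Y}(P^{\bullet},J^{\bullet}),
\]
whose target represents $\RHom_{Y}(M,\bR(f,d)_{*}N)$. Composing the two displays yields a canonical morphism
\[
\RHom_{X}(\bL(f,d)^{*}M,N)\longrightarrow \RHom_{Y}(M,\bR(f,d)_{*}N)
\]
in $\bD(\Ab)$, and this is the map I propose as the isomorphism of the corollary.

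To finish, it remains to show this morphism is a quasi-isomorphism. The cohomology of the $n$-th shift of either side is exactly $\Hom_{\bD(X)}(\bL(f,d)^{*}M,N[n])$ respectively $\Hom_{\bD(Y)}(M,\bR(f,d)_{*}N[n])$, and a direct unpacking shows that the map induced on $H^{n}$ is precisely the adjunction isomorphism from the preceding proposition; hence the morphism is a quasi-isomorphism and therefore an isomorphism in $\bD(\Ab)$. Functoriality in $M$ and $N$ is built into the construction since the h-flat and h-injective resolutions can be chosen functorially (again Lemmas \ref{lem: h-flat exist} and \ref{lem: h-inj exist}). The main obstacle, and really the only non-formal point, is the bookkeeping ensuring that the underived hom-complex on the right actually computes the derived $\RHom$ once one has replaced $(f,d)_{*}I^{\bullet}$ by an h-injective resolution; everything else is a completely standard transcription of the argument one uses for derived adjunctions on ordinary schemes.
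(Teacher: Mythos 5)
Your proof is correct, but it takes a genuinely different route from the paper's. Both proofs begin the same way: pick an h-flat resolution $P^{\bullet}\to M$ and an h-injective resolution $N\to I^{\bullet}$, and use the underived adjunction to get an isomorphism of hom-complexes $\Hom^{\bullet}_{X}((f,d)^{*}P^{\bullet},I^{\bullet}) \cong \Hom^{\bullet}_{Y}(P^{\bullet},(f,d)_{*}I^{\bullet})$. The crux in both cases is that $(f,d)_{*}I^{\bullet}$ need not be h-injective, so the right-hand side does not obviously compute $\RHom_Y(M,\bR(f,d)_{*}N)$. The paper handles this by invoking Lipman's trick (\cite[Corollary 3.2.2]{LipmanGrothendieck}): for $P^{\bullet}$ h-flat the natural map $\Hom^{\bullet}_Y(P^{\bullet},(f,d)_{*}I^{\bullet})\to\RHom_Y(P^{\bullet},(f,d)_{*}I^{\bullet})$ is \emph{already} a quasi-isomorphism, so the underived hom-complex computes $\RHom$ without replacing the target. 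You instead pass to an h-injective resolution $J^{\bullet}$ of $(f,d)_{*}I^{\bullet}$, thereby producing a candidate map rather than an on-the-nose isomorphism, and then verify it is a quasi-isomorphism by matching its effect on cohomology with the adjunction isomorphism on $\Hom_{\bD}$-sets from the preceding proposition. Your approach is more elementary — it avoids Lipman's hypercohomological input and reuses the derived adjunction you already have — at the cost of being slightly less direct (you construct a map and then have to identify its cohomological effect, rather than obtaining the isomorphism of complexes at once). One small thing worth making explicit in your last paragraph: the identification of the map on $H^{n}$ with the adjunction isomorphism is exactly the content of \cite[\href{https://stacks.math.columbia.edu/tag/09T5}{Lemma 09T5}]{stacks-project} applied to these resolutions, which the preceding proposition already cites; flagging that would make the "direct unpacking" verifiably non-circular.
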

		\begin{proof}
			The trick of Lipman \cite[Corollary 3.2.2]{LipmanGrothendieck}, showing that the natural morphism $$\Hom^\bullet_Y(M,{(f,d)_{*}}N)\to \RHom_Y(M,{(f,d)_{*}}N)$$ is a quasi-isomorphism for $M$ h-flat and $N$ h-injective works here too, allowing one to reduce to the non-derived setting by picking an h-flat and h-injective resolution.
		\end{proof}
		
		\subsubsection{Derived compatibilities with schemes and \texorpdfstring{$\Acal$}{A}-spaces}\label{subsubsec: derived compat}
		We collect some compatibilities.
		Let $\bD(X):=\bD(\QCoh(X))$ denote the usual derived category of quasi-coherent modules over a scheme $X$. 
		Moreover, note that the functor $\gr_{-i}$ from Lemma \ref{lem: adjunction with gr-i} is exact and thus graciously descends to the derived category.
		
		\begin{lemma}\label{lem: compat filtered der push with underlying der push}
			Let $(f,d):(X,{}_{dn}F^*)\rightsquigarrow(Y,{}_{n}G^*)$ be a generalised morphism of quasi-compact separated filtered schemes.
			Then, the diagram
			\[
			\begin{tikzcd}[column sep=4em]
				{\bD(X,F^*)} & {\bD(X)} \\
				{\bD(Y,G^*)} & {\bD(Y)}
				\arrow["{\bR(f,d)_*}"', from=1-1, to=2-1]
				\arrow["{\gr_{X,-di}}", from=1-1, to=1-2]
				\arrow["{\gr_{Y,-i}}", from=2-1, to=2-2]
				\arrow["{\bR f_*}", from=1-2, to=2-2]
			\end{tikzcd}
			\]
			(2-)commutes for all $0\leq i <n$.
			In particular, when the filtered schemes are in addition Noetherian and $f$ is proper, the derived pushforward preserves coherence, i.e.\ \allowbreak 
			\[
			\bR (f,d)_*( \bD^b(\Coh^{dn}(X,F^*)) )\subseteq \bD^b(\Coh^n(Y,G^*)).
			\]
		\end{lemma}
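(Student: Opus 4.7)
The non-derived commutativity is immediate from the explicit description of $(f,d)_{*}$ in Lemma \ref{lem: pull/push}: for $\Mcal\in\Mod^{dn}(X,F^{*})$, the pushforward $(f,d)_{*}\Mcal$ is the $d$-Veronese of the componentwise pushforward $\oplus_{j}f_{*}\Mcal^{j}$, so
\[
\gr_{Y,-i}(f,d)_{*}\Mcal \;=\; f_{*}(\Mcal^{-di}) \;=\; f_{*}\gr_{X,-di}\Mcal
\]
functorially in $\Mcal$, on the underlying abelian categories.

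To lift this to the derived level the plan is to resolve $\Mcal^{\bullet}\in\bD(X,F^{*})$ by a complex to which all four functors of the diagram may be applied termwise with unambiguous meaning. Since $X$ is quasi-compact and separated, a finite affine open cover $\mathcal{U}=\{U_{\alpha}\}$ of $X$ with affine finite intersections $U_{I}$ is available, and I would form the associated \v Cech resolution $\check{C}^{\bullet}(\mathcal{U},\Mcal^{\bullet})$ inside $\QCoh^{dn}(X,F^{*})$, whose terms are finite products of pushforwards along the affine open immersions $j_{I}\colon U_{I}\hookrightarrow X$. Three observations pin things down:
\begin{enumerate}
\item $\gr_{X,-di}$ commutes with restriction to opens and with pushforward along affine open immersions (both are computed on underlying sheaves), and with the finite products in the \v Cech complex (being a right adjoint by Lemma \ref{lem: adjunction with gr-i}); hence $\gr_{X,-di}\check{C}^{\bullet}(\mathcal{U},\Mcal^{\bullet})=\check{C}^{\bullet}(\mathcal{U},\gr_{X,-di}\Mcal^{\bullet})$;
\item the classical \v Cech computation of $\bR f_{*}$ on $\bD(\QCoh(X))$ for quasi-compact separated $X$ identifies $f_{*}\check{C}^{\bullet}(\mathcal{U},-)$ with $\bR f_{*}(-)$;
\item the same argument transported to $\QCoh^{dn}$ identifies $(f,d)_{*}\check{C}^{\bullet}(\mathcal{U},-)$ with $\bR(f,d)_{*}(-)$; alternatively, this filtered statement can be bootstrapped from the non-filtered one, using that the finitely many functors $\gr_{X,-j}$ jointly detect acyclicity in $\QCoh^{dn}$ together with the non-derived commutativity applied to each component.
\end{enumerate}
Combining these with the non-derived commutativity termwise yields the desired 2-commutative diagram.

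For the final coherence assertion, suppose the filtered schemes are Noetherian and $f$ is proper, and let $\Mcal^{\bullet}\in\bD^{b}(\Coh^{dn}(X,F^{*}))$. Each $\gr_{X,-di}\Mcal^{\bullet}$ then lies in $\bD^{b}(\Coh(X))$, so by the classical proper pushforward theorem, $\bR f_{*}\gr_{X,-di}\Mcal^{\bullet}\in\bD^{b}(\Coh(Y))$ with cohomological amplitude uniformly bounded above (e.g.\ by $\dim X$). The 2-commutativity established above thus gives $\gr_{Y,-i}\bR(f,d)_{*}\Mcal^{\bullet}\in\bD^{b}(\Coh(Y))$ for every $0\le i<n$, and since the $n$ functors $\gr_{Y,-i}$ ($0\le i<n$) jointly detect boundedness and coherence of length $n$ modules (via Lemmas \ref{lem: char (Q)Coh} and \ref{lem: Db(Coh)=Bcoh(QCoh)}), we conclude $\bR(f,d)_{*}\Mcal^{\bullet}\in\bD^{b}(\Coh^{n}(Y,G^{*}))$. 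The main obstacle is point (3) above: rigorously verifying that the \v Cech complex computes the derived pushforward in the filtered setting. The cleanest resolution is the bootstrap via joint detection by the $\gr$-functors, which reduces filtered \v Cech-acyclicity to its classical non-filtered counterpart, and this is the technical step to execute carefully.
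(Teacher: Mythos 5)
Your route differs from the paper's, and your "bootstrap" suggestion for the crucial step has a circularity problem.

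The paper's proof is shorter and dodges the technical step you identify: since both composites in the square are right adjoints, it suffices to check their left adjoints agree, and the left adjoints are $\bL(f,d)^*$ and $-\otimes_{\Ocal_X}l^{dn}(\tOcal_{(X,F^*)}(di))$ (respectively $-\otimes_{\Ocal_Y}l^n(\tOcal_{(Y,G^*)}(i))$), all of which preserve h-flat complexes with flat components; the derived comparison then collapses to a one-line non-derived computation of the two tensor/pullback composites. Your \v Cech-resolution route is also viable, but it front-loads a nontrivial adaptedness verification — that the filtered \v Cech terms $j_{I,*}j_I^*\Mcal$ are $(f,d)_*$-acyclic — which the left-adjoint trick avoids entirely.

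For that step, the ``bootstrap via joint detection by the $\gr$-functors'' you settle on is circular. You would compute $\gr_{Y,-i}\bigl((f,d)_*\check{C}^\bullet\bigr)=f_*\gr_{X,-di}\check{C}^\bullet\cong\bR f_*\gr_{X,-di}\Mcal^\bullet$, and then want to compare with $\gr_{Y,-i}\bR(f,d)_*\Mcal^\bullet$; but identifying the latter with $\bR f_*\gr_{X,-di}\Mcal^\bullet$ is exactly the commutativity being proved, so the $\gr$-functors cannot ``detect'' the needed quasi-isomorphism without the lemma already in hand. (Note also that $\gr_{Y,-i}$ is a right adjoint to tensoring by a not-necessarily-flat module, so it has no reason to send h-injective resolutions to $f_*$-adapted complexes, blocking the obvious attempt to compute $\gr\circ\bR(f,d)_*$ directly.) Your earlier, non-bootstrap phrasing — transport the classical \v Cech argument — is the one that works: $(f,d)_*\circ(j_I,1)_*=(f\circ j_I,d)_*$ with $(j_I,1)_*$ exact, so $(f,d)_*$-acyclicity of the \v Cech terms reduces to vanishing of higher direct images for an affine source, which can be checked componentwise or through the $\Acal$-space dictionary of Proposition~\ref{prop: Auslander}. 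With that fix the proposal goes through; the coherence consequence is then argued exactly as in the paper, by checking coherence and boundedness componentwise.
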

		\begin{proof}
			Both sides are compositions of right adjoints, so it suffices to check that the compositions of their left adjoints are isomorphic.
			As the left adjoints to $\gr_{X,-di}$ and $\gr_{Y,-i}$ are given by tensoring (see Lemma \ref{lem: adjunction with gr-i}) these preserve h-flat complexes (with flat components).
			The isomorphism thus follows as, with notation from Lemma \ref{lem: pull/push},
			\begin{align*}
				&(f,d)^*( \Mcal\otimes_{\Ocal_Y} l^n(\tOcal_{(Y,G^*)}(i)) ) \\
				=& f^{-1}(\epsilon(\Mcal\otimes_{\Ocal_Y} l^n(\tOcal_{(Y,G^*)}(i)))) \otimes_{f^{-1}(\tOcal_{(Y,G'^*)})|dn} \tOcal_{(X,F^*)} \\
				=& f^{-1}(\Mcal\otimes_{\Ocal_Y} l^{dn}(\tOcal_{(Y,G'^*)}(di))) \otimes_{f^{-1}(\tOcal_{(Y,G'^*)})|dn} \tOcal_{(X,F^*)} \\
				=& f^{-1}(\Mcal)\otimes_{f^{-1}(\Ocal_Y)} l^{dn}(\tOcal_{(X,F^*)}(di)) \\
				=& f^{*}(\Mcal)\otimes_{\Ocal_X} l^{dn}(\tOcal_{(X,F^*)}(di)).
			\end{align*}
			
			The second statement follows immediately from the corresponding statement for schemes, see e.g.\ \cite[Theorem 3.2.1]{EGAIII1}, since we can check coherence componentwise.
		\end{proof}
		
		Let us finish this subsection by mentioning that the diagrams \eqref{eq: pull/push compat Auslander}, stating the compatibility of the pullback/pushforward functor with the $\Acal$-space side, descend to the derived level.
		Indeed, the equivalences $\bD(X,F^*)\cong\bD(X,\Acal_X)$ and  $\bD(Y,F^*)\cong\bD(Y,\Acal_Y)$, induced by Proposition \ref{prop: Auslander}, clearly preserve h-injective and h-flat complexes (with flat components). 
		
	\subsection{Perfect complexes}\label{subsec: perf comp}
		Let $(X,F^*)$ be an $n$-filtered scheme.
		We define and collect some facts concerning perfect complexes over $(X,F^*)$.
		
		Analogous to schemes, we say that a complex is \emph{strictly perfect} if it is bounded with terms consisting of finite direct sums of the objects $l^n(\tOcal_X(i))$ for $0\leq i<n$.
		(Direct summands of these objects would again be (locally) of this form\footnote{
			Namely, $\tOcal_{X,x}$ is graded local, with unique homogeneous maximal ideal $\widetilde{\mathfrak{m}}$ induced by the maximal ideal $\mathfrak{m}$ of $\Ocal_{X,x}$ ($\widetilde{\mathfrak{m}}$ equals $\tOcal_{X,x}$ except that it has $\mathfrak{m}$ as 0th piece).
			Using this, one can show that every finitely generated projective module in  $\Mod^n(\Ocal_{X,x},F^*)$ is a direct sum of $l^n(\tOcal_X(i))_{x}$'s (the proof is exactly the same as in the non-filtered $n=1$ case, $\tOcal_{X,x}/\widetilde{\mathfrak{m}}=\Ocal_{X,x}/\mathfrak{m}$ is a field).
			Now, use an extension of \cite[\href{https://stacks.math.columbia.edu/tag/0B8J}{Lemma 0B8J}]{stacks-project}.
		}.)
		We then define \emph{the category of perfect complexes} $\Perf(X,F^*)$ to be the full subcategory of $\bD(X,F^*)$ consisting of those complexes that are locally quasi-isomorphic to strictly perfect complexes.
		\begin{remark}
			Under the equivalence of Proposition \ref{prop: Auslander} the perfect complexes over $(X,F^*)$ correspond to the perfect complexes over $(X,\Acal_{F^*})$, i.e.\ those complexes that are locally quasi-isomorphic to bounded complexes having direct summands of finite free $\Acal_{F^*}$-modules as components.
			This follows as, with notation as in the proposition, $\Pcal$ is mapped to $\Acal_{F^*}$ and the equivalence is compatible with restriction to open subsets.
		\end{remark}
		
		\begin{proposition}\label{prop: compact iff perfect on nice fSch}
			Let $(X,F^*)$ be a quasi-compact separated $n$-filtered scheme.
			An object $M$ of $\bD(X,F^*)$ is compact if and only if it is perfect.
		\end{proposition}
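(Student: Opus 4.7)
The plan is to adapt the classical argument that perfect complexes coincide with compact objects on a quasi-compact separated scheme (cf.\ \cite[Theorem 3.1.1]{BondalVandenBergh} or the Stacks Project), using the truncated twists $l^n(\tOcal_X(i))$ for $0\leq i<n$ as the relevant generators in place of $\Ocal_X$. A shortcut is available via Proposition \ref{prop: Auslander}: the equivalence $\bD(X,F^*)\cong \bD(X,\Acal_{F^*})$ identifies perfect complexes on both sides (both are described locally as direct summands of bounded complexes of finite free modules over the respective progenerator), and compactness is intrinsic to the triangulated category, so the filtered statement reduces to the analogous one for the Auslander algebra, which is the content of \cite[Proposition 5.11]{KuznetsovLunts}. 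I nonetheless sketch a direct argument below.

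For \emph{perfect $\Rightarrow$ compact}, I first check that each $l^n(\tOcal_X(i))$ is compact. By Lemma \ref{lem: adjunction with gr-i} there is a natural isomorphism
\[
\bR\Hom_{\bD(X,F^*)}(l^n(\tOcal_X(i)),M)\cong \bR\Gamma(X,\gr_{-i}M).
\]
Here $\gr_{-i}$ commutes with arbitrary direct sums (it is projection onto a graded component), and $\bR\Gamma(X,-)$ commutes with direct sums in the filtered setting because its image under the forgetful functor agrees with the classical $\bR\Gamma(X,-)$ by Lemma \ref{lem: compat filtered der push with underlying der push}, which commutes with direct sums on any quasi-compact separated scheme. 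Hence every $l^n(\tOcal_X(i))$ is compact; closure under shifts, cones and finite direct sums shows that every strictly perfect complex is compact. For a general perfect complex $M$, I extend this by a Mayer--Vietoris induction on the number of affine opens in a cover of $X$: if $X=U\cup V$ with $U$, $V$ and $U\cap V$ quasi-compact separated, and $M|_U, M|_V$ are already known to be compact, then the Mayer--Vietoris triangle for $\bR\Hom(M,-)$ together with compactness at each open yields compactness on $X$.

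For \emph{compact $\Rightarrow$ perfect}, I plan to exhibit a set of perfect complexes that compactly generates $\bD(X,F^*)$; once this is in hand, the subcategory of compact objects is the thick closure of this set, which lands inside the perfect complexes (themselves closed under direct summands, shifts and cones), giving the converse. Compact generation will be deduced by lifting a compactly generating family of $\bD(X)$ — e.g.\ Koszul-type complexes supported on the complements of affine opens — via tensoring with the various $l^n(\tOcal_X(i))$; right-orthogonality of $M$ to all such generators forces $\gr_{-i}M=0$ in $\bD(X)$ for every $0\leq i<n$ via the classical result, hence $M=0$. The genuine obstacle in both directions is the local-to-global passage, which proceeds by the standard Mayer--Vietoris machinery; in the filtered setting all of the requisite compatibilities (restriction to quasi-compact opens, derived pullback of h-flat complexes, and the interaction with $\gr_{-i}$) have been established in the preceding subsections, so no new idea is required.
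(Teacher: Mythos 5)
Your overall strategy is sound and both routes you sketch do work, but they diverge from the paper's proof in the direction \emph{compact} $\Rightarrow$ \emph{perfect}, and you should be aware of a small gap to close in the Mayer--Vietoris base case.

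For \emph{compact} $\Rightarrow$ \emph{perfect}, the paper argues much more directly: it observes that derived pushforward along an open immersion commutes with coproducts (because $X$ is quasi-compact separated), hence by Lemma~\ref{lem: nice functor preserving or reflecting compactness} restriction to any quasi-compact open preserves compactness; one then restricts $M$ to each member of a finite affine cover and uses that on an affine the object $\oplus_{i} l^n(\tOcal(i))$ is a compact generator, so $M|_U \in \Thick(\oplus_i l^n(\tOcal_U(i)))$ is strictly perfect. This makes ``compact $\Rightarrow$ locally strictly perfect'' essentially immediate and avoids having to produce global compact generators. Your alternative---exhibit perfect compact generators of $\bD(X,F^*)$, then invoke $\Tsf^c=\Thick(\text{generators})\subseteq\Perf$---is valid and is in fact exactly the mechanism used in the paper's Proposition~\ref{prop: perf ess small} for a different purpose, but it is more work than the restriction argument. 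Note also that if you go this route, the logical order matters: you need compactness of the generators first, which is what your perfect $\Rightarrow$ compact step supplies; you have the order right, but say so.

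For \emph{perfect} $\Rightarrow$ \emph{compact}, your Mayer--Vietoris induction is in the right spirit (it is the same reduction as in \cite[Lemma 3.3.7]{BondalVandenBergh}, which the paper also cites), but as written the base case is not quite closed. If you induct on the number of affines in an arbitrary cover, the base case is $X$ affine and $M$ merely perfect, and you still need ``perfect $\Rightarrow$ strictly perfect on an affine''---this is the filtered version of \cite[\href{https://stacks.math.columbia.edu/tag/08EB}{Lemma 08EB}]{stacks-project} and is precisely what the paper's proof invokes. Alternatively, you can sidestep Stacks 08EB entirely by choosing the finite affine cover $X=\cup_j U_j$ so that $M|_{U_j}$ is \emph{strictly} perfect on each $U_j$ (possible since $M$ is locally strictly perfect and $X$ is quasi-compact); then the base case is the strictly perfect case you have already handled, and the intersections $U_j\cap U_k$ are affine by separatedness with $M$ still strictly perfect there. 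Either fix works, but at present the sketch jumps over this point. Your identification $\bR\Hom(l^n(\tOcal_X(i)),M)\cong\bR\Gamma(X,\gr_{-i}M)$ via Lemma~\ref{lem: adjunction with gr-i} (together with exactness of $\gr_{-i}$) is correct and is the right way to see compactness of the twisted truncated free modules. The Auslander-algebra shortcut in your first paragraph is also a legitimate alternative to the direct argument, provided the cited proposition of Kuznetsov--Lunts does say ``compact $=$ perfect'' for the relevant $\Acal$-spaces; it is worth double-checking the reference number before relying on it.
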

		\begin{proof}
			Suppose $M$ is compact.
			As the underlying scheme is quasi-compact and separated, derived pushforwards along open immersions preserve direct sums\footnote{One can reprove this in our setting; the proof for schemes works, but one can also reduce to the non-filtered case using Lemma \ref{lem: compat filtered der push with underlying der push}.}.
			Hence, restriction to open subsets preserves compactness by Lemma \ref{lem: nice functor preserving or reflecting compactness}.
			Thus, we reduce to the case where $X$ is affine and consequently $\oplus_{i=0}^{n-1}l^n(\tOcal_X(i))$ is a compact generator.
			So $M\in\bD(X,F^*)^c=\Thick(\oplus_{i=0}^{n-1}l^n(\tOcal_X(i)))$ is strictly perfect.
			
			Conversely, suppose $M$ is perfect.
			As in the proof of \cite[Lemma 3.3.7]{BondalVandenBergh} we can reduce to the case where $X$ is affine (essentially by looking at Mayer-Vietoris type distinguished triangles).
			Then, suitably adjusting \cite[\href{https://stacks.math.columbia.edu/tag/08EB}{Lemma 08EB}]{stacks-project}, one can show that $M$ is quasi-isomorphic to a bounded complex of projective modules, in which case it is clearly compact.
			Alternatively, when $X$ is additionally Noetherian, one can, using Lemma \ref{lem: Db(Coh)=Bcoh(QCoh)}, copy \cite[Theorem 4.1 (iii) $\Rightarrow$ (i)]{AvramovIyengarLipman} to show that $M$ is quasi-isomorphic to a bounded complex of projective modules.
		\end{proof}
		
		\begin{proposition}\label{prop: perf ess small}
			Let $(X,F^*)$ be a quasi-compact separated $n$-filtered scheme.
			The derived category $\bD(X,F^*)$ is compactly generated, consequently $\Perf(X,F^*)$ is essentially small (i.e.\ equivalent to a small category). 
		\end{proposition}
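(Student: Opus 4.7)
The plan is to exhibit a (set of) compact generator(s) for $\bD(X,F^*)$; the second claim then follows formally, since by Proposition \ref{prop: compact iff perfect on nice fSch} the compact objects coincide with the perfect complexes, and the full subcategory of compact objects in a compactly generated triangulated category is always essentially small.

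For the first claim, I see two natural approaches. The cleanest is to reduce to the $\Acal$-space setting: Proposition \ref{prop: Auslander} yields an equivalence $\QCoh^n(X,F^*)\cong\QCoh(X,\Acal_{F^*})$ where $\Acal_{F^*}$ is a coherent sheaf of $\Ocal_X$-algebras on the quasi-compact separated scheme $X$, and this equivalence descends to an equivalence of derived categories (it preserves h-injectives by virtue of being an equivalence of abelian categories). Then compact generation of $\bD(X,\Acal_{F^*})$ is exactly \cite[Lemma 5.6]{KuznetsovLunts}.

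Alternatively, one proves compact generation directly, by induction on the number $r$ of affines in a cover $X=U_1\cup\cdots\cup U_r$ (which exists by quasi-compactness; by separatedness all finite intersections remain affine). The base case $r=1$ is immediate: on an affine $(X,F^*)$ the object $\oplus_{i=0}^{n-1}l^n(\tOcal_X(i))$ is a compact generator, as noted in the proof of Proposition \ref{prop: compact iff perfect on nice fSch}. For the inductive step, write $X=U\cup V$ with $V$ affine and both $U$ and $U\cap V$ covered by fewer affines; one argues by a Thomason-style gluing, producing compact generators on $X$ from compact generators on $U$, $V$ and $U\cap V$ (to pass from $U\cap V$ to generators supported away from $U\cap V$ one uses Proposition \ref{prop: compact iff perfect on nice fSch} together with the fact that pushforwards along quasi-compact open immersions commute with direct sums, as is mentioned in the proof of Proposition \ref{prop: compact iff perfect on nice fSch}).

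The main obstacle in the direct approach is the Mayer--Vietoris step, which requires a small amount of bookkeeping in the filtered setting; the $\Acal$-space route avoids this entirely, at the cost of invoking the corresponding result from \cite{KuznetsovLunts}.
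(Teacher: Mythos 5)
Your proof is correct, but it takes a genuinely different route from the paper. Both of your suggested approaches (transfer via Proposition \ref{prop: Auslander} to the $\Acal$-space side, or a direct Thomason--Neeman--style Mayer--Vietoris induction over an affine cover) would work, and both are standard. What the paper actually does is slicker and more self-contained: using the adjunction from Remark \ref{rem: adjunction with oplus gr-i}, one has $\left(-\otimes_{\Ocal_X}^{\bL}\Pcal\right)\dashv(-)|_{\Ocal_X}$ between $\bD(X)$ and $\bD(X,F^*)$ with $\Pcal=\oplus_{i=0}^{n-1}l^n(\tOcal_X(i))$; the right adjoint $(-)|_{\Ocal_X}$ is exact, commutes with coproducts, and is conservative (reflects zero), so the image under the left adjoint of the Bondal--Van den Bergh compact generator of $\bD(X)$ is a compact generator of $\bD(X,F^*)$. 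This avoids both the appeal to KL and any gluing bookkeeping, at the cost of invoking Bondal--Van den Bergh for the underlying scheme. One small caveat on your first route: you should double-check the exact citation --- the places where this paper cites \cite[Lemma 5.6]{KuznetsovLunts} are for the Grothendieck property and enough flats, so compact generation of $\bD(X,\Acal)$ may live under a different number in that paper. Your reduction of the ``essentially small'' claim to $\Tsf^c=\Thick(S)$ for a generating set $S$ is exactly right.
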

		\begin{proof}
			Let $\Pcal:=\oplus_{i=0}^{n-1}l^n(\tOcal_X(i))$.
			By Remark \ref{rem: adjunction with oplus gr-i} we have an adjunction (ignoring the components in positive degree) 
			\[
			\begin{tikzcd}[sep=2.5em]
				\bD(X,F^*) \\ \bD(X) \rlap{ .}
				\arrow[""{name=0, anchor=center, inner sep=0}, "{(-)|_{\Ocal_X}}", bend left=45, from=1-1, to=2-1]
				\arrow[""{name=1, anchor=center, inner sep=0}, "{(-\otimes_{\Ocal_X}^\bL \Pcal)}", bend left=45, from=2-1, to=1-1]
				\arrow["\dashv"{anchor=center}, draw=none, from=1, to=0]
			\end{tikzcd}
			\]
			It is well-known \cite[Theorem 3.1.1]{BondalVandenBergh} that $\bD(X)$ admits a compact generator ${U}$.
			As $(-)|_{\Ocal_X}$ is exact, commutes with coproducts and moreover reflects the zero object, it follows by abstract nonsense that ${U}\otimes_{\Ocal_X}^\bL \Pcal$ is a compact generator for $\bD(X,F^*)$.
		\end{proof}
		
	\subsection{Semi-orthogonal decompositions}\label{subsec: SODs}
		
		We make use of the following notation.
		Let $X$ be a scheme and $\Ical$ a quasi-coherent ideal sheaf, then $\VV_X(\Ical)$ denotes the closed subscheme of $X$ defined by $\Ical$.
		
		The following are two results of \cite{KuznetsovLunts} translated into the filtered language.
		\begin{proposition}\label{prop: filtered SODs}
			Let $(X,F^*)$ be a Noetherian $n$-filtered scheme, put $X_0:=\VV_X(F^{-1}\Ocal_X)$.
			There are semi-orthogonal decompositions
			\begin{align*}
				\bD(X,F^*) &= \langle \overbrace{\bD(X_0),\bD(X_0),\dots,\bD(X_0)}^{n\text{ components}} \rangle \\
				\bD^b(\Coh^n(X,F^*)) &= \langle \underbrace{\bD^b(\Coh(X_0)),\bD^b(\Coh(X_0)),\dots,\bD^b(\Coh(X_0))}_{n\text{ components}} \rangle.
			\end{align*}
		\end{proposition}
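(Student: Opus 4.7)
The plan is to proceed by induction on the length $n$, translating to the Auslander-algebra side via Proposition \ref{prop: Auslander} and peeling off one copy of $\bD(X_0)$ at a time using a primitive idempotent. The base case $n=1$ is tautological: a length-one filtration is trivial, so $X_0=X$ and the statements reduce to $\bD(X)=\langle \bD(X)\rangle$ and $\bD^b(\Coh(X))=\langle \bD^b(\Coh(X))\rangle$.

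For $n>1$, I invoke Proposition \ref{prop: Auslander} to pass to $\bD(X,\Acal_{F^*})$ and consider the primitive idempotent $e_{n-1}\in\Acal_{F^*}$ projecting onto the $(n{-}1)$-st summand of $\Pcal=\oplus_{i=0}^{n-1}l^n(\tOcal_X(i))$. A direct matrix computation from Remark \ref{rem: Auslander} (writing $\Acal_{F^*}$ entrywise as $F^{\min(0,i-j)}\Ocal_X/F^{i-n}\Ocal_X$) shows two things. First, the corner algebra $e_{n-1}\Acal_{F^*}e_{n-1}$ equals the bottom-right entry $\Ocal_X/F^{-1}\Ocal_X=\Ocal_{X_0}$. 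Second, the two-sided ideal quotient $\Acal_{F^*}/\Acal_{F^*}e_{n-1}\Acal_{F^*}$ is obtained by deleting the last row and column, and a second matrix computation identifies it with $\Acal_{G^*}$ for the $(n{-}1)$-filtered scheme $(Y,G^*)$ with $Y:=\VV_X(F^{1-n}\Ocal_X)$ and $G^i\Ocal_Y$ the image of $F^i\Ocal_X$ in $\Ocal_Y$. Because $F^{1-n}\Ocal_X\subseteq F^{-1}\Ocal_X$, the reduced locus $Y_0$ coincides with $X_0$, and $(Y,G^*)$ is again Noetherian. Standard idempotent-induced semi-orthogonal decomposition machinery then yields
\[
\bD(X,\Acal_{F^*})=\langle \bD(X,\Acal_{G^*}),\,\bD(X_0)\rangle,
\]
with the first component embedded via the quotient $\Acal_{F^*}\twoheadrightarrow\Acal_{G^*}$ and the second via induction along the corner $e_{n-1}\Acal_{F^*}e_{n-1}\hookrightarrow\Acal_{F^*}$; the semi-orthogonality $\Hom(\bD(X_0),\bD(X,\Acal_{G^*}))=0$ is immediate since any module induced from the corner has a nonzero $e_{n-1}$-component while any module from the quotient is annihilated by $e_{n-1}$. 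Applying the inductive hypothesis to $\bD(Y,G^*)=\langle \bD(X_0),\dots,\bD(X_0)\rangle$ ($n{-}1$ copies) then assembles the desired $n$-component SOD for $\bD(X,F^*)$.

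For the coherent version I use Lemma \ref{lem: Db(Coh)=Bcoh(QCoh)} to view $\bD^b(\Coh^n(X,F^*))$ as $\bD^b_{\Coh}(X,F^*)$ inside $\bD(X,F^*)$, and check that the SOD above restricts. This reduces to verifying that the two embedding functors preserve boundedness and coherence, which follows because the underlying morphisms $X_0\hookrightarrow Y\hookrightarrow X$ are finite (closed immersions into a Noetherian scheme) and all Auslander algebras involved are coherent as $\Ocal_X$-modules thanks to the finite length of the filtration. The main obstacle is setting up the idempotent SOD cleanly at the sheaf-of-algebras level: one needs to know that the bimodule $\Acal_{F^*}e_{n-1}$ is flat enough over $e_{n-1}\Acal_{F^*}e_{n-1}\cong\Ocal_{X_0}$ for the induction functor $-\otimes^{\bL}_{\Ocal_{X_0}}e_{n-1}\Acal_{F^*}$ to be fully faithful and for the canonical map $\Acal_{F^*}e_{n-1}\otimes^{\bL}_{\Ocal_{X_0}}e_{n-1}\Acal_{F^*}\to\Acal_{F^*}e_{n-1}\Acal_{F^*}$ to be an isomorphism. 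Both conditions can be verified locally on $X$ and reduce to an explicit ring-theoretic verification inside a single Auslander matrix algebra, where the column $\Acal_{F^*}e_{n-1}$ is visibly a filtered $\Ocal_{X_0}$-module with subquotients that are flat $\Ocal_X$-modules, making the needed flatness transparent.
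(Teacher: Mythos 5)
Your overall strategy—pass to the Auslander algebra via Proposition \ref{prop: Auslander}, peel off one copy of $\bD(X_0)$ at a time via a corner idempotent, and induct on $n$—is sound, and it is essentially what the reference invoked by the paper does: the paper's own proof simply cites \cite[Corollary 5.15]{KuznetsovLunts} and translates via Proposition \ref{prop: Auslander}. Your matrix bookkeeping is also correct: the corner $e_{n-1}\Acal_{F^*}e_{n-1}$ really is $\Ocal_{X_0}$, and the quotient $\Acal_{F^*}/\Acal_{F^*}e_{n-1}\Acal_{F^*}$ really is the Auslander algebra of the $(n-1)$-filtered scheme $(Y,G^*)$ you describe, with $Y_0=X_0$.

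The place where your argument goes astray is the final paragraph, in identifying which side of the idempotent controls the stratifying condition. For the recollement you want, you need the canonical map $\Acal_{F^*}e_{n-1}\otimes^{\bL}_{\Ocal_{X_0}}e_{n-1}\Acal_{F^*}\to\Acal_{F^*}e_{n-1}\Acal_{F^*}$ to be an isomorphism in the derived category, and what makes this hold is not flatness of the \emph{column} $\Acal_{F^*}e_{n-1}$, but flatness (indeed freeness) of the \emph{row} $e_{n-1}\Acal_{F^*}$ over $e_{n-1}\Acal_{F^*}e_{n-1}\cong\Ocal_{X_0}$: from Remark \ref{rem: Auslander}, every entry of the bottom row is $\Ocal_X/F^{-1}\Ocal_X=\Ocal_{X_0}$, so $e_{n-1}\Acal_{F^*}\cong\Ocal_{X_0}^{\oplus n}$ is free, the derived tensor collapses to the underived one, and a direct check gives the required isomorphism. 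Your claim that the column has ``subquotients that are flat $\Ocal_X$-modules'' is in fact false in general: the entries of $\Acal_{F^*}e_{n-1}$ are the graded pieces $F^{k}\Ocal_X/F^{k-1}\Ocal_X$, which are annihilated by $F^{-1}\Ocal_X$ and hence cannot be flat over $\Ocal_X$ unless the filtration is trivial, nor is there any reason for them to be flat over $\Ocal_{X_0}$. Separately, you do not need any flatness for full faithfulness of the induction $-\otimes^{\bL}_{\Ocal_{X_0}}e_{n-1}\Acal_{F^*}$: the unit $M\to(M\otimes^{\bL}_{eAe}eA)e\cong M\otimes^{\bL}_{eAe}eAe\cong M$ is always an isomorphism because multiplication by $e$ is exact. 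With the roles of row and column corrected, the remainder of the argument (including the restriction to the coherent bounded derived category) goes through.
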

		\begin{proof}
			This follows from Proposition \ref{prop: Auslander} and \cite[Corollary 5.15]{KuznetsovLunts} (in loc.\ cit.\ $X$ is assumed to be separated and of finite type over a field, but this is not needed).
		\end{proof}
		
		\begin{proposition}\label{prop: Perf(X,F)=Db(Coh(X,F))}
			Let $(X,F^*)$ be a separated $n$-filtered scheme of finite type over $\kk$.
			Suppose $X_0:=\VV_X(F^{-1}\Ocal_X)$ is smooth, then $\Perf(X,F^*)=\bD^b(\Coh^n(X,F^*))$.
		\end{proposition}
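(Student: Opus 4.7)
The plan is to show the two inclusions $\Perf(X,F^*)\subseteq \bD^b(\Coh^n(X,F^*))$ and $\bD^b(\Coh^n(X,F^*))\subseteq\Perf(X,F^*)$ separately. The first is a local statement that holds without any smoothness hypothesis; the second is where the assumption on $X_0$ enters, via the semi-orthogonal decomposition of Proposition \ref{prop: filtered SODs}.

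For the first inclusion, I would observe that being bounded with coherent cohomology is a local property on $X$, so it suffices to check the claim on strictly perfect complexes. Such a complex is bounded with terms given by finite direct sums of the modules $l^n(\tOcal_X(i))$ for $0\leq i<n$. Since $X$ is of finite type over $\kk$ it is Noetherian, so each ideal $F^i\Ocal_X$ is coherent, and thus so are the $l^n(\tOcal_X(i))$. The conclusion then follows from Lemma \ref{lem: Db(Coh)=Bcoh(QCoh)}.

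For the reverse inclusion, the idea is to combine Proposition \ref{prop: filtered SODs} with Proposition \ref{prop: compact iff perfect on nice fSch}. By Proposition \ref{prop: filtered SODs} there are compatible semi-orthogonal decompositions
\[
\bD(X,F^*)=\langle \bD(X_0),\ldots,\bD(X_0)\rangle \quad\text{and}\quad \bD^b(\Coh^n(X,F^*))=\langle \bD^b(\Coh(X_0)),\ldots,\bD^b(\Coh(X_0))\rangle,
\]
each with $n$ components, where the small SOD sits naturally inside the big one. Since $X_0$ is smooth, $\bD^b(\Coh(X_0))=\Perf(X_0)=\bD(X_0)^c$. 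Furthermore, Proposition \ref{prop: compact iff perfect on nice fSch} identifies $\Perf(X,F^*)$ with $\bD(X,F^*)^c$. Each SOD inclusion $\iota_i\colon\bD(X_0)\hookrightarrow \bD(X,F^*)$ is fully faithful, commutes with arbitrary direct sums, and admits a right adjoint (projection onto the $i$th component) which also commutes with direct sums because all other components are localising; hence by Lemma \ref{lem: nice functor preserving or reflecting compactness} the $\iota_i$ preserve compactness. Therefore $\iota_i(\bD^b(\Coh(X_0)))=\iota_i(\Perf(X_0))\subseteq \Perf(X,F^*)$ for every $i$. Since $\Perf(X,F^*)$ is a thick triangulated subcategory of $\bD(X,F^*)$ containing all of these components, the SOD forces it to contain $\bD^b(\Coh^n(X,F^*))$.

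The main technical point to pin down is the precise compatibility between the two semi-orthogonal decompositions, namely that the $i$th component of the small SOD inside $\bD(X,F^*)$ coincides with the image of $\bD^b(\Coh(X_0))$ under $\iota_i$. This should be transparent from the construction in \cite[Corollary 5.15]{KuznetsovLunts}, where both SODs are produced by the same family of functors (acting compatibly on $\bD(X_0)$, its bounded coherent part, and $\Perf(X_0)$), but it is the one step where care is required rather than pure formalism.
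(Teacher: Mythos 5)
Your overall plan is sound and corresponds in spirit to what the cited [KL, Theorem~5.17] establishes: the inclusion $\Perf(X,F^*)\subseteq\bD^b(\Coh^n(X,F^*))$ follows by locality and Lemma~\ref{lem: Db(Coh)=Bcoh(QCoh)}, and the reverse inclusion should indeed come from feeding $\bD^b(\Coh(X_0))=\Perf(X_0)$ (smoothness of $X_0$) through the semi-orthogonal decompositions of Proposition~\ref{prop: filtered SODs}. The paper itself just cites [KL, Theorem~5.17] via Proposition~\ref{prop: Auslander}, so unpacking the argument is genuinely useful. The gap is in the step where you assert that each SOD inclusion $\iota_i\colon\bD(X_0)\hookrightarrow\bD(X,F^*)$ preserves compactness.

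Concretely, the claim that $\iota_i$ ``admits a right adjoint (projection onto the $i$th component) which also commutes with direct sums because all other components are localising'' does not follow from abstract SOD theory. In an $n$-term SOD $\langle\Tsf_1,\dots,\Tsf_n\rangle$ only the rightmost inclusion is guaranteed a right adjoint; for $1<i<n$ the Postnikov-type projection onto the $i$th component is not an adjoint of $\iota_i$, because the Homs from $\Tsf_i$ into $\Tsf_j$ for $j>i$ may be nonzero and the projection discards exactly this contribution to $\Hom(\iota_iB,-)$. Even granting that a right adjoint exists, ``all other components are localising'' does not yield commutation with direct sums; the correct mechanism (cf.\ Lemma~\ref{lem: compact sod}) needs the subcategory to the left of $\Tsf_i$ to be generated under direct sums by objects compact in the \emph{ambient} category, which is essentially the conclusion you are after, not a formal input. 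What actually makes the claim true here is that the $\iota_i$ are the concrete derived functors coming from the Auslander-algebra SOD, and the perfectness of the associated gluing bimodules --- which is what ultimately delivers compactness preservation --- itself relies on $X_0$ being smooth. So smoothness of $X_0$ is doing double duty: once to give $\bD^b(\Coh(X_0))=\Perf(X_0)$, and once to make the SOD ``nice'' enough for the $\iota_i$ to preserve compacts; this second use is precisely the content of [KL, Theorem~5.17] that your argument quietly assumes. The compatibility point you flag at the end is real but secondary to this.
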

		\begin{proof}
			This follows from Proposition \ref{prop: Auslander} and \cite[Theorem 5.17]{KuznetsovLunts}.
			It makes use of the semi-orthogonal decompositions of Proposition \ref{prop: filtered SODs} being `nice'.
		\end{proof}
		
	\subsection{Functorial enhancements}\label{subsec: the enhancement}
		We construct \emph{functorial} small enhancements of separated finite length filtered schemes of finite type by combining \cite[Theorem 3.11]{KuznetsovLunts} with the usual passage from a pseudo-functor to a functor, see e.g.\ \cite[Subsection 4.1]{BlockHolsteinWei} or \cite[Theorem 3.45]{Vistoli}.
		
		Let $\underline{\fSch}$ be the category consisting of the following:
		\begin{itemize}
			\item objects are separated finite length filtered schemes of finite type over a field $\kk$ of characteristic zero,
			\item morphisms are generalised morphisms.
		\end{itemize}
		Furthermore, let $\dgCat$ and $\Tri$ denote, respectively, the (2-)`category' of big (i.e.\ not necessarily small) dg categories and triangulated categories.
		
		For any filtered scheme $(X,F^*)$ in $\underline{\fSch}$ we have enough h-flat complexes and therefore an enhancement of its derived category by taking the Drinfeld dg quotient of the (big) dg category  $\hflat(X,F^*)$  of h-flat complexes with flat components by the full (big) dg subcategory  $\hflat^{\circ}(X,F^*)$ of those complexes that are additionally acyclic.
		For convenience, we henceforth identify\footnote{Note that, if we define $\hflat(X,\Acal_{F^*})$ as the dg category of h-flat complexes with flat components of right modules over the corresponding Auslander algebra, then $\hflat(X,F^*)$ and $\hflat(X,\Ascr_{F^*})$ are dg equivalent.	So, the dg enhancements constructed in \cite{KuznetsovLunts} on the $\Acal$-space side are quasi-equivalent to the ones defined in this section.}
		\[
		\bD(X,F^*)=[\hflat(X,F^*)/\hflat^{\circ}(X,F^*)].
		\]
		Moreover, as the pullback along any generalised morphism preserves (acyclic) h-flat complexes and flat objects it induces a dg functor between these enhancements.
		This gives a pseudo-functor
		\begin{equation}\label{eq: big enhancement of fSch}
			\hflat/\hflat^{\circ}:\underline{\fSch}^{\op}\to \dgCat.
		\end{equation}
		(One way of seeing that this is a pseudo-functor is as follows. 
		Pulling back gives a pseudo-functor on the level of complexes of quasi-coherent modules, as the pullback is adjoint to pushforward and we have actual equality of composition for the pushforward, c.f.\ \cite[Subsection 3.2.1]{Vistoli}.
		This then descends to the category $\hflat$ and to its dg quotient.)
		Moreover, by post-composing with $H^0$ we obtain a pseudo-functor $\bD:\underline{\fSch}^{\op}\to \Tri$.
		
		Let $\hflatperf(X,F^*)$ denote the full (big) dg subcategory of $\hflat(X,F^*)$ of those complexes that are additionally perfect.
		Then, the pseudo-functor \eqref{eq: big enhancement of fSch} restricts to a pseudo-functor $\hflatperf/\hflat^{\circ}:\underline{\fSch}^{\op}\to \dgCat$ as pullback preserves perfectness.
		This gives big enhancements of the perfect complexes over filtered schemes.
		
		Lastly, we also have a pseudo-functor $\bD:\dgcat\to\Tri$ by associating to any dg category its derived category and any dg functor its induction functor.
		(A similar reasoning as before shows that this is a pseudo-functor.)
		
		The main result of this section is the following proposition.
		\begin{proposition}\label{prop: funct enhancements}
			There exists a functor $\Dscr : \underline{\fSch}^{\op} \to \dgcat$ such that the diagram
			\begin{equation}\label{eq: compatibility of (pseudo)-functors}
				\begin{tikzcd}[  sep=normal]
					{\underline{\fSch}}^{\op} && \dgCat \\
					& \dgcat \\[1em]
					& \Tri
					\arrow["\Dscr"{description}, from=1-1, to=2-2]
					\arrow["inc"{description}, from=2-2, to=1-3]
					\arrow["\bD"{description}, from=2-2, to=3-2]
					\arrow["\bD"{description}, bend right, from=1-1, to=3-2]
					\arrow["{\hflatperf/\hflat^{\circ}}"{description}, bend left, from=1-1, to=1-3]
					\arrow["\bD"{description}, bend left, from=1-3, to=3-2]
				\end{tikzcd}
			\end{equation}
			is 2-commutative (i.e.\ the triangles commutate up to `natural equivalences', the upper triangle is up to fully faithful quasi-equivalence, the bottom left is up to equivalence and the bottom right is simply equality).
		\end{proposition}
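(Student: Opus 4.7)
The plan is to proceed in two distinct steps: first pass from big dg categories to a pseudo-functor valued in small dg categories, then strictify that pseudo-functor. For the first step, Kuznetsov-Lunts \cite[Theorem 3.11]{KuznetsovLunts} produces, on the $\Acal$-space side, a small dg enhancement of $\Perf(X,\Acal_{F^*})$ as a quasi-equivalent dg subcategory of the big enhancement of h-flat complexes with flat components. Via the equivalences $\bD(X,F^*)\cong\bD(X,\Acal_{F^*})$ of Proposition \ref{prop: Auslander} (and the corresponding equivalences $\hflatperf(X,F^*)\simeq\hflatperf(X,\Acal_{F^*})$, compatible with pullback as explained below \eqref{eq: pull/push compat Auslander}), this yields, for each object of $\underline{\fSch}$, a small dg category $\Dscr_0(X,F^*)$ together with a quasi fully faithful dg inclusion $\Dscr_0(X,F^*)\hookrightarrow \hflatperf(X,F^*)/\hflat^{\circ}(X,F^*)$ that is quasi-equivalent to an enhancement of $\Perf(X,F^*)$. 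Since derived pullback along a generalised morphism preserves perfectness and h-flatness (Propositions \ref{prop: pullback preserve hflat} and compatibility with Lemma \ref{lem: compat filtered der push with underlying der push}), the assignment $(X,F^*)\mapsto \Dscr_0(X,F^*)$ assembles into a pseudo-functor $\Dscr_0:\underline{\fSch}^{\op}\to \dgcat$, together with a pseudo-natural quasi fully faithful inclusion into $\hflatperf/\hflat^{\circ}$.

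Second, I strictify $\Dscr_0$ to an honest functor $\Dscr:\underline{\fSch}^{\op}\to \dgcat$ using the standard rectification procedure for pseudo-functors, as in \cite[Theorem 3.45]{Vistoli} or \cite[\S4.1]{BlockHolsteinWei}. Concretely, for each $(X,F^*)\in\underline{\fSch}$, let $\Dscr(X,F^*)$ be the dg category whose objects are families $\{a_g\}$ indexed by generalised morphisms $g:(Y,G^*)\rightsquigarrow (X,F^*)$, with $a_g\in\Dscr_0(Y,G^*)$, together with coherent compatibility dg isomorphisms $\alpha_{h,g}:(h^{*})a_g\isoto a_{g\circ h}$ satisfying the cocycle condition; morphism complexes are computed componentwise on $a_{\id}$. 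For any generalised morphism $f:(X,F^*)\rightsquigarrow (X',F'^{*})$ reindexing $\{a_g\}\mapsto\{a_{f\circ g}\}$ yields a strictly associative and unital pullback dg functor $\Dscr(f)$. Evaluation at $g=\id$ provides a quasi-equivalence $\Dscr(X,F^*)\to\Dscr_0(X,F^*)$, pseudo-natural in $(X,F^*)$.

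It remains to verify that the diagram \eqref{eq: compatibility of (pseudo)-functors} 2-commutes. The upper triangle commutes by composing the pseudo-natural quasi-equivalence $\Dscr\Rightarrow\Dscr_0$ with the quasi fully faithful inclusion $\Dscr_0\hookrightarrow\hflatperf/\hflat^{\circ}$ from the first step. The lower-right triangle is tautological: by definition $\bD$ on $\dgCat$ restricts on $\dgcat$ to the functor obtained by composing with inclusion. For the lower-left triangle, applying $\bD$ to $\Dscr(X,F^*)$ produces a triangulated category naturally equivalent to $\bD(\Perf(X,F^*))\cong \bD(X,F^*)$ via Proposition \ref{prop: perf ess small} and Lemma \ref{lem: prop F imlies prop IndF}, and this equivalence is compatible with derived pullback by functoriality of the construction.

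The main technical obstacle lies in the strictification step: one must verify that the ambient pseudo-functor structure (coming from non-strict associativity of derived tensor product and of composition of pullbacks along generalised morphisms) can be resolved coherently, and that the strictified value remains quasi-equivalent to a dg enhancement of $\Perf(X,F^*)$. Both are formal but bookkeeping-heavy: the first amounts to checking the cocycle condition is well-posed for the unit and associator constraints of the (pseudo)-functor $\Dscr_0$, while the second follows from the fact that the evaluation-at-identity dg functor is a quasi-equivalence by construction, hence preserves the enhancement property through Lemma \ref{lem: prop F imlies prop IndF}.
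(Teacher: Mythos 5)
Your overall strategy — rectify a pseudo-functor into an honest functor — is the same as the paper's, and the verification of the three triangles would go through similarly. But your intermediate step contains a genuine gap.

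You assert that the assignment $(X,F^*)\mapsto \Dscr_0(X,F^*)$, where $\Dscr_0(X,F^*)$ is some small full dg subcategory of $\hflatperf(X,F^*)/\hflat^{\circ}(X,F^*)$ quasi-equivalent to it, "assembles into a pseudo-functor $\Dscr_0:\underline{\fSch}^{\op}\to \dgcat$". This does not follow: the derived pullback $f^*$ is a dg functor between the big quotient categories, and there is no reason for it to carry the arbitrarily chosen small subcategory $\Dscr_0(X',F'^{*})$ \emph{into} $\Dscr_0(X,F^*)$. The small replacements are chosen one object at a time, with no compatibility built in. Since your strictification via coherent families $\{a_g\}$ with isomorphisms $\alpha_{h,g}:h^{*}a_g\isoto a_{g\circ h}$ presupposes that $h^{*}$ lands in $\Dscr_0$ of the source (otherwise $\alpha_{h,g}$ is an isomorphism between objects that do not both live in $\Dscr_0$), the construction does not get off the ground without this. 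In particular the evaluation-at-identity functor $\Dscr(X,F^*)\to\Dscr_0(X,F^*)$ being a quasi-equivalence (even essentially surjective) requires exactly this missing pseudo-functoriality of $\Dscr_0$, since one has to extend a given $a\in\Dscr_0(X,F^*)$ to a coherent family by setting $a_g:=g^{*}a$.

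The paper sidesteps the problem by a different construction, which is worth contrasting with yours. It first uses essential smallness of $\underline{\fSch}$ to fix a \emph{set} $S$ of representatives and chooses $\Dscr_0$ only on $S$. Then $\Dscr(X,F^*)$ is defined to have objects that are \emph{pairs} $(g,M)$ with $g:(X,F^*)\rightsquigarrow(Y,G^*)$, $(Y,G^*)\in S$, $M\in\Dscr_0(Y,G^*)$, and — crucially — the morphism complex between $(g_1,M_1)$ and $(g_2,M_2)$ is taken to be $\Hom(g_1^{*}M_1,g_2^{*}M_2)$ computed in the ambient big dg category $\hflatperf(X,F^*)/\hflat^{\circ}(X,F^*)$. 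Because the homs live in the big category, no compatibility of the small subcategories with pullback is ever needed; one only needs the pseudo-functoriality of the big enhancement $\hflatperf/\hflat^{\circ}$ (which is given). Functoriality on objects is then strict reindexing $(g,M)\mapsto(gf,M)$, with the coherence data $\alpha_{f,g}$ only appearing as isomorphisms between hom-complexes, not between objects of the small categories. Your proposal could in principle be repaired by closing $\Dscr_0$ up under all pullbacks in an essentially small skeleton of $\underline{\fSch}$, but that is an additional argument you would need to supply; as written, the claim that $\Dscr_0$ is a pseudo-functor is unsupported and in general false.

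There is also a secondary smallness concern: indexing families $\{a_g\}$ over \emph{all} generalised morphisms into $(X,F^*)$ need not give a small collection, since the sources range over the whole (only essentially small, not small) category $\underline{\fSch}$. The paper's pairs are a set precisely because $g$ is constrained to have target in the chosen set $S$.
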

		\begin{proof}
			First note that the commutativity of the bottom left triangle follows from that of the upper one, as we have a natural transformation of pseudo-functors
			\[
			\bD(\hflatperf/\hflat^{\circ})\cong[\hflat/\hflat^{\circ}]=\bD,
			\]
			which is proven in exactly the same manner as the latter part of \cite[Theorem 3.11]{KuznetsovLunts}.
			
			Thus, it suffices to construct $\Dscr$ and to show the commutativity of the upper triangle.
			For the construction note that the category $\underline{\fSch}$ is essentially small, as finite type schemes are essentially small and the addition of a filtration does not change this.
			Let $S$ denote a set of representatives of isomorphism classes of objects in $\underline{\fSch}$.
			
			Take $(Y,G^*)$ in $S$.
			By Proposition \ref{prop: perf ess small} $\Perf(Y,G^*)$ is essentially small.
			Hence we can find a small dg subcategory $\Dscr_{0}(Y,G^*)$ of $\hflatperf/\hflat^{\circ}(Y,G^*)$ inducing an equivalence on the homotopy categories, i.e.
			\[
			[\Dscr_0(Y,G^*)]\cong [\hflatperf/\hflat^{\circ}(Y,G^*)] \cong \Perf(Y,G^*).
			\]
			
			Now, let $(X,F^*)$ be arbitrary, we define $\Dscr(X,F^*)$ as follows.
			\begin{itemize}
				\item Objects: pairs $(g,M)$ where $g:(X,F^*)\rightsquigarrow(Y,G^*)$ is a morphism with target in $S$ and $M$ is an object of $\Dscr_{0}(Y,G^*)$.
				\item Morphisms: $\Hom( (g_{1},M_{1}), (g_{2},M_{2}) ) := \Hom( g_{1}^*M_{1}, g_{2}^*M_{2} ) $ where the latter $\Hom$ is taken in $\hflatperf/\hflat^{\circ}(X,F^*)$.
			\end{itemize}
			As $S$ is a set, we also have a set of morphisms with target in $S$. 
			Therefore, as the $\Dscr_{0}$'s are small, $\Dscr$ is small.
			
			In the following we will denote by 
			\[
			\alpha_{f,g}:f^*g^*\isoto(gf)^*\text{  and }\epsilon_{(X,F^*)}:(\id_{(X,F^*)})^*\isoto\id_{\hflatperf/\hflat^{\circ}(X,F^*)}
			\]
			the coherence isomorphisms of the pseudo-functor $\hflatperf/\hflat^{\circ}$.
			In order to make $\Dscr$ into a functor we define for any morphism $f:(X,F^*)\rightsquigarrow (X',F'^*)$ a dg functor $f^*:\Dscr(X',F'^*)\rightsquigarrow\Dscr(X,F^*)$ as follows.
			\begin{itemize}
				\item On objects: $(g,M)\mapsto (gf,M)$.
				\item On morphisms: sent $\phi: (g_{1},M_{1})\to (g_{2},M_{2})$ to the upper morphism in the diagram
				\begin{equation}\label{eq: f*:Dscr->Dscr on morphisms}
					\begin{tikzcd}[row sep=2.5em]
						{(g_1f)^*M_1} & {(g_2f)^*M_2} \\
						{f^*g_1^*M_1} & {f^*g_2^*M_2}\rlap{ .}
						\arrow[dashed, from=1-1, to=1-2]
						\arrow["{f^*\phi}", from=2-1, to=2-2]
						\arrow["{\alpha_{f,g_1}}"{description}, from=2-1, to=1-1]
						\arrow["{\alpha_{f,g_1}}"{description}, from=2-2, to=1-2]	
					\end{tikzcd}
				\end{equation}
			\end{itemize}
			This gives us the sought after functor $\Dscr:\underline{\fSch}\to\dgcat$; functoriality follows by the naturality and coherence conditions of the $\alpha$'s and $\epsilon$'s.
			
			It remains to show the claim concerning the upper triangle in \eqref{eq: compatibility of (pseudo)-functors}.
			Define a dg functor $\Dscr(X,F^*)\to \hflatperf/\hflat^{\circ}(X,F^*)$ on objects by $(g,M)\mapsto g^*M$ and on morphisms as equality. 
			Clearly, this functor is fully faithful. 
			Moreover, it gives an equivalence on $H^0$.
			Indeed, let $M$ be an arbitrary h-flat perfect complex over $(X,F^*)$ and pick an isomorphism $s:(X,F^*)\to(Y,F^*)$ with target in $S$. Then, $M$ is homotopic to $s^*M'$ where $M'$ is any object of $\Dscr_{0}(Y,F^*)$ homotopic to $s_{*}M'$. 
			
			Lastly, for any $f:(X,F^*)\rightsquigarrow (X',F'^*)$, the $\alpha_{f,-}$'s give, using \eqref{eq: f*:Dscr->Dscr on morphisms}, a natural isomorphism making the diagram
			\[\begin{tikzcd}[row sep=2.5em]
				{\Dscr(X',F'^*)} & {\hflatperf/\hflat^{\circ}(X',F'^*)} \\
				{\Dscr(X,F^*)} & {\hflatperf/\hflat^{\circ}(X,F^*)}
				\arrow[from=2-1, to=2-2]
				\arrow[from=1-1, to=1-2]
				\arrow["{f^*}"{description}, from=1-1, to=2-1]
				\arrow["{f^*}"{description}, from=1-2, to=2-2]
				\arrow["\alpha_{f,-}"', shorten <=4pt, shorten >=4pt, Rightarrow, from=1-2, to=2-1]
			\end{tikzcd}\]
			2-commute. 
		\end{proof}
		
		\begin{remark}
			In particular, with notation as in the proposition, we have for each generalised morphism $f:(X,F^*)\rightsquigarrow (Y,G^*)$ in $\underline{\fSch}$ a 2-commutative square
			\[
			\begin{tikzcd}[column sep=3em]
				\bD(\Dscr(Y,G^*))\arrow[r, "\bL\!\Ind_{{f^*}}"]\arrow[d, "\rotatebox{90}{\(\sim\)}"] & \bD(\Dscr(X,F^*))\arrow[d, "\rotatebox{90}{\(\sim\)}"] \\
				\bD(Y,G^*)\arrow[r, "\bL{f^*}"] & \bD(X,F^*)
			\end{tikzcd}
			\]
			where the vertical morphisms are equivalences.
		\end{remark}
		
		\begin{remark}
			It seems likely that the proof of \cite[Theorem B]{CanonacoNeemanStellari} generalises to the filtered setting, showing that the category of perfect complexes over a quasi-compact (quasi-)separated finite length filtered scheme has a unique dg enhancement.
			For the derived category this is known by Theorem A of loc.\ cit.\ which shows that for any abelian category $\Asf$ the derived category $\bD^?(\Asf)$, for $?= b, +, -, \varnothing$, has a unique dg enhancement.
		\end{remark}
		
		The following is \cite[Theorem 5.20]{KuznetsovLunts} translated into the filtered language.
		
		\begin{proposition}\label{prop: smoothness Perf(X,F)}
			Let $(X,F^*)$ be a filtered scheme in $\underline{\fSch}$.
			Suppose $X_0:=\VV_X(F^{-1}\Ocal_X)$ is smooth (over $\kk$), then $\Dscr(X,F^*)$ is a smooth dg category (over $\kk$).
			If $X_0$ is additionally proper, then $\Dscr(X,F^*)$ is dg proper.
		\end{proposition}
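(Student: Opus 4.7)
The plan is to combine the semi-orthogonal decomposition from Proposition \ref{prop: filtered SODs} with the directed dg category machinery from Section \ref{sec: directed dg cat}, reducing the statement to the known facts that $\Perf(X_0)$ is smooth (and proper, if $X_0$ is) and that the transition bimodules between the components are well-behaved.

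More precisely, I would first invoke Proposition \ref{prop: Perf(X,F)=Db(Coh(X,F))}, which uses the smoothness of $X_0$, to identify the category of perfect complexes with $\bD^b(\Coh^n(X,F^*))$, and then use Proposition \ref{prop: filtered SODs} to write
\[
\Perf(X,F^*)=\langle \bD^b(\Coh(X_0)),\dots,\bD^b(\Coh(X_0))\rangle,
\]
where since $X_0$ is smooth each component is just $\Perf(X_0)$. Via the (quasi-equivalence-unique) enhancement $\Dscr(X,F^*)$ one then transports this semi-orthogonal decomposition to the dg level, so by the converse direction (the one after Proposition \ref{prop: SOD directed}) $\Dscr(X,F^*)$ is quasi-equivalent to a directed dg category
\[
\Cscr=\begin{pmatrix} \Ascr_0 & & 0 \\ \vdots & \ddots & \\ \phi_{n-1,0} & \dots & \Ascr_{n-1} \end{pmatrix}
\]
with each $\Ascr_i$ a dg enhancement of $\Perf(X_0)$, and with bimodules $\phi_{ij}$ equal to the restriction of the diagonal bimodule of $\Dscr(X,F^*)$. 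Since smoothness and properness are Morita invariant, it suffices to show $\Cscr$ is smooth (respectively proper) and then apply Proposition \ref{prop: smoothness directed dg cat}.

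The first hypothesis of that proposition is easy: each $\Ascr_i$ enhances $\Perf(X_0)$ with $X_0$ smooth, hence is smooth (and proper, if $X_0$ is proper). The real content, and the main obstacle, is verifying that each $\phi_{ij}$ is right perfect. For this I would work on the $\Acal$-space side using Proposition \ref{prop: Auslander}: the components of the semi-orthogonal decomposition correspond, via the idempotents $e_0,\dots,e_{n-1}$ of $\Acal_{F^*}$, to the subcategories $\bD(e_i\Acal_{F^*}e_i)\simeq \bD(X_0)$, and the bimodule $\phi_{ij}$ is then identified (up to quasi-isomorphism) with the hom-complex $\RHom(-,e_j\Acal_{F^*}e_i)$ of the corresponding projective modules. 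Since these projectives are bounded complexes of $\Ocal_{X_0}$-bimodules given, via the matrix description in Remark \ref{rem: Auslander}, by quotients of quasi-coherent ideals on $X$ supported on $X_0$, their image in $\bD(\Ascr_j)\simeq\bD(X_0)$ is a perfect complex when $X_0$ is smooth; equivalently, the functor $-\otimes^{\bL}\phi_{ij}:\bD(\Ascr_i)\to\bD(\Ascr_j)$ preserves perfect objects because it factors as the projection of the semi-orthogonal decomposition composed with the inclusion, and both preserve compactness in our setting (using Lemma \ref{lem: compact sod}, Lemma \ref{lem: nice functor preserving or reflecting compactness} and the compact generation of $\bD(X,F^*)$ from Proposition \ref{prop: perf ess small}).

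Once right perfectness of the $\phi_{ij}$ is established, Proposition \ref{prop: smoothness directed dg cat} immediately yields that $\Cscr$, and hence $\Dscr(X,F^*)$, is smooth, and also proper whenever the $\Ascr_i$ are proper, i.e.\ whenever $X_0$ is proper. The substantive step is the perfectness of the bimodules: it hinges on understanding the transition bimodules of the Kuznetsov--Lunts semi-orthogonal decomposition in a form amenable to checking compactness preservation, and I expect it to be the main technical point that needs care.
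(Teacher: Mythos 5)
Your overall strategy is genuinely different from the paper's: where the paper simply transports \cite[Theorem 5.20]{KuznetsovLunts} across the equivalence of Proposition~\ref{prop: Auslander}, you attempt to re-derive smoothness from scratch by transporting the semi-orthogonal decomposition of Proposition~\ref{prop: filtered SODs} to a directed dg category $\Cscr$ and applying Proposition~\ref{prop: smoothness directed dg cat}. The outline is reasonable, and you rightly isolate the one substantive step (right perfectness of the $\phi_{ij}$), but neither of your two sub-arguments for it actually works.

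Your ``equivalently'' argument is circular. The functor $-\otimes^{\bL}\phi_{ij}$ factors as $\Res_{\iota_j}\circ\bL\!\Ind_{\iota_i}$, and $\bL\!\Ind_{\iota_i}$ always preserves compacts, so everything hinges on $\Res_{\iota_j}$ preserving compacts. By Lemma~\ref{lem: nice functor preserving or reflecting compactness}\ref{item: nicefunctor2} this is equivalent to $(\iota_j)^!$ commuting with direct sums, which is equivalent to $\Res_{\iota_j}$ sending representables of $\Dscr(X,F^*)$ to perfect $\Ascr_j$-modules --- and that is precisely the right perfectness of the $\phi_{ij}$ you are trying to prove. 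The remark after Lemma~\ref{lem: compact sod} shows explicitly that the projection to a semi-orthogonal component does \emph{not} preserve compactness in general, so you cannot quote this for free.

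Your matrix-description argument is closer in spirit to what \cite[Theorem 5.20]{KuznetsovLunts} actually does, but as written it contains a false identification: the diagonal entries of the Auslander algebra in Remark~\ref{rem: Auslander} are $e_i\Acal_{F^*}e_i=\Ocal_X/F^{i-n}\Ocal_X$, which equals $\Ocal_{X_0}$ only for $i=n-1$. So the components of the semi-orthogonal decomposition of Proposition~\ref{prop: filtered SODs} are \emph{not} the idempotent pieces $\bD(e_i\Acal_{F^*}e_i)$; identifying each component with $\bD(X_0)$ requires the finer construction of \cite[Corollary 5.15]{KuznetsovLunts}, and one then has to compute the cross-Ext of the images of $\Ocal_{X_0}$ under the resulting fully faithful functors and check perfectness on $X_0$. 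That computation is essentially the content of \cite[Theorem 5.20]{KuznetsovLunts}; your sketch does not provide a substitute for it, and the assertion that the relevant $\RHom$-complexes are ``quotients of quasi-coherent ideals on $X$ supported on $X_0$'' is only true set-theoretically, not as $\Ocal_{X_0}$-modules, which is what you need to conclude perfectness from smoothness of $X_0$.
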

		\begin{proof}
			This follows from Proposition \ref{prop: Auslander} and \cite[Theorem 5.20]{KuznetsovLunts}.
			It is a consequence of the semi-orthogonal decomposition of Proposition \ref{prop: filtered SODs} having appropriate perfect gluing bimodules.
		\end{proof}
		
	\subsection{Filtered blow-ups}\label{subsec: filt blow}
		We start by briefly recalling some aspects of the relative Proj construction in the non-filtered setting, see for example \cite[Section 3]{EGAII} for more information.
		
		Thus, let $X$ be a scheme and $\Acal$ a commutative quasi-coherent graded $\Ocal_X$-algebra.
		One can construct a scheme $\rProj_X\Acal$ by gluing together the usual Proj of a graded ring $\Proj\Acal(U)$ for $\Spec R=U\subseteq X$ affine open (see e.g.\ \cite[\href{https://stacks.math.columbia.edu/tag/01M3}{Section 01M3}]{stacks-project} for its construction).
		As $\Acal(U)$ is an $R$-algebra, $\Proj\Acal(U)$ is equipped with a natural morphism to $U$.
		These glue to give a natural morphism $\rProj_X\Acal\to X$.
		Moreover, to any quasi-coherent $\ZZ$-graded $\Acal$-module $\Mcal$ we can associate a quasi-coherent sheaf over $\rProj_X \Acal$. 
		We will use $\widetilde{\Mcal}$ or $\Mcal^\sim$ to denote this associated sheaf, as is customary.
		In order to avoid confusion with our notation for the associated Rees algebra of a filtered algebra, we will simply denote the latter $\Rees(\dots)$ in this subsection and the next. 
		The functor $^{\sim}$ is exact.
		
		Now, let $(X,F^*)$ be an $n$-filtered scheme.
		We extend the relative Proj construction to the filtered setting.
		Recall that $\Filt^n(X,F^*)$ is the category of length $n$ filtered $(\Ocal_X,F^*)$-modules; it consists of those filtered modules $(\Mcal,F^*)$ with $F^0\Mcal=\Mcal$ and $F^{-n}\Mcal=0$. 
		Consider an $\NN$-graded commutative algebra object $(\Acal,F^*)$ of $\Filt^n(X,F^*)$ whose filtration pieces are quasi-coherent. Explicitly, $(\Acal,F^*)$ consists of a collection\footnote{\label{foot: lower upper index}
			We use lower indices here as, in the next subsection, we will have to consider $\Rees(\Acal,F^*)=\oplus_j F^j\Acal t^j=\oplus_{i,j}F^j\Acal_i t^j$, which is a $\ZZ^2$-graded algebra; having upper and lower indexes will help differentiate between the gradings.
		} $\{(\Acal_i,F^*)\}_{i\in\NN}\subseteq \Filt^n(X,F^*)$ s.t.\ every $F^j\Acal_i$ is quasi-coherent as $\Ocal_X$-module together with commutative unitality and associativity (filtered) maps.
		Alternatively, we can think of $(\Acal,F^*)$ as a commutative quasi-coherent $\NN$-graded $\Ocal_X$-algebra equipped with a length $n$ filtration (i.e.\ satisfying $F^0\Acal=\Acal$ and $F^{-n}\Acal=0$) that is compatible with the filtration of $(\Ocal_X,F^*)$, i.e.\ $(\Acal,F^*)$ is a filtered $(\Ocal_X,F^*)$-module.
		See also the next paragraph. 
		
		By looking at the $F^0$-part of the filtration we obtain a quasi-coherent $\NN$-graded $\Ocal_X$-algebra $ F^0\Acal:=\oplus_i  F^0\Acal_i (=\oplus_i \Acal_i=\Acal)$ and we can consider the scheme $Y:=\rProj_X(F^0\Acal)$. 
		Moreover, every filtered piece $F^j\Acal$ can be viewed as a graded $ F^0\Acal$-module $\oplus_i F^j\Acal_i$ which is quasi-coherent as $\Ocal_X$-module.
		Thus, we can associate a quasi-coherent module $F^j\Ocal_Y:=({F^j\Acal})^\sim$ over $Y$, and note that $F^0\Ocal_Y=\Ocal_Y$.
		Therefore, we have a natural filtration on $\Ocal_Y$, which moreover has length $n$. 
		Denote the associated $n$-filtered scheme $(Y,F^*)$ by $\rProj_X(\Acal,F^*)$.
		Furthermore, note that the canonical morphism $\pi:Y\to X$ is compatible with the filtrations, thus giving a morphism of filtered schemes $\pi:(Y,F^*)\to (X,F^*)$.
		Of course, all of this also makes sense for filtered schemes with unbounded filtration.
		
		This leads to the following extension of blowing up in the filtered setting.
		Let $\Ical\subset\Ocal_X$ be a quasi-coherent sheaf of ideals. (There is no distinction between filtered ideal sheaves of $(\Ocal_X,F^*)$ and ideal sheaves of $\Ocal_X$ as the filtration is uniquely determined by the inclusion being a strict monomorphism, see also the beginning of the next subsection.)
		Denote by $\Acal:=\oplus_i \Ical^i$ the Rees algebra of $\Ical$ which we view as an $\NN$-graded algebra and equip every $\Acal_i=\Ical^i$ with the filtration induced from $(\Ocal_X,F^*)$, i.e.\ $F^j\Acal_i:=\Ical^i\cap F^j\Ocal_X$.
		Then, we are in the above setting and can apply the relative Proj construction to $(\Acal,F^*)$. 
		We obtain an $n$-filtered scheme $\Bl_\Ical(X,F^*):=\rProj_X(\Acal,F^*)$ which we refer to as the \emph{filtered blow-up of $(X,F^*)$ along $\Ical$}, and a canonical filtered morphism $\pi:\Bl_\Ical(X,F^*)\to(X,F^*)$.
		
		The next proposition, although straightforward, will be important as it lets us lift blow-ups in some sense, which we make precise now.
		Recall that for a scheme $X$ and a quasi-coherent ideal sheaf $\Ical$, the closed subscheme defined by $\Ical$ is denoted by $\VV_X(\Ical)$.
		We say that a morphism of filtered schemes $\pi:(Y,F^*)\to(X,F^*)$ \emph{lifts} a morphism of schemes $\overline{\pi}:\overline{Y}\to\overline{X}$ when the following statements hold:
		\begin{enumerate}
			\item\label{item: lift1} $\overline{X}=\VV_X(F^{-1}\Ocal_X)$,
			\item\label{item: lift2} $\overline{Y}=\VV_Y(F^{-1}\Ocal_Y)$,
			\item\label{item: lift3} the morphism 
			\[
			\begin{tikzcd}[row sep = 0.6em]
				{\overline{Y}} & {\overline{X}} \\
				{\VV_Y(F^{-1}\mathcal{O}_Y)} & {\VV_X(F^{-1}\mathcal{O}_X)}
				\arrow[from=2-1, to=2-2]
				\arrow[equal, from=1-1, to=2-1]
				\arrow[equal, from=1-2, to=2-2]
			\end{tikzcd}
			\] 
			induced by $\pi$ equals $\overline{\pi}$.
		\end{enumerate}
		Of course, it is always possible to find a lifting by endowing $\overline{Y}$ with the trivial filtration, but this is not always what one wants.
		Moreover, more generally, we can weaken the above, only requiring isomorphisms in \ref{item: lift1} and \ref{item: lift2} and then requiring the induced morphism in \ref{item: lift3} to be equal modulo those isomorphisms.
		(The reason we are interested in the closed subschemes defined by the $F^{-1}$-parts of the filtration is because of their presence in Proposition \ref{prop: smoothness Perf(X,F)}.)
		
		\begin{proposition}\label{prop: lifting blowups}
			Let $(X,F^*)$ be a filtered scheme and $\overline{X} := \VV_X(F^{-1}\Ocal_X)$.
			Suppose 
			\[
			\overline{\pi}:\overline{Y} := \Bl_{\overline{\Ical}}(\overline{X})\to \overline{X}
			\] 
			is the blow-up along a quasi-coherent sheaf of ideals $\overline{\Ical}\subseteq\Ocal_{\overline{X}}$.
			Then, there exists a quasi-coherent sheaf of ideals $\Ical\subseteq\Ocal_X$ such that $$\pi:(Y,F^*):=\Bl_\Ical(X,F^*)\to(X,F^*),$$ the filtered blow-up at $\Ical$, lifts $\overline{\pi}$.
		\end{proposition}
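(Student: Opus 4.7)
The plan is to construct $\Ical$ as the preimage of $\overline{\Ical}$ under the canonical surjection $\Ocal_X\twoheadrightarrow \Ocal_X/F^{-1}\Ocal_X=\Ocal_{\overline{X}}$. Concretely, $\Ical$ fits in a short exact sequence
\[
0\to F^{-1}\Ocal_X\to \Ical\to \overline{\Ical}\to 0
\]
of quasi-coherent $\Ocal_X$-modules (quasi-coherence of $\Ical$ follows from that of $F^{-1}\Ocal_X$ and $\overline{\Ical}$). With this choice, I form the filtered blow-up $(Y,F^*):=\Bl_\Ical(X,F^*)=\rProj_X(\Acal,F^*)$, where $\Acal_i=\Ical^i$ and $F^j\Acal_i=\Ical^i\cap F^j\Ocal_X$, together with its canonical morphism $\pi$ to $(X,F^*)$.

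To verify that $\pi$ lifts $\overline{\pi}$, I would compute $\VV_Y(F^{-1}\Ocal_Y)$. Since the functor $(-)^\sim$ from quasi-coherent graded $\Acal$-modules to quasi-coherent sheaves on $Y=\rProj_X \Acal$ is exact and $F^{-1}\Ocal_Y=(F^{-1}\Acal)^\sim$, I get
\[
\Ocal_Y/F^{-1}\Ocal_Y\cong (\Acal/F^{-1}\Acal)^\sim,
\]
so $\VV_Y(F^{-1}\Ocal_Y)\cong \rProj_X(\Acal/F^{-1}\Acal)$ as schemes over $X$. The key computation is then to identify the graded algebra $\Acal/F^{-1}\Acal$. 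In degree $i$ this is
\[
\Ical^i/(\Ical^i\cap F^{-1}\Ocal_X),
\]
which is precisely the image of $\Ical^i$ in $\Ocal_{\overline{X}}$. As $\Ical$ maps onto $\overline{\Ical}$ by construction, this image coincides with $\overline{\Ical}^i$. Hence as quasi-coherent graded $\Ocal_{\overline{X}}$-algebras,
\[
\Acal/F^{-1}\Acal \cong \bigoplus_{i\geq 0}\overline{\Ical}^i,
\]
and taking relative Proj gives $\VV_Y(F^{-1}\Ocal_Y)\cong \Bl_{\overline{\Ical}}(\overline{X})=\overline{Y}$, naturally over $\overline{X}=\VV_X(F^{-1}\Ocal_X)$.

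Finally, the morphism induced by $\pi$ from this closed subscheme to $\VV_X(F^{-1}\Ocal_X)$ agrees with $\overline{\pi}$ because the isomorphism above is compatible with the structure maps of the relative Proj construction: the projection $\Acal\twoheadrightarrow \Acal/F^{-1}\Acal$ is a morphism of graded $\Ocal_X$-algebras covering the closed immersion $\overline{X}\hookrightarrow X$, and $\rProj$ is functorial with respect to such data. The main (mild) obstacle is keeping track of the identification $(\Acal/F^{-1}\Acal)_i\cong \overline{\Ical}^i$; this is straightforward once one notes that passing to the quotient $\Ocal_X\twoheadrightarrow \Ocal_{\overline{X}}$ commutes with taking powers of ideals, so no further hypothesis (e.g.\ flatness or Noetherianity) is needed.
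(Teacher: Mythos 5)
Your proposal is correct and takes essentially the same approach as the paper: define $\Ical$ as the preimage of $\overline{\Ical}$ under $\Ocal_X\twoheadrightarrow\Ocal_{\overline{X}}$, compute $\Acal/F^{-1}\Acal\cong\oplus_j\overline{\Ical}^j$ via the second isomorphism theorem (which is where $F^{-1}\Ocal_X\subseteq\Ical$ is used), apply exactness of $(-)^\sim$, and invoke functoriality of $\rProj$ along the affine morphism $\overline{X}\hookrightarrow X$. The paper's proof is just slightly more careful about tracking $i_*$ and spells out the final compatibility by passing to affine opens, but the substance is identical.
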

		\begin{proof}
			Let $i:\overline{X}\hookrightarrow X$ denote the closed embedding.
			Consider the quasi-coherent ideal sheaf $\Ical$ of $X$ containing $F^{-1}\Ocal_X$ with $\Ical/F^{-1}\Ocal_X=i_*\overline{\Ical}$, i.e.\ the pullback
			\[
			\begin{tikzcd}[row sep=1em]
				\Ical & i_*\overline{\Ical} \\
				\Ocal_X & i_*\Ocal_{\overline{X}}\rlap{ .}
				\arrow[from=1-1, to=2-1, phantom, sloped, "\subseteq"]
				\arrow[from=1-2, to=2-2, phantom, sloped, "\subseteq"]
				\arrow[from=1-1, to=1-2]
				\arrow[from=2-1, to=2-2]
				\arrow["\lrcorner"{anchor=center, pos=0.125}, draw=none, from=1-1, to=2-2]
			\end{tikzcd}
			\]
			Define 
			\[
			(Y,F^*):=\Bl_\Ical(X,F^*) 
			\]
			the filtered blow-up of $(X,F^*)$ along $\Ical$ and denote by $\pi$ the canonical filtered morphism to $(X,F^*)$.
			
			To show \ref{item: lift2} in the requirement of a lift note that
			\begin{align*}
				F^0\Acal/F^{-1}\Acal &= \oplus_j \Ical^j/ \oplus_j (\Ical^j\cap F^{-1}\Ocal_X) \\
				&= \oplus_j (\Ical^j/ \Ical^j\cap F^{-1}\Ocal_X) \\
				&= \oplus_j (\Ical^j+ F^{-1}\Ocal_X)/ F^{-1}\Ocal_X \\
				&= i_*(\oplus_j \overline{\Ical}^j)
			\end{align*}
			as $\mathcal{O}_X$-modules (where we sneakily used that $i_*$ commutes with direct sums).
			Therefore, $\VV_Y(F^{-1}\Ocal_Y)$ is exactly 
			\[
			\rProj_X\left(F^0\Acal/F^{-1}\Acal\right)= \rProj_X\left(i_*(\oplus_j \overline{\Ical}^j)\right)
			\]		
			but we have\footnote{
				As $i:\overline{X}\hookrightarrow X$ is affine, the subscript in the relative Proj only changes the scheme over which we view the Proj, not the actual scheme itself. 
				To see this, look at the construction; the graded algebras that get glued are the same.
				Moreover, here specifically we even have
				\[
				\rProj_X\left(i_*(\oplus_j \overline{\Ical}^j)\right) = \rProj_X\left(i_*(\oplus_j \overline{\Ical}^j)\right)\times_X \overline{X} = \rProj_{\overline{X}}\left(i^*i_*(\oplus_j \overline{\Ical}^j)\right) = \rProj_{\overline{X}}\left(\oplus_j \overline{\Ical}^j\right),
				\]
				where the first identification follows from the fact that $\rProj_X\left(i_*(\oplus_j \overline{\Ical}^j)\right)\to X$ factors through the monomorphism $\overline{X}\hookrightarrow X$ (as the ideal $F^{-1}\Ocal_X$ gets mapped to zero).
			}
			\[
			\rProj_X\left(i_* (\oplus_j \overline{\Ical}^j)\right) = \rProj_{\overline{X}}\left(\oplus_j \overline{\Ical}^j\right) =	\overline{Y}.
			\]	
			Furthermore, by looking at affine opens of $X$, one sees that the diagram
			\[
			\begin{tikzcd}[row sep = 0.7em]
				{Y=\rProj_X\left(\oplus_j \Ical^j\right)} & X \\[1.3em]
				{\rProj_X\left(i_* (\oplus_j \overline{\Ical}^j)\right)} \\
				{\overline{Y}=\rProj_{\overline{X}}\left(\oplus_j \overline{\Ical}^j\right)} & {\overline{X}}
				\arrow["{\overline{\pi}}", from=3-1, to=3-2]
				\arrow["i"', hook, from=3-2, to=1-2]
				\arrow[from=3-1, to=2-1, phantom, sloped, "="]
				\arrow[hook, from=2-1, to=1-1]
				\arrow["\pi", from=1-1, to=1-2]
			\end{tikzcd}
			\]
			commutes, showing that \ref{item: lift3} in the requirement of a lift is fulfilled.
		\end{proof}
		
		\begin{corollary}\label{cor: existence filt resolution}
			Let $(X,F^*)$ be a filtered scheme such that $\overline{X} := \VV_X(F^{-1}\Ocal_X)$ admits a resolution of singularities $\overline{\pi}:\overline{Y}\to\overline{X}$ by a blow-up.
			
			Then $\overline{\pi}$ can be lifted to a morphism of filtered schemes $\pi:(Y,F^*)\to (X,F^*)$.
			In particular, $\VV_Y(F^{-1}\Ocal_Y) = \overline{Y}$ is smooth.
		\end{corollary}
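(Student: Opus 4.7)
The plan is to simply apply the preceding proposition, as the corollary is essentially an immediate translation of its content into the language of resolutions.

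First I would unpack the hypothesis: saying that $\overline{\pi}:\overline{Y}\to\overline{X}$ is a resolution of singularities by a blow-up means there is a quasi-coherent ideal sheaf $\overline{\Ical}\subseteq\Ocal_{\overline{X}}$ with $\overline{Y}=\Bl_{\overline{\Ical}}(\overline{X})$, and moreover that $\overline{Y}$ is smooth. This puts us precisely in the setup of Proposition \ref{prop: lifting blowups}.

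Next I would invoke Proposition \ref{prop: lifting blowups} directly: it produces a quasi-coherent ideal $\Ical\subseteq\Ocal_X$ (explicitly, the pullback of $i_*\overline{\Ical}\hookrightarrow i_*\Ocal_{\overline{X}}$ along $\Ocal_X\to i_*\Ocal_{\overline{X}}$) such that the filtered blow-up $\pi:(Y,F^*):=\Bl_{\Ical}(X,F^*)\to (X,F^*)$ lifts $\overline{\pi}$ in the sense defined just before the proposition. This establishes the first assertion of the corollary.

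For the second assertion, condition (ii) in the definition of a lift gives the identification $\VV_Y(F^{-1}\Ocal_Y)=\overline{Y}$, and by hypothesis $\overline{Y}$ is smooth, yielding the \emph{in particular} claim. There is no genuine obstacle here; the work has already been done in Proposition \ref{prop: lifting blowups}, and the corollary is best viewed as repackaging that result in the situation of interest for later sections of the paper.
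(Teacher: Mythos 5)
Your proof is correct and matches what the paper intends: the corollary is precisely Proposition \ref{prop: lifting blowups} applied to the blow-up $\overline{\pi}$, with the smoothness of $\VV_Y(F^{-1}\Ocal_Y)$ read off from condition (ii) in the definition of a lift together with the hypothesis that $\overline{Y}$ is smooth. The paper states this without a written proof, and yours supplies the expected argument.
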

		\begin{remark}
			If $\overline{X}$ is a quasi-projective variety, a proper birational morphism being a blow-up is equivalent to it being projective. 
			Moreover, as the composition of a blow-up is a blow-up (under some mild hypotheses \cite[\href{https://stacks.math.columbia.edu/tag/080B}{Lemma 080B}]{stacks-project}), it follows from Hironaka \cite{Hironaka} that every reduced separated scheme of finite type over a field of characteristic zero can be resolved by a blow-up.
		\end{remark}
		
	\subsection{Filtered nonrational loci}\label{subsec: filt nrl}
		Let $(X,F^*)$ be a filtered scheme.
		A morphism of filtered modules $f:(\Mcal,F^*)\to(\Ncal,F^*)$ is called \emph{strict} if $f(F^i\Mcal)=f(\Mcal)\cap F^i\Ncal$ for all $i\in\ZZ$.
		It is a \emph{monomorphism}, respectively, \emph{epimorphism} in $\Filt(X,F^*)$ when $\Mcal\to\Ncal$ is so in $\Mod(X)$, equivalently the kernel, respectively, cokernel of this map is zero.
		In general only the strict monomorphisms/epimorphisms get mapped by $\iota$ from Proposition \ref{prop: Rees gives abelian hull} to monomorphisms/epimorphisms in $\grMod(\tOcal_{X})$.
		
		An ideal $\Ical\subseteq \Ocal_X$ is always endowed with the filtration induced from $(\Ocal_X,F^*)$, i.e. $F^i\Ical := \Ical\cap F^i\Ocal_X$.
		Another way of stating this is that we require the inclusion $(\Ical,F^*)\hookrightarrow(\Ocal_X, F^*)$ to be a strict monomorphism. 
		We sometimes refer to these as \emph{filtered ideal sheaves}, but as there is no choice in the filtration there is no difference between a filtered ideal sheaf and an ideal sheaf.
		
		Furthermore, to any ideal $\Ical\subseteq \Ocal_X$ we can associate a \emph{filtered closed subscheme} of $(X,F^*)$.
		This is done by considering the usual closed subscheme $i:\VV_X(\Ical)\hookrightarrow X$ endowed with the filtration induced from $(X,F^*)$, i.e.\ $  F^j\Ocal_{\VV_X(\Ical)} := i^{-1}( (F^j\Ocal_X + \Ical) / \Ical )$.
		This is exactly requiring $(\Ocal_X,F^*)\to i_*(\Ocal_{\VV_X(\Ical)},F^*)$ to be a strict epimorphism.
		
		As $(F^j\Ocal_X + \Ical) / \Ical = F^j\Ocal_X / \Ical\cap F^j\Ocal_X$ the above two constructions are nicely compatible.
		Further evidence that these are the `correct' things to do is given by the following lemma, which is proven in exactly the same way as in the non-filtered case.
		
		\begin{lemma}
			Let $(X,F^*)$ be a filtered scheme. 
			There is a one-to-one correspondence between quasi-coherent filtered ideal sheaves $(\Ical,F^*)$ of $(\Ocal_X, F^*)$ and filtered closed subschemes\:\!\footnote{
				The map $i$ is the inclusion of a closed subset $Y$ of $X$ and the morphism of structure sheaves $i^\sharp:(X,F^*)\to i_*(Y,F^*)$ is a strict epimorphism.} 
			$i:(Y, F^*)\hookrightarrow(X, F^*)$. More precisely,
			\[
			(\Ical,F^*)\mapsto (Z,F^*),
			\]
			where $Z$ is the support of $\Ocal_X/\Ical$ and $(\Ocal_Z,F^*)$ is the sheaf of filtered rings on Z corresponding to $(\Ocal_X,F^*)/(\Ical,F^*)$, and
			\[
			\ker (i^\sharp) \mapsfrom  (Y, F^*),
			\]
			where $i^\sharp$ is the morphism $(\Ocal_X,F^*)\to i_*(\Ocal_Y,F^*)$ of structure sheaves.
		\end{lemma}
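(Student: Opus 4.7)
The plan is to reduce the proof, as much as possible, to the unfiltered analogue (the well-known correspondence between quasi-coherent ideal sheaves and closed subschemes of a scheme), and to use the fact that, by the conventions fixed in the two paragraphs preceding the lemma, \emph{neither} side of the putative bijection involves a genuine choice of filtration: the filtration on a quasi-coherent ideal sheaf $\Ical\subseteq\Ocal_X$ is forced by strictness of the inclusion, and the filtration on a closed subscheme $i\colon Z\hookrightarrow X$ is forced by strictness of the epimorphism $i^\sharp$.

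First I would verify that the two assignments are well-defined. Given a quasi-coherent ideal sheaf $\Ical\subseteq\Ocal_X$, set $Z:=\VV_X(\Ical)$ and $F^j\Ocal_Z:=i^{-1}\bigl((F^j\Ocal_X+\Ical)/\Ical\bigr)$ exactly as in the paragraph on filtered closed subschemes. Using the identity $(F^j\Ocal_X+\Ical)/\Ical=F^j\Ocal_X/(F^j\Ocal_X\cap\Ical)$, one checks directly that the defining conditions of a filtered scheme hold (the filtration is ascending, bottoms out at $0$, has $F^0=\Ocal_Z$, and is closed under multiplication because it is so on $X$), and that $i^\sharp$ is strict by construction. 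Conversely, given a filtered closed subscheme $i\colon(Z,F^*)\hookrightarrow(X,F^*)$, the ideal sheaf $\Ical:=\ker i^\sharp$ is quasi-coherent (this is the unfiltered statement), and the filtration it inherits from $(\Ocal_X,F^*)$, namely $F^j\Ical=\Ical\cap F^j\Ocal_X$, coincides with the kernel $\ker(F^j\Ocal_X\to i_*F^j\Ocal_Z)$ precisely because $i^\sharp$ is strict.

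Next I would check that the two assignments are mutually inverse. On the underlying unfiltered data this is exactly the classical correspondence, so the only content is the compatibility of the filtrations. Going $\Ical\mapsto(Z,F^*)\mapsto\ker i^\sharp$, the result is the ideal whose $j$th piece is $\ker(F^j\Ocal_X\to F^j\Ocal_X/(F^j\Ocal_X\cap\Ical))=F^j\Ocal_X\cap\Ical$, which is the filtration we originally put on $\Ical$. Going $(Y,F^*)\mapsto\Ical=\ker i^\sharp\mapsto(Z,F^*)$, the underlying scheme $Z$ agrees with $Y$ by the classical statement, and the filtration on $\Ocal_Z$ is $F^j\Ocal_X/(F^j\Ocal_X\cap\Ical)$, which equals $F^j\Ocal_Y$ because $i^\sharp$ was strict.

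I do not expect serious obstacles: the statement is essentially a bookkeeping exercise once the correct definitions of strict monomorphism and strict epimorphism are in place. The only point that requires a little care is ensuring that the filtered module $(\Ocal_X,F^*)/(\Ical,F^*)$, which a priori lives in the quasi-abelian category $\Filt(X,F^*)$, really does give rise to a sheaf of filtered rings concentrated on the support of $\Ocal_X/\Ical$; this follows from the fact that taking the quotient by a strict subobject is computed underlyingly, so the multiplicative structure and the support are inherited from the unfiltered quotient.
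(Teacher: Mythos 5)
Your proposal is correct and matches the paper's approach: the paper gives no written proof, only the remark in the sentence introducing the lemma that it "is proven in exactly the same way as in the non-filtered case," and your sketch spells out precisely that reduction, observing that strictness forces the filtrations on both sides so that the classical unfiltered bijection carries over verbatim.
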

		\begin{remark}
			If we allowed non-strict monomorphisms $(\Ical,F^*)\hookrightarrow(\Ocal_X, F^*)$ as filtered ideals, we would not obtain this bijection. 
			To go from a closed subscheme to the associated ideal, one takes a kernel and kernels are always strict monomorphisms.
			Similarly, if, for a filtered closed subscheme, one wants $i_*(\Ocal_Y, F^*)$ to be the cokernel of $(\Ical,F^*)\hookrightarrow(\Ocal_X,F^*)$, one is lead to require $(\Ocal_X,F^*)\to i_*(\Ocal_Z, F^*)$ to be a strict epimorphism.
		\end{remark} 
		
		The following is the natural generalisation of the definition of a nonrational locus of \cite[Definition 6.1]{KuznetsovLunts} to the filtered setting (but we choose to state it using the ideal sheaf instead of the corresponding closed subscheme).
		\begin{definition}
			Let $f:(Y,F^*)\to(X,F^*)$ be a morphism of $n$-filtered schemes.
			A quasi-coherent ideal sheaf $\Ical\subseteq\Ocal_X$ of $X$ is called \emph{the ideal sheaf of a filtered nonrational locus of $(X,F^*)$ with respect to $f$} if the canonical morphisms\footnote{Here, the first morphism is obtained by adjunction from the morphism $f^*\Ical\to f^{-1}\Ical\cdot\Ocal_Y$ (obtained in its turn from the factorisation through the image of $f^*\Ical\to f^*\Ocal_X=\Ocal_Y$) which is compatible with the filtrations (this follows immediately from the fact that $f$ is a filtered morphism since ideals carry the induced filtration). The second morphism is the usual one which is part of the data of a derived functor.}
			\begin{equation}\label{eq: nrl}
				(\Ical,F^*)\to  f_* (f^{-1}\Ical\cdot\Ocal_Y,F^*)\to \bR f_* (f^{-1}\Ical\cdot\Ocal_Y,F^*)
			\end{equation}
			are isomorphisms.
		\end{definition}
		\begin{remark}
			Of course, we should really have written
			\[
			\iota{(\Ical,F^*)}\to  f_* \left(\iota{(f^{-1}\Ical\cdot\Ocal_Y,F^*)}\right)\to \bR f_* \left(\iota{(f^{-1}\Ical\cdot\Ocal_Y,F^*)}\right),
			\]
			where $\iota$ denotes both inclusions $\Filt^n\hookrightarrow\grMod^n$, see Proposition \ref{prop: Rees gives abelian hull}.
			As this looks even more unsightly we refrain from doing this unless confusion can arise.
		\end{remark}
		\begin{remark}
			Justification for the name is as follows.
			Suppose $X$ is a variety over field of characteristic zero, and $f:Y\to X$ is a resolution of singularities.
			One says that $X$ has \emph{rational singularities} when the natural morphism $\Ocal_X\to\bR f_*\Ocal_Y$ is an isomorphism.
			If $S:=\VV_X(\Ical)$ is a nonrational locus for $f$, then $X\backslash S$ has rational singularities (as $\Ical|_{X\backslash S}=\Ocal_{X\backslash S}$, etc.).
		\end{remark}
		
		Suppose $\Ical$ is the ideal of a filtered nonrational locus as in the definition.
		Then, due to the following diagram:
		\[
		\begin{tikzcd}
			{\bR f_*\tOcal_Y} & {\bR f_*\tOcal_{\VV_Y(f^{-1}\Ical \cdot \Ocal_Y)}} & {{\bR f_* (f^{-1}\Ical\cdot\Ocal_Y,F^*)}}[1] & {} \\
			{\tOcal_X} & {\tOcal_{\VV_X(\Ical)}} & (\Ical,F^*)[1] & {}\rlap{ ,}
			\arrow[from=2-1, to=1-1]
			\arrow[from=1-1, to=1-2]
			\arrow[from=2-1, to=2-2]
			\arrow[from=2-2, to=1-2]
			\arrow[from=1-2, to=1-3]
			\arrow[from=1-3, to=1-4]
			\arrow[from=2-2, to=2-3]
			\arrow[from=2-3, to=2-4]
			\arrow[from=2-3, to=1-3, "\sim", sloped]
		\end{tikzcd}
		\]
		one can already guess, taking a glance at Lemma \ref{lem: t(A)=t(t(alpha))}, how this notion will lead to an acyclic square on the level of enhancements.
		We show this in Proposition \ref{prop: nr locus give acyclic square} below.
		
		It is not clear how to construct nonrational loci in general.
		Luckily \cite[Lemma 6.3]{KuznetsovLunts}, showing the existence of nonrational loci for blow-ups, extends to the filtered setting.
		
		\begin{proposition}\label{prop: existence nonrat locus}
			Let $(X,F^*)$ be a Noetherian filtered scheme of finite length.
			Suppose $f : \Bl_\Ical(X,F^*) \to (X,F^*)$ is a filtered blow-up along a coherent\:\!\footnote{This is automatic if one requires $\Ical$ to come from a filtered closed subscheme, as it will then be quasi-coherent and hence automatically coherent as we are in a Noetherian setting.} sheaf of ideals $\Ical$.
			Then, for $k\gg 0$ the ideal $\Ical^k$ is the ideal of a filtered nonrational locus of $(X,F^*)$ with respect to $f$.
		\end{proposition}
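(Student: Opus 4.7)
The strategy is to reduce the claim to a componentwise check via Lemma~\ref{lem: compat filtered der push with underlying der push}, and then to invoke the Artin--Rees lemma together with classical Serre vanishing for relative $\rProj$ to obtain the required isomorphisms with a uniform bound in $k$.

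Set $\Jcal:=f^{-1}\Ical\cdot\Ocal_{Y}$, so that $f^{-1}\Ical^{k}\cdot\Ocal_{Y}=\Jcal^{k}$. By Lemma~\ref{lem: compat filtered der push with underlying der push} (applied to both $f_{*}$ and $\bR f_{*}$), the canonical morphisms \eqref{eq: nrl} for $\Ical^{k}$ are isomorphisms if and only if the corresponding morphisms on graded pieces
\[
\gr_{-i}(\Ical^{k},F^{*})\longrightarrow f_{*}\gr_{-i}(\Jcal^{k},F^{*})\longrightarrow \bR f_{*}\gr_{-i}(\Jcal^{k},F^{*})
\]
are isomorphisms for every $0\leq i<n$. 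Using the short exact sequences $0\to F^{-i-1}(-)\to F^{-i}(-)\to \gr_{-i}(-)\to 0$ on both sides, induction on $i$ reduces this further to checking that
\[
F^{-i}\Ical^{k} \isoto \bR f_{*}(F^{-i}\Jcal^{k}),\qquad 0\leq i<n,
\]
for $k$ sufficiently large, where $F^{-i}\Jcal^{k}=\Jcal^{k}\cap F^{-i}\Ocal_{Y}$ by the convention for filtered ideals adopted in this section.

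Since $\Ical$ is coherent and $\Jcal$ is invertible on the blow-up $Y=\rProj_{X}(\oplus_{m}\Ical^{m})$, one identifies $F^{-i}\Jcal^{k}$, for $k$ sufficiently large, with the twisted sheaf $\widetilde{\Mcal}_{i}(k)$ associated to the graded $(\oplus_{m}\Ical^{m})$-module $\Mcal_{i}:=\bigoplus_{m\geq 0}(\Ical^{m}\cap F^{-i}\Ocal_{X})$. Artin--Rees applied to $F^{-i}\Ocal_{X}\subseteq \Ocal_{X}$ shows that $\Mcal_{i}$ is finitely generated as a graded module over the Noetherian graded algebra $\oplus_{m}\Ical^{m}$, and moreover that $\Ical^{m+k}\cap F^{-i}\Ocal_{X}=\Ical^{m}\cdot(\Ical^{k}\cap F^{-i}\Ocal_{X})$ for $k$ sufficiently large and all $m\geq 0$. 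Classical Serre vanishing for relative $\rProj$ (see e.g.\ \cite[\href{https://stacks.math.columbia.edu/tag/01YR}{Section 01YR}]{stacks-project}) then provides, for each $i$, a threshold $k_{i}$ such that for $k\geq k_{i}$ one has $R^{j}f_{*}\widetilde{\Mcal}_{i}(k)=0$ for $j>0$ and $f_{*}\widetilde{\Mcal}_{i}(k)=F^{-i}\Ical^{k}$. Taking $k\geq \max_{0\leq i<n}k_{i}$ (possible since the filtration has finite length $n$) yields the needed isomorphisms simultaneously for all $i$.

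The principal obstacle is the identification of $F^{-i}\Jcal^{k}=\Jcal^{k}\cap F^{-i}\Ocal_{Y}$ with the twisted module $\widetilde{\Mcal}_{i}(k)$. The natural map $\widetilde{\Mcal}_{i}\otimes_{\Ocal_{Y}}\Ocal_{Y}(k)\to F^{-i}\Jcal^{k}$ is injective by invertibility of $\Ocal_{Y}(k)$, but surjectivity rests on controlling how the filtration on $\Ocal_{Y}$ interacts with the ideal $\Jcal$; this is precisely where the Artin--Rees-based identity above is used, ensuring that ``multiplication by $\Jcal^{k}$'' and ``intersection with $F^{-i}\Ocal_{Y}$'' are interchangeable once $k$ is past all relevant Artin--Rees thresholds.
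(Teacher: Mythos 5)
Your proof is correct and follows the paper's underlying strategy: reduce componentwise to the non-filtered case via Lemma~\ref{lem: compat filtered der push with underlying der push} and then apply Serre vanishing for relative $\rProj$. The paper packages this through the bigraded Rees-module framework of Lemma~\ref{lem: filt cohomology} applied to $\Rees(\Acal,F^*)$; you unpack it, working directly with the graded $(\oplus_m\Ical^m)$-modules $\Mcal_i=\oplus_m(\Ical^m\cap F^{-i}\Ocal_X)$, and you make the Artin--Rees input for finite generation explicit (the paper leaves it implicit in the coherence assertion about $\Rees(\Acal,F^*)$). Two small remarks. First, in this paper's conventions $\gr_{-i}$ applied to $\iota(\Ical^k,F^*)$ already \emph{is} the filtration piece $F^{-i}\Ical^k$ (it is not the associated graded), so the short-exact-sequence d\'evissage you insert between the $\gr_{-i}$ reduction and the $F^{-i}$ check is redundant, though harmless. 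Second, the identification $F^{-i}\Jcal^k=\widetilde{\Mcal_i}(k)$ does not actually require $k\gg 0$ and is not where Artin--Rees enters: since $(-)^\sim$ is exact it preserves intersections of subsheaves, so $\Jcal^k\cap F^{-i}\Ocal_Y=\bigl(\Acal(k)_{\geq0}\cap F^{-i}\Acal\bigr)^\sim=\widetilde{\Mcal_i(k)}$ for every $k\geq 0$; the paper records this as the observation that $(\Acal(k)_{\geq0},F^*)\hookrightarrow(\Acal,F^*)$ is a strict monomorphism preserved by $^\sim$. Artin--Rees is needed only for the coherence of $\Mcal_i$ feeding into Serre vanishing, and the threshold $k$ comes purely from that vanishing.
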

		The proof requires some preparatory work which we do now. 
		Essentially, in order to `copy-paste' the proof of \cite[Lemma 6.3]{KuznetsovLunts} we have to extend some results of Serre \cite{SerreFAC} concerning the cohomology of coherent modules over projective schemes to the filtered setting.
		
		\subsubsection{Cohomology of sheaves of graded rings}
		
		\paragraph{\textit{Non-filtered version}}
		Let $X$ be a Noetherian scheme and $\Acal=\oplus_{i}\Acal_i$ a quasi-coherent graded $\Ocal_X$-algebra\footnote{Again, we write the grading with subscripts here, see also footnote \ref{foot: lower upper index}.} such that $\Acal_0$ is a coherent $\Ocal_X$-module. 			
		Assume furthermore, for convenience, that $\Acal$ is locally generated by $\Acal_1$ as an $\Acal_0$-algebra (equivalently as $\Ocal_X$-algebra); by this we mean that the morphism $\Sym_{\Acal_0}\Acal_1\to\Acal$ induced by multiplication is surjective.
		It follows that $\Acal$ is coherent in every degree, and moreover is Noetherian\footnote{It satisfies the ascending chain condition on quasi-coherent ideal sheaves, i.e.\ the categorical definition of being a Noetherian object of $\QCoh(X,\Acal)$, or equivalently it is given by a Noetherian algebra on every affine open subset.}.
		
		The following can be made to work, with slight modifications, without the local degree one generation (essentially because some Veronese will be locally generated in degree one), see also \cite[\href{https://stacks.math.columbia.edu/tag/0BXE}{Section 0BXE}]{stacks-project}.
		However, as we are only really interested in the blow-up situation, we restrict to the locally generated in degree one setting for ease.
		
		Recall that to any quasi-coherent $\ZZ$-graded $\Acal$-module $\Mcal$ we can associate a quasi-coherent sheaf $\widetilde{\Mcal}$ over $Y:=\rProj_X \Acal$. 
		By the degree one generation assumption this commutes with tensor products\footnote{
			This uses the local generation in degree one assumption.
			If $S$ is a graded ring, $M$ and $N$ are graded $S$-modules and $f\in S$, then the natural morphism (notation of Hartshorne for example)
			\[
			M_{(f)} \otimes_{S_{(f)}} N_{(f)} \to (M \otimes_{S} N)_{(f)}
			\]
			is in general only an isomorphism if $\deg(f)=1$, see e.g.\ \cite[\href{https://stacks.math.columbia.edu/tag/01ML}{Remark 01ML}]{stacks-project} for a counterexample.
		}.
		In particular, $(\Mcal(k))^\sim\cong\widetilde{\Mcal}(k):=\widetilde{\Mcal}\otimes_{\Ocal_Y} \Ocal_Y(1)^{\otimes k}$ (where $\Ocal_Y(1):=(\Acal(1))^\sim$). 
		
		\begin{lemma}\label{lem: non-filt cohomology}
			Let $Y=\rProj_X \Acal$ be as above and denote by $f:Y\to X$ the corresponding structure morphism. 
			Let $\Mcal$ be a coherent $\ZZ$-graded $\Acal$-module.
			The canonical morphisms
			\begin{equation}\label{eq: M_k=Rf_*M(k)}
				\Mcal_k \to f_* ((\Mcal(k))^\sim)\to \bR f_* ((\Mcal(k))^\sim)
			\end{equation}
			are isomorphisms for $k\gg 0$.
		\end{lemma}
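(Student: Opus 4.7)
The plan is to reduce this to the classical Serre vanishing and saturation statement for relative projective space. The statement is local on $X$: for $f$ quasi-compact and separated (which it is here since $f$ is proper), the derived pushforward commutes with restriction along open immersions, and the natural morphisms in the statement are local in nature. Hence I may assume $X = \Spec R$ is affine with $R$ Noetherian, and $\Acal$ corresponds to a graded $R$-algebra $A = \bigoplus_i A_i$ with $A_0$ finite over $R$ and $A$ generated by $A_1$ over $A_0$. Then $A_1$ is finite over $A_0$, hence over $R$, so we obtain a surjection of graded $R$-algebras $R[T_0,\dots,T_n] \twoheadrightarrow A$ for some $n$, inducing a closed immersion $i : Y = \Proj A \hookrightarrow \mathbb{P}^n_R$.

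Next, I would transport the problem across $i$. Viewing $\Mcal$ as a coherent graded $R[T_0,\dots,T_n]$-module via restriction of scalars, one has $i_*(\widetilde{\Mcal}) = \widetilde{\Mcal}^{\,\prime}$ for the corresponding sheaf on $\mathbb{P}^n_R$, and twisting is compatible (since $i^*\Ocal_{\mathbb{P}^n_R}(1) = \Ocal_Y(1)$ by the degree-one generation hypothesis). With $\pi : \mathbb{P}^n_R \to \Spec R$ the structure map, we have $f = \pi \circ i$, so $\bR f_*(\widetilde{\Mcal}(k)) = \bR \pi_*(\widetilde{\Mcal}^{\,\prime}(k))$ and the module of global sections is unchanged under $i_*$. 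Thus it suffices to establish the statement for $\mathbb{P}^n_R$.

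At this point I invoke the classical Serre theorems (FAC, or \cite[\S2]{EGAIII1}): for any coherent graded $R[T_0,\dots,T_n]$-module $\Mcal$, there exists $k_0$ such that for all $k \geq k_0$:
\begin{enumerate}
\item $H^0(\mathbb{P}^n_R, \widetilde{\Mcal}(k)) = \Mcal_k$ (the natural map is an isomorphism in high degree because $\Mcal$ is, up to a bounded-degree defect, saturated);
\item $H^i(\mathbb{P}^n_R, \widetilde{\Mcal}(k)) = 0$ for all $i > 0$.
\end{enumerate}
Since $\pi$ is proper and $R$ is Noetherian, taking $\bR^i \pi_*$ corresponds to taking $H^i(\mathbb{P}^n_R, -)$, so this gives exactly the isomorphisms \eqref{eq: M_k=Rf_*M(k)} in the reduced setting.

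The main obstacles are essentially bookkeeping: verifying that the canonical morphisms written in the statement agree, under the reductions above, with those to which Serre's theorem applies, and confirming that the degree-one generation is used precisely where needed (namely for $(\Mcal(k))^\sim \cong \widetilde{\Mcal}(k)$ and for the pullback of $\Ocal(1)$ to be correct under the closed immersion). Without degree-one generation one would need to pass to a Veronese to make sense of twisting, but the hypothesis makes this unnecessary.
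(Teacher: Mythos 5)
Your argument is the standard one, and it is essentially what the paper itself does: the paper simply cites the Stacks project (tags 0AG6, 0AG7) for the affine case, and those tags prove exactly the reduction-to-projective-space-plus-Serre argument you outline. So there is no genuine difference of approach.

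One small imprecision worth flagging: from ``$A_1$ is finite over $A_0$, hence over $R$'' you conclude a surjection of graded $R$-algebras $R[T_0,\dots,T_n] \twoheadrightarrow A$. This requires not just finiteness of $A_1$ over $R$ but that $A$ be generated \emph{over $R$} by $A_1$, which forces the degree-zero part of the image to be the image of $R$, i.e.\ $A_0$ must be a quotient of $R$. If $A_0$ were merely a finite $R$-module one would only get a closed immersion into $\mathbb{P}^n_{A_0}$ (and would then use that $\Spec A_0 \to \Spec R$ is finite, hence that $h_*$ is exact and coherence-preserving, to descend to $R$). In the paper's setup this is implicitly taken care of, since the parenthetical ``equivalently as $\Ocal_X$-algebra'' in the standing hypotheses already forces $A_0$ to be a quotient of $R$, and in the blow-up application $A_0 = R$ outright. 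So your proof is correct in context, but the chain ``finite over $R$ $\Rightarrow$ surjection $R[T_i]\twoheadrightarrow A$'' elides a step that would matter in slightly greater generality.
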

		\begin{proof}
			This is well-known, see e.g.\ \cite[\href{https://stacks.math.columbia.edu/tag/0AG6}{Lemma 0AG6}]{stacks-project} and \cite[\href{https://stacks.math.columbia.edu/tag/0AG7}{Lemma 0AG7}]{stacks-project} for proofs (in the case that $X$ is affine, from which the relative version follows).
		\end{proof}
		\begin{remark}
			We chose to twist on $X$ instead of on $Y$, which is what is commonly done, for reasons that will become clear when we look at the filtered version, see Remark \ref{rem: twisting on (X,F)}.
			Since $(\Mcal(k))^\sim\cong\widetilde{\Mcal}(k)$, as $\Acal$ is locally generated in degree one, this does not matter.
			When $\Acal$ is not locally generated in degree one, twisting on $X$ is the `correct' thing to do (the functor $\Mcal\mapsto \oplus_k f_*(\widetilde{\Mcal}(k))$ is not compatible with shifts whilst $\Mcal\mapsto \oplus_k f_*((\Mcal(k))^\sim)$ is).
		\end{remark}
		
		\paragraph{\textit{Filtered version}}
		Now, let $(X,F^*)$ be a Noetherian $n$-filtered scheme and $(\Acal,F^*)$, as in \S\ref{subsec: filt blow}, 
		an $\NN$-graded algebra object of $\Filt^n(X,F^*)$ whose filtration pieces are quasi-coherent.
		Furthermore, assume that the graded $\Ocal_X$-algebra $F^0\Acal(=\Acal)$ satisfies the same assumptions as in the non-filtered case, i.e.\  $F^0\Acal_0$ is a coherent $F^0\Ocal_X$-module and $F^0\Acal$ is locally generated by $F^0\Acal_1$ as an $F^0\Acal_0$-algebra.
		It follows that every $F^j\Acal_i$ is coherent.
		
		Let $\Rees(\Acal,F^*)=\oplus_{i,j}F^j\Acal_it^j$ be the Rees algebra of $(\Acal,F^*)$. 
		This is a sheaf of $\ZZ^2$-graded $\Ocal_X$-algebras. 
		By convention the lower index indicates the grading obtained from $F^0\Acal=\oplus_iF^0\Acal_i$ and the upper index indicates the grading obtained from the filtration $F^*$.
		We denote by $\grMod^n(\Acal,F^*)$ the full subcategory of ${\ZZ^2}$-$\grMod(\Rees(\Acal,F^*))$ consisting of $\ZZ^2$-graded modules $\Mcal=\oplus_{i,j}\Mcal^j_i$ (the same naming convention concerning gradings applies here too) over $\Rees(\Acal,F^*)$ for which $\Mcal^j_*=0$ for $j\leq -n$ and multiplication by $t$ induces an isomorphism $\Mcal^j_*\isoto\Mcal^{j+1}_*$ for $j\geq 0$.
		The shift $\Mcal(k)$ of an object in $\grMod^n(\Acal,F^*)$ shifts the lower grading, $(\Mcal(k))^j_i=\Mcal^j_{i+k}$.
		We say that an object $\Mcal$ in $\grMod^n(\Acal,F^\ast)$ is coherent if every $\Mcal^i_*$ is coherent as a module over $\Rees(\Acal,F^*)^0_*=F^0\Acal$.
		
		Let $(Y,F^*):=\rProj_X(\Acal,F^*)$ be defined as in \S\ref{subsec: filt blow}.
		Applying $^\sim$ to a coherent object $\Mcal$ in $\grMod^n(\Mcal,F^*)$, component-wise `using' the lower grading, gives an object $\widetilde{\Mcal}=(\widetilde{\Mcal^j_*})_{j\in\ZZ}$ in $\Coh^n(Y,F^*)$.
		
		\begin{lemma}\label{lem: filt cohomology}
			Let $(Y,F^*)=\rProj_X (\Acal,F^*)$ be as above and denote by $f:(Y,F^*)\to (X,F^*)$ the corresponding structure morphism. 
			Let $\Mcal$ be a coherent object in $\grMod^n(\Acal,F^*)$. 
			The canonical morphisms
			\begin{equation}\label{eq: filt M_k=Rf_*M(k)}
				\Mcal_k \to f_* ((\Mcal(k))^\sim) \to \bR f_* ((\Mcal(k))^\sim)
			\end{equation}
			are isomorphisms for $k\gg 0$.
		\end{lemma}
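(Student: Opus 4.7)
The plan is to reduce the filtered statement to the already established non-filtered Lemma \ref{lem: non-filt cohomology} by applying the exact functor $\gr_{-j}$ for $0\leq j<n$, invoking the compatibility between filtered and underlying derived pushforwards.

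First I would unpack what $\gr_{-j}$ does to the various objects. For the source in \eqref{eq: filt M_k=Rf_*M(k)}, $\gr_{-j}(\Mcal_k) = \Mcal^{-j}_k$ is just the $(-j)$-graded piece of $\Mcal_k$ viewed as a coherent $\Ocal_X$-module. For the target, since $(\Mcal(k))^\sim$ is built component-wise along the lower grading, its $(-j)$-filtered piece is exactly $(\Mcal^{-j}_*(k))^\sim$, where the tilde on the right is the classical relative Proj construction applied to the coherent $\ZZ$-graded $F^0\Acal$-module $\Mcal^{-j}_*$. Therefore $\gr_{-j}((\Mcal(k))^\sim) = (\Mcal^{-j}_*(k))^\sim$ in $\Coh(Y)$. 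Now Lemma \ref{lem: compat filtered der push with underlying der push}, applied to the $1$-morphism $f:(Y,F^*)\to (X,F^*)$, gives a natural isomorphism
\[
\gr_{-j}\bigl(\bR f_*((\Mcal(k))^\sim)\bigr) \;\cong\; \bR f_*\bigl((\Mcal^{-j}_*(k))^\sim\bigr),
\]
and similarly without the $\bR$. Under these identifications the canonical morphisms of \eqref{eq: filt M_k=Rf_*M(k)} become, after applying $\gr_{-j}$, precisely the canonical morphisms of \eqref{eq: M_k=Rf_*M(k)} for the non-filtered module $\Mcal^{-j}_*$ over $F^0\Acal$.

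Next I would observe that each $\Mcal^{-j}_*$ for $0\leq j<n$ is a coherent $\ZZ$-graded $F^0\Acal$-module by the definition of coherence in $\grMod^n(\Acal,F^*)$, and that $F^0\Acal$ satisfies the hypotheses of Lemma \ref{lem: non-filt cohomology}. Applying that lemma yields an integer $k_j$ such that for all $k\geq k_j$ the morphisms $\Mcal^{-j}_k \to f_*((\Mcal^{-j}_*(k))^\sim) \to \bR f_*((\Mcal^{-j}_*(k))^\sim)$ are isomorphisms. Setting $k_0:=\max\{k_j \mid 0\leq j<n\}$, which exists as $n$ is finite, gives simultaneous isomorphisms on every graded piece for $k\geq k_0$.

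Finally I would conclude by noting that a morphism in $\bD^b(\Coh^n(Y,F^*))$ (respectively in $\Coh^n(X,F^*)$) is an isomorphism if and only if its image under each $\gr_{-j}$ for $0\leq j<n$ is an isomorphism. This is immediate on the abelian side from the definition of $\Coh^n$, and on the derived side from the fact that the collection $\{\gr_{-j}\}_{j=0}^{n-1}$ is jointly conservative (one can check this by looking at the long exact sequence in cohomology and the fact that $\gr_{-j}$ is exact). Since Lemma \ref{lem: compat filtered der push with underlying der push} also gives $\bR (f,1)_*(\bD^b(\Coh^n(Y,F^*)))\subseteq \bD^b(\Coh^n(X,F^*))$ (so both sides of \eqref{eq: filt M_k=Rf_*M(k)} live in the appropriate bounded coherent filtered derived category), this concludes the proof. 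The step requiring the most care is the identification $\gr_{-j}((\Mcal(k))^\sim) \cong (\Mcal^{-j}_*(k))^\sim$, which rests on the fact that the relative Proj and the filtration on $\Ocal_Y$ are set up precisely so that passing to graded pieces of $F^*\Ocal_Y$ commutes with the tilde construction — essentially by definition, but it is the bridge that allows the reduction to the non-filtered lemma.
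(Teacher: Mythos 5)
Your proposal is correct and follows essentially the same route as the paper's proof: apply $\gr_{-j}$ for $0\le j<n$, invoke the compatibility from Lemma~\ref{lem: compat filtered der push with underlying der push}, reduce to Lemma~\ref{lem: non-filt cohomology} applied to the finitely many $\Mcal^{-j}_*$, and take the maximum of the resulting bounds. You are somewhat more explicit than the paper about joint conservativity of $\{\gr_{-j}\}$ and about the identification $\gr_{-j}((\Mcal(k))^\sim)\cong(\Mcal^{-j}_*(k))^\sim$, but the substance is identical.
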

		\begin{proof}
			To show that the morphisms in \eqref{eq: filt M_k=Rf_*M(k)} are isomorphisms, it suffices to check this for each graded component separately, i.e.\
			\[
			\gr_{j}\Mcal_k \to \gr_{j}f_* ((\Mcal(k))^\sim) \to \gr_{j}\bR f_* ((\Mcal(k))^\sim)
			\]
			is an isomorphism for each $0\geq j>-n$.
			As there are only finitely many $j$'s for which we have to check this, the result follows, using Lemma \ref{lem: compat filtered der push with underlying der push}, from Lemma \ref{lem: non-filt cohomology} applied to the $\Mcal^j_*$'s since $\gr_{j}\Mcal_k=(\Mcal^j_*)_k$ and $\gr_{j}((\Mcal(k))^\sim)= (\Mcal^j_*(k))^\sim$.
		\end{proof}
		\begin{remark}\label{rem: twisting on (X,F)}
			Twisting on $(X,F^*)$ is necessary here as opposed to naively twisting on $(Y,F^*)$, i.e.\ applying $-\otimes_{Y|n}(\Acal(1))^\sim$.
			The latter is not compatible with taking graded components as tensoring on $(Y,F^*)$ mixes these; twisting on $(X,F^*)$ twists a different grading giving $\gr_{j}((\Mcal(k))^\sim)= (\Mcal^j_*(k))^\sim$.		
		\end{remark}
		
		\subsubsection{Proof of the proposition}
		\begin{proof}[Proof of Proposition \ref{prop: existence nonrat locus}]
			Recall that $\Bl_\Ical(X,F^*):=\rProj_X(\Acal,F^*)$ where $\Acal:=\oplus_i \Ical^i$ is the Rees algebra of $\Ical$ viewed as an $\NN$-graded algebra object of $\Filt^n(X,F^*)$ by equipping every $\Acal_i=\Ical^i$ with the filtration induced from $(\Ocal_X,F^*)$, i.e.\ $F^j\Acal_i:=\Ical^i\cap F^j\Ocal_X$.
			
			Put $\Jcal:= f^{-1}\Ical\cdot \Ocal_Y$ the inverse image ideal sheaf and note that $\Jcal= (\Ical\Acal)^\sim$ (e.g.\ this follows from the exactness of $^\sim$ and $f^*\Ical=(\Ical\otimes_{\Ocal_X} \Acal)^\sim$). 
			Moreover, we have $\Jcal^k=(f^{-1}\Ical \cdot\Ocal_Y)^k= f^{-1}\Ical^k \cdot\Ocal_Y= (\Ical^k\Acal)^\sim$.
			Observe that $\Acal(k)_{\geq 0}\subseteq \Acal$ and the filtration on $\Acal(k)_{\geq 0}$ obtained by twisting is the one induced from $\Acal$ by this inclusion (since $\Acal_{l+k}\subseteq \Acal_l\subseteq \Ocal_X$ and the filtrations on both $\Acal_{l+k}$ and $\Acal_l$ are induced from $\Ocal_X$).
			Therefore, under the identification
			\[
			(\Acal(k))^\sim = (\Acal(k)_{\geq0})^\sim = (\Ical^k\Acal)^\sim = \Jcal^k,
			\]
			the filtration on $\Acal(k)$ obtained by twisting $(\Acal,F^*)$ gives exactly the filtration on $\Jcal^k$ induced from $(\Ocal_Y,F^*)$.
			Schematically,
			\[
			\begin{tikzcd}[column sep= 0.3em]
				{(\Acal(k)_{\geq 0},F^*)} & {(\Acal,F^*)} \\
				{(\Jcal^k,F^*)} & {(\Ocal_Y,F^*)}
				\arrow["\subseteq"{description}, draw=none, from=1-1, to=1-2]
				\arrow["\subseteq"{description}, draw=none, from=2-1, to=2-2]
				\arrow[maps to, "{}^\sim", from=1-1, to=2-1]
				\arrow[maps to, "{}^\sim", from=1-2, to=2-2]
			\end{tikzcd}
			\]
			where both inclusions are strict monomorphisms (as $^\sim$ is exact it preserves strict monomorphisms of filtered objects).
			
			Applying Lemma \ref{lem: filt cohomology} with $\Mcal=\Rees(\Acal,F^*)$ gives isomorphisms 
			\[
			\Rees(\Acal,F^*)_k\to f_* ((\Rees(\Acal,F^*)(k))^\sim) \to \bR f_* ((\Rees(\Acal,F^*)(k))^\sim)
			\]
			for $k\gg 0$.
			Using that $\Rees(\Acal,F^*)_k=\iota(\Ical^k,F^*)$, that by the previous paragraph 
			\begin{align*}
				(\Rees(\Acal,F^*)(k))^\sim &=  ( ( F^j\Acal(k) )^\sim )_{j\in\ZZ} \\
				&= ( F^j\Jcal^k )_{j\in\ZZ} \\
				&= \iota(f^{-1}\Ical^k\cdot\Ocal_Y,F^*)
			\end{align*}
			and noting that the morphisms of Equation \eqref{eq: filt M_k=Rf_*M(k)} are exactly those of Equation \eqref{eq: nrl} gives the result.
		\end{proof}
		
	\subsection{Two acyclic squares}\label{subsec: two acyclic squares}
		The importance of nonrational loci and functorial refinements is that they induce acyclic squares of the enhancements.
		This will be crucial in our construction of an acyclic hypercube, and thus of a categorical resolution in the next section.
		Below we use the functorial enhancement $\Dscr:\underline{\fSch}^{\op}\to\dgcat$ (constructed in \S\ref{subsec: the enhancement}), so we restrict to separated finite length filtered schemes of finite type over $\kk$.
		In particular, the diagrams we obtain do actually strictly commute and we can apply the $t$-construction from \S\ref{subsec: Tot}.
		
		\subsubsection{Refinements}
		
		\begin{proposition}
			Let $(\id,d):(X,G^*)\rightsquigarrow(X,F^*)$ be a $d$-refinement.
			Then, the derived pullback functor
			\[
			\bL(\id,d)^*:\bD(X,F^*)\to\bD(X,G^*)
			\]
			is fully faithful.
		\end{proposition}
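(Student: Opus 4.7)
The plan is to show that the unit of the adjunction $\bL(\id,d)^*\dashv\bR(\id,d)_*$ is a natural isomorphism, which by general adjunction nonsense is equivalent to $\bL(\id,d)^*$ being fully faithful.

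First I would observe that the right adjoint is actually underived. Examining the proof of Lemma~\ref{lem: pull/push}, since the underlying morphism of schemes is the identity, $(\id,d)_*$ factors as the $d$-Veronese $(-)^{(d)}$ preceded by restriction of scalars along the inclusion $\tOcal_{(X,F'^*)}\hookrightarrow\tOcal_{(X,G^*)}$, where $(X,F'^*)$ is the intermediate $dn$-filtered scheme from that proof. Both operations are manifestly exact on $\Mod^{dn}(X,G^*)$, hence $\bR(\id,d)_*=(\id,d)_*$.

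Next I would establish the key underived identity: the unit $\eta_{\Mcal}\colon\Mcal\to(\id,d)_*\circ(\id,d)^*\Mcal$ is a natural isomorphism. Using the description $(\id,d)^*\Mcal=l^{dn}(\epsilon(\Mcal)\otimes_{\tOcal_{F'^*}}\tOcal_{G^*})$, the refinement condition $G^{di}\Ocal_X=F^i\Ocal_X$ translates to the identification of $d$-Veroneses $(\tOcal_{(X,G^*)})^{(d)}\cong\tOcal_{(X,F^*)}$. Together with the equality $\epsilon(\Mcal)^{-di}=\Mcal^{-i}$, this yields $(\Mcal\otimes_{\tOcal_{F'^*}}\tOcal_{G^*})^{(d)}\cong\Mcal$ and the subsequent truncation $l^{dn}$ is harmless at these distinguished positions; naturality is automatic because all constructions are functorial.

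Finally, to pass to the derived setting, I would pick an h-flat resolution $P^{\bullet}\to\Mcal^{\bullet}$, which exists by Lemma~\ref{lem: h-flat exist}. By Proposition~\ref{prop: pullback preserve hflat}, $(\id,d)^*P^{\bullet}$ remains h-flat (with flat components) and represents $\bL(\id,d)^*\Mcal^{\bullet}$. Applying the exact functor $(\id,d)_*=\bR(\id,d)_*$ term-by-term and invoking the non-derived identity yields $\bR(\id,d)_*\bL(\id,d)^*\Mcal^{\bullet}\cong(\id,d)_*(\id,d)^*P^{\bullet}\cong P^{\bullet}\cong\Mcal^{\bullet}$, naturally; hence the derived unit is an isomorphism, completing the proof.

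The main obstacle is the underived identification $(\id,d)_*\circ(\id,d)^*\cong\id$. Although intuitively natural---``refining the filtration and then re-extracting the Veronese piece recovers the original module''---the verification requires careful tracking of the composition $\epsilon$, graded tensor product over $\tOcal_{F'^*}$, the left truncation $l^{dn}$, and the $d$-Veronese, and in particular one must check that none of the new graded pieces introduced at non-multiple-of-$d$ positions interfere with the output at the multiple-of-$d$ positions after applying $l^{dn}$.
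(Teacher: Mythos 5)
Your plan follows essentially the same route as the paper's proof: both reduce fully faithfulness to showing the unit of the adjunction $\bL(\id,d)^*\dashv(\id,d)_*$ is an isomorphism (exploiting, as you observe, that $(\id,d)_*$ is already exact since it is restriction of scalars followed by the $d$-Veronese, so no $\bR$ is needed). Where you diverge is in the packaging: you want the \emph{underived} identity $(\id,d)_*\circ(\id,d)^*\cong\id$ on arbitrary modules, then derive via an h-flat resolution, whereas the paper checks the derived unit directly.

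The substantive gap is that the ``careful tracking'' you flag as the main obstacle is precisely the content of the proof, and your plan leaves it unverified. You do not need the underived unit to be an isomorphism on all modules (and verifying it by hand on an arbitrary $\Mcal$, chasing $\epsilon$, $\otimes_{\tOcal_{F'^*}}$, $l^{dn}$ and $(-)^{(d)}$, is painful). Since $\bL(\id,d)^*$ commutes with direct sums, you can localize to the affine case and only check the unit on the compact generators $l^n(\tOcal_{(X,F^*)}(i))$ for $0\le i<n$: these are projective, so $\Ext^k$ vanishes for $k\ne 0$, and for $k=0$ one computes directly
\[
l^n(\tOcal_{(X,F^*)}(j))_{-i}=F^{j-i}\Ocal_X/F^{j-n}\Ocal_X=G^{dj-di}\Ocal_X/G^{dj-dn}\Ocal_X=l^{dn}(\tOcal_{(X,G^*)}(dj))_{-di},
\]
after noting $\bL(\id,d)^*\bigl(l^n(\tOcal_{(X,F^*)}(i))\bigr)=l^{dn}(\tOcal_{(X,G^*)}(di))$. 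This is exactly what the paper does, via Lemma \ref{lem: fully faithful if so on compact generators etcetc}. The underived unit being an isomorphism on all modules then follows formally (both sides commute with colimits), but you get fully faithfulness directly without that detour. So: right strategy, but the plan stops precisely where the real computation must occur, and the compact-generator reduction is the cleanest way to carry it out.
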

		\begin{proof}
			The derived pullback functor being fully faithful is equivalent to the unit of the adjunction $\bL(\id,d)^*\dashv(\id,d)_*$ being an isomorphism (in the derived category).
			The latter question is local, so we may reduce to the affine setting.
			As $\bL(\id,d)^*$ commutes with direct sums and $\{ l^n(\tOcal_{(X,F^*)}(i)) \}_{0\leq i<n}$ is then a set of compact generators that gets mapped to compact objects $\bL(\id,d)^*  (l^n(\tOcal_{(X,F^*)}(i))) =  l^{dn}(\tOcal_{(X,G^*)}(di))$, it is enough, by Lemma \ref{lem: nice functor preserving or reflecting compactness}, to show that
			\begin{multline*}
				\bD(X,F^*)( l^n(\tOcal_{(X,F^*)}(i)) ,l^n(\tOcal_{(X,F^*)}(j)) [k])\\ \to \bD(X,G^*)( \bL(\id,d)^*  (l^n(\tOcal_{(X,F^*)}(i))), \bL(\id,d)^*  (l^n(\tOcal_{(X,F^*)}(j)))[k] )
			\end{multline*}
			is bijective for $0\leq i,j<n$ and $k\in\ZZ$.
			If $k\neq 0$ both sides are zero as the objects are in addition projective, whilst for $k=0$ we have
			\begin{align*}
				\text{LHS}&=l^n(\tOcal_{(X,F^*)}(j))_{-i} = F^{j-i}\Ocal_X/F^{j-n}\Ocal_X \\ &= G^{dj-di}\Ocal_X/G^{dj-dn}\Ocal_X = l^{dn}(\tOcal_{(X,G^*)}(dj))_{-di}=\text{RHS}. \qedhere
			\end{align*}
		\end{proof}
		\begin{corollary}\label{cor: refinement give acyclic square}
			In the setting of Proposition \ref{prop: existence refinements}, i.e.\ $(\id,d):(X,G^*)\rightsquigarrow(X,F^*)$ and $(\id,d):(Y,G^*)\rightsquigarrow(Y,F^*)$ are compatible $d$-refinements, the induced square of enhancements
			\[
			\begin{tikzcd}[row sep= 1.5em, column sep= 4em]
				\Dscr(X,F^*) & \Dscr(X,G^*) \\
				\Dscr(Y,F^*) & \Dscr(Y,G^*)
				\arrow[from=2-1, to=1-1]
				\arrow[from=2-2, to=1-2]
				\arrow["{\bL(\id, d)^*}", from=1-1, to=1-2]
				\arrow["{\bL(\id, d)^*}", from=2-1, to=2-2]
			\end{tikzcd}
			\]
			is acyclic.
		\end{corollary}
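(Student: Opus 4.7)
The plan is to reduce the acyclicity of the square to the fully-faithfulness of the refinement pullback established in the preceding proposition. First, by the functoriality of the enhancement (Proposition \ref{prop: funct enhancements}), the dg functor $\Dscr((\id,d)):\Dscr(X,F^*)\to\Dscr(X,G^*)$ becomes, at the derived level, the derived pullback $\bL(\id,d)^*:\bD(X,F^*)\to\bD(X,G^*)$, which is fully faithful by the preceding proposition. Since the dg functor $\Dscr((\id,d))$ extends along the Yoneda embedding to the derived functor (Lemma \ref{lem: prop F imlies prop IndF}), fully-faithfulness of the latter on representable dg modules precisely means quasi-full-faithfulness of the former. The same reasoning, applied to $(\id,d):(Y,G^*)\rightsquigarrow(Y,F^*)$, shows that the bottom horizontal edge $\Dscr(Y,F^*)\to\Dscr(Y,G^*)$ of the square in the statement is quasi fully faithful.

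Next, I view the square as a hypercube of $(\Dscr(Y,F^*),\Dscr(Y,F^*))$-bimodules, by restriction of the diagonal bimodule at each vertex along the appropriate composition of edges. The content of quasi-full-faithfulness of the top horizontal edge $\Dscr(X,F^*)\to\Dscr(X,G^*)$ is exactly that, for any pair of objects $M,N\in\Dscr(Y,F^*)$, the induced map
\[
\Hom_{\Dscr(X,F^*)}(\bL f^*M,\bL f^*N)\to\Hom_{\Dscr(X,G^*)}(\bL(\id,d)^*\bL f^*M,\bL(\id,d)^*\bL f^*N)
\]
is a quasi-isomorphism, i.e.\ the top 1-subcube of bimodules is acyclic; and similarly for the bottom. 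Thus, the two opposing horizontal faces of the square (viewed as 1-cubes of bimodules) are acyclic.

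Finally, by Lemma \ref{lem: seeing acyclic on faces}, a hypercube with two opposing acyclic faces is itself acyclic, so the square of enhancements is acyclic as required. The main bookkeeping step is the translation between quasi-full-faithfulness of the dg functor and fully-faithfulness of the associated derived functor, together with checking that the $(\Dscr(Y,F^*),\Dscr(Y,F^*))$-bimodule morphism induced by each horizontal edge coincides at object pairs with the natural map on hom-complexes; both are handled cleanly by Lemma \ref{lem: prop F imlies prop IndF} and Proposition \ref{prop: funct enhancements}.
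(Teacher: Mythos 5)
Your proof is correct and follows essentially the same route as the paper: reduce via Lemma \ref{lem: seeing acyclic on faces} to quasi-full-faithfulness of the two horizontal refinement pullbacks at the enhancement level, and establish the latter by combining the compatibility between $\Dscr$ and the derived pullback (Proposition \ref{prop: funct enhancements} together with Lemma \ref{lem: prop F imlies prop IndF}) with the preceding proposition. The extra bookkeeping in your middle paragraph, spelling out the bimodule restriction and the hom-complex comparison, is left implicit in the paper; the one small overclaim is the phrase ``is exactly that,'' since quasi-full-faithfulness of the top edge is a statement about \emph{all} pairs of objects of $\Dscr(X,F^*)$ and is therefore strictly stronger than the displayed condition on pairs pulled back from $\Dscr(Y,F^*)$ --- but it is the implication you actually use, so the argument stands.
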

		\begin{proof}
			By Lemma \ref{lem: seeing acyclic on faces} it suffices to show that any refinement 
			\[
			(\id,d):(X,G^*)\rightsquigarrow(X,F^*)
			\]
			induces a quasi fully faithful dg functor
			\[
			(\id,d)^*:\Dscr(X,F^*)\to\Dscr(X,G^*)
			\]	
			of the enhancements.
			This holds by the (2-)commutative diagram
			\[
			\begin{tikzcd}[column sep= 4em, row sep= 1.5em]
				{\bD(X,F^*)} & {\bD(X,G^*)} \\
				{\bD(\Dscr(X,F^*))} & {\bD(\Dscr(X,G^*))} \\
				{[\Dscr(X,F^*)]} & {[\Dscr(X,G^*)]}
				\arrow["{\bL(\id,d)^*}", from=1-1, to=1-2]
				\arrow["\sim", sloped, from=2-1, to=1-1]
				\arrow["\sim", sloped, from=2-2, to=1-2]
				\arrow["{\bL\!\Ind_{(\id,d)^*}}", from=2-1, to=2-2]
				\arrow[hook, from=3-1, to=2-1]
				\arrow[hook, from=3-2, to=2-2]
				\arrow["{[(\id,d)^*]}", from=3-1, to=3-2]
			\end{tikzcd}
			\]
			and the previous proposition.
		\end{proof}
		
		\subsubsection{Filtered nonrational locus}
		The following, together with Proposition \ref{prop: acyclic hypercube iff qff}, is essentially a reinterpretation of \cite[Proposition 6.5]{KuznetsovLunts}.
		\begin{proposition}\label{prop: nr locus give acyclic square}
			Let $\Ical$ be the ideal sheaf of a filtered nonrational locus for a morphism $f : (Y,F^*) \to (X,F^*)$ of separated finite length filtered schemes of finite type over a field.
			Then, the induced square of enhancements
			\[
			\begin{tikzcd}[sep= 1.5em]
				\Dscr(Y,F^*)\arrow[r] & \Dscr(\VV_Y(f^{-1}\Ical \cdot \Ocal_Y),F^*) \\
				\Dscr(X,F^*)\arrow[r]\arrow[u] & \Dscr(\VV_X(\Ical),F^*)\arrow[u]
			\end{tikzcd}
			\]
			is acyclic.
		\end{proposition}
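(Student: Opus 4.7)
The strategy is to reduce the acyclicity of the square of bimodules to a derived-category statement about cones of pushforwards, which can then be read off from the nonrational locus hypothesis via the standard morphism-of-triangles argument.

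First, acyclicity is checked at each pair of objects: for every $A,B \in \Ascr_\varnothing := \Dscr(X,F^*)$ the $2$-cube of $\kk$-complexes obtained by evaluating the bimodule-cube at $(A,B)$ must be acyclic. By Lemma~\ref{lem: t(A)=t(t(alpha))}, this is equivalent to showing that the natural morphism between the totalizations of the two horizontal (or vertical) edges is a quasi-isomorphism. Concretely, writing $i\colon \VV_X(\Ical)\hookrightarrow X$, $j\colon \VV_Y(f^{-1}\Ical\cdot\Ocal_Y)\hookrightarrow Y$ for the closed immersions, and using the compatibility of $\Dscr$ with derived pullbacks from Proposition~\ref{prop: funct enhancements}, I would identify the four hom-complexes (up to quasi-isomorphism) with
\[
\RHom_X(A,B),\ \RHom_X(A,\bR f_*\bL f^*B),\ \RHom_X(A,\bR i_*\bL i^*B),\ \RHom_X(A,\bR f_*\bR j_*\bL j^*\bL f^*B),
\]
using the adjunction of Corollary~\ref{cor: adjunction pullpush on RHom}. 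Since $\RHom_X(A,-)$ is exact, it suffices to prove the corresponding cone-comparison at the level of the second argument, i.e.\ that the natural morphism
\[
\cone\!\bigl(B \longrightarrow \bR f_*\bL f^*B\bigr)\;\longrightarrow\; \cone\!\bigl(\bR i_*\bL i^*B \longrightarrow \bR f_*\bR j_*\bL j^*\bL f^*B\bigr)
\]
in $\bD(X,F^*)$ is a quasi-isomorphism.

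Next, I would invoke the projection formula (which needs to be checked to hold in this filtered setting, either directly or by reduction to the $\Acal$-space formulation via Proposition~\ref{prop: Auslander}) to rewrite
\[
\bR f_*\bL f^*B \simeq B\otimes^{\bL}_{(X,F^*)|n}\bR f_*\Ocal_{(Y,F^*)},\quad \bR i_*\bL i^*B \simeq B\otimes^{\bL}_{(X,F^*)|n}\Ocal_{\VV_X(\Ical),F^*},
\]
and similarly for the bottom-right corner. This reduces the problem to showing that the natural morphism
\[
\cone\!\bigl(\Ocal_{(X,F^*)} \to \bR f_*\Ocal_{(Y,F^*)}\bigr)\;\longrightarrow\; \cone\!\bigl(\Ocal_{\VV_X(\Ical),F^*} \to \bR f_*\Ocal_{\VV_Y(f^{-1}\Ical\cdot\Ocal_Y),F^*}\bigr)
\]
is a quasi-isomorphism.

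Finally, this last claim follows from the octahedral axiom applied to the morphism of distinguished triangles
\[
\begin{tikzcd}[column sep=1.5em]
(\Ical,F^*) \arrow[r] \arrow[d,"\sim"'] & \Ocal_{(X,F^*)} \arrow[r] \arrow[d] & \Ocal_{\VV_X(\Ical),F^*} \arrow[d] \arrow[r] & {} \\
\bR f_*(f^{-1}\Ical\cdot\Ocal_Y,F^*) \arrow[r] & \bR f_*\Ocal_{(Y,F^*)} \arrow[r] & \bR f_*\Ocal_{\VV_Y(f^{-1}\Ical\cdot\Ocal_Y),F^*} \arrow[r] & {}\rlap{ ,}
\end{tikzcd}
\]
where the top row comes from the defining ideal sequence of $\VV_X(\Ical)$, the bottom row from applying $\bR f_*$ to the analogous sequence on $Y$, and the leftmost vertical is an isomorphism precisely by the filtered-nonrational-locus hypothesis. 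A morphism of triangles whose first component is an isomorphism forces the cones of the other two components to be canonically quasi-isomorphic.

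I expect the main obstacle to be Step 2: rigorously identifying the hom-complexes of the dg enhancements $\Dscr$ with $\RHom$ in $\bD(X,F^*)$ in a way compatible with the square structure, and in particular establishing the projection-formula isomorphisms in the filtered setting (either intrinsically using h-flat complexes from \S\ref{subsec: dercat}, or by transporting the standard projection formula through the Auslander-algebra equivalence of Proposition~\ref{prop: Auslander}). The derived-category cone-chasing of Step 3 is then formal.
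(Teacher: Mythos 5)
Your proposal is correct and takes essentially the same approach as the paper: reduce via Lemma~\ref{lem: t(A)=t(t(alpha))} to a quasi-isomorphism of cones, then use adjunction and the projection formula to bring everything down to the defining morphism $(\Ical,F^*)\to\bR f_*(f^{-1}\Ical\cdot\Ocal_Y,F^*)$ of the nonrational locus. The paper packages the final step by applying the triangulated functor $\RHom_X(\Mcal^{\bullet},\Ncal^{\bullet}\otimes_{X\mid n}-)$ directly to that quasi-isomorphism rather than invoking a morphism-of-triangles argument, and it addresses the obstacle you flag (the filtered projection formula) as Lemma~\ref{lem: proj formula}, while the identification of $\Dscr$-hom-complexes with $\RHom$ is handled by working with h-flat perfect representatives.
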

		\begin{proof}
			For notational ease, let us denote 
			\[
			S:=\VV_X(\Ical)\quad\text{and}\quad T:=\VV_Y(f^{-1}\Ical \cdot \Ocal_Y)
			\]
			and label the morphisms as follows:
			\[
			\begin{tikzcd}[sep=1.5em]
				(Y,F^*) &  (T,F^*) \\
				(X,F^*) &  (S,F^*) \rlap{ .}
				\arrow["f"', from=1-1, to=2-1]
				\arrow["j"', from=1-2, to=1-1]
				\arrow["p", from=1-2, to=2-2]
				\arrow["i"', from=2-2, to=2-1]
			\end{tikzcd}
			\]
			
			We have to show that for any $M$, $N$ in $\Dscr(X,F^*)$ the diagram 
			\[
			\begin{tikzcd}[sep=2em]
				\Dscr(Y,F^*)(f^*M, f^*N) & \Dscr(T,F^*)((f\circ j)^*M, (f\circ j)^*N) \\
				\Dscr(X,F^*)(M, N) & \Dscr(S,F^*)(i^*M, i^*N)
				\arrow[from=1-1, to=1-2, "j^*"]
				\arrow[from=2-1, to=2-2, "i^*"]
				\arrow[from=2-1, to=1-1, "f^*"]
				\arrow[from=2-2, to=1-2, "p^*"']
			\end{tikzcd}		
			\]
			is acyclic after applying $t$. This is equivalent, by Lemma \ref{lem: t(A)=t(t(alpha))}, to 
			\begin{multline}\label{eq: t(t()->t())}
				t\left(\Dscr(X,F^*)(M, N) \xrightarrow{i^*} \Dscr(S,F^*)(i^*M, i^*N)\right)  \\ \to t\left(\Dscr(Y,F^*)(f^*M, f^*N) \xrightarrow{j^*} \Dscr(T,F^*)((f\circ j)^*M, (f\circ j)^*N)\right)
			\end{multline}
			being a quasi-isomorphism.
			We may assume that $M=\Mcal^{\bullet}$ and $N=\Ncal^{\bullet}$ are honest perfect h-flat complexes\footnote{
				By our construction of $\Dscr$ we have an \emph{on the nose} fully faithful quasi-equivalence $\Dscr(X,F^*)\to\hflatperf(X,F^*)/\hflat^{\circ}(X,F^*)$ (in fact it gives actual equality on the hom-complexes). 
				Moreover, the hom-complex in the latter identifies, through quasi-isomorphism, with $\RHom$ and this is all compatible with the pullback morphisms.
			}.
			Using adjunction and the projection formula (Lemma \ref{lem: proj formula} below) the morphism \eqref{eq: t(t()->t())} identifies with\footnote{
				This is probably most easily seen using $\RHom(-,-)=H^\bullet\RHom(-,-)=\Ext^\bullet(-,-)$ since this is a complex of vector spaces, see e.g.\ \cite[Section III.2 Proposition 4]{GelfandManin}, thereby reducing to showing the compatibility on the level of hom's in the derived category.
				
				As an aside, note that the derived pullback on $\RHom$ level can be defined via the lifted adjunction Corollary \ref{cor: adjunction pullpush on RHom} and precomposing with the counit.
			}
			\begin{multline*}
				\cone(\RHom_X(\Mcal^{\bullet}, \Ncal^{\bullet}) \to \RHom_X(\Mcal^{\bullet}, \Ncal^{\bullet}\otimes_{X\mid n}i_{*}\tOcal_S))  \\ \to\cone(\RHom_X(\Mcal^{\bullet}, \Ncal^{\bullet}\otimes_{X\mid n}\bR f_{*}\tOcal_Y) \to \RHom_X(\Mcal^{\bullet}, \Ncal^{\bullet}\otimes_{X\mid n} \bR f_{*} j_{*}\tOcal_T) ).
			\end{multline*}
			This is exactly $\RHom_X(\Mcal^{\bullet}, \Ncal^{\bullet}\otimes_{X\mid n} -)$ applied to 
			\[
			(\Ical,F^*)\to  \bR f_* (f^{-1}\Ical\cdot\Ocal_Y,F^*).
			\]
			As the latter morphism is a quasi-isomorphism by definition of a nonrational locus and the functor we applied is triangulated, the claim follows.
		\end{proof}
		
		In the proof of the following lemma we make use of `flasque filtered resolutions'.
		We briefly explain what we mean by this.
		Recall that a \emph{flasque sheaf}\index{flasque/flabby sheaf} $\Fcal$, also called a flabby sheaf, is a sheaf for which the restriction maps $\Fcal(V)\to\Fcal(U)$ along open subsets $U\subseteq V$ are surjective. 
		They have the pleasant property of having no higher sheaf cohomology.
		Thus, an exact sequence of flasque sheaves is exact as sequence of presheaves.
		Let $(\Mcal,F^*)$ be a filtered module over a filtered scheme $(X,F^*)$.
		The Godement resolution $\Mcal^{gdm}$ of $\Mcal$, defined by $\Mcal^{gdm}(U):=\prod_{p\in U} \Mcal_{p}$ over an open $U$, is naturally filtered. 
		Indeed, simply put $F^{i}(\Mcal^{gdm}):=(F^{i}\Mcal)^{gdm}$.
		As $F^{i}(\Mcal^{gdm})\cap\Mcal = F^{i}\Mcal$, the natural morphism $(\Mcal,F^*)\to(\Mcal^{gdm},F^*)$ is a strict monomorphism and thus its cokernel in $\grMod$ remains filtered.
		As a consequence, any filtered module admits a resolution by filtered flasque modules.
		
		\begin{lemma}\label{lem: proj formula}
			Let $f:(X,F^*)\to (Y,F^*)$ be a morphism of quasi-compact separated $n$-filtered schemes.
			For $\Mcal^\bullet\in \bD(X,F^*)$ and $\Ncal^{\bullet}\in\bD(Y,F^*)$ there exists a functorial isomorphism (called the \emph{projection formula}\index{projection formula})
			\[
			\bR f_*\Mcal^{\bullet} \otimes^{\bL}_{Y|n} \Ncal^{\bullet} \isoto \bR f_*( \Mcal^{\bullet} \otimes^{\bL}_{X|n} \bL f^*\Ncal^{\bullet} )
			\] 
			in $\bD(Y,F^*)$.
		\end{lemma}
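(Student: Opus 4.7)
The plan is to construct the natural morphism via the derived adjunction and then verify it is an isomorphism by testing on a set of compact generators of $\bD(Y,F^*)$.

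First I would construct the morphism. By Proposition \ref{prop: pullback preserve hflat}, the pullback $f^*$ preserves flat modules and h-flat complexes with flat components, and is symmetric monoidal on these (the underived compatibility $f^*(A\otimes_{Y|n}B)\cong f^*A\otimes_{X|n}f^*B$ holding for $A$ and $B$ flat). Consequently $\bL f^*$ is symmetric monoidal for the derived tensor products $\otimes^\bL_{Y|n}$ and $\otimes^\bL_{X|n}$. Composing the resulting isomorphism $\bL f^*(\bR f_*\Mcal^\bullet\otimes^\bL_{Y|n}\Ncal^\bullet)\cong \bL f^*\bR f_*\Mcal^\bullet\otimes^\bL_{X|n}\bL f^*\Ncal^\bullet$ with the counit $\bL f^*\bR f_*\to\id$ and transposing across the derived adjunction of the preceding proposition produces the morphism stated in the lemma.

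Next I would reduce to compact generators. Both sides of the desired isomorphism are triangulated endofunctors of $\bD(Y,F^*)$ in the variable $\Ncal^\bullet$ commuting with arbitrary direct sums. For the left-hand side this is immediate; for the right-hand side it uses that $\bL f^*$ (a left adjoint) and $\otimes^\bL_{X|n}$ commute with direct sums, and that $\bR f_*$ does too, via Lemma \ref{lem: nice functor preserving or reflecting compactness}: its left adjoint $\bL f^*$ preserves compactness since it sends each $l^n(\tOcal_Y(i))$ to the strictly perfect (hence compact, by Proposition \ref{prop: compact iff perfect on nice fSch}) object $l^n(\tOcal_X(i))$. Since the statement is local on $Y$, I may assume $Y$ is affine, in which case $\{l^n(\tOcal_Y(i))\}_{0\leq i<n}$ is a set of compact generators of $\bD(Y,F^*)$, and it suffices to verify the isomorphism for $\Ncal^\bullet=l^n(\tOcal_Y(i))$.

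Finally I would verify the formula on such generators. The object $l^n(\tOcal_Y(i))$ is flat (by Lemma \ref{lem: Lazard}), hence derived tensor products coincide with underived ones, and $\bL f^*l^n(\tOcal_Y(i))=l^n(\tOcal_X(i))$ is also flat. Direct computation gives $l^n(\tOcal_?(i))\otimes_{?|n}\Mcal=l^n(\Mcal(i))$, so the projection formula reduces to the natural identification
\[
l^n((\bR f_*\Mcal^\bullet)(i))\isoto \bR f_*(l^n(\Mcal^\bullet(i))),
\]
which on each graded piece $\gr_{-j}$ for $0\leq j<n$ becomes an identity between the cones of $(\bR f_*\Mcal^\bullet)^{i-n}\to(\bR f_*\Mcal^\bullet)^{i-j}$ computed on the two sides, using that $\bR f_*$ commutes with $\gr_{-k}$ by Lemma \ref{lem: compat filtered der push with underlying der push}. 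The main obstacle is the careful bookkeeping of the truncation functor $l^n$ at the derived level, since the intermediate module $\Mcal^\bullet(i)$ lives in $\grMod^{n+i}$ rather than $\grMod^n$; this is resolved by performing everything graded-componentwise and invoking the compatibility of Lemma \ref{lem: compat filtered der push with underlying der push}.
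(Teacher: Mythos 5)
There is a genuine gap in the verification step. You assert that since $l^n(\tOcal_Y(i))$ is flat, ``derived tensor products coincide with underived ones,'' and on this basis you reduce the projection formula to the underived identification $l^n\bigl((\bR f_*\Mcal^\bullet)(i)\bigr)\cong\bR f_*\bigl(l^n(\Mcal^\bullet(i))\bigr)$. This claim does not hold. In the filtered setting, ``flat'' (and even ``h-flat'') only means that tensoring is exact against \emph{filtered} complexes, i.e.\ objects of $\Filt^n$; the functor $-\otimes_{Y|n}l^n(\tOcal_Y(i))=l^n\circ\sigma_i$ (where $\sigma_i$ is the exact grading shift) is merely right exact on all of $\Mod^n$, because $l^n$ is only right exact there. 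A minimal example: for $n=2$, $i=1$ and the trivial filtration on $\Spec\kk$, the short exact sequence $0\to(0\to\kk)\to(\kk\xrightarrow{\sim}\kk)\to(\kk\to 0)\to 0$ in $\grMod^2$ is sent by $l^2\sigma_1$ to $0\to\kk\to 0\to 0\to 0$, which is not exact. So tensoring with $l^n(\tOcal_Y(i))$ does not preserve quasi-isomorphisms, and one may \emph{not} apply $l^n$ directly to a representative of $\bR f_*\Mcal^\bullet$. The correct reduction, as in the paper, is to the identity $\bL l^n\,\bR f_*\cong\bR f_*\,\bL l^n$ with the \emph{derived} truncation $\bL l^n$: this is what $-\otimes^{\bL}_{Y|n}l^n(\tOcal_Y(i))$ actually computes, since the derived tensor is evaluated on h-flat resolutions, whose flat (hence filtered) components are $l^n$-acyclic. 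Your own use of the word ``cone'' (rather than ``cokernel'') in the last sentence already belongs to the derived picture and is in tension with the preceding claim.

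Once the reduction is corrected to the derived statement, your graded-componentwise check does provide an alternative route to the paper's argument. Using that $\gr_{-j}$ is jointly conservative and exact, that $\gr_{-j}\bL l^n$ is computed by the cone of $t^{n-j}\colon\gr_{-n}(-)\to\gr_{-j}(-)$ (valid because $\bL l^n$ is computed on complexes with flat, hence filtered, components, on which $t$-multiplication is injective), and that $\bR f_*$ commutes with $\gr_{-k}$ by Lemma~\ref{lem: compat filtered der push with underlying der push} and with cones (being exact), one obtains $\gr_{-j}\bL l^n\bR f_*\cong\gr_{-j}\bR f_*\bL l^n$ for all $j$, hence the desired isomorphism. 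This avoids the paper's way-out functor argument and the flasque filtered resolutions used to produce complexes simultaneously adapted to $f_*$ and $l^n$, at the cost of having to verify the commutations listed above; the paper's route instead spends its effort on finite cohomological dimension and the observation that flasque filtered complexes are acyclic for both functors and satisfy $l^nf_*=f_*l^n$ on the nose.
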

		\begin{proof}
			The construction of the functorial morphism is standard abstract nonsense.
			To show it is an isomorphism we readily reduce to the case $\Ncal^{\bullet}=l^n(\tOcal_Y(i))$ for $i\geq 0$, see e.g.\ \cite[\href{https://stacks.math.columbia.edu/tag/08EU}{Lemma 08EU}]{stacks-project}. 
			In this case the required isomorphism boils down to showing that
			\begin{equation}\label{eq: [Ll,Rf_{*}]}
				\bL l^n \bR f_{*} (\Mcal^{\bullet}) = \bR f_{*} \bL l^n (\Mcal^{\bullet}).
			\end{equation}
			
			In our set-up, i.e.\ $(X,F^*)$ and $(Y,F^*)$ quasi-compact and separated, $f_{*}$ has finite cohomological dimension on $\QCoh^n(X,F^*)$ by \cite[Corollary 1.4.12]{EGAIII1}, which can be lifted to the filtered setting using Lemma \ref{lem: compat filtered der push with underlying der push}.
			Moreover, $l^n$ always has finite cohomological dimension as any quasi-coherent module has a resolution by a two term quasi-coherent filtered complex (it suffices to find a surjection from a quasi-coherent filtered module, e.g.\ take a flat quasi-coherent module as these are filtered, the kernel is then automatically filtered).
			Hence, using that both functors are consequently way-out in both directions, and by first replacing $\Mcal^\bullet$ by a complex of quasi-coherent filtered modules, we can reduce to the case where $\Mcal^{\bullet}=\Mcal$ is a quasi-coherent filtered module viewed as complex concentrated in degree zero by \cite[Chapter I Proposition 7.1]{Hartshorne}.
			
			Lastly, we may replace $\Mcal$ by a flasque filtered resolution	(subtly making use of Remark \ref{rem: D_{Qc}=D(QCoh)} as flasque modules are not quasi-coherent in general).
			As these complexes are both $f_{*}$- and $l^n$-acyclic, i.e.\ $f_*$ and $l^n$ map acyclic flasque complexes to acyclic complexes, and are preserved by $f_{*}$ and $l^n$ the result follows from the fact that $l^nf_{*}=f_{*}l^n$ on flasque sheaves (as an exact sequence of flasque sheaves is exact as sequence of presheaves).
		\end{proof}
		\begin{remark}
			`Unfortunately', the above proof requires the pushforward to have finite cohomological dimension, and hence some separated and quasi-compact assumptions.
			As is clear from the proof, the crux is finding enough complexes that are simultaneously $f_{*}$- and $l^{n}$-acyclic.
			It seems unclear how to find these in general.
		\end{remark}

\section{Categorical resolutions}\label{sec: cat res}
	This chapter is dedicated to reproving the main theorem of \cite[Theorem 1.4]{KuznetsovLunts} in the finite length filtered setting without using the strong version of Hironaka.
	\begin{theorem*}[Theorem \ref{thm: main}]
		Any separated finite length filtered scheme of finite type over a field of characteristic zero has a categorical resolution by a strongly geometric triangulated category.
		Moreover, if the filtered scheme is proper, so is the resolving category.
	\end{theorem*}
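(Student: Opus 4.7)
The plan is to argue by induction on $\dim X$. In the base case $(X, F^*)$ has $X_{\mathrm{red}}$ smooth, so by Proposition \ref{prop: smoothness Perf(X,F)} the enhancement $\Dscr(X, F^*)$ is already smooth (and proper when $X$ is proper), and the identity serves as a trivial categorical resolution. The inductive step is where the real work happens. Using Corollary \ref{cor: existence filt resolution} I would lift a (projective) resolution $\overline{Y} \to X_{\mathrm{red}}$ to a morphism of filtered schemes $\pi : (Y, F^*) \to (X, F^*)$ with $Y_{\mathrm{red}} = \overline{Y}$ smooth. Then by Proposition \ref{prop: existence nonrat locus} the ideal $\Ical^k$ for large $k$ is the ideal sheaf of a filtered nonrational locus for $\pi$. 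Setting $S := \VV_X(\Ical^k)$ and $T := \VV_Y(\pi^{-1}\Ical^k \cdot \Ocal_Y)$, Proposition \ref{prop: nr locus give acyclic square} furnishes an acyclic square of enhancements around $(X, F^*)$.

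The crucial point is that $S$ sits set-theoretically inside the singular locus of $X_{\mathrm{red}}$, so $\dim S < \dim X$, but $S$ need not be reduced and, more importantly, its induced filtration does not yet record its non-reducedness. To remedy this I would apply Corollary \ref{cor: refinements}, using an ideal of $\Ocal_X$ containing the radical of $S$ (for instance the pullback of a defining ideal of $S_{\mathrm{red}}$ inside $\Ocal_S$, extended in a manner compatible with $f$), to obtain compatible refinements $(X, F'^*) \rightsquigarrow (X, F^*)$, $(Y, F'^*) \rightsquigarrow (Y, F^*)$, $(S, G^*) \rightsquigarrow (S, F^*)$, $(T, G^*) \rightsquigarrow (T, F^*)$ so that, after refinement, the $F^{-1}$-part of $S$'s filtration cuts out $S_{\mathrm{red}}$. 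By Corollary \ref{cor: refinement give acyclic square} this change-of-filtration square is itself acyclic, and extending the previous acyclic square by it using the construction of \S\ref{subsec: Tot} gives (via Lemma \ref{lem: extension of acyclic is acyclic}) an acyclic cube whose ``inward'' face still sits over $\Dscr(S, G^*)$. By induction on $\dim X$, applied to $(S, G^*)$, the filtered scheme $(S, G^*)$ carries a categorical resolution built from a glued punctured hypercube $\underline{\Dscr}_m^\circ$ of smooth dg categories.

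I would then splice the two constructions together by repeatedly stacking cubes along their common faces (Lemma \ref{lem: stacking acyclic is acyclic}) to produce a single acyclic hypercube $\underline{\Dscr}_n$ with $\Dscr(X, F^*)$ at the puncture and whose vertices are the enhancements of the smooth (reduced) pieces appearing along the way; bookkeeping gives $n \leq \dim X$. Proposition \ref{prop: acyclic hypercube iff qff} then yields that the canonical dg functor $\pi : \Dscr(X, F^*) \to \Glue(\underline{\Dscr}_n^\circ)$ is quasi fully faithful, so on the derived level we get a fully faithful embedding $\bD(X, F^*) \hookrightarrow \bD(\Glue(\underline{\Dscr}_n^\circ))$ by Lemma \ref{lem: prop F imlies prop IndF}. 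Smoothness of the glued dg category follows from Corollary \ref{cor: smoothness gluing} because every vertex is of the form $\Dscr(X_i, F_i^*)$ with $(X_i)_{\mathrm{red}}$ smooth (Proposition \ref{prop: smoothness Perf(X,F)}) and all edges in the hypercube are pullbacks along proper morphisms or refinements and thus preserve compactness; properness follows identically when $X$ is proper. Further refining each component using Proposition \ref{prop: filtered SODs} gives a semi-orthogonal decomposition whose pieces are derived categories of smooth (proper) varieties, exhibiting the resolving category as strongly geometric.

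The main obstacle I anticipate is verifying the technical conditions of Definition \ref{def: cat res} for a categorical resolution rather than merely producing a fully faithful embedding into a smooth category. Concretely, one must check that $\pi^* : \bD(X, F^*) \hookrightarrow \bD(\Glue(\underline{\Dscr}_n^\circ))$ has a right adjoint that behaves well with respect to perfect objects, so that compact objects of $\bD(X, F^*)$ land in compacts of the gluing; this is where Lemma \ref{lem: restriction functor restricted to component of sod} should be crucial, as it describes the restriction functor associated to each semi-orthogonal piece as an iterated cone of genuine pullback functors on filtered schemes, thereby reducing the verification to bookkeeping on the hypercube. A secondary, more combinatorial, difficulty is choosing the refining ideals at each stage \emph{compatibly} so that the stacking/extension operations really do produce a single globally-defined acyclic hypercube with $\Dscr(X, F^*)$ at the puncture; here I expect Corollary \ref{cor: refinements} together with the functorial enhancement of Proposition \ref{prop: funct enhancements} to reduce the problem to strict commutativity of certain diagrams of filtered schemes.
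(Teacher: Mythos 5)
Your overall strategy — acyclic hypercubes built from nonrational locus squares and refinement squares, glued via $\Glue(\underline{\Dscr}^\circ)$, with fully faithfulness coming from Proposition \ref{prop: acyclic hypercube iff qff} and smoothness from Corollary \ref{cor: smoothness gluing} — is the paper's strategy. But there is a genuine gap in the way you propose to assemble the hypercube, and it is not the "secondary, combinatorial" difficulty you flag at the end: it is the central technical obstacle.

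You set up an induction on $\dim X$, and the inductive hypothesis hands you an acyclic hypercube whose puncture is $\Dscr(S,G^*)$. You then propose to "splice" this with the (extended) nonrational-locus/refinement cube around $(X,F^*)$ by "repeatedly stacking cubes along common faces." But stacking (Lemma \ref{lem: stacking acyclic is acyclic}) requires the two hypercubes to share an \emph{entire face}, not just a single vertex. The cube around $(X,F^*)$ has $(S,G^*)$ as one vertex of a face whose other vertices involve $(T,G^*)$, and stacking would require you to already possess a compatible hypercube based at $(T,G^*)$ — one that maps coherently to the hypercube for $(S,G^*)$ along $p : (T,G^*) \to (S,G^*)$. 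An induction on $\dim X$ run at $(S,G^*)$ alone gives you no such thing: resolutions of $S$ do not pull back functorially to resolutions of $T$ without extra input, and nonrational loci for a blow-up of $S$ need not even be chosen compatibly with those for the pulled-back blow-up of $T$.

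The paper avoids this by \emph{not} recursing on the filtered scheme $S$ in isolation. Instead it builds the hypercube iteratively in the number of steps (Proposition \ref{prop: constructing hypercube}), and at each step it applies Lemma \ref{lem: 'functorial' squares}: given a \emph{finite poset} of filtered schemes with terminal object $(S_{i-1},F_{i-1}^*)$ — the entire "bad" face of the current hypercube — one blow-up of $(S_{i-1},F_{i-1}^*)$ produces compatible acyclic squares at every vertex of that face simultaneously, because one power $\Ical^k$ can be taken large enough to serve as a nonrational locus for the pulled-back blow-ups at all finitely many vertices at once, and Corollary \ref{cor: refinements} makes the subsequent refinements functorial. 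The invariant that the "bad" face consists of filtered schemes of a common length is what makes this lemma applicable, and it must be carried through the induction. This finitely-functorial construction of squares is the ingredient your proposal lacks; without it the stacking step is unjustified. A smaller point: your discussion of Definition \ref{def: cat res} conflates conditions (ii) and (iii) — the nontrivial check is $\pi_*(\Tsf^c) \subseteq \bD^b_{\Coh}(X,F^*)$, for which Lemma \ref{lem: restriction functor restricted to component of sod} plus properness of all edges of the hypercube and Proposition \ref{prop: Perf(X,F)=Db(Coh(X,F))} do the work, as the paper shows.
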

	For this we start by recalling the definition of a (strongly geometric) categorical resolution and its dg version in \S\ref{subsec: generalities}.
	Followed by this, in \S\ref{subsec: constructing hyper}, we construct the hypercube of filtered schemes which will give rise to the sought after categorical resolution.
	Finally, the present gets unwrapped in \S\ref{subsec: proof of main thm}, where we give a proof of the above theorem.
	
	\subsection{Generalities}\label{subsec: generalities}
		There are a few different definitions of categorical resolutions of singularities \cite{BondalOrlov, KuznetsovLefschetz, KuznetsovLunts, Lunts}, all of which `somewhat suitably' extend the usual geometric notion to a categorical setting.
		We use the definition of \cite{KuznetsovLunts}, extended to filtered schemes.
		Their definition is tailored towards the `big' triangulated category associated to a scheme, i.e.\ the unbounded derived category.
		
		The distinction between big and small is quite imprecise.
		Roughly speaking a triangulated category would be considered big when it has direct sums, whilst it would be considered small if it is essentially small as a category.
		One way of going from big to small is by looking at the triangulated subcategory of compact objects (or other objects subject to some suitable finiteness condition), whilst going in the converse direction could, for example, be done in an enhanced setting by taking the derived category of the enhancement.
		Of course, to make this correspond nicely extra hypotheses are necessary. 
		
		Before stating the definition of a categorical resolution, we introduce a few notions.
		Recall that every compactly generated triangulated category $\Tsf$ has direct sums by definition and that the triangulated subcategory of compact objects is denoted $\Tsf^c$.
		We say $\Tsf$ is \emph{smooth} if there exists a smooth dg category $\Ascr$ such that $\bD(\Ascr)$ is equivalent to $\Tsf$.
		Since $\Tsf^c$ is then equivalent to the homotopy category of the dg category of perfect complexes over $\Ascr$, which is also smooth, this is at least morally akin to $\Tsf^c$ having a smooth dg enhancement (and it is equivalent when $\Tsf^c$ has a unique enhancement).
		Moreover, we say $\Tsf$ is \emph{proper} if $\Tsf^c$ is Ext-finite, that is $\oplus_i\Hom_\Tsf(A,B[i])$ is finite dimensional over $\kk$ for all $A,B\in\Tsf^c$.
		When $\Tsf^c$ is enhanceable this is equivalent to the enhancement being proper.
		Of course we should mention that for a `nice enough' scheme $X$, smoothness, respectively, properness of $\bD(X)$ is equivalent to smoothness, respectively, properness of $X$, see e.g.\ \cite[Proposition 3.30 and 3.31]{Orlov}.
		Lastly, we follow Kuznetsov and Lunts and say that a smooth triangulated category is \emph{strongly geometric} if it admits a semi-orthogonal decomposition consisting of derived categories of smooth varieties.
		Our goal it then to construct strongly geometric categorical resolutions for filtered schemes in $\underline{\fSch}$.
		Let us start by defining categorical resolutions.
		
		\begin{definition}\label{def: cat res}
			A \emph{categorical resolution of a finite length filtered scheme $(X,F^*)$} is a smooth compactly generated triangulated category $\Tsf$ (in particular it has direct sums) together with an adjoint pair of triangulated functors
			\[
			\begin{tikzcd}[sep=2.5em]
				{\bD(X,F^*)} \\ {\Tsf}
				\arrow[""{name=0, anchor=center, inner sep=0}, "{\pi_*}"', bend right=45, from=2-1, to=1-1]
				\arrow[""{name=1, anchor=center, inner sep=0}, "{\pi^*}"', bend right=45, from=1-1, to=2-1]
				\arrow["\dashv"{anchor=center}, draw=none, from=1, to=0]
			\end{tikzcd}
			\]
			such that
			\begin{enumerate}
				\item\label{item: in def cat res1} $\pi_*\circ\pi^*=\id$, i.e.\ $\pi^*$ is fully faithful,
				\item\label{item: in def cat res2} $\pi_*$ commutes with arbitrary direct sums ($\pi^*$ automatically does so as it is a left adjoint),
				\item\label{item: in def cat res3} $\pi_*(\Tsf^c)\subseteq \bD^b_{\Coh}(X,F^*)$.
			\end{enumerate}
		\end{definition}
		\begin{remark}
			Condition \ref{item: in def cat res2} implies that $\pi^*(\Perf(X,F^*))\subseteq\Tsf^c$ (in fact \ref{item: in def cat res2} is equivalent to this by Lemma \ref{lem: nice functor preserving or reflecting compactness} as $\bD(X,F^*)$ is compactly generated).
			Together with condition \ref{item: in def cat res3} this implies that there is an induced categorical resolution on the `small' derived category, as defined in \cite[Definition 3.2]{KuznetsovLefschetz}, i.e.\ the functors restrict to $\pi^*:\Perf(X,F^*)\to \Tsf^c$ and $\pi_*:\Tsf^c\to\bD^b_{\Coh}(X,F^*)$. 
			
			In fact, condition \ref{item: in def cat res3} can be interpreted as `properness' of the categorical resolution.
			Note, however, that there are no birationality requirements in the definition, e.g.\ $\bD(\mathbb{P}^n_\kk)$ yields a categorical resolution of $\bD(\Spec(\kk))$.
		\end{remark}
		
		The categorical resolutions we construct are made at the enhanced level, by gluing together a hypercube of dg categories. 
		Therefore, in fact, we first construct a dg version of a categorical resolution.
		We recall its definition from \cite{KuznetsovLunts} (although we leave out the pretriangulated requirement).
		
		\begin{definition}
			A \emph{categorical dg resolution} of a dg category $\Cscr$ is a smooth dg category $\Dscr$ together with a quasi fully faithful dg functor $\pi:\Cscr\to\Dscr$.
		\end{definition}
		
		A bridge between the two definitions is given by \cite[Proposition 3.13]{KuznetsovLunts}; rephrased in the filtered setting it reads.
		\begin{proposition}\label{prop: dg catres to catres}		
			Let $(X,F^*)$ be a finite length filtered scheme and let $\pi : \Dscr(X,F^*) \to \Dscr$ be a dg resolution. 
			Put $$\pi^* := \bL\!\Ind_\pi : \bD(X,F^*) = \bD(\Dscr(X,F^*)) \to  \bD(\Dscr),$$ the derived induction functor, and 	$\pi_* := \Res_\pi : \bD(\Dscr) \to \bD(X,F^*),$ the restriction functor. 
			If $\pi_*([\Dscr]) \subseteq \bD^b_{\Coh}(X,F^*)$, then $(\bD(\Dscr),\pi^*,\pi_*)$ is a categorical resolution of $(X,F^*)$.
		\end{proposition}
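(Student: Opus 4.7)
The plan is to verify, in turn, that $\bD(\Dscr)$ is a smooth compactly generated triangulated category and that the adjoint pair $(\pi^*,\pi_*)$ satisfies conditions \ref{item: in def cat res1}--\ref{item: in def cat res3} of Definition \ref{def: cat res}. The identifications $\bD(X,F^*)=\bD(\Dscr(X,F^*))$ and $\pi^*=\bL\!\Ind_\pi$, $\pi_*=\Res_\pi$ are part of the data, and all the work has essentially been done in the preliminaries on dg categories.

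First, the target triangulated category: by hypothesis $\Dscr$ is a smooth dg category, so $\bD(\Dscr)$ is smooth in the sense of \S\ref{subsec: generalities}. Compact generation is standard: the set of representable dg modules $\{h^A : A\in\Dscr\}$ generates $\bD(\Dscr)$ and each $h^A$ is compact, and they identify via the dg Yoneda embedding with the image of $[\Dscr]\hookrightarrow\bD(\Dscr)$. In particular, $\bD(\Dscr)^c=\Thick([\Dscr])$.

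For conditions \ref{item: in def cat res1} and \ref{item: in def cat res2}, Lemma \ref{lem: prop F imlies prop IndF} does the job for free. The quasi fully faithfulness of $\pi$ implies that $\pi^*=\bL\!\Ind_\pi$ is fully faithful, which is exactly $\pi_*\circ\pi^*=\id$; and the same lemma tells us that $\pi_*=\Res_\pi$ always commutes with arbitrary direct sums.

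The remaining condition \ref{item: in def cat res3} is where the hypothesis $\pi_*([\Dscr])\subseteq\bD^b_{\Coh}(X,F^*)$ is used. Since $\bD(\Dscr)^c=\Thick([\Dscr])$ and $\pi_*$ is triangulated, we obtain $\pi_*(\bD(\Dscr)^c)\subseteq\Thick(\pi_*([\Dscr]))$. Now $\bD^b_{\Coh}(X,F^*)$ is a thick triangulated subcategory of $\bD(X,F^*)$ (it is closed under cones, shifts and direct summands), so $\Thick(\pi_*([\Dscr]))\subseteq\bD^b_{\Coh}(X,F^*)$, completing the verification. No step poses a real obstacle here—the content is concentrated in the hypotheses, and the proof is essentially an assembly of Lemma \ref{lem: prop F imlies prop IndF} together with the standard fact that the representables compactly generate.
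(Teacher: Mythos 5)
Your proof is correct and takes essentially the same route as the paper: Lemma \ref{lem: prop F imlies prop IndF} supplies conditions \ref{item: in def cat res1} and \ref{item: in def cat res2}, and condition \ref{item: in def cat res3} follows from the hypothesis together with the identification $\bD(\Dscr)^c=\Thick([\Dscr])$. You merely spell out the small details the paper leaves implicit (that $\pi_*(\Thick([\Dscr]))\subseteq\Thick(\pi_*([\Dscr]))$ and that $\bD^b_{\Coh}(X,F^*)$ is thick), which is harmless.
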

		\begin{proof}
			Conditions \ref{item: in def cat res1} and \ref{item: in def cat res2} in the definition of a categorical resolution are fulfilled by Lemma \ref{lem: prop F imlies prop IndF}.
			Whilst condition \ref{item: in def cat res3} is exactly what we assumed since $\Thick([\Dscr])=\bD(\Dscr)^c$.
		\end{proof}
		\begin{remark}
			In \cite[Remark 3.14]{KuznetsovLunts} it is remarked that, in the non-filtered setting, $\pi_*([\Dscr]) \subseteq \bD^b_{\Coh}(X)$ holds automatically when $X$ is projective and $\Dscr$ is proper.
			It would be interesting to know whether this remains true in the filtered setting.
			Using results from \cite{Rouquier} it seems plausible that this remains true in the filtered setting, but we did not check the details.
		\end{remark}
		
		\subsection{Constructing the resolution}\label{subsec: constructing hyper}
			We show how to construct, starting from a filtered scheme in $\underline{\fSch}$, a hypercube of filtered schemes such that the edges adjacent to the initial filtered scheme are dg smooth.
			Applying the functorial enhancement and then gluing the punctured hypercube will give a categorical dg resolution of the enhancement of the initial filtered scheme.
			
			\subsubsection{Finitely functorial squares}
			The key observation is that we can construct `finitely functorial acyclic squares'.
			
			Let $\Isf$ be the category associated to a \emph{finite} poset with a greatest element, and consider a functor $\Isf\to n\text{-}\fSch$.
			We identify $\Isf$ with its image.
			Suppose $(X,F^*)$ is the terminal object of $\Isf$ and we are given a filtered blow-up $f:\Bl_\Ical(X,F^*)\to(X,F^*)$.
			(What we really have in mind here is, unsurprisingly, a subcategory shaped as a hypercube with $(X,F^*)$ as its highest vertex.)
			Then, we can construct a compatible system of squares in the same shape as $\Isf$, i.e.\ every object is replaced by a square.
			By combining this with Corollary \ref{cor: refinements} we have the flexibility to, in addition, pick a (specific type of) refinement of the nonrational locus of $(X,F^*)$ with respect to $f$.
			Thus, we get a finite functorial system of squares:
			\begin{equation}\label{eq: the constructed acyclic squares}
				\begin{tikzcd}
					\Bl_\Ical(X,F^*) & (\VV_{\Bl}(f^{-1}\Ical\cdot\Ocal_{\Bl}),F^*) & (\VV_{\Bl}(f^{-1}\Ical\cdot\Ocal_{\Bl}),G^*) \\
					(X,F^*)  & (\VV_X(\Ical),F^*) & (\VV_X(\Ical),G^*)
					\arrow[from=1-1, to=2-1]
					\arrow[from=1-2, to=2-2]
					\arrow[from=1-3, to=2-3]
					\arrow[from=1-2, to=1-1]
					\arrow[from=2-2, to=2-1]
					\arrow[squiggly, from=2-3, to=2-2]
					\arrow[squiggly, from=1-3, to=1-2]
				\end{tikzcd}
			\end{equation}
			where the left square is a nonrational locus square and the right square is a refinement square.
			(By Lemma \ref{lem: stacking acyclic is acyclic}, Corollary \ref{cor: refinement give acyclic square} and Proposition \ref{prop: nr locus give acyclic square} this gives an acyclic square on enhancements.)
			
			Unfortunately, guaranteeing that all the squares are acyclic is only possible after potentially replacing $\Ical$ by some power $\Ical^k$, i.e.\ by considering some finite order thickening of the nonrational locus.
			As this $k$ needs to be chosen large enough for every object in $\Isf$ separately, we need $\Isf$ to be finite.
			
			\begin{lemma}\label{lem: 'functorial' squares}
				Let $\Isf$ and $f:(Y,F^*):=\Bl_\Ical(X,F^*)\to(X,F^*)$ be as above.
				For every morphism $a:(X',F^*)\to (X,F^*)$ in $\Isf$ we have a cube
				\begin{equation}\label{eq: functorial acyclic square}
					\begin{tikzcd}[sep=.75 em]
						& (Y,F^*) && (T,G^*) \\
						(Y',F^*) && (T',G^*) \\
						& (X,F^*) && (S,G^*) \\
						(X',F^*) && (S',G^*)
						\arrow[from=1-4, to=3-4]
						\arrow[squiggly, from=1-4, to=1-2]
						\arrow[from=1-2, to=3-2, "f"{pos=0.2}]
						\arrow[squiggly, from=3-4, to=3-2]
						\arrow[from=2-3, to=4-3, crossing over]
						\arrow[squiggly, from=2-3, to=2-1, crossing over]
						\arrow[from=2-1, to=4-1, "f'"]
						\arrow[squiggly, from=4-3, to=4-1]
						\arrow[from=4-1, to=3-2, "a"]
						\arrow[from=4-3, to=3-4]
						\arrow[from=2-3, to=1-4]
						\arrow[from=2-1, to=1-2]
					\end{tikzcd}
				\end{equation}
				such that the back and front face give rise to acyclic squares, where the back face is fixed and we can pick any refinement as in Proposition \ref{prop: existence refinements} of the nonrational locus $(S,F^*)$ of $(X,F^*)$ with respect to $f$.
				
				Moreover, this construction can be made compatible with compositions.
				If we are given another morphism $b:(X'',F^*)\to (X',F^*)$ in $\Isf$, then the cube obtained from $b\circ a$ is the stacking of \eqref{eq: functorial acyclic square} and the cube obtained from $b$ using $f'$.
			\end{lemma}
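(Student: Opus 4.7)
The plan is to build the cube in two stages, following the template of diagram \eqref{eq: the constructed acyclic squares}: first fix the back face by choosing a uniformly large thickening of $\Ical$, then pull back along each morphism $a$ of $\Isf$ functorially via Corollary \ref{cor: refinements}.

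For the first stage I would fix the thickening. For each object $(X',F^*)$ of $\Isf$ with structure morphism $a:(X',F^*) \to (X,F^*)$, the pullback blow-up is $f': (Y',F^*) := \Bl_{a^{-1}\Ical \cdot \Ocal_{X'}}(X',F^*) \to (X',F^*)$, and Proposition \ref{prop: existence nonrat locus} yields some $k_{X'}$ such that for all $k \geq k_{X'}$ the ideal $a^{-1}\Ical^k \cdot \Ocal_{X'} = (a^{-1}\Ical \cdot \Ocal_{X'})^k$ is the ideal of a filtered nonrational locus for $f'$. Finiteness of $\Isf$ lets me set $k := \max_{X' \in \Isf} k_{X'}$ and replace $\Ical$ by $\Ical^k$ once and for all. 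The back face is then assembled: set $S := \VV_X(\Ical)$, $T := \VV_Y(f^{-1}\Ical \cdot \Ocal_Y)$, pick any $d$-refinement $(S,G^*)$ via Proposition \ref{prop: existence refinements}, and let the chosen auxiliary ideal on $S$ determine the compatible $d$-refinement $(T,G^*)$. The nonrational locus square is acyclic on enhancements by Proposition \ref{prop: nr locus give acyclic square}, the refinement square is acyclic by Corollary \ref{cor: refinement give acyclic square}, and their stacking is acyclic by Lemma \ref{lem: stacking acyclic is acyclic}.

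For the front face associated to $a$, I would define $S' := \VV_{X'}(a^{-1}\Ical \cdot \Ocal_{X'})$ and $T' := \VV_{Y'}((f')^{-1}a^{-1}\Ical \cdot \Ocal_{Y'})$; the nonrational locus square for $f'$ is acyclic by the uniform choice of $k$. The $d$-refinement of $(S,F^*)$ propagates to a $d$-refinement $(S',G^*)$ functorially via Corollary \ref{cor: refinements} applied to the induced morphism $S' \to S$, and likewise for $T' \to T$. Stacking yields the acyclic front face. The remaining vertical edges of the cube are the natural morphisms: $a$ itself, the morphism $Y' \to Y$ coming from the universal property of $Y = \Bl_\Ical(X,F^*)$ (since $Y' \to X' \to X$ makes $\Ical$ invertible), and the induced morphisms on closed subschemes, all compatible with the refinements by the functoriality of Corollary \ref{cor: refinements}.

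For composition compatibility with $b:(X'',F^*) \to (X',F^*)$, the construction applied to $b \circ a$ uses $(b \circ a)^{-1}\Ical \cdot \Ocal_{X''} = b^{-1}(a^{-1}\Ical \cdot \Ocal_{X'}) \cdot \Ocal_{X''}$, so the blow-up and nonrational locus over $X''$ agree whether built from $X$ directly or via the intermediate step $X'$. The stacking of the cube for $a$ with the cube for $b$ relative to $f'$ therefore gives precisely the cube for $b \circ a$; this reduces to tracing Corollary \ref{cor: refinements} and the compatibility of inverse-image ideals with composition. The main obstacle is ensuring the uniform choice of $k$ genuinely works across all of $\Isf$ simultaneously — this is exactly where the finiteness hypothesis on $\Isf$ is indispensable — and that each pullback blow-up $f'$ satisfies the hypotheses of Proposition \ref{prop: existence nonrat locus} (Noetherianness inherited from $\underline{\fSch}$). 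The remainder is careful bookkeeping of inverse-image ideals and propagated refinements.
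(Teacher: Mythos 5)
Your proposal is correct and takes essentially the same route as the paper: reduce to nonrational-locus squares via Corollary \ref{cor: refinements}, use finiteness of $\Isf$ to choose a uniformly large exponent $k$ so that $\Jcal^k=(a^{-1}\Ical\cdot\Ocal_{X'})^k=a^{-1}\Ical^k\cdot\Ocal_{X'}$ is a filtered nonrational locus for every $a$ in $\Isf$ simultaneously, and deduce composition-compatibility from the identity $b^{-1}\Jcal^k\cdot\Ocal_{X''}=(b\circ a)^{-1}\Ical^k\cdot\Ocal_{X''}$. The one phrasing to tighten is ``replace $\Ical$ by $\Ical^k$'': the blow-up $f$ is fixed by hypothesis, and you are only choosing $\Ical^k$ (resp.\ $\Jcal^k$) as the nonrational-locus ideal, relying implicitly on $\Bl_{\Ical^k}=\Bl_\Ical$ to identify the back face with the given $f$ --- the paper does the same thing tacitly when it writes $\Bl_{\Ical^k}(X,F^*)$ in its cube.
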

			\begin{proof}
				By Corollary \ref{cor: refinements} it suffices to show this for nonrational loci squares. \
				Put $\Jcal:=a^{-1}\Ical\cdot\Ocal_{X'}$ and consider the cube
				\[
				\begin{tikzcd}[row sep=.75 em, column sep=0em]
					& \Bl_{\Ical^k}(X,F^*) && (\VV_{\Bl}(f^{-1}\Ical^k\cdot\Ocal_{\Bl}),F^*) \\
					\Bl_{\Jcal^k}(X',F^*) && (\VV_{\Bl'}((f')^{-1}\Jcal^k\cdot\Ocal_{\Bl'}),F^*) \\
					& (X,F^*) && (\VV_X(\Ical^k),F^*)\rlap{ .} \\
					(X',F^*) && (\VV_{X'}(\Jcal^k),F^*)
					\arrow[from=1-4, to=3-4]
					\arrow[hookleftarrow, from=1-2, to=1-4]
					\arrow[from=1-2, to=3-2, "f"{pos=0.2}]
					\arrow[hookleftarrow, from=3-2, to=3-4]
					\arrow[from=2-3, to=4-3, crossing over]
					\arrow[hookleftarrow, from=2-1, to=2-3, crossing over]
					\arrow[from=2-1, to=4-1, "f'"]
					\arrow[hookleftarrow, from=4-1, to=4-3]
					\arrow[from=4-1, to=3-2, "a"]
					\arrow[from=4-3, to=3-4]
					\arrow[from=2-3, to=1-4]
					\arrow[from=2-1, to=1-2]
				\end{tikzcd}
				\]
				For $k\gg0$ we may assume, by Proposition \ref{prop: existence nonrat locus}, that $\Ical^k$ and $\Jcal^k$ are the ideal sheaves of filtered nonrational loci, i.e.\ the back and front face give rise to acyclic squares.
				
				The second claim follows as $b^{-1}\Jcal^k\cdot\Ocal_{X''}=(b\circ a)^{-1}\Ical^k\cdot\Ocal_{X'}$ by picking $k$ big enough to work for every morphism in $\Isf$.
			\end{proof}
			\begin{remark}
				Observe that, when $a$ is proper, all the morphisms in the cube \eqref{eq: functorial acyclic square} are proper.
				One can, for example, see this by noting that, when forgetting the filtrations, all but the left and right faces are pullback squares and $\Bl_{\Jcal^k}(X')$ is a closed subscheme of the pullback $Bl_{\Ical^k}(X,F^*)\times_X X'$.
				This will be important later.
			\end{remark}
			
		\subsubsection{A sketch, a warm-up, a cube}\label{subsubsec: sketch}
		The acyclic hypercube will be constructed inductively.
		Here, we illustrate the idea behind the construction in the first few low-dimensional steps, as we can easily illustrate these with pictures.
		We make these pictures in the filtered scheme setting, although in reality all statements concerning acyclicity are on the dg level after considering the dg enhancements.
		This procedure will be formalised in Proposition \ref{prop: constructing hypercube}.
		However, we urge the reader to simply keep the following sketch in mind when reading the proof.
		(Essentially all that changes in the general case is that the arrows `going down' in the 3-cubes are replaced by hypercubes.)
		
		So, let $(X,F^*)$ be a filtered scheme of finite length which is separated and of finite type over a field of characteristic zero. 
		We start by considering a refinement 
		\begin{equation}\label{eq: initial refinement}
			\begin{tikzcd}[sep=1.5em]
				{(X,F^*)} & {(S_0,F_0^*)}
				\arrow[squiggly, from=1-2, to=1-1]
			\end{tikzcd}
		\end{equation}
		such that $\VV_{S_0}(F_0^{-1}\Ocal_{S_0})$ is reduced.
		It can then be resolved via a blow-up that can be lifted, i.e.\ there exists a morphism $(X_0,F_0^*) \to (S_0,F_0^*)$ with $\VV_{X_0}(F_0^{-1}\Ocal_{X_0})$ smooth.
		Picking a filtered nonrational locus $(S_{1},F_{0}^*)$ we obtain
		\[
		\begin{tikzcd}[sep=1.5em]
			(X_0,F_0^*)\arrow[d]\arrow[r, hookleftarrow]  & (T_{1},F_{0}^*)\arrow[d]\\
			(S_0,F_0^*)\arrow[r, hookleftarrow] & (S_{1},F_{0}^*)\rlap{ ,}
		\end{tikzcd}
		\]
		which we can further refine to get an acyclic 2-cube
		\begin{equation}\label{eq: initial square}
			\begin{tikzcd}[sep=1.5em]
				(X_0,F_0^*)\arrow[d] & (T_{1},F_{1}^*)\arrow[d]\arrow[l, squiggly] \\
				(S_0,F_0^*) & (S_{1},F_{1}^*)\arrow[l, squiggly]
			\end{tikzcd}
		\end{equation}
		such that moreover $\VV_{S_1}(F_1^{-1}\Ocal_{S_1})$ is reduced.
		By extending the 1-cube \eqref{eq: initial refinement} and stacking with the 2-cube \eqref{eq: initial square} we obtain an acyclic 2-cube
		\[
		\begin{tikzcd}[sep=1.5em]
			{(X,F^*)} & {(S_0,F_0^*)} & {(X_0,F_0^*)} \\
			{(S_1,F_1^*)} & {(S_1,F_1^*)} & {(T_1,F_1^*)}\rlap{ .}
			\arrow[squiggly, from=1-2, to=1-1]
			\arrow[from=1-3, to=1-2]
			\arrow[squiggly, from=2-2, to=1-2]
			\arrow[squiggly, from=2-3, to=1-3]
			\arrow[from=2-3, to=2-2]
			\arrow[squiggly, from=2-1, to=1-1]
			\arrow[equal, from=2-2, to=2-1]
		\end{tikzcd}
		\]
		Now, in the (rotated) resulting acyclic 2-cube 
		\begin{equation}\label{eq: final initial square}
			\begin{tikzcd}[sep=1.5em]
				(X_0,F_0^*)\arrow[d, squiggly] & (T_{1},F_{1}^*)\arrow[d]\arrow[l, squiggly] \\
				(X,F^*) & (S_{1},F_{1}^*)\arrow[l, squiggly]
			\end{tikzcd}
		\end{equation}
		we have that $\Dscr(X_0,F_0^*)$ is smooth as $\VV(F_0^{-1}\Ocal_{X_0})$ is smooth.
		However, as we did not use strong Hironaka, there is no reason for $\VV(F_1^{-1}\Ocal_{S_1})$ to be smooth, hence we need to replace the $(S_1,F_1^*)$ vertex. 
		As $\VV(F_1^{-1}\Ocal_{S_1})$ is reduced, we can again find a morphism $(X_1,F_1^*) \to (S_1,F_1^*)$ with $\VV_{X_1}(F_1^{-1}\Ocal_{X_1})$ smooth.
		By using the functorial squares we construct an acyclic $3$-cube which shares an edge with the $2$-cube \eqref{eq: final initial square}
		\[
		\begin{tikzcd}[sep=0.75em]
			{(X_0,F_0^*)} && {(T_1,F_1^*)} && \bullet \\
			& \bullet && \bullet \\
			{(X,F^*)} && {(S_1,F_1^*)} && {(X_1,F_1^*)}\rlap{ .} \\
			& {(S_2,F_2^*)} && {(T_2,F_2^*)}
			\arrow[from=1-3, to=3-3]
			\arrow[squiggly, from=4-2, to=3-3]
			\arrow[squiggly, from=4-4, to=3-5]
			\arrow[from=3-5, to=3-3]
			\arrow[from=4-4, to=4-2]
			\arrow[squiggly, from=2-4, to=1-5]
			\arrow[squiggly, from=2-2, to=1-3]
			\arrow[squiggly, from=3-3, to=3-1]
			\arrow[squiggly, from=1-3, to=1-1]
			\arrow[from=2-2, to=4-2, crossing over]
			\arrow[from=2-4, to=4-4, crossing over]
			\arrow[from=1-5, to=3-5]
			\arrow[from=1-5, to=1-3]
			\arrow[from=2-4, to=2-2, crossing over]
			\arrow[squiggly, from=1-1, to=3-1]
		\end{tikzcd}
		\]
		Extending the $2$-cube and stacking with the $3$-cube gives us an acyclic $3$-cube
		\begin{equation}\label{eq: cube stacking diagram}
			\begin{tikzcd}[sep=0.75em]
				& {(X_0,F_0^*)} && {(T_1,F_1^*)} && \bullet \\
				\bullet && \bullet && \bullet \\
				& {(X,F^*)} && {(S_1,F_1^*)} && {(X_1,F_1^*)}\rlap{ .} \\
				{(S_2,F_2^*)} && {(S_2,F_2^*)} && {(T_2,F_2^*)}
				\arrow[from=1-4, to=3-4]
				\arrow[squiggly, from=4-3, to=3-4]
				\arrow[squiggly, from=4-5, to=3-6]
				\arrow[from=3-6, to=3-4]
				\arrow[from=4-5, to=4-3]
				\arrow[squiggly, from=2-5, to=1-6]
				\arrow[squiggly, from=2-3, to=1-4]
				\arrow[squiggly, from=3-4, to=3-2]
				\arrow[squiggly, from=1-4, to=1-2]
				\arrow[from=2-3, to=4-3, crossing over]
				\arrow[from=2-5, to=4-5, crossing over]
				\arrow[from=1-6, to=3-6]
				\arrow[from=1-6, to=1-4]
				\arrow[from=2-5, to=2-3, crossing over]
				\arrow[squiggly, from=1-2, to=3-2]
				\arrow[from= 2-3, to=2-1, crossing over, equal]
				\arrow[from= 4-3, to=4-1, equal]
				\arrow[from= 2-1, to=1-2, squiggly]
				\arrow[from= 4-1, to=3-2, squiggly]
				\arrow[from= 2-1, to=4-1]
			\end{tikzcd}
		\end{equation}
		Now, two of the vertices $(X_0,F_0^*)$ and $(X_1,F_1^*)$ connected to $(X,F^*)$ are `good', but at the cost of making a third one $(S_2,F_2^*)$ that is `bad'
		\[
		\eqref{eq: cube stacking diagram}=\begin{tikzcd}[sep=0.75em]
			& {(X_0,F_0^*)} && \bullet \\
			\bullet && \bullet \\
			& {(X,F^*)} && {(X_1,F_1^*)}\rlap{ .} \\
			{(S_2,F_2^*)} && {(T_2,F_2^*)}
			\arrow[squiggly, from=4-3, to=3-4]
			\arrow[from=4-3, to=4-1]
			\arrow[from=2-1, to=4-1]
			\arrow[from=1-4, to=3-4]
			\arrow[squiggly, from=2-3, to=1-4]
			\arrow[squiggly, from=1-2, to=3-2]
			\arrow[squiggly, from=1-4, to=1-2]
			\arrow[squiggly, from=3-4, to=3-2]
			\arrow[squiggly, from=2-1, to=1-2]
			\arrow[squiggly, from=4-1, to=3-2]
			\arrow[from=2-3, to=4-3, crossing over]
			\arrow[from=2-3, to=2-1, crossing over]
		\end{tikzcd}
		\]
		Note that the front face only contains filtered schemes of the same length. 
		We can thus, similarly, replace every vertex in this front face by an acyclic square, thereby obtaining an acyclic 4-cube.
		Extending the 3-cube \eqref{eq: cube stacking diagram} and stacking with this 4-cube we then obtain an acyclic 4-cube.
		This time three of the four vertices connected to $(X,F^*)$ will be `good' whilst we have created a fourth `bad' vertex $(S_3,F_3^*)$.
		However, in every step the dimension of $S_i$ strictly decreases, as its complement in $S_{i-1}$ is dense.
		Hence, after a finite number of steps, $S_r$ will be a disjoint union of points.
		Thus, $(S_r,F_r^*)$ is already `good', and we obtain in this manner an acyclic hypercube as desired.
		
		\subsubsection{Constructing the acyclic hypercube}	
		We say that a hypercube of filtered schemes in $\underline{\fSch}$ is \emph{acyclic} if applying the functor $\Dscr:\underline{\fSch}^{\op}\to\dgcat$ of Proposition \ref{prop: funct enhancements} yields an acyclic hypercube of dg categories.
		Because the functor $\Dscr$ is contravariant we use the opposite order to label our hypercubes of filtered schemes, so that after applying $\Dscr$ the hypercube of dg categories has the correct labelling.
		Concretely this means that the arrows in the hypercube of filtered schemes point in the opposite direction of how they would for dg categories:
		\begin{itemize}
			\item for $\underline{\fSch}$ all arrows point towards $\varnothing$,
			\item for $\dgcat$ all arrows point away from $\varnothing$.
		\end{itemize}
		
		\begin{proposition}\label{prop: constructing hypercube}
			Let $(X,F^*)$ be a separated finite length filtered scheme of finite type over a field of characteristic zero.
			There exists an acyclic hypercube $\underline{A}_{r+1}$ of filtered schemes in $\underline{\fSch}$, with $r\leq\dim X$, such that
			\begin{enumerate}
				\item $A_\varnothing= (X,F^*)$,
				\item for all $i\in\{0,\dots,r\}$ the filtered scheme $A_i=:(X_i,F_i^*)$ has $\VV_{X_i}(F_i^{-1}\Ocal_{X_i})$ smooth,
				\item the edges of the hypercube are proper morphisms of filtered schemes.
			\end{enumerate}
		\end{proposition}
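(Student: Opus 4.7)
The plan is to argue by induction on $\dim X$, at each step peeling off one layer using the warm-up construction of \S\ref{subsubsec: sketch} and then invoking the hypothesis on a scheme of strictly smaller dimension.

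\emph{Base case.} When $\dim X = 0$, I would choose a refinement $(X,G^*) \rightsquigarrow (X,F^*)$ making $\VV_X(G^{-1}\Ocal_X)$ reduced via Proposition \ref{prop: existence refinements} (taking $\Ical_X$ to be $\rad F^{-1}\Ocal_X$); the reduced scheme is then a finite union of field-valued points, hence smooth. The resulting $1$-cube $A_\varnothing = (X,F^*) \leftsquigarrow A_0 = (X,G^*)$ is acyclic by Corollary \ref{cor: refinement give acyclic square}, and satisfies all three conditions with $r=0 \leq \dim X$.

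\emph{Inductive step.} Assume the result holds for filtered schemes in $\underline{\fSch}$ whose underlying scheme has dimension strictly less than $\dim X$. First, pick a refinement $(X,G_0^*) \rightsquigarrow (X,F^*)$ so that $\VV_X(G_0^{-1}\Ocal_X)$ is reduced, giving an acyclic $1$-cube by Corollary \ref{cor: refinement give acyclic square}. Next, by Corollary \ref{cor: existence filt resolution}, lift a blow-up resolution of singularities of $\VV_X(G_0^{-1}\Ocal_X)$ to a filtered morphism $g_0 : (X_0, G_0^*) \to (X, G_0^*)$, and use Propositions \ref{prop: existence nonrat locus} and \ref{prop: existence refinements} to produce a filtered nonrational locus together with a refinement $(S_1, F_1^*)$ satisfying $\VV_{S_1}(F_1^{-1}\Ocal_{S_1})$ reduced. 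Combining Proposition \ref{prop: nr locus give acyclic square} and Corollary \ref{cor: refinement give acyclic square} via Lemma \ref{lem: stacking acyclic is acyclic}, and then extending with Lemma \ref{lem: extension of acyclic is acyclic}, yields an acyclic $2$-cube containing $(X,F^*)$, $(X_0,G_0^*)$ and $(S_1,F_1^*)$, as in diagram \eqref{eq: final initial square} of the sketch.

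Now $\dim S_1 < \dim X$, so the inductive hypothesis supplies an acyclic hypercube $\underline{B}$ of dimension $r'+1 \leq \dim S_1 + 1$ with $B_\varnothing = (S_1, F_1^*)$ and the smoothness and properness properties required. Applying Lemma \ref{lem: 'functorial' squares} to the finite poset $\Isf$ underlying $\underline{B}$, with $(X,F^*)$ replaced by $(S_1,F_1^*)$ and $f$ the resolution of $\VV_{S_1}(F_1^{-1}\Ocal_{S_1})$, produces finitely functorial acyclic squares above every vertex of $\underline{B}$; these assemble into an acyclic hypercube one dimension higher whose `bottom face' is $\underline{B}$. Stacking this with the extension of the initial $2$-cube by Lemmas \ref{lem: stacking acyclic is acyclic} and \ref{lem: extension of acyclic is acyclic} produces the desired acyclic $(r+1)$-cube with $r := r'+1 \leq \dim X$. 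Condition (i) holds by construction, (ii) holds for $(X_0,G_0^*)$ by the blow-up step and for the remaining $A_i$ by the inductive hypothesis, and (iii) follows because blow-ups, closed immersions, identities and refinements are all proper (as remarked after Lemma \ref{lem: 'functorial' squares}).

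The main obstacle will be the bookkeeping: one must verify that the large integer $k$ appearing in Proposition \ref{prop: existence nonrat locus} can be chosen \emph{uniformly} for every object of the finite poset $\Isf$ attached to $\underline{B}$, which is precisely where the finiteness of $\Isf$ in Lemma \ref{lem: 'functorial' squares} is essential, and that the refinements chosen on distinct vertices of $\underline{B}$ are compatible under the morphisms of $\underline{B}$, which is taken care of by Corollary \ref{cor: refinements}.
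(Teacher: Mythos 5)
Your plan — an induction on $\dim X$ with a divide-and-conquer structure — differs from the paper's, which builds the cube layer by layer (induction on cube dimension $i$) and uses the strict decrease of $\dim S_i$ only to guarantee termination. Your base case and the construction of the initial acyclic $2$-cube are fine, but there is a genuine gap in the gluing step. After the initial $2$-cube with vertices $(X,F^*)$, $(X_0,F_0^*)$, $(S_1,F_1^*)$ and $(T_1,F_1^*)=f_0^{-1}(S_1)$, you invoke the inductive hypothesis to get an acyclic hypercube $\underline{B}$ rooted at $(S_1,F_1^*)$. To assemble a larger hypercube via stacking and extension, however, the new and old pieces must agree along an entire codimension-one face, not just at the single vertex $(S_1,F_1^*)$. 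The inductive hypothesis says nothing about $(T_1,F_1^*)$, which appears in the $2$-cube but not in $\underline{B}$, so there is no way to relate $\underline{B}$ to the rest of the picture. The paper avoids this by applying Lemma \ref{lem: 'functorial' squares} to the entire bad face $F$ at each step of its induction, so that the inserted acyclic squares are compatible by construction; that functoriality is a one-step-at-a-time statement and cannot be applied after the fact to a finished hypercube to make it fit over a subscheme it never saw.

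Two secondary but real issues. First, Lemma \ref{lem: 'functorial' squares} requires a functor $\Isf \to n\text{-}\fSch$ for a single $n$, so you cannot apply it to ``the finite poset underlying $\underline{B}$'': $\underline{B}$ contains filtered schemes of different filtration lengths, since each refinement step increases the length. The paper carefully maintains, as invariant (vi) of its induction, that the face $\{I\mid i-1\in I\}$ to which Lemma \ref{lem: 'functorial' squares} is applied consists entirely of schemes of the same length $F_{i-1}^*$. Second, the dimension bookkeeping in your final stacking step does not close: the extension of a $2$-cube is a $3$-cube, while the hypercube you build over $\underline{B}$ is $(r'+2)$-dimensional, and these can only be stacked when $r'=1$. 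A working divide-and-conquer proof would have to prove a \emph{relative} version of the proposition, producing compatible hypercubes over a family of filtered subschemes, which is precisely what the paper's layer-by-layer construction accomplishes implicitly.
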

		\begin{proof}
			To begin we note that $X$, being of finite type over a field, is finite dimensional.
			For the proof we inductively construct sequences of filtered schemes  $(S_0,F_0^*),\dots,\allowbreak (S_{i-1},F_{i-1}^*)$ and $(X_0,F_0^*),\dots, (X_{i-2},F_{i-2}^*)$ together with an acyclic $i$-cube $\underline{A}(i)$ such that
			\begin{enumerate}
				\item $X=S_0\varsupsetneq S_1 \varsupsetneq \dots \varsupsetneq S_{i-1}$ are closed subschemes of strictly decreasing dimension,
				\item the $\VV_{S_j}(F_j^{-1}\Ocal_{S_j})$'s are reduced and the $\VV_{X_j}(F_j^{-1}\Ocal_{X_j})$'s are smooth,
				\item $A(i)_\varnothing=(X,F^*)$,
				\item $A(i)_{j}=(X_j,F_j^*)$ for $0\leq j<i-1$,
				\item $A(i)_{i-1}=(S_{i-1},F_{i-1}^*)$,
				\item the face determined by the subset $\{I\in[i]\mid i-1\in I \}$ consists of filtered schemes of the same length.
			\end{enumerate}
			Since by construction $\dim(S_{j})<\dim(S_{j-1})$ after a finite number of steps either
			\begin{itemize}
				\item $\VV_{S_r}(F_r^{-1}\Ocal_{S_r})$ is already smooth, so we put $(X_r,F_r^*) := (S_r,F_r^*)$,
				\item $\dim (S_r) = 0$ in which case $\VV_{S_r}(F_r^{-1}\Ocal_{S_r})$ is a disjoint union of reduced points, and hence certainly smooth.
				Again we put $(X_r,F_r^*) := (S_r,F_r^*)$.
			\end{itemize}
			Moreover, as every morphism of filtered schemes throughout the construction is proper, taking $i=r+1$ gives the desired hypercube.
			
			Let $F^{-1}\Ocal_X\subseteq\Ical\subseteq\Ocal_X$ be the ideal with $\Ical/F^{-1}\Ocal_X=\rad(\Ocal_X/F^{-1}\Ocal_X)$
			Then, as $X$ is Noetherian, there exists an integer $d$ s.t.\ $\Ical^d\subseteq F^{-1}\Ocal_X$.
			Therefore, by Proposition \ref{prop: existence refinements}, there exists a refinement $F_0^*$ with $F_0^{-1}\Ocal_X=\Ical$.
			We put $(S_0,F_0^*):=(X,F_0^*)$.
			If per chance $\VV_{S_0}(F_0^{-1}\Ocal_{S_0})$ is smooth, we put $(X_0,F_0^*):=(S_0,F_0^*)$ and are done. 
			Otherwise, as
			\[
			\Ocal_X/\Ical = (\Ocal_X/F^{-1}\Ocal_X)/(\Ical/F^{-1}\Ocal_X) = (\Ocal_X/F^{-1}\Ocal_X)/\rad(\Ocal_X/F^{-1}\Ocal_X),
			\]
			$\VV_{S_0}(F_0^{-1}\Ocal_{S_0})$ is a reduced separated scheme of finite type over a field of characteristic zero.
			Thus, it can be resolved via a blow-up, which by Corollary \ref{cor: existence filt resolution} can be lifted to a filtered morphism $f_0:(X_0,F_0^*) \to (S_0,F_0^*)$ with $\VV_{X_0}(F_0^{-1}\Ocal_{X_0})$ smooth.
			Using Proposition \ref{prop: existence nonrat locus} we find a filtered nonrational locus $(S_{1},F_{0}^*)$ and obtain a (pullback) square ($T_{1}=f_0^{-1}(S_1)$ is the scheme theoretic inverse image)
			\begin{equation}\label{eq: in constructing initial data 1}
				\begin{tikzcd}[sep=1.5em]
					(X_0,F_0^*)\arrow[d, "f_0"']\arrow[r, hookleftarrow]  & (T_{1},F_{0}^*)\arrow[d, "f_0|_{T_1}"]\\
					(S_0,F_0^*)\arrow[r, hookleftarrow] & (S_{1},F_{0}^*)\rlap{ .}
				\end{tikzcd}
			\end{equation}
			By applying Proposition \ref{prop: existence refinements} again, this time with $F_0^{-1}\Ocal_{S_1}\subseteq\Ical_{S_1}\subseteq\Ocal_{S_1}$, the ideal such that $\Ical_{S_1}/F_0^{-1}\Ocal_{S_1}=\rad(\Ocal_{S_1}/F_0^{-1}\Ocal_{S_1})$, and $\Ical_{T_1}=(f_0|_{T_1})^{-1}\Ical_{S_1}\cdot\Ocal_{T_1}$, we find refinements
			\begin{equation}\label{eq: in constructing initial data 2}
				\begin{tikzcd}[sep=1.5em]
					(T_1,F_0^*)\arrow[d] & (T_{1},F_{1}^*)\arrow[d]\arrow[l, squiggly] \\
					(S_1,F_0^*) & (S_{1},F_{1}^*)\arrow[l, squiggly]
				\end{tikzcd}
			\end{equation}
			such that $\VV_{S_1}(F_1^{-1}\Ocal_{S_1})$ is reduced.
			Stacking the squares \eqref{eq: in constructing initial data 1} and \eqref{eq: in constructing initial data 2} and using Lemma \ref{lem: stacking acyclic is acyclic}, Corollary \ref{cor: refinement give acyclic square} and Proposition \ref{prop: nr locus give acyclic square} we obtain an acyclic square
			\begin{equation}\label{eq: in constructing initial data 3}
				\begin{tikzcd}[sep=1.5em]
					(X_{0},F_{0}^*)\arrow[d] & (T_{1},F_{1}^*)\arrow[d]\arrow[l, squiggly] \\
					(S_{0},F_{0}^*) & (S_{1},F_{1}^*)\rlap{ .}\arrow[l, squiggly]
				\end{tikzcd}
			\end{equation}
			Extending the refinement 
			\[
			\begin{tikzcd}[sep=1.5em]
				{(S_0,F_0^*)} \\ {(X,F^*)}
				\arrow[squiggly, from=1-1, to=2-1]
			\end{tikzcd}
			\]
			and stacking with the previous square \eqref{eq: in constructing initial data 3} we thereby obtain, this time using Lemmas \ref{lem: stacking acyclic is acyclic} and \ref{lem: extension of acyclic is acyclic}, the required acyclic 2-cube 
			\[
			\underline{A}(2)=\begin{tikzcd}[sep=1.5em]
				(X_0,F_0^*)\arrow[d, squiggly] & (T_{1},F_{1}^*)\arrow[d]\arrow[l, squiggly] \\
				(X,F^*) & (S_{1},F_{1}^*)\rlap{ ,}\arrow[l, squiggly]
			\end{tikzcd}
			\]
			i.e.\ $A(2)_\varnothing=(X,F^*)$, $A(2)_{0}=(X_0,F_0^*)$, $A(2)_{1}=(S_{1},F_{1}^*)$ and $A(2)_{01}=(T_{1},F_{1}^*)$.
			That $\dim (S_1)< \dim(S_0)$ follows as the resolution of $S_0$ is an isomorphism over the regular locus, in particular $S_0\backslash S_1$ is dense in $S_0$.
			
			Now, suppose $\underline{A}(i)$ has been constructed. 
			If $\VV_{S_{i-1}}(F_{i-1}^{-1}\Ocal_{S_{i-1}})$ is smooth we can put $(X_{i-1},F_{i-1}^*):= (S_{i-1},F_{i-1}^*)$ and are done.
			Otherwise, consider the face ${F}$ of $\underline{A}(i)$ determined by the subset $\{I\in[i]\mid i-1\in I \}$.
			When viewing $\underline{A}(i)$ as a morphism, via Lemma \ref{lem: n-cube can be viewed as morphism of (n-1)-cubes}, $F$ is the source and the target is the `good' face containing $(X,F^*)$ and all the $(X_j,F_j^*)$'s.
			As $\VV_{S_{i-1}}(F_{i-1}^{-1}\Ocal_{S_{i-1}})$ is reduced we have, as above, a morphism $(X_{i-1},F_{i-1}^*)\to (S_{i-1},F_{i-1}^*) $ with $\VV_{X_{i-1}}(F_{i-1}^{-1}\Ocal_{X_{i-1}})$ smooth.
			Since all the filtered schemes in $F$ have the same length, we obtain, using Lemma \ref{lem: 'functorial' squares}, an $(i+1)$-cube ${B}$ by `inserting' an acyclic square at every vertex of ${F}$.
			More precisely, the vertex $(S_{i-1},F_{i-1}^*)$ is replaced by an acyclic square
			\begin{equation}\label{eq: in constructing initial data 4}
				\begin{tikzcd}[sep=1.5em]
					{(S_{i-1},F_{i-1}^*)} & {(X_{i-1},F_{i-1}^*)} \\
					{(S_{i},F_{i}^*)} & {(T_{i},F_{i}^*)}
					\arrow[from=1-2, to=1-1]
					\arrow[from=2-2, to=2-1]
					\arrow[squiggly, from=2-1, to=1-1]
					\arrow[squiggly, from=2-2, to=1-2]
				\end{tikzcd}
			\end{equation}
			where  $\dim S_i<\dim S_{i-1}$ and, by possibly refining, $\VV_{S_i}(F_i^{-1}\Ocal_{S_i})$ is reduced and every other vertex is replaced by a square of this form.
			We have ${B}$ acyclic by Lemma \ref{lem: seeing acyclic on faces}.
			Let\footnote{
				To follow this part it is easiest to think of $B$ as portrayed in the right square of the diagram \eqref{eq: A(i+1)} below, which one should think of as the square \eqref{eq: in constructing initial data 4} with every vertex replaced by an $(i-1)$-cube (all of whose filtered schemes have the same length as the filtered scheme from the respective vertex of \eqref{eq: in constructing initial data 4}).
			} 
			$G$ be the face of $B$ which, when viewed as a morphism via Lemma \ref{lem: n-cube can be viewed as morphism of (n-1)-cubes}, has $F$ as target and has $(S_{i-1},F_{i-1}^*)$ as image of $(S_{i},F_{i}^*)$, which sits in the opposing face $F'$ of $G$.
			Finally, denote by ${C}$ the extension of ${G}$ by $\underline{A}(i)$ (${G}$ and $\underline{A}(i)$ share the face ${F}$), which is acyclic by Lemma \ref{lem: extension of acyclic is acyclic}, and define $\underline{A}(i+1)$ to be the stacking of ${B}$ and ${C}$ (they share the face ${G}$), which is acyclic by Lemma \ref{lem: stacking acyclic is acyclic},
			\begin{equation}\label{eq: A(i+1)}
				\underline{A}(i+1)=
				\begin{tikzcd}[sep={5em, between origins}]
					{\text{`good'}} & F & \bullet \\
					{F'} & {F'} & \bullet\rlap{ .}
					\arrow["{\underline{A}(i)}"', to=1-1, from=1-2]
					\arrow["G"{description}, to=1-2, from=2-2]
					\arrow[to=1-1, from=2-1]
					\arrow[to=2-1, from=2-2, equal]
					\arrow["A"{description}, draw=none, to=1-1, from=2-2]
					\arrow[to=2-2, from=2-3]
					\arrow[to=1-2, from=1-3]
					\arrow[to=1-3, from=2-3]
					\arrow["B"{description}, draw=none, to=1-2, from=2-3]
				\end{tikzcd}
			\end{equation}
			In the diagram\footnote{Compare this diagram with the top view of diagram \eqref{eq: cube stacking diagram}.} \eqref{eq: A(i+1)}, the upper left vertex is the `good' face of $\underline{A}(i)$, the one containing $(X,F^*)$ and all the $(X_j,F_j^*)$ for $j<i-1$.
			The lower left vertex contains $(S_{i},F_{i}^*)$ whilst the upper right vertex contains $(X_{i-1},F_{i-1}^*)$ (the stacking exchanged $(S_{i-1},F_{i-1}^*)$ which was contained in $F$ by $(X_{i-1},F_{i-1}^*)$).
			Moreover, the face determined by the subset $\{I\in[i+1]\mid i\in I \}$ consists of filtered schemes all having the length of $F_i^*$.
			The resulting hypercube $\underline{A}(i+1)$ thus has all the required properties.
		\end{proof}
		
	\subsection{Proof of main theorem}\label{subsec: proof of main thm}
		We are finally ready to prove the main theorem.
		First, we show that we obtain a dg resolution.
		
		\begin{theorem}
			Let $(X,{}_nF^*)$ be a separated finite length filtered scheme of finite type over a field of characteristic zero.
			Then, there exists a (pretriangulated) dg category $\Dscr$, glued from several copies of filtered schemes, and a dg functor $\pi:\Dscr(X,F^*)\to\Dscr$ such that
			\begin{enumerate}
				\item\label{item: mainthm1} $(\Dscr,\pi)$ is a categorical dg resolution of $\Dscr(X,F^*)$,
				\item\label{item: mainthm2} the restriction functor $$\Res_\pi:\bD(\Dscr)\to \bD(\Dscr(X,F^*))=\bD(X,F^*)$$ maps $[\Dscr]$ into $\bD^b(\Coh^n(X,F^*))$.
			\end{enumerate}
			Moreover, if $(X,F^*)$ is proper, then so is $\Dscr$.
		\end{theorem}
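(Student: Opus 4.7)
The plan is to apply the machinery assembled throughout the paper to the hypercube produced by Proposition \ref{prop: constructing hypercube}. First, I would invoke that proposition to obtain an acyclic hypercube $\underline{A}_{r+1}$ of filtered schemes in $\underline{\fSch}$ with $A_\varnothing = (X,F^*)$, with each $A_i = (X_i,F_i^*)$ for $i \in [r+1]$ satisfying $\VV_{X_i}(F_i^{-1}\Ocal_{X_i})$ smooth, and with all edges proper. Applying the functorial enhancement $\Dscr:\underline{\fSch}^{\op}\to\dgcat$ from Proposition \ref{prop: funct enhancements} yields an acyclic hypercube $\underline{\Ascr}_{r+1}$ of dg categories with $\Ascr_\varnothing = \Dscr(X,F^*)$, where acyclicity is by construction (cf.\ Corollary \ref{cor: refinement give acyclic square} and Proposition \ref{prop: nr locus give acyclic square} stacked along the inductive procedure).

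I would then define $\Dscr := \Glue(\underline{\Ascr}_{r+1}^{\circ})$ as the glued dg category of the punctured hypercube, and take $\pi$ to be the natural dg functor $\Ascr_\varnothing \to \Glue(\underline{\Ascr}_{r+1}^{\circ})$ constructed in \S\ref{subsec: a and qff}. Quasi fully faithfulness of $\pi$ is immediate from Proposition \ref{prop: acyclic hypercube iff qff} combined with the acyclicity of the hypercube. For smoothness of $\Dscr$, I apply Corollary \ref{cor: smoothness gluing}: each $\Ascr_i = \Dscr(X_i,F_i^*)$ for $i\in[r+1]$ is smooth by Proposition \ref{prop: smoothness Perf(X,F)} (this uses precisely the conclusion that $\VV_{X_i}(F_i^{-1}\Ocal_{X_i})$ is smooth), and restriction along $\Ascr_i\to\Ascr_I$ corresponds to derived pullback of perfect complexes followed by pushforward, which preserves compactness because all edges in the hypercube are proper morphisms between filtered schemes whose derived pushforwards preserve perfect (equivalently, by Proposition \ref{prop: compact iff perfect on nice fSch}, compact) objects. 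This establishes that $(\Dscr,\pi)$ is a categorical dg resolution, which via Proposition \ref{prop: dg catres to catres} gives item \ref{item: mainthm1} once we verify item \ref{item: mainthm2}.

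For item \ref{item: mainthm2}, the key tool is Lemma \ref{lem: restriction functor restricted to component of sod}: any compact object of $[\Dscr]$ lies in the semi-orthogonal decomposition $\langle[\tw\Ascr_0],\dots,[\tw\Ascr_r]\rangle$ of Corollary \ref{cor: SODs Glue}, so it suffices to check that the restriction $\Res_\pi$ sends each $[\Ascr_i]$ into $\bD^b(\Coh^n(X,F^*))$. By the lemma, the image is an iterated cone of $\Res_{V_j}(-\otimes^{\bL}\phi_{ij})$-images, and tracing through the construction of the gluing bimodules, these are obtained by iterated derived pushforward along the proper (filtered) morphisms $V_j$ starting from a perfect complex on $(X_i,F_i^*)$. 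Since each $(X_i,F_i^*)$ is smooth in the sense of Proposition \ref{prop: Perf(X,F)=Db(Coh(X,F))}, $\Perf(X_i,F_i^*) = \bD^b(\Coh^n(X_i,F_i^*))$, and derived pushforward along proper morphisms preserves bounded coherence by Lemma \ref{lem: compat filtered der push with underlying der push}; iterated cones within $\bD^b_{\Coh}$ remain in $\bD^b_{\Coh}$, giving the claim. Finally, if $(X,F^*)$ is proper, then every $(X_i,F_i^*)$ in the hypercube is proper over $\kk$ (properness propagates through all edges since they are proper morphisms and $(X,F^*)$ is the terminal vertex), each $\Ascr_i$ is dg proper by Proposition \ref{prop: smoothness Perf(X,F)}, and properness of $\Dscr$ then follows from the second assertion of Corollary \ref{cor: smoothness gluing}.

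The main obstacle I anticipate is item \ref{item: mainthm2}: bookkeeping which morphism in the hypercube contributes to which component of the iterated cone, and confirming that the bimodules $\phi_{ij}$ appearing in Lemma \ref{lem: restriction functor restricted to component of sod} really do correspond, up to quasi-isomorphism, to iterated derived pushforwards of structure-sheaf-like objects along proper edges, so that Lemma \ref{lem: compat filtered der push with underlying der push} can be applied to stay within $\bD^b(\Coh^n)$. All remaining pieces (smoothness, quasi fully faithfulness, properness) are essentially direct quotations of the theory built in \S\ref{sec: gluing dg cat} and \S\ref{sec: filt sch} applied to the hypercube of Proposition \ref{prop: constructing hypercube}.
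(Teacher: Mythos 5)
Your proposal follows the paper's own argument very closely: same hypercube from Proposition \ref{prop: constructing hypercube}, same gluing via $\Glue(\underline{\Ascr}_{r+1}^{\circ})$, same use of Proposition \ref{prop: acyclic hypercube iff qff}, Corollary \ref{cor: smoothness gluing}, Lemma \ref{lem: restriction functor restricted to component of sod}, and Corollary \ref{cor: SODs Glue}, and your treatment of the properness claim is, if anything, slightly more explicit than the paper's.

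One step you should tighten: in the smoothness argument you write that restriction along $\Ascr_i\to\Ascr_I$ preserves compactness ``because all edges in the hypercube are proper morphisms \dots whose derived pushforwards preserve perfect objects.'' Read literally, this asserts a general fact about proper morphisms that is false --- $\bR f_*$ of a perfect complex along a proper map is usually only bounded coherent, not perfect (e.g.\ pushing $\Ocal_Y$ forward along a resolution of a singular $X$). What actually makes the argument work is the direction in which the pushforward lands: for $V:\Dscr_i\to\Dscr_I$, the restriction $\Res_V$ is identified with $\bR f_*$ for some proper $f:(T,G^*)\to(X_i,F_i^*)$, whose \emph{target} is one of the special vertices with $\VV_{X_i}(F_i^{-1}\Ocal_{X_i})$ smooth. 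By Lemma \ref{lem: compat filtered der push with underlying der push}, $\bR f_*$ preserves bounded coherence, and by Proposition \ref{prop: Perf(X,F)=Db(Coh(X,F))} applied to $(X_i,F_i^*)$ one has $\bD^b(\Coh^{n_i}(X_i,F_i^*)) = \Perf(X_i,F_i^*)$, so $\bR f_*$ lands in $\Perf$. It is this conjunction --- properness plus smoothness of the target vertex --- that gives compactness-preservation, not properness alone; you should say this explicitly, since you invoke Proposition \ref{prop: Perf(X,F)=Db(Coh(X,F))} only in the later coherence step. (Relatedly, $\Res_V$ is a single derived pushforward, not ``pullback of perfect complexes followed by pushforward''; the composite pullback-then-pushforward description applies to the gluing bimodules $\phi_{ij}$ as in Lemma \ref{lem: bimodules right perfect}, not to $\Res_V$ itself.) With that repair your argument matches the paper's.
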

		\begin{proof}
			Let $\underline{A}_{r+1}$ be the acyclic hypercube constructed in Proposition \ref{prop: constructing hypercube} and put $\underline{\Dscr}_{r+1}:=\Dscr(\underline{A}_{r+1})$.
			Next, define
			\[
			\Dscr:=\Glue(\underline{\Dscr}_{r+1}^\circ)
			\]
			and let $\pi$ denote the quasi fully faithful dg functor 
			\[
			\Dscr(X,F^*)=\Dscr_\varnothing\to \Dscr
			\]
			from Proposition \ref{prop: acyclic hypercube iff qff}.
			
			To show $\ref{item: mainthm1}$ it suffices, by Corollary \ref{cor: smoothness gluing}, to verify that restriction along 
			\[
			\Dscr_i\to\Dscr_I\quad
			\]
			for $i\in I\subseteq [r+1]$, induced from composing the edges of the hypercube, preserves compactness.
			To see this, observe that such a $V:\Dscr_i\to\Dscr_I$ is of the form $f^*: \Dscr(X_i,{}_{n_i}F_i^*)\to \Dscr(T,{}_mG^*)$ for some proper (generalised) morphism of filtered schemes with in particular $\VV_{X_i}(F_i^{-1}\Ocal_{X_i})$ smooth.
			Consequently, by adjointness, $\Res_V$ identifies with $\bR f_*$ (as $\bL\!\Ind_V$ identifies with $\bL f^*$).
			Thus, we need to show that $\bR f_*$ preserves perfect complexes, as these are exactly the compact objects by Proposition \ref{prop: compact iff perfect on nice fSch}.
			As $f$ is proper, we know by Lemma \ref{lem: compat filtered der push with underlying der push} that $\bR f_*$ preserves coherence.
			Hence, since $\VV_{X_i}(F_i^{-1}\Ocal_{X_i})$ is smooth, we conclude by Proposition \ref{prop: Perf(X,F)=Db(Coh(X,F))} that $\bR f_*$ maps $\Perf(T,G^*)\subset \bD^b(\Coh^m(T,G^*))$ into $\bD^b(\Coh^{n_i}(X_i,F_i^*))=\Perf(X_i,F_i^*)$.
			
			Showing $\ref{item: mainthm2}$ is similar.
			By Lemma \ref{lem: restriction functor restricted to component of sod} together with the first semi-orthogonal decomposition from Corollary \ref{cor: SODs Glue} (and with the  Lemma \ref{lem: bimodules right perfect} and the previous paragraph) it suffices to show that restriction along
			\[
			\quad\Dscr_\varnothing\to\Dscr_i
			\]
			preserves coherence.
			This is shown in exactly the same manner as the previous paragraph.
		\end{proof}
		
		By the above theorem, together with Proposition \ref{prop: dg catres to catres}, we immediately obtain our desired categorical resolution.
		The fact that it is strongly geometric follows from Corollary \ref{cor: SODs Glue} and the semi-orthogonal decomposition of the derived category of a finite length filtered scheme from Proposition \ref{prop: filtered SODs}.
		Indeed, let $(X_i,{}_{n_i}F_i^*)$ be as in the theorem and put $\overline{X}_i:=\VV_{X_i}(F^{-1}\Ocal_{X_i})$.
		Then, we have a semi-orthogonal decomposition
		\begin{align*}
			\bD(\Dscr)&=\langle \bD(X_0,F_0^*),\bD(X_1,F_1^*),\dots, \bD(X_r,F_r^*)  \rangle \\
			&= \langle \underbrace{\bD(\overline{X}_0),\dots,\bD(\overline{X}_0)}_{n_0\text{ components}},\underbrace{\bD(\overline{X}_1),\dots,\bD(\overline{X}_1)}_{n_1\text{ components}}, \dots, \underbrace{\bD(\overline{X}_r),\dots,\bD(\overline{X}_r)}_{n_r\text{ components}} \rangle,
		\end{align*}
		where $r\leq \dim X$ and the components are derived categories of smooth (potentially nonconnected) varieties.
		Altogether we have obtained the desired result.
		
		\begin{theorem}\label{thm: main}
			Any separated finite length filtered scheme of finite type over a field of characteristic zero has a categorical resolution by a strongly geometric triangulated category.
			Moreover, if the filtered scheme is proper, so is the resolving category.
		\end{theorem}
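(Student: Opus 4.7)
The plan is to construct the categorical resolution at the dg level and then descend to the triangulated setting via Proposition~\ref{prop: dg catres to catres}. The strategy is to build an acyclic hypercube of filtered schemes whose vertices adjacent to $(X,F^*)$ all have smooth reduced support, apply the functorial enhancement $\Dscr$ from Proposition~\ref{prop: funct enhancements}, and then glue the resulting punctured hypercube of dg categories.

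First, I would invoke Proposition~\ref{prop: constructing hypercube} to obtain an acyclic hypercube $\underline{A}_{r+1}$ with $r \le \dim X$, placing $(X,F^*)$ at the puncture vertex and filtered schemes $(X_i,{}_{n_i}F_i^*)$ with $\VV_{X_i}(F_i^{-1}\Ocal_{X_i})$ smooth at the vertices adjacent to $(X,F^*)$, all edges being proper (generalised) morphisms. Applying $\Dscr$ yields an acyclic hypercube of dg categories $\underline{\Dscr}_{r+1}$, and I set $\Dscr := \Glue(\underline{\Dscr}_{r+1}^{\circ})$ together with the canonical functor $\pi : \Dscr(X,F^*) \to \Dscr$ from Proposition~\ref{prop: acyclic hypercube iff qff}; acyclicity of $\underline{\Dscr}_{r+1}$ guarantees quasi fully faithfulness of $\pi$.

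Next I would verify the dg categorical resolution axioms. Smoothness of $\Dscr$ follows from Corollary~\ref{cor: smoothness gluing}: the vertex dg categories $\Dscr_i = \Dscr(X_i,F_i^*)$ are smooth by Proposition~\ref{prop: smoothness Perf(X,F)} (and proper if $(X,F^*)$ is proper), and restriction along the edges $\Dscr_i \to \Dscr_I$ preserves compactness. The latter claim is the technical crux: such restriction corresponds via adjunction to a derived pushforward $\bR f_*$ along a proper morphism, and compactness equals perfectness by Proposition~\ref{prop: compact iff perfect on nice fSch}; since the source vertex $X_i$ has smooth reduced support, Proposition~\ref{prop: Perf(X,F)=Db(Coh(X,F))} identifies perfect complexes with $\bD^b(\Coh)$, and properness together with Lemma~\ref{lem: compat filtered der push with underlying der push} then shows $\bR f_*$ preserves bounded coherence. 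The same reasoning, combined with the first SOD of Corollary~\ref{cor: SODs Glue} and the iterated-cone description of Lemma~\ref{lem: restriction functor restricted to component of sod}, shows that $\Res_\pi$ sends $[\Dscr]$ into $\bD^b(\Coh^n(X,F^*))$, supplying the finiteness hypothesis required by Proposition~\ref{prop: dg catres to catres}.

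Finally, Proposition~\ref{prop: dg catres to catres} upgrades $(\Dscr,\pi)$ to a categorical resolution $(\bD(\Dscr),\pi^*,\pi_*)$ of $\bD(X,F^*)$, which is proper when $(X,F^*)$ is. To confirm the strongly geometric property, I would combine the SOD of Corollary~\ref{cor: SODs Glue}
\[
\bD(\Dscr) = \langle \bD(X_0,F_0^*), \dots, \bD(X_r,F_r^*) \rangle
\]
with the iterated semi-orthogonal decomposition of Proposition~\ref{prop: filtered SODs}, which refines each component into $n_i$ copies of $\bD(\VV_{X_i}(F_i^{-1}\Ocal_{X_i}))$; since each such reduced support is a smooth variety, the resulting SOD exhibits $\bD(\Dscr)$ as strongly geometric. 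The main obstacle throughout is really encapsulated in Proposition~\ref{prop: constructing hypercube}: the delicate inductive construction producing an acyclic hypercube via repeated stacking and extension of nonrational-locus squares and refinement squares, relying on termination through strict decrease of $\dim S_i$, is what replaces the iterative regluing procedure of Kuznetsov--Lunts and is where avoiding strong Hironaka is actually leveraged.
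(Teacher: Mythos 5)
Your proposal is correct and follows the paper's argument essentially step by step: construct the acyclic hypercube via Proposition~\ref{prop: constructing hypercube}, glue its enhancement via $\Glue$ and Proposition~\ref{prop: acyclic hypercube iff qff}, verify smoothness and the coherence condition via Corollary~\ref{cor: smoothness gluing}, Proposition~\ref{prop: compact iff perfect on nice fSch}, Proposition~\ref{prop: Perf(X,F)=Db(Coh(X,F))}, Lemma~\ref{lem: compat filtered der push with underlying der push} and Lemma~\ref{lem: restriction functor restricted to component of sod}, then descend via Proposition~\ref{prop: dg catres to catres} and read off the strongly geometric SOD from Corollary~\ref{cor: SODs Glue} and Proposition~\ref{prop: filtered SODs}. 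The only cosmetic difference is that the paper splits this into an intermediate dg-level theorem followed by a short deduction, whereas you present it in one pass.
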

		
		\begin{remark}
			Similarly as in \cite[\S6.5]{KuznetsovLunts} one can argue that this resolution is birational in some sense; essentially because the entire construction is relative over the initial filtered scheme $(X,F^*)$.
			
			For any open $U\subseteq X$ we can restrict the acyclic hypercube $\underline{A}_{r+1}$, constructed in Proposition \ref{prop: constructing hypercube}, to obtain a hypercube with $\varnothing$-vertex $(U,F^*)$ (replace every other vertex by its restriction to the inverse image of $U$).
			Applying $\Dscr$ and then gluing gives a categorical resolution $\Dscr_U$ of $\Dscr(U,F^*)$.
			The mapping $U\mapsto \Dscr_U$ defines a presheaf of dg categories on $X$.
			
			Now, suppose that moreover $\VV_X(F^{-1}X_0)$ is reduced, so that we can take $F_0^*=F^*$ in the construction.
			Then for $U$ small enough, contained in the complement of $S_1$, $\Dscr_U\cong\Dscr(U,F^*)$, giving `birationality'. 
			(Let $f_0:X_0\to X$ denote the underlying morphism in the hypercube, then all the vertices in the hypercube are zero except for $\Dscr(f_0^{-1}(U),F_0^*)\cong \Dscr(U,F_0^*) = \Dscr(U,F_0^*)$.)
		\end{remark}

\appendix

\section{More on the smoothness of directed dg categories}\label{Asec: more on smoothness}
	We briefly mention necessary conditions for the smoothness of directed dg categories, at least for small $n$.
	
	In Proposition \ref{prop: smoothness directed dg cat} the bimodules are required to be \emph{right} perfect, instead of merely perfect as one would expect by naively trying to generalise Theorem \ref{thm: LS}.
	Undoubtedly our assumption is too strong and one could get by with less.
	However, simply requiring perfectness of the bimodules will not be enough, as can be expected from Proposition \ref{prop: perf iff}, see also Remark \ref{rem: necessity perf bimod}.
	In general there will be conditions involving the dg bimodule morphisms $\phi_{ik}\otimes_{\Ascr_k} \phi_{kj}\to \phi_{ij}$ of the directed dg category, and thus simply requiring the bimodules to be perfect will not suffice.
	In effect the right perfectness assumption allows us to avoid taking these morphisms into consideration.
	Moreover, as Example \ref{ex: smooth n=3 non-perfect bimodule} below shows, requiring the dg bimodules to be perfect is in general not necessary.
	A possible explanation for the dichotomy between the cases $n=2$ and $n>2$ is the fact that in the former case the directed dg category is a tensor dg category, which does not hold for $n>2$ in general.
	So, it is not unreasonable to expect a difference in behaviour for both cases.
	
	As an illustration, for the $n=3$ case we obtain the following sufficient and necessary conditions (the cases $n>3$ become slightly more complicated for increasing $n$, see Remark \ref{rem: n>3}).
	
	\begin{proposition}
		Let 
		\[
		\Cscr=
		\begin{pmatrix}
			\Ascr_0 & 0 & 0\\
			\phi_{10} & \Ascr_1 & 0\\
			\phi_{20} & \phi_{21} & \Ascr_2
		\end{pmatrix}.
		\] 
		Then, $\Cscr$ is smooth if and only if the $\Ascr_i$'s are smooth and $\phi_{10}$, $\phi_{21}$ and $\cone(\phi_{21}\otimes^{\bL}_{\Ascr_1}\phi_{10}\to\phi_{20})$ are perfect.
	\end{proposition}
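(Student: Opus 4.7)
The plan is to iteratively apply Theorem \ref{thm: LS} to reduce the smoothness of $\Cscr$ to a perfectness statement for a single bimodule, and then to unpack this perfectness via Proposition \ref{prop: perf iff}. Concretely, I would first group the top-left $2\times 2$ block to write
\[
\Cscr = \begin{pmatrix} \Ascr & 0 \\ \phi & \Ascr_2 \end{pmatrix}, \qquad \Ascr = \begin{pmatrix} \Ascr_0 & 0 \\ \phi_{10} & \Ascr_1 \end{pmatrix},
\]
where $\phi$ is the $(\Ascr_2,\Ascr)$-bimodule whose restrictions to $\Ascr_0$ and $\Ascr_1$ are $\phi_{20}$ and $\phi_{21}$ respectively, the remaining structure being dictated by the composition in $\Cscr$. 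By Theorem \ref{thm: LS} applied to this outer splitting, $\Cscr$ is smooth iff $\Ascr$ and $\Ascr_2$ are smooth and $\phi$ is perfect; applying the same theorem once more to $\Ascr$, smoothness of $\Ascr$ reduces to $\Ascr_0$ and $\Ascr_1$ being smooth together with perfectness of $\phi_{10}$.

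It remains to recast perfectness of $\phi$. The bimodule $\phi$ is naturally a right dg module over
\[
\Ascr \otimes_\kk \Ascr_2^{\op} \;=\; \begin{pmatrix} \Ascr_0 \otimes \Ascr_2^{\op} & 0 \\ \phi_{10} \otimes \Ascr_2^{\op} & \Ascr_1 \otimes \Ascr_2^{\op} \end{pmatrix},
\]
which is again a lower triangular directed dg category of the form treated by Proposition \ref{prop: perf iff}. Under the inclusions $a,b$ of the two diagonal blocks one has $a_*\phi = \phi_{20}$ and $b_*\phi = \phi_{21}$, so the proposition gives that $\phi$ is perfect if and only if both $\phi_{21}$ and
\[
\cone\bigl(\phi_{21} \otimes^{\bL}_{\Ascr_1 \otimes \Ascr_2^{\op}}(\phi_{10} \otimes \Ascr_2^{\op}) \to \phi_{20}\bigr)
\]
are perfect. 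The common $\Ascr_2^{\op}$-factors act diagonally and hence cancel in the derived tensor product, yielding a canonical isomorphism with $\phi_{21} \otimes^{\bL}_{\Ascr_1} \phi_{10}$ of $(\Ascr_2,\Ascr_0)$-bimodules; chaining all the reductions then produces exactly the stated equivalence.

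The step I expect to cost the most ink is the identification of the structural map appearing in Proposition \ref{prop: perf iff}, namely the morphism $b_*\phi \otimes^{\bL}_{\Bscr}\psi \to a_*\phi$ induced by the counit of $b^* \dashv b_*$ (see Lemma \ref{lem: functors A,B,C}), with the composition map $\phi_{21} \otimes_{\Ascr_1} \phi_{10} \to \phi_{20}$ coming from the multiplication in $\Cscr$. Unwinding Lemma \ref{lem: functors A,B,C} shows both maps arise from the unique $(\Ascr_2,\Ascr_0)$-bimodule morphism induced by the $\Ascr$-action on $\phi$, so they agree; once this is verified the argument is complete.
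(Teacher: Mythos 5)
Your argument is essentially the paper's own proof: group the top-left $2\times2$ block and reduce via Theorem~\ref{thm: LS} to the perfectness of the row bimodule $\bigl(\begin{smallmatrix}\phi_{20}&\phi_{21}\end{smallmatrix}\bigr)$ over the triangular dg category $\Ascr\otimes_\kk\Ascr_2^{\op}$, then apply Proposition~\ref{prop: perf iff} and simplify the derived tensor product $\phi_{21}\otimes^{\bL}_{\Ascr_1\otimes_\kk\Ascr_2^{\op}}(\phi_{10}\otimes_\kk\Ascr_2^{\op})\cong\phi_{21}\otimes^{\bL}_{\Ascr_1}\phi_{10}$. You spell out the Theorem~\ref{thm: LS} step and the identification of the structural map more explicitly than the paper does, but the route is the same.
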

	\begin{proof}
		It suffices to determine when $\left(\begin{smallmatrix}	\phi_{20} & \phi_{21} \end{smallmatrix}\right)$ is perfect as $\left(\begin{smallmatrix}
			\Ascr_0\otimes_\kk \Ascr_2^{\op} & 0\\
			\phi_{10}\otimes_\kk \Ascr_2^{\op} & \Ascr_1\otimes_\kk \Ascr_2^{\op}
		\end{smallmatrix} \right)$-module (with the obvious dg module structure).
		This is exactly the content of Proposition \ref{prop: perf iff}, using 
		\[
		\phi_{21}\otimes^{\bL}_{\Ascr_1\otimes_\kk \Ascr_2^{\op}}(\phi_{10}\otimes_\kk\Ascr_2^{\op}) = \phi_{21}\otimes^{\bL}_{\Ascr_1}\phi_{10}.
		\]
		(Which can be seen by taking an h-projective resolution of $\phi_{21}$ and noting that h-projective bimodules are right h-projective as we work over a field.)
	\end{proof}
	\begin{remark}\label{rem: necessity perf bimod}
		There is no reason for $\phi_{21}\otimes^{\bL}_{\Ascr_1}\phi_{10}$ to be perfect when $\phi_{21}$ and $\phi_{10}$ are merely perfect.
		This would hold if $\phi_{21}$ is left perfect or $\phi_{10}$ is right perfect.
		However, in general for a dg $(\Ascr,\Bscr)$-bimodule perfectness implies left (respectively, right) perfectness only when $\Bscr$ (respectively, $\Ascr$) is proper (as can be seen by letting $\Ascr$ (respectively, $\Bscr$) equal $\kk$).
	\end{remark}
	
	\begin{example}\label{ex: smooth n=3 non-perfect bimodule}
		Let $A$, $B$ and $C$ be $\kk$-algebras which we view as dg categories concentrated in degree zero with one object.
		Then, it immediately follows from the previous proposition that
		\[
		\begin{pmatrix}
			A & 0 & 0\\
			B\otimes_\kk A & B & 0\\
			C\otimes_\kk B\otimes_\kk A & C \otimes_\kk B & C
		\end{pmatrix}
		\] 
		is smooth if $A$, $B$ and $C$ are smooth. 
		However, when $B$ is infinite dimensional as a $\kk$-vector space, $C\otimes_\kk B\otimes_\kk A$ is not a perfect $(A,C)$-bimodule.
	\end{example}
	
	To finish we say something about the $n>3$ case.		
	\begin{remark}\label{rem: n>3}
		Writing down the necessary and sufficient conditions for smoothness explicitly for general $n$ becomes a bit `tricky', but it will be clear from the $n=4$ case below what one should expect.
		The reason is that an inductive proof leads to conditions involving taking cones of cones, of cones, etc. 
		Luckily these iterated cones can be packaged nicely, using the $t$-construction from \S\ref{subsec: Tot}.
		
		As an illustration we write out the $n=4$ case.		
		Let 
		\[
		\Cscr=
		\begin{pmatrix}
			\Ascr_0 & 0 & 0 & 0\\
			\phi_{10} & \Ascr_1 & 0 & 0\\
			\phi_{20} & \phi_{21} & \Ascr_2 & 0\\
			\phi_{30} & \phi_{31} & \phi_{32} & \Ascr_3
		\end{pmatrix}.
		\] 
		Then, $\Cscr$ is smooth if and only if 
		\begin{itemize}
			\item $\Ascr_0$, \dots, $\Ascr_3$ are smooth,
			\item $\phi_{10}$, $\phi_{21}$ and $\phi_{32}$ are perfect,
			\item $\cone( \phi_{21}\otimes^{\bL}_{\Ascr_1} \phi_{10} \to \phi_{20} )$ and $\cone( \phi_{32}\otimes^{\bL}_{\Ascr_2} \phi_{21} \to \phi_{31} )$ are perfect,
			\item $\cone(  \cone(\phi_{32}\otimes^{\bL}_{\Ascr_2} \phi_{21}\to \phi_{31})\otimes^{\bL}_{\Ascr_1} \phi_{10}  \to \cone(\phi_{32}\otimes^{\bL}_{\Ascr_2} \phi_{20} \to \phi_{30}) )$ is perfect.
		\end{itemize}
		Rewriting these conditions a bit, getting rid of all the cones, one finds a more symmetric description.
		Namely, $\Cscr$ is smooth if and only if
		\begin{itemize}
			\item $\Ascr_0$, \dots, $\Ascr_3$ are smooth,
			\item $\phi_{10}$, $\phi_{21}$ and $\phi_{32}$ are perfect,
			\item $t( \phi_{21}\otimes^{\bL}_{\Ascr_1} \phi_{10} \to \phi_{20} )$ and $t( \phi_{32}\otimes^{\bL}_{\Ascr_2} \phi_{21} \to \phi_{31} )$ are perfect,
			\item 
			\[
			t\left(\begin{tikzcd}[sep=small]
				\phi_{32}\otimes^{\bL}_{\Ascr_2} \phi_{21}\otimes^{\bL}_{\Ascr_1} \phi_{10} \ar[d]\ar[r] & \phi_{32}\otimes^{\bL}_{\Ascr_2} \phi_{20}\ar[d] \\
				\phi_{31}\otimes^{\bL}_{\Ascr_1} \phi_{10} \ar[r] & \phi_{30}
			\end{tikzcd}\right)
			\]
			is perfect.
		\end{itemize}
		(There are some subtleties concerning the fact that we have derived tensor products, which we have swept under the rug.
		It suffices to pick resolutions for $\phi_{32}$ and $\phi_{10}$, then everything can be expressed with honest morphisms of complexes.
		In general choosing resolutions, to compute the derived tensor products, for all the $\phi$'s in a compatible way seems tricky.)
	\end{remark}

\section{An alternative definition of the gluing}\label{Asec: alternative gluing}
	Here, we give an alternative definition of the gluing, more in line with how the gluing of two dg categories is defined in \cite{KuznetsovLunts}.
	As it makes sense for arbitrary directed dg categories, we do it in this setting.
	To obtain the gluing of a hypercube, we then apply the construction below to the generalised arrow dg category.
	This construction is only well-suited when the vertices of the hypercube are pretriangulated.
	
	Let $\Cscr$ be a directed dg category as in Equation \eqref{eq: lower matrix dg cat}.
	We define $\Glue'(\Cscr)$ to be the full subcategory of the twisted complexes over $\Cscr$ consisting of objects of the form
	\begin{equation}\label{eq: ob in glue'}
		\left( \oplus_{i=0}^{n-1}M_{n-1-i}[i], \mu \right),
	\end{equation}
	where $M_j \in \Ascr_j$, and $\mu$ is a strictly upper triangular matrix satisfying the usual condition for a twisted complex.
	For notational ease, we will not denote these objects by \eqref{eq: ob in glue'}, rather we opt for notation that leaves out the $\oplus$ and $[i]$'s.
	We write the objects as
	\[
	\left( (M_i), (\mu_{ij}) \right)
	\]
	with $M_j \in \Ascr_j$ and\footnote{We altered the naming convention for morphisms, compared to \S\ref{subsec: twist}, writing $\mu_{ij}:M_i\to M_j$ instead of $\mu_{ji}:M_i\to M_j$.} $\mu_{ij}\in\Cscr^{i-j+1}(M_i,M_j)$ where the hom-complex of $\Cscr$ carries an extra $(-1)^{n-1-j}$ in the differential due to the shifts in the twisted complex.
	Note that this minus sign reflects the fact that $\Ascr_j$ sits in $\Glue'(\Cscr)$ with a twist $[n-1-j]$.
	
	For convenience we make some aspects of this category more explicit.
	\begin{itemize}
		\item The $(\mu_{ij})$ satisfy the following relations
		\begin{align}
			\mu_{ij}&=0\quad\text{for } i\geq j,\notag \\
			(-1)^{n-1-j}d\mu_{ij}+\sum_k\mu_{kj}\mu_{ik}&=0\quad\text{for all } i\leq j,\label{eq: mu condition}
		\end{align}	
		where the differential and composition are those of $\Cscr$ (i.e.\ we already took into account the minus signs from the shifts in the twisted complex).
		\item  The morphism complexes are given by
		\[
		\Hom( \left( (M_i), (\mu_{ij})\right), \left( (N_i), (\nu_{ij}) \right) )=\oplus_{i,j} \Cscr(M_i, N_j)[i-j]
		\]
		with differential
		\[
		(d(f_{ij})) = \left( (-1)^{n-1-j}df_{ij} + \sum_k \left(\nu_{kj}f_{ik} - (-1)^{|f|} f_{kj}\mu_{ik}\right) \right)
		\] 
		for $f=(f_{ij}):\left( (M_i), (\mu_{ij})	\right) \to \left( (N_i), (\nu_{ij})	\right)$ of degree $|f| (=|f_{ii}|$ for all $i$)
		(here the differential and composition are again those of $\Cscr$).
		\item 	Composition of morphisms is matrix multiplication
		\[
		(f_{ij})(g_{ij}) = \left(\sum_k f_{kj}g_{ik}\right).
		\]
	\end{itemize} 
	\begin{remark}
		The ordering in our objects \eqref{eq: ob in glue'} comes from the ordering of the morphisms in $\Cscr$ (which is chosen so that the indices in the resulting semi-orthogonal decomposition increase from left to right).
		If we considered the subcategory with objects $(\oplus_{i=0}^{n-1}M_{i}[i], \mu)$, $\mu$ would always equal zero as there are no arrows between the $M_i$ when $i$ increases.
		Then, the gluing category would just be $\oplus_i\Ascr_i$, which is clearly not what we want.
	\end{remark}
	
	The following lemma mimics \cite[Lemma 4.3]{KuznetsovLunts} and shows that $\Glue'(\Cscr)$ is pretriangulated whenever the $\Ascr_k$'s are pretriangulated, and thus $\Glue'(\Cscr)$ gives a much smaller (and more hands-on) pretriangulated hull compared to $\tw(\Cscr)$.
	\begin{lemma}
		If the $\Ascr_i$'s are pretriangulated, respectively, strongly pretriangulated, then so is $\Glue'(\Cscr)$.
	\end{lemma}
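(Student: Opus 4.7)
The plan is to verify the two defining conditions of (strong) pretriangulation directly inside $\tw(\Cscr)$, namely that the subcategory $\Glue'(\Cscr)$ is closed under the shifts and cones already provided by the ambient strongly pretriangulated dg category $\tw(\Cscr)$. Because an object of $\Glue'(\Cscr)$ is nothing more than an $n$-tuple $(M_i)_{i=0}^{n-1}$ with $M_i\in\Ascr_i$ glued by a strictly upper triangular matrix $(\mu_{ij})$, it will suffice to build, for each component operation, appropriate objects in the $\Ascr_i$'s and then reassemble the twisting data.

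\emph{Shifts.} Given $X=((M_i),(\mu_{ij}))\in\Glue'(\Cscr)$ and $k\in\ZZ$, the shift $X[k]$ in $\tw(\Cscr)$ is $((M_i[k]),(\mu_{ij}))$ where the twisting is unchanged but reinterpreted via the sign convention of $\ZZ\Cscr$. For each $i$, choose $M_i'\in\Ascr_i$ together with a homotopy equivalence (respectively, dg isomorphism) $\alpha_i:M_i'\to M_i[k]$ furnished by pretriangulation (respectively, strong pretriangulation) of $\Ascr_i$. Define $\mu_{ij}':=\alpha_j^{-1}\mu_{ij}\alpha_i$ (computed in the appropriate hom-complex of $\ZZ\Cscr$, where $\alpha_j^{-1}$ denotes a homotopy inverse in the pretriangulated case). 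The Maurer--Cartan condition $d\mu'+(\mu')^2=0$ is inherited from that of $\mu$ up to conjugation, and the $\alpha_i$'s assemble into a homotopy equivalence (resp.\ dg isomorphism) $X'\to X[k]$ with $X'=((M_i'),(\mu_{ij}'))\in\Glue'(\Cscr)$.

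\emph{Cones.} For a closed degree zero morphism $f=(f_{ij}):X\to Y$ in $\Glue'(\Cscr)$, where $X=((M_i),(\mu_{ij}))$ and $Y=((N_i),(\nu_{ij}))$, the cone in $\tw(\Cscr)$ is, by Remark \ref{rem: cone in tw}, $((N_i\oplus M_i[1]),\delta)$ with $\delta$ incorporating the $f_{ij}$'s, $\mu_{ij}$'s, $\nu_{ij}$'s, and appropriate signs. For each $i$, let $K_i\in\Ascr_i$ together with a homotopy equivalence (resp.\ dg isomorphism) $\beta_i:K_i\to\cone(f_{ii})$ be provided by (strong) pretriangulation of $\Ascr_i$. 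Using the $\beta_i$'s and the block form of $\cone(f_{ii})=N_i\oplus M_i[1]$, transport $\nu_{ij}$, $\mu_{ij}$, and $f_{ij}$ (for $i<j$) into a strictly upper triangular matrix $(\kappa_{ij})$ of morphisms $K_i\to K_j$ in $\ZZ\Cscr$ whose block form against $\beta$ reproduces $\delta$. One then checks that $(\kappa_{ij})$ satisfies the twisted-complex equation and that the $\beta_i$'s assemble into a homotopy equivalence (resp.\ dg isomorphism) $((K_i),(\kappa_{ij}))\to\cone(f)$.

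\emph{Main obstacle.} The nontrivial point is the verification that $(\kappa_{ij})$ really satisfies the Maurer--Cartan equation $d\kappa+\kappa^2=0$ in the sum closed dg category over $\Cscr$, since expanding $\kappa^2$ couples the blocks $\nu_{ij}$, $\mu_{ij}$ and $f_{ij}$ through the composition morphisms $\phi_{ik}\otimes_{\Ascr_k}\phi_{kj}\to\phi_{ij}$ of the directed structure, and the many Koszul signs arising from the shifts $[1]$ inside each $K_i$ and from the ordering of the $M_{n-1-i}[i]$'s must cancel correctly. The computation ultimately reduces to $(\delta')^2+d\delta'=0$ for the cone data on $\cone(f)$ in $\tw(\Cscr)$, which holds by construction; the only work is keeping the bookkeeping straight. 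In the strongly pretriangulated case, all of this can be written down on the nose using the strict cones and shifts in each $\Ascr_i$, producing a dg isomorphism rather than a homotopy equivalence.
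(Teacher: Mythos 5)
Your construction of the cone in the \emph{strongly} pretriangulated case runs parallel to the paper's: take cones $C_k$ of the diagonal components in each $\Ascr_k$, then reassemble the off-diagonal data using the structural morphisms $i_k,p_k,j_k,s_k$. The paper goes further and writes out the explicit formula $\gamma_{ij}=-i_j\epsilon^{-1}\mu_{ij}\epsilon p_i + j_j\nu_{ij}s_i + j_jf_{ij}\epsilon p_i$, but your outline captures the same idea.

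However, there is a genuine gap in your treatment of the merely pretriangulated case. You define $\mu_{ij}':=\alpha_j^{-1}\mu_{ij}\alpha_i$ where $\alpha_j^{-1}$ is only a \emph{homotopy} inverse, and assert that the Maurer--Cartan equation $d\mu'+(\mu')^2=0$ is ``inherited up to conjugation.'' This fails: expanding $(\mu')^2$ produces terms of the form $\alpha_k^{-1}\mu_{jk}\,(\alpha_j\alpha_j^{-1})\,\mu_{ij}\alpha_i$, and for a homotopy equivalence $\alpha_j\alpha_j^{-1}=\id+dh_j$ rather than $\id$, so the correction term $\alpha_k^{-1}\mu_{jk}(dh_j)\mu_{ij}\alpha_i$ obstructs the identity. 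Consequently $((M_i'),(\mu'_{ij}))$ is not a valid object of $\Glue'(\Cscr)$: it is not an honest twisted complex. The same issue recurs in your cone step when transporting $(\kappa_{ij})$ along the $\beta_i$'s. The paper sidesteps this entirely by first replacing each $\Ascr_k$ with $\tw(\Ascr_k)$ (via a generalisation of \cite[Proposition 4.14]{KuznetsovLunts} to the $n$-fold directed setting), thereby reducing to the strongly pretriangulated case where all conjugations are by genuine dg isomorphisms. That reduction is the step missing from your argument, and it is not merely bookkeeping --- it is where the essential content of the pretriangulated case lives. Adding it (and the footnoted trick that a closed degree zero morphism $(f_{ij})$ with all $f_{ii}$ homotopy equivalences is itself a homotopy equivalence, used to show the inclusion into the strongly pretriangulated gluing is a quasi-equivalence) would repair the proposal.
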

	\begin{proof}
		The same proof as \cite[Lemma 4.3]{KuznetsovLunts} works in this generality, although it is more tedious to check all the details.
		We give a sketch.
		
		By a suitable generalisation\footnote{
			One needs a neat trick for this. 
			Namely, suppose $(f_{ij})$ is a closed degree zero morphism with every $f_{ii}$ a homotopy equivalence.
			Then, $(f_{ij})$ is itself a homotopy equivalence. 
			To see this, note that, using the natural filtration on twisted complexes, $(f_{ij})$ can be written as an iterated extension of the $f_{ii}$'s in the homotopy category. 
		} of \cite[Proposition 4.14]{KuznetsovLunts} we can replace $\Ascr_k$ by $\tw(\Ascr_k)$, thereby reducing to the strongly pretriangulated case.
		Thus, we henceforth assume that the $\Ascr_k$ are strongly pretriangulated.
		
		It is clear that $\Glue'(\Cscr)$ is closed under shifts; the shift of $\left( (M_i), (\mu_{ij})	\right)$ is represented by $\left( (M_i[1]), (-\mu_{ij})	\right)$.
		So, it suffices to show that it is closed under cones. 
		Thus, let $f=(f_{ij}):\left( (M_i), (\mu_{ij})	\right) \to \left( (N_i), (\nu_{ij})	\right)$ be a closed degree zero morphism. 
		As $(-1)^{n-1-k}f_{kk}:M_k\to N_k$ is a closed degree zero morphism of $\Ascr_k$, which is strongly pretriangulated, we can take its cone $C_k$.
		This comes with degree zero morphisms
		\[
		M_k[1]\xrightarrow{i_k}C_k\xrightarrow{p_k}M_k[1],\quad N_k\xrightarrow{j_k}C_k\xrightarrow{s_k}N_k,
		\]  
		satisfying
		\begin{gather*}
			p_ki_k=\id_{M_k[1]},\quad s_kj_k=\id_{N_k},\quad p_kj_k=0,\quad s_ki_k=0,\quad i_kp_k+j_ks_k=\id_{C_k},\\	d(j_k)=d(p_k)=0,\quad d(i_k)=(-1)^{n-1-k}j_kf_{kk}\epsilon,\quad d(s_k)=-(-1)^{n-1-k}f_{kk}\epsilon p_k,
		\end{gather*}
		where $\epsilon:M[1]\to M$ is a closed degree one isomorphism.
		Put
		\[
		\gamma_{ij}= - i_j\epsilon^{-1}\mu_{ij}\epsilon p_i + j_j\nu_{ij}s_i + j_jf_{ij}\epsilon p_i\in\Cscr(C_i,C_j)\quad\text{for }i<j.
		\]
		Then, $(C_i, \gamma_{ij})$ is the cone of $f$.
		For this one needs to check two things
		\begin{enumerate}
			\item the $\gamma$'s satisfy the relation \eqref{eq: mu condition},
			\item collecting the $i_k$'s, $p_k$'s, $j_k$'s and $s_k$'s together (as diagonal matrices) into morphisms $i$, $p$, $j$ and $s$, these satisfy the required relations defining a cone intrinsically in the dg category $\Glue'(\Cscr)$.
		\end{enumerate}
		We leave this to the motivated reader. 
	\end{proof}
		
\bibliographystyle{amsalpha}
\bibliography{bibliography}
\end{document}